\DeclareTextFontCommand\textmathbf{\bfseries\boldmath} % Text- und Mathematikmodus auf fett umschalten
\newcommand{\N} {\ensuremath{\mathbf{N}}}
\newcommand{\Z} {\ensuremath{\mathbf{Z}}}
\newcommand{\Q} {\ensuremath{\mathbf{Q}}}
\newcommand{\R} {\ensuremath{\mathrm{R}}}
\newcommand{\RR} {\ensuremath{\mathbf{R}}}
\newcommand{\G} {\ensuremath{\mathbf{G}}}
\newcommand{\A} {\ensuremath{\mathbf{A}}}
\newcommand{\Ccal} {\ensuremath{\mathscr{C}}}
\newcommand{\Ocal} {\ensuremath{\mathscr{O}}}
\newcommand{\Ecal} {\ensuremath{\mathscr{E}}}
\newcommand{\Lcal} {\ensuremath{\mathscr{L}}}
\newcommand{\Pcal} {\ensuremath{\mathscr{P}}}
\newcommand{\F} {\ensuremath{\mathbf{F}}}
\newcommand{\PP} {\ensuremath{\mathbf{P}}}
\renewcommand{\H} {\ensuremath{\mathrm{H}}}
\newcommand{\PPic} {\ensuremath{\mathbf{Pic}}}
\renewcommand{\epsilon}{\varepsilon}
\renewcommand{\theta}{\vartheta}
\renewcommand{\phi}{\varphi}
\newcommand*\defn[1]{\textup{\textmathbf{#1}}}
\providecommand{\qu}[1]{\langle#1\rangle}
\providecommand{\pfister}[1]{\langle\kern-0.2em\langle#1\rangle\kern-0.2em\rangle}
\providecommand{\sep}{{\mathrm{sep}}}
\providecommand{\iso}{\ensuremath{\cong}}
\DeclareMathOperator{\ordrm}{ord}
\providecommand{\ord}{\mathop{\ordrm} \limits}
\providecommand{\isoto}{\stackrel{\sim}{\to}}
\DeclareMathOperator{\im}{im}
\providecommand{\card}[1]{\left\lvert#1\right\rvert}
\DeclareMathOperator{\Tr}{Tr}
\DeclareMathOperator{\Aut}{Aut}
\DeclareMathOperator{\End}{End}
\DeclareMathOperator{\Hom}{Hom}
\DeclareMathOperator{\Mor}{Mor}
\DeclareMathOperator{\id}{id}
\DeclareMathOperator{\Frob}{Frob}
\DeclareMathOperator{\CH}{CH}
\DeclareMathOperator{\colim}{\varinjlim}
\DeclareMathOperator{\res}{res}
\DeclareMathOperator{\cores}{cor}
\DeclareMathOperator{\Char}{char}
\DeclareMathOperator{\Br}{Br}
\DeclareMathOperator{\Ext}{Ext}
\DeclareMathOperator{\Spec}{Spec}
\DeclareMathOperator{\pr}{pr}
\DeclareMathOperator{\rk}{rk}
\DeclareMathOperator{\NS}{NS}
\DeclareMathOperator{\Pic}{Pic}
\DeclareMathOperator{\coker}{coker}
\DeclareMathOperator{\Quot}{Quot}
\DeclareMathOperator{\Alb}{Alb}
\providecommand{\divv}{\ensuremath{\mathrm{div}}}
\DeclareMathOperator{\supp}{supp}
\providecommand{\Mod}{\ensuremath{\mathbf{Mod}}}
\providecommand{\Acal}{\ensuremath{\mathscr{A}}}
\providecommand{\Bcal}{\ensuremath{\mathscr{B}}}
\providecommand{\Ecal}{\ensuremath{\mathscr{E}}}
\providecommand{\Fcal}{\ensuremath{\mathscr{F}}}
\providecommand{\Gcal}{\ensuremath{\mathscr{G}}}
\providecommand{\Lcal}{\ensuremath{\mathscr{L}}}
\providecommand{\Ocal}{\ensuremath{\mathscr{O}}}
\providecommand{\EExt}{\ensuremath{\mathscr{E}\mathrm{xt}}}
\providecommand{\HHom}{\ensuremath{\mathscr{H}\mathrm{om}}}
\providecommand{\Het}{\ensuremath{\H_\mathrm{\acute{e}t}}}
\providecommand{\piet}{\ensuremath{\pi_1^\mathrm{\acute{e}t}}}
\newcommand{\tors} {\ensuremath{\mathrm{tors}}}
\newcommand{\Tors} {\ensuremath{\mathrm{nt}}}
\newcommand{\Div} {\ensuremath{\mathrm{nd}}}
\newcommand{\cd} {\ensuremath{\mathrm{cd}}}
\newcommand{\FEt} {\ensuremath{\mathbf{F\acute{E}t}}}
\providecommand{\Zar}{\ensuremath{\mathrm{Zar}}}
\providecommand{\et}{\ensuremath{\mathrm{\acute{e}t}}}
\providecommand{\fppf}{\ensuremath{\mathrm{fppf}}}
\DeclareSymbolFont{cyrletters}{OT2}{wncyr}{m}{n}
\DeclareMathSymbol{\Sha}{\mathalpha}{cyrletters}{"58}
\newtheorem{maintheorem}{Theorem}
\newtheorem{theorem}{Theorem}[subsection]
\newtheorem{lemma}[theorem]{Lemma}
\newtheorem{corollary}[theorem]{Corollary}
\newtheorem{definition}[theorem]{Definition}%[section]
\newtheorem{proposition}[theorem]{Proposition}%[section]
\theoremstyle{definition}
\theoremstyle{remark}
\newtheorem{remark}[theorem]{Remark}
\newtheorem{example}[theorem]{Example}%[section]
\crefname{theorem}{Theorem}{Theorems}
\crefname{lemma}{Lemma}{Lemmata}
\crefname{corollary}{Corollary}{Corollaries}
\crefname{proposition}{Proposition}{Propositions}
\crefname{definition}{Definition}{Definitions}
\crefname{conjecture}{Conjecture}{Conjectures}
\crefname{example}{Example}{Examples}
\crefname{algorithm}{Algorithm}{Algorithms}
\crefname{remark}{Remark}{Remarks}
\numberwithin{equation}{section}
	\renewcommand{\refname}{Bibliography}%
\begin{document}
\nonfrenchspacing

%\frontmatter
%%%%%%%%%%%%%%%%%%%%%%%%%%%%%%%%%%%%%%%%%%%%%%%%%%%%%%%%%%%%%%%%%%%%%
%%%%%%%%%%%%%%%%%%%%%%%%%%%%%%%%%%%%%%%%%%%%%%%%%%%%%%%%%%%%%%%%%%%%%
%%%%%%%%%%%%%%%%%%%%%%%%%%%%%%%%%%%%%%%%%%%%%%%%%%%%%%%%%%%%%%%%%%%%%

\pagestyle{fancy}

\title{On an analogue of the conjecture of Birch and Swinnerton-Dyer for Abelian schemes over higher dimensional bases over finite fields}
\author{Timo Keller}
\maketitle\thispagestyle{empty}
%%%%%%%%%%%%%%%%%%%%%%%%%%%%%%%%%%%%%%%%%%%%%%%%%%%%%%%%%%%%%%%%%%%%%
%%%%%%%%%%%%%%%%%%%%%%%%%%%%%%%%%%%%%%%%%%%%%%%%%%%%%%%%%%%%%%%%%%%%%
%%%%%%%%%%%%%%%%%%%%%%%%%%%%%%%%%%%%%%%%%%%%%%%%%%%%%%%%%%%%%%%%%%%%%
\begin{abstract}
We formulate an analogue of the conjecture of Birch and Swinnerton-Dyer for Abelian schemes with everywhere good reduction over higher dimensional bases over finite fields of characteristic $p$.  We prove the prime-to-$p$ part conditionally on the finiteness of the $p$-primary part of the Tate-Shafarevich group or the equality of the analytic and the algebraic rank.  If the base is a product of curves, Abelian varieties and K3 surfaces, we prove the prime-to-$p$ part of the conjecture for constant or isoconstant Abelian schemes, in particular the prime-to-$p$ part for (1) relative elliptic curves with good reduction or (2) Abelian schemes with constant isomorphism type of $\Acal[p]$ or (3) Abelian schemes with supersingular generic fibre, and the full conjecture for relative elliptic curves with good reduction over curves and for constant Abelian schemes over arbitrary bases.  We also reduce the conjecture to the case of surfaces as the basis.
\\
\\
{\bf Keywords:} $L$-functions of varieties over global fields; Birch-Swinnerton-Dyer conjecture; Heights; Étale cohomology, higher regulators, zeta and $L$-functions; Abelian varieties of dimension $> 1$; Étale and other Grothendieck topologies and cohomologies; Arithmetic ground fields
\\
\\
{\bf MSC 2010:} 11G40, 11G50, 19F27, 11G10, 14F20, 14K15
\end{abstract}

\markright{}
\tableofcontents

%%%%%%%%%%%%%%%%%%%%%%%%%%%%%%%%%%%%%%%%%%%%%%%%%%%%%%%%%%%%%%%%%%%%%
%%%%%%%%%%%%%%%%%%%%%%%%%%%%%%%%%%%%%%%%%%%%%%%%%%%%%%%%%%%%%%%%%%%%%
%%%%%%%%%%%%%%%%%%%%%%%%%%%%%%%%%%%%%%%%%%%%%%%%%%%%%%%%%%%%%%%%%%%%%
%\mainmatter

\section{Introduction}
If $K$ is a global field, i.\,e.\ a finite extension of $\Q$ or of $\F_q(t)$, the conjecture of Birch and Swinnerton-Dyer for an Abelian variety $A/K$ relates global invariants, like the rank of the Mordell-Weil group $A(K)$, the order of the Tate-Shafarevich group $\Sha(A/K)$ (a group measuring the failure of the Hasse principle for principal homogeneous spaces of $A/K$) and the determinant of the height pairing $A(K) \times A^t(K) \to \RR$ with $A^t$ the dual Abelian variety to the vanishing order of the $L$-function $L(A/K,s)$ (built up from the number of points of the reduction of $A$ at the primes of $K$) at $s = 1$ and the special $L$-value at this point.  The aim of this article is to extend this setting from the classical situation of a \emph{curve} over a finite field to the case of a \emph{higher dimensional basis} over finite fields.

Even for elliptic curves over the rationals, this is a difficult problem.  The function field case is more accessible since the situation is more geometric as one has a ground field the algebraic closure of which one can pass to, but up to now, there have been only (mostly conditional) results over \emph{curves} over finite fields:  For Abelian varieties over global function fields, John Tate~\cite{Artin-Tate} considered the problem for Jacobians of curves, and the first result is due to James Milne~\cite{Milne-Tate-Shafarevic}:  He proved the conjecture of Birch and Swinnerton-Dyer for \emph{constant} Abelian schemes over global function fields, i.\,e. Abelian schemes of the form $\Acal = A \times_k X$ with $A/k$ an Abelian variety over a finite field $k$ and $X/k$ a smooth projective geometrically connected curve.  Later, Peter Schneider~\cite{Schneider} proved a conditional result for Abelian varieties over global function fields, namely that the prime-to-$p$ part of the conjecture of Birch and Swinnerton-Dyer ($p$ the characteristic of the ground field) holds iff for one $\ell \neq p$, the $\ell$-primary part of the Tate-Shafarevich group is finite.  In~\cite{Bauer}, Werner Bauer proved an analogue of Schneider's result for the prime-to-$p$ part of the conjecture, but only for Abelian varieties with good reduction; finally, Kazuya Kato and Fabien Trihan~\cite{Kato-Trihan} extended Bauer's result to the case of bad reduction.  Tate and Shafarevich~\cite{Tate-Shafarevich} gave examples of elliptic curves over $\F_q(t)$ of arbitrarily large rank and Douglas Ulmer~\cite{Ulmer} proved the conjecture for certain non-isoconstant elliptic curves over $\F_q(t)$ with arbitrarily large rank.

In section~\ref{section:BSD}, we proceed by generalising Schneider's arguments to the case of a higher dimensional basis $X$ over a finite field $k$.  A key point is to find the correct definition of the $L$-function in the higher dimensional setting.  Let $\Acal/X$ be an Abelian scheme.  The Kummer sequence for $\Acal/X$ on the small étale site of $X$ induces a short exact sequence
\[
    0 \to \Acal(X) \otimes_\Z \Z_\ell \to \H^1(X,T_\ell\Acal) \to T_\ell\Sha(\Acal/X) \to 0
\]
with $\H^1(X,T_\ell\Acal) = \varprojlim_n\H^1(X,\Acal[\ell^n])$.  Since $\Sha(\Acal/X)[\ell^\infty]$ is cofinitely generated, $T_\ell\Sha(\Acal/X) = 0$ iff $\Sha(\Acal/X)[\ell^\infty]$ is finite.  This gives us the link between the algebraic rank $\rk_{\Z} \Acal(X)$ and $\H^1(X,T_\ell\Acal)$.  Using the Hochschild-Serre spectral sequence $\H^p(G_k,\H^q(\overline{X},T_\ell\Acal)) \Rightarrow \H^{p+q}(X,T_\ell\Acal)$, one relates $\H^1(X,T_\ell\Acal)$ to $\H^1(\overline{X},T_\ell\Acal)^{G_k}$.  Then one uses~\cref{lemma:BayerNeukirch} to relate the vanishing order of the $L$-function to the algebraic rank and the special $L$-value at $s=1$ to orders of cohomology groups and determinants of cohomological pairings.  The proof is complicated by the fact that one has more non-vanishing cohomology groups than in the case of a curve as a basis.  For example, setting $d = \dim{X}$, if $d = 1$, Poincaré duality is a pairing between $\H^1(\overline{X},\Fcal) \times \H^1(\overline{X},\Fcal^\vee(1)) \to \Z_\ell$, whereas for general $d > 1$, it is a pairing $\H^1(\overline{X},\Fcal) \times \H^{2d-1}(\overline{X},\Fcal^\vee(d)) \to \Z_\ell$.

In section~\ref{subsection:pairingacutebrackets} and~\ref{subsection:pairingroundbrackets}, we study two cohomological pairings given by cup product in cohomology:
\begin{align*}
    \qu{\cdot,\cdot}_\ell&: \H^1(X, T_\ell\Acal)_\Tors \times \H^{2d-1}(X, T_\ell(\Acal^t)(d-1))_\Tors \to \H^{2d}(X,\Z_\ell(d)) \stackrel{\pr_1^*}{\to} \H^{2d}(\overline{X},\Z_\ell(d)) = \Z_\ell\\
    (\cdot,\cdot)_\ell&: \H^2(X, T_\ell\Acal)_\Tors \times \H^{2d-1}(X, T_\ell(\Acal^t)(d-1))_\Tors \to \H^{2d+1}(X,\Z_\ell(d)) = \Z_\ell
\end{align*}
If one $\ell$-primary component of the Tate-Shafarevich group of $\Acal/X$ is finite, we relate the pairing $\qu{\cdot,\cdot}_\ell$ to the Néron-Tate height pairing, and show that the determinant of the pairing $(\cdot,\cdot)_\ell$ equals $1$.  This is done by generalising Schneider's arguments comparing $\qu{\cdot,\cdot}_\ell$ with Bloch's height pairing from~\cite{BlochNote}.  Again, the higher dimensional case is more involved.

In section~\ref{sec:IsoconstantAbelianScheme}, we specialise to the case of an isoconstant Abelian scheme, and deduce in section~\ref{sec:Verification} from a descent theorem of our previous article~\cite[p.~238, Theorem~4.29]{KellerSha} our analogue of the conjecture of Birch and Swinnerton-Dyer for relative elliptic curves or Abelian schemes with constant isomorphism type of $\Acal[p]$ over products of curves and Abelian varieties by showing these are isoconstant since the moduli scheme $Y(N)$ is affine for $N \geq 3$ resp.\ since the Ekedahl-Oort stratification is quasi-affine.  We also prove the conjecture for supersingular Abelian schemes.

In section~\ref{sec:Reduction}, we reduce the conjecture to the case of a surface (and in special cases also of a curve) as a basis using Poonen's Bertini theorem for varieties over finite fields.

Our main results are as follows:

In section~\ref{section:BSD}, we first introduce a suitable $L$-function $L(\Acal/X,s)$ for Abelian schemes $\Acal$ over a smooth projective base scheme $X$ over a finite field of characteristic $p$ (see~\cref{rem:DefinitionOfLFunction} for a motivation):
\[
    L(\Acal/X,t) = \frac{\det(1 - t\Frob_q^{-1} \mid \H^1(\overline{X}, V_\ell\Acal))}{\det(1 - t\Frob_q^{-1} \mid \H^0(\overline{X}, V_\ell\Acal))}
\]
We then prove that an analogue of the conjecture of Birch and Swinnerton-Dyer holds for the prime-to-$p$ part, with two cohomological pairings $\qu{\cdot,\cdot}_\ell$ and $(\cdot,\cdot)_\ell$ in place of the height pairing, provided that for one $\ell \neq p$ the $\ell$-primary component of the Tate-Shafarevich group $\Sha(\Acal/X) := \Het^1(X,\Acal)$ is finite or, equivalently, if the analytic rank equals the algebraic rank.

The Tate-Shafarevich group is studied in a previous article~\cite[section~4]{KellerSha}, especially Theorem~4.4 and~4.5.  There, we show:
\[
	\Sha(\Acal/X) = \ker\Big(\H^1(K, \Acal) \to \prod_{x \in S} \H^1(K_x^{nr}, \Acal) \Big),
\]
where $K_x^{nr} = \Quot(\Ocal_{X,x}^{sh})$, and $S$ is either (a) the set of all points of $X$, or (b) the set $|X|$ of all closed points of $X$, or (c) the set $X^{(1)}$ of all codimension-$1$ points of $X$, and $\Acal = \PPic^0_{\Ccal/X}$ for a relative curve $\Ccal/X$ with everywhere good reduction admitting a section, and $X$ is a variety over a finitely generated field.  Here, one can replace $K_x^{nr}$ by $K_x^h = \Quot(\Ocal_{X,x}^h)$ if $\kappa(x)$ is finite, and $K_x^{nr}$ and $K_x^h$ by $\Quot(\hat{\Ocal}_{X,x}^{sh})$ and $\Quot(\hat{\Ocal}_{X,x}^h)$, respectively, if $x \in X^{(1)}$.

More precisely, we get the following first main result:
\begin{maintheorem}[\cref{thm:BSDI}]
Let $X/k$ be a smooth projective geometrically connected variety over a finite field $k = \F_q$ and $\Acal/X$ an Abelian scheme.  Set $\overline{X} = X \times_k \overline{k}$ and let $\ell \neq \Char{k}$ be a prime.  Let $\rho$ be the vanishing order of $L(\Acal/X,s)$ at $s=1$ and define the special $L$-value $c = L^*(\Acal/X,1)$ of $L(\Acal/X,s)$ at $s=1$ by
\begin{align*}
    L(\Acal/X,s) &\sim c\cdot(1-q^{1-s})^\rho \sim c\cdot(\log{q})^\rho(s-1)^\rho \quad\text{for $s \to 1$}.
\end{align*}
Then one has $\rho \geq \rk_{\Z}\Acal(X)$, and the following statements are equivalent:\\
\noindent (a) $\rho = \rk_{\Z}\Acal(X)$\\
\noindent (b) $\Sha(\Acal/X)[\ell^\infty]$ is finite\\
\noindent If these hold, one has for all $\ell \neq \Char{k}$ the equality
\[
    |c|_\ell^{-1} = \frac{\card{\Sha(\Acal/X)[\ell^\infty]} \cdot R_\ell(\Acal/X)}{\card{\Acal(X)[\ell^\infty]_\tors} \cdot \card{\H^2(\overline{X},T_\ell\Acal)^\Gamma}}
\]
and the prime-to-$p$ part of the Tate-Shafarevich group $\Sha(\Acal/X)[\text{non-$p$}]$ is finite.  Here $\Acal(X) = A(K)$ with $A$ the generic fibre of $\Acal/X$ and $K = k(X)$ the function field of $X$, and the regulator $R_\ell(\Acal/X)$ is the determinant of a cohomological pairing $\qu{\cdot,\cdot}_\ell$~\eqref{eq:Regulator spitze Klammer} divided by the determinant of a cohomological pairing $(\cdot,\cdot)_\ell$~\eqref{eq:Regulator runde Klammer}.
\end{maintheorem}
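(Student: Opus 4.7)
The plan is to apply the Grothendieck--Lefschetz trace formula to the lisse $\Q_\ell$-sheaf $V_\ell\Acal := T_\ell\Acal\otimes_{\Z_\ell}\Q_\ell$ on $\bar{X}$, writing
\[
    L(\Acal/X,s) = \prod_{i=0}^{2\dim X}\det\bigl(1 - F\cdot q^{-s}\mid \H^i(\bar{X}, V_\ell\Acal)\bigr)^{(-1)^{i+1}},
\]
and then matching each factor of the leading Taylor coefficient with an $\ell$-adic invariant. Since $X$ is smooth projective and $V_\ell\Acal$ is pure of weight one, Deligne's purity implies that $\H^i(\bar{X},V_\ell\Acal)$ is pure of weight $i+1$, so $F$ can have eigenvalue $q$ only on $\H^1$, and only the $i=1$ factor contributes to the vanishing at $s=1$. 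By purity the generalised eigenspace coincides with the ordinary one, which gives $\rho = \dim_{\Q_\ell}\H^1(\bar{X},V_\ell\Acal)^{F=q} = \dim_{\Q_\ell}\H^1(\bar{X},V_\ell\Acal)^\Gamma$.

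The inequality $\rho\geq\rk_\Z\Acal(X)$ and the equivalence of (a) and (b) should come from the $\ell$-adic Kummer sequence. Taking the $\ell^n$-Kummer sequence of $\Acal$ on $X_\et$ and passing to the limit yields
\[
    0 \to \Acal(X)\otimes\Z_\ell \to \Het^1(X,T_\ell\Acal) \to T_\ell\Sha(\Acal/X) \to 0,
\]
using $\Sha(\Acal/X) = \Het^1(X,\Acal)$. The Hochschild--Serre spectral sequence for $\bar{X}\to X$ together with the vanishing $\H^0(\bar{X},V_\ell\Acal)=0$ (no Frobenius-fixed vector of positive weight) provides an isomorphism $\Het^1(X,V_\ell\Acal)\isoto \H^1(\bar{X},V_\ell\Acal)^\Gamma$, and composing one obtains an injection $\Acal(X)\otimes\Q_\ell\hookrightarrow \H^1(\bar{X},V_\ell\Acal)^\Gamma$ whose cokernel is $T_\ell\Sha(\Acal/X)\otimes\Q_\ell$. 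The rank inequality is then immediate, and equality is equivalent to $T_\ell\Sha(\Acal/X) = 0$, that is, to $\Sha(\Acal/X)[\ell^\infty]$ being finite.

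The heart of the argument is the leading coefficient formula. Using the standard lemma that for a finitely generated $\Z_\ell$-module $M$ and an endomorphism $\sigma$ with finite kernel and cokernel one has $|\det\sigma|_\ell = \card{\coker\sigma}/\card{\ker\sigma}$, one can rewrite each factor $\det(1-F/q\mid \H^i(\bar X,V_\ell\Acal))$, and the ``reduced'' factor at $i=1$, as a ratio of orders of the integral groups $\H^i(\bar X,T_\ell\Acal)^\Gamma$ and $\H^i(\bar X,T_\ell\Acal)_\Gamma$, up to a torsion correction. Poincaré duality on $\bar{X}$ combined with the Weil pairing $T_\ell\Acal\otimes T_\ell\Acal^\vee\to\Z_\ell(1)$ identifies $\H^i(\bar{X},T_\ell\Acal)^\Gamma$ with the Pontryagin dual of $\H^{2d+1-i}(\bar{X},T_\ell\Acal^\vee)_\Gamma$ (with $d=\dim X$) up to torsion, and yields the pairing $(\cdot,\cdot)_\ell$; the Bockstein construction applied to $0\to T_\ell\Acal\to V_\ell\Acal\to V_\ell\Acal/T_\ell\Acal\to 0$ produces $\qu{\cdot,\cdot}_\ell$ on $\H^1(\bar{X},T_\ell\Acal)^\Gamma\times \H^1(\bar{X},T_\ell\Acal^\vee)^\Gamma$, and the reduced Frobenius determinant at $s=1$ emerges as the quotient $R_\ell(\Acal/X)$ of these two determinants. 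Collecting the alternating product then gives the stated formula, with $\card{\H^2(\bar{X},T_\ell\Acal)^\Gamma}$ arising as the uncancelled $i=2$ piece and $\card{\Tor\Acal(X)[\ell^\infty]}$ as the torsion correction on $\H^1$.

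Finally, once (b) holds for one $\ell\neq p$ we get $\rho = \rk_\Z\Acal(X)$, an identity of $\ell$-independent quantities, so (b) holds for every $\ell\neq p$; the BSD formula just established then shows that $\card{\Sha(\Acal/X)[\ell^\infty]}$ equals a product of local factors each of which is an $\ell$-unit outside a finite explicit set of primes, whence $\Sha(\Acal/X)[\text{non-}p]$ is finite. The principal obstacle is the third paragraph, namely setting up $\qu{\cdot,\cdot}_\ell$ and $(\cdot,\cdot)_\ell$ for a base of arbitrary dimension $d$ and showing that their ratio equals the reduced Frobenius determinant on $\H^1(\bar{X},V_\ell\Acal)$: this is where Poincaré duality in dimension $d>1$ genuinely enters and where the extra factor $\card{\H^2(\bar{X},T_\ell\Acal)^\Gamma}$ appears compared with the classical curve case treated by Schneider and Bauer.
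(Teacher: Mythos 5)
You expand the wrong $L$-function. In this paper $L(\Acal/X,s)$ is defined (\cref{def:LFunctionSchneider}) as the quotient $P_1/P_0$ built only from $\H^0$ and $\H^1$ of $\bar X$; it is \emph{not} the full alternating product over $0\le i\le 2d$, and for $d>1$ the two genuinely differ because $\H^i(\bar X,V_\ell\Acal)\neq 0$ for $i\ge 2$ in general. Your opening identity therefore computes a different function, and the discrepancy is not harmless for the special value: by \cref{cor:Liell} each factor you include with $i\ge 2$ would contribute $\card{\H^i(\bar X,T_\ell\Acal)^\Gamma}/\card{\H^i(\bar X,T_\ell\Acal)_\Gamma}$ to $|c|_\ell^{-1}$, which is not $1$ in general, so with your normalisation the stated formula would carry uncancelled terms. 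Relatedly, $\card{\H^2(\bar X,T_\ell\Acal)^\Gamma}$ is not an ``uncancelled $i=2$ piece'' of a trace formula: in the paper it arises as the cokernel of the Hochschild--Serre map $\beta\colon \H^1(\bar X,T_\ell\Acal)_\Gamma\to\H^2(X,T_\ell\Acal)$ in the factorisation of the regulator term $q(f)$ (\cref{lemma:L1tilda}, \cref{cor:cell}), and $\card{\Sha(\Acal/X)[\ell^\infty]}$ enters as $\Tor\H^2(X,T_\ell\Acal)$ while $\card{\Tor\Acal(X)[\ell^\infty]}$ is $\Tor\H^1(X,T_\ell\Acal)$ (\cref{lemma:cohTerms}); none of this bookkeeping is reproduced by your sketch.

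The central gap is the sentence ``by purity the generalised eigenspace coincides with the ordinary one''. Purity controls the absolute values of the Frobenius eigenvalues, not the Jordan structure at a given eigenvalue; semisimplicity of Frobenius at the eigenvalue relevant to $s=1$ on $\H^1(\bar X,V_\ell\Acal)$ is not known unconditionally, and its possible failure is exactly why one only has $\rho\ge\dim_{\Q_\ell}\H^1(\bar X,V_\ell\Acal)^\Gamma$, with equality if and only if the map $f$ in \eqref{eq:sesHS1} is a quasi-isomorphism. Your claim would make $\rho=\rk_\Z\Acal(X)+\rk_{\Z_\ell}T_\ell\Sha(\Acal/X)$ automatic and would trivialise (a)$\iff$(b); in reality the direction (b)$\implies$(a) requires proving that finiteness of $\Sha(\Acal/X)[\ell^\infty]$ forces this semisimplicity, which the paper obtains by combining the integral Kummer sequence over $X$ (\cref{lemma:SesAcal}) with the equivalences of \cref{lemma:ShaFinite} and the non-degeneracy of the two pairings, the latter resting on the section-3 identification of $\qu{\cdot,\cdot}_\ell$ with the N\'eron--Tate/Bloch height pairing (non-degenerate by Conrad) and on the computation $\det(\cdot,\cdot)_\ell=1$ (\cref{rem:Lfn}). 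That comparison --- cup products over $X$ (not $\bar X$) into $\H^{2d}(X,\Z_\ell(d))$ and $\H^{2d+1}(X,\Z_\ell(d))$, hard Lefschetz over a finite field, and Poincar\'e duality for $X$ --- is the bulk of the paper's argument, and you yourself flag it as the principal obstacle; so as written neither the equivalence (a)$\iff$(b) nor the leading-coefficient formula is established.
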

For example, (a) holds if $L(\Acal/X,1) \neq 0$ (\cref{rem:Lfn}\,(a)), and (b) holds under mild conditions if $\Acal/X$ is isoconstant (\cref{thm:MilnesMainTheorem}, \cref{rem:ShaConstant} and~\cref{thm:isotrivial}).

In section~\ref{subsection:pairingacutebrackets} and~\ref{subsection:pairingroundbrackets}, we construct a higher-dimensional analogue
\[
    \qu{\cdot,\cdot}: A(K) \times A^t(K) \to \log{q}\cdot\Z
\]
of the Néron-Tate canonical height pairing with $A^t$ the dual Abelian variety, and show the second main result, which identifies the cohomological regulator $R_\ell(\Acal/X)$ in Theorem~1 with a geometric one:
\begin{maintheorem}[\cref{thm:comparisonofpairings} and \cref{thm:rundeKlammerPaarungDeterminante1}]
Let $\ell$ be a prime different from $\Char{k}$.  Assume that $\Sha(\Acal/X)[\ell^\infty]$ is finite.\\
\noindent (a) The Néron-Tate canonical height pairing $\qu{\cdot,\cdot}$ gives the pairing $\qu{\cdot,\cdot}_\ell$ after tensoring with $\Z_\ell$ up to a known factor, the integral hard Lefschetz defect, see~\cref{def:integralhardLefschetzdefect}.\\
\noindent (b) The cohomological pairing $(\cdot,\cdot)_\ell$ has determinant $1$.
\end{maintheorem}
More precisely, the pairing $\qu{\cdot,\cdot}$ depends on the choice of a very ample line bundle on $X$, but the comparison isomorphism also, and the two choices cancel each other; see~\cref{rem:HeightPairingAndCohPairing}.  For (a), see~\cref{thm:comparisonofpairings}, and~\cref{thm:rundeKlammerPaarungDeterminante1} for (b).  In~\cref{thm:HeightAndTracePairing}, we identify the cohomological pairing $\qu{\cdot,\cdot}_\ell$ with a trace pairing in the case of $\Acal/X$ a constant Abelian variety, and in~\cref{thm:TraceAndHeightPairingForCurves} with another pairing if $X$ is a curve.

We prove our analogue conjecture of Birch and Swinnerton-Dyer for constant Abelian schemes unconditionally:
\begin{maintheorem}[\cref{thm:BSDII}]
Let $X/k$ be a smooth projective geometrically connected variety over a finite field $k = \F_q$ and $B/k$ an Abelian variety of dimension $d$.  Set $\overline{X} = X \times_k \overline{k}$ and $\Acal = B \times_k X$, and let $K = k(X)$ be the function field of $X$.  The $L$-function of $\Acal/X$ is defined in~\cref{def:LFunctionMilne}.  Assume\\
\noindent (a) the Néron-Severi group of $\overline{X}$ is torsion-free and\\
\noindent (b) the dimension of $\H^1_{\Zar}(\overline{X},\Ocal_{\overline{X}})$ as a vector space over $\overline{k}$ equals the dimension $g$ of the Albanese variety of $\overline{X}/\overline{k}$.

\noindent Then:

\noindent 1.  The Tate-Shafarevich group $\Sha(\Acal/X)$ is finite.

\noindent 2.  The vanishing order equals the Mordell-Weil rank $r$: $\ord\nolimits_{s=1}L(\Acal/X,s) = \rk \Acal(X) = \rk A(K)$.

\noindent 3.  There is the equality for the leading Taylor coefficient
    \[
        L^*(\Acal/X,1) = q^{(g-1)d} (\log q)^r\frac{\card{\Sha(\Acal/X)} \cdot R(\Acal/X)}{\card{\Acal(X)_\tors}}.
    \]
    Here, $R(\Acal/X)$ is the determinant of the trace pairing $\Hom(A,B) \times \Hom(B,A) \rightarrow \End(A) \stackrel{\mathrm{tr}}{\to} \Z$ with $A$ the Albanese variety of $X$, or, see~\cref{thm:HeightAndTracePairing}, the determinant of a cohomological pairing, and, if $X$ is a curve, the determinant of another pairing or the Néron-Tate canonical height pairing, see~\cref{thm:TraceAndHeightPairingForCurves}.
\end{maintheorem}

Combining the finiteness of $\Sha(\Acal/X)$ for constant $\Acal/X$~\cite[p.~98, Theorem~2]{Milne-Tate-Shafarevic} and the descent of finiteness of $\Sha$ under $\ell'$-alterations~\cite[p.~238, Theorem~4.29]{KellerSha} we obtain:
\begin{maintheorem}[\cref{thm:MilnesMainTheorem} and~\cref{thm:isotrivial}] \label[theorem]{mainthm:BSDforIsoconstant}
Let $X/k$ be a smooth projective geometrically connected variety over a finite field $k = \F_q$ and $\Acal/X$ an isoconstant Abelian scheme, i.\,e.\ such that there exists a proper, surjective, generically étale morphism $f: X' \to X$ such that $f^*\Acal := \Acal \times_X X'/X'$ is constant. Assume that (a) the Néron-Severi group of $\overline{X'}$ is torsion-free and (b) the dimension of $\H^1_{\Zar}(\overline{X'},\Ocal_{\overline{X'}})$ as a vector space over $\overline{k}$ equals the dimension of the Albanese variety of $\overline{X'}/\overline{k}$.  Then the prime-to-$p$ part of the conjecture of Birch and Swinnerton-Dyer holds for $\Acal/X$.
\end{maintheorem}
Note that we do not need $f$ to be of generical degree prime to $\ell$ since $\Acal/X$ is $\ell'$-isoconstant (isoconstant for a generically étale morphism $f: X' \to X$ of generical degree prime to $\ell$) for some $\ell$, and then we can use (a) $\implies$ (b) from~\cref{thm:BSDI} to get independence from $\ell$.  This also extends the known, classical results for Abelian varieties over one-dimensional global function fields, where the constant case had be settled by Milne~\cite[p.~100, Theorem~3]{Milne-Tate-Shafarevic}.  In~\cref{thm:EllipticCurvesIsoconstant}, we prove that relative \emph{elliptic curves} are isoconstant and conclude with
\begin{maintheorem}[\cref{cor:BSDoverCurves} and \cref{cor:BrauerGroupFinite}] \label[theorem]{mainthm:BSDforRelativeEllipticCurvesEtc}
Let $X$ be a product of smooth proper curves, Abelian varieties and K3 surfaces over a finite field of characteristic $p$.  Now let $\Acal$ be an Abelian $X$-scheme belonging to one of the following three classes:
\begin{enumerate}
	\item a relative elliptic curve
	\item an Abelian scheme such that the isomorphism type of $\Acal[p]$ is constant
	\item an Abelian scheme with supersingular generic fibre
\end{enumerate}
Then the prime-to-$p$ part of our analogue of the conjecture of Birch and Swinnerton-Dyer holds for $\Acal/X$ and, if $\Acal/X$ is a relative elliptic curve, $\Br(\Acal)[\text{non-$p$}]$ is finite.  If $X$ is a curve, the full conjecture of Birch and Swinnerton-Dyer holds for $\Acal/X$.  Furthermore, the Tate conjecture holds in dimension $1$ for $\Acal$.

Let $C/\F_q$ be a smooth proper geometrically connected curve and $\Ecal/C$ be a relative elliptic curve.  Then $\Br(\Ecal) = \Sha(\Ecal/C)$ is finite and of square order, and the Tate conjecture holds for $\Ecal$.
\end{maintheorem}

In the final section~\ref{sec:Reduction}, we reduce the conjecture to the case of a surface as a basis:
\begin{maintheorem}[\cref{thm:ReductionToSurface}]
	If the analogue of the conjecture of Birch-Swinnerton-Dyer holds for a prime $\ell$ invertible on the base and for all Abelian schemes over all smooth projective geometrically integral surfaces, then it holds over arbitrary dimensional bases.
	
	More precisely, if there is a sequence $S \hookrightarrow \ldots \hookrightarrow X$ of ample smooth projective geometrically integral hypersurface sections with a surface $S$ and the conjecture holds for $\Acal/S$, then it holds for $\Acal/X$.

	If there is a smooth projective ample geometrically integral curve $C \hookrightarrow S$ with $\rk \Acal(S) = \rk \Acal(C)$, the analogue of the conjecture of Birch and Swinnerton-Dyer for $\Acal/S$ is equivalent to the conjecture for $\Acal/C$.
\end{maintheorem}

%In section~4, we specialise to constant Abelian schemes and prove the third main result, which also works for the $p$-part:
%\begin{maintheorem}
%Assume\\
%\noindent (a) the Néron-Severi group of $\overline{X}$ is torsion-free and\\
%\noindent (b) the dimension of $\H^1_{\Zar}(\overline{X},\Ocal_{\overline{X}})$ as a vector space over $\overline{k}$ equals the dimension of the Albanese variety of $\overline{X}/\overline{k}$.\\
%\noindent Then the Birch and Swinnerton-Dyer conjecture as stated in Theorem~1 holds for constant Abelian schemes $B \times_k X/X$, with the additional property that the prime $\ell$ can be equal to $p = \Char{k}$.  Moreover, the determinant of the height pairing $\qu{\cdot,\cdot}$ equals the determinant of the trace pairing $\Hom(A,B) \times \Hom(B,A) \rightarrow \End(A) \stackrel{\mathrm{tr}}{\to} \Z$, and $\H^2(\overline{X}, T_\ell\Acal)^\Gamma = 0$.
%\end{maintheorem}

\paragraph{Notation.}
Let $\N = \{0,1,2,\ldots\}$ be the set of natural numbers.  Canonical isomorphisms are often denoted by ``$=$''.

We denote Pontrjagin duality by $(-)^D$ (see~\cite[§\,1]{CohomologyOfNumberFields}), duals of $R$-modules or $\ell$-adic sheaves by $(-)^\vee$, and duals of Abelian schemes and Cartier duals by $(-)^t$.

The $\ell$-adic valuation $|\cdot|_\ell$ is taken to be normalised by $|\ell|_\ell = \ell^{-1}$.

If $\Gamma$ is a group acting on an Abelian group $A$, we denote by $A^\Gamma$ invariants and by $A_\Gamma$ coinvariants.  By $X^{(i)}$, we denote the set of codimension-$i$ points of a scheme $X$, and by $|X|$ the set of closed points.  For an Abelian variety $A$, we denote its Poincaré bundle by $\Pcal_A$.

For an Abelian group $A$, let $A_\tors$ be the torsion subgroup of $A$, and $A_\Tors = A/A_\tors$.  Let $A_\divv$ be the maximal divisible subgroup of $A$ (in general strictly contained in the subgroup of divisible elements of $A$, but see~\cref{lemma:Aellendlich} below) and $A_\Div = A/A_\divv$.  For an integer $n$ and an object $A$ of an Abelian category, denote the cokernel of $A \stackrel{n}{\to} A$ by $A/n$ and its kernel by $A[n]$, and for a prime $p$ the $p$-primary subgroup $\colim_n A[p^n]$ by $A[p^\infty]$.  Write $A[\text{non-}p]$ for $\varinjlim_{p \nmid n}A[n]$.  For a prime $\ell$, let the $\ell$-adic Tate module $T_\ell A$ be $\varprojlim_n A[\ell^n]$ and the rationalised $\ell$-adic Tate module $V_\ell A = T_\ell A \otimes_{\Z_\ell} \Q_\ell$.  The corank of $A[p^\infty]$ is the $\Z_p$-rank of $A[p^\infty]^D = T_pA$.

Denote the absolute Galois group of a field $k$ by $G_k$.

Varieties over a field $k$ are schemes of finite type over $\Spec{k}$.  For the class $[\Lcal]$ of a line bundle in the Picard group of a scheme, we write $\Lcal$.  If not stated otherwise, all cohomology groups are taken with respect to the étale topology.

An $\ell$-adic sheaf on a scheme $X$ is a projective system $(\Fcal_n)_{n \in \Z}$ of étale sheaves on $X$ such that all $\Fcal_n$ are constructible, $\Fcal_n = 0$ for $n < 0$, $\ell^{n+1}\Fcal_n = 0$ for $n \geq 0$ and $\Fcal_{n+1}/\ell^{n+1} \isoto \Fcal_n$ (see~\cite[p.~122, Definition~12.6]{Freitag-Kiehl}).    For example, the $\ell$-adic Tate module $T_\ell\Acal = (\Acal[\ell^n])_{n \in \N}$ is an $\ell$-adic sheaf on $X$ for $\Acal/X$ an Abelian scheme and $\ell$ invertible on $X$ (see~\cref{cor:TellAladicsheaf}).

\section{The $L$-function and the cohomological BSD formula} \label{section:BSD}
The main theorem~\cref{thm:BSDI} of this section is a conditional result on our analogue of the conjecture of Birch and Swinnerton-Dyer over higher dimensional bases over finite fields.

The results in this section are a generalisation of results of Schneider~\cite[p.~134--138]{SchneiderZeta} and~\cite[p.~496--498]{Schneider}.

Let $k = \F_q$ be a finite field with $q = p^n$ elements and let $\ell \neq p$ be a prime.  For a variety $X/k$ denote by $\overline{X}$ its base change to an algebraic closure $\overline{k} = k^{\mathrm{sep}}$ of $k$.

Denote by $\Frob_q$ the arithmetic Frobenius, the inverse of the geometric Frobenius as defined in~\cite[p.~5]{KiehlWeissauer} and by $\Gamma$ the absolute Galois group of the finite base field $k$.

Let $X/k$ be a smooth projective geometrically connected variety of dimension $d$, and let $\Acal/X$ be an Abelian scheme.

\subsection{Tate modules of Abelian groups}

We often use the following basic properties of the Tate module:

\begin{lemma} \label[lemma]{lemma:PropertiesOfTateModule}
	Let $A$ be an Abelian group and $\ell$ a prime.
	\begin{enumerate}[label=(\roman*)]
		\item There is a canonical isomorphism $\Hom(\Q_\ell/\Z_\ell, A) = T_\ell A$. \label{lemma:HomUndTateModul}
		\item If $A$ is finite, $T_\ell A$ is trivial. \label{lemma:Tate-module-of-finite-group-is-trivial}
		\item If $A$ is an $\ell$-primary torsion group such that $A[\ell]$ is finite, then $A$ is cofinitely generated and the maximal divisible subgroup $A_\divv$ of $A$ coincides with the subgroup of divisible elements of $A$. \label{lemma:Aellendlich}
		\item If $A$ is an $\ell$-primary torsion group and cofinitely generated, $T_\ell A = 0$ implies $A$ finite. \label{lemma:Tate-module-of-is-trivial-so-finite}
		\item The $\Z_\ell$-module $T_\ell A$ is torsion-free. \label{lemma:Tate-module-is-torsion-free}
	\end{enumerate}
\end{lemma}
\begin{proof}
	The statements~\ref{lemma:HomUndTateModul}, \ref{lemma:Tate-module-of-finite-group-is-trivial} and~\ref{lemma:Tate-module-is-torsion-free} are well-known.
	
	\ref{lemma:Aellendlich}: Equip $A$ with the discrete topology.  Applying Pontrjagin duality to $0 \to A[\ell] \to A \stackrel{\ell}{\to} A$ gives us that $A^D/\ell$ is finite, hence by~\cite[p.~179, Proposition~3.9.1]{CohomologyOfNumberFields} ($A^D$ being profinite as a dual of a discrete torsion group), $A^D$ is a finitely generated $\Z_\ell$-module, hence $A$ a cofinitely generated $\Z_\ell$-module.  For the second statement see~\cite[p.~30, Lemma~3.3.1]{JossenPhD}.
	
	\ref{lemma:Tate-module-of-is-trivial-so-finite}: Since $A$ is a cofinitely generated $\ell$-primary Abelian group, $A \iso B \oplus (\Q_\ell/\Z_\ell)^r$ with $B$ finite and $r \in \N$ (by the structure theorem of finitely generated modules over the principal ideal domain $\Z_p$ since the Pontrjagin dual of $A$ is a finitely generated $\Z_\ell$-module), so $T_\ell A \iso T_\ell B \oplus \Z_\ell^r = \Z_\ell^r$ by~\ref{lemma:Tate-module-of-finite-group-is-trivial}, hence $r = 0$ since $T_\ell A$ is finite, so $A \iso B$ is finite.
\end{proof}

\begin{remark}
	Note that, in contrast, for an $\ell$-adic sheaf $(\Fcal_n)_{n \in \N}$, $\varprojlim_n\H^i(X,\Fcal_n)$ need \emph{not} be torsion-free.
\end{remark}

\subsection{The yoga of weights}

\begin{definition}
A $\Q_\ell[\Gamma]$-module is said to be \defn{pure of weight $n$} if all eigenvalues $\alpha$ of the geometric Frobenius automorphism $\Frob_q^{-1}$ are algebraic integers which have absolute value $q^{n/2}$ under all embeddings $\iota: \Q(\alpha) \hookrightarrow \mathbf{C}$.
\end{definition}
For the definition of a smooth sheaf see~\cite[p.~7\,f., Definition~1.2]{KiehlWeissauer} and of a sheaf pure of weight $n$, see~\cite[p.~13, Definition~2.1\,(3)]{KiehlWeissauer}.  We often use the yoga of weights (without further mentioning):
\begin{theorem} \label[theorem]{thm:YogaOfWeights}
Let $f: X \to Y$ be a smooth proper morphism of schemes of finite type over $\F_q$ and $\Fcal$ a smooth sheaf pure of weight $n$.  Then $\R^if_*\Fcal$ is a smooth sheaf pure of weight $n+i$ for any $i$.
\end{theorem}
\begin{proof}
Apply Poincaré duality to~\cite[p.~138, Théorème~1]{DeligneWeilII}.
\end{proof}

\begin{definition} \label[definition]{def:Tatetwist}
Let $V$ be a $\Z_\ell[\Gamma]$-module.  Its \defn{$i$-th Tate twist} $V(i)$ is defined as $V(i) = V \otimes_{\Z_\ell} \Z_\ell(i)$ where $\Z_\ell(i) = \varprojlim_n\mu_{\ell^n}^{\otimes i}$ if $i \geq 0$ (let $\mu_{\ell^n}^{\otimes 0} = \Z/\ell^n$) and $\Z_\ell(i) = \Z_\ell(-i)^\vee$ if $i < 0$.
\end{definition}

\begin{lemma} \label[lemma]{lemma:WeightOfTateTwist}
Let $V$, $W$ be $\Q_\ell[\Gamma]$-modules pure of weight $m$ and $n$, respectively.

(a) The tensor product $V \otimes_{\Q_\ell} W$ is a $\Q_\ell[\Gamma]$-module pure of weight $m + n$.

(b) $\Hom_{\Q_\ell}(V,W)$ is a $\Q_\ell[\Gamma]$-module pure of weight $n - m$.  In particular, $V^\vee$ is pure of weight $-m$.

(c) The $i$-th Tate twist $V(i)$ is pure of weight $m-2i$.
\end{lemma}
\begin{proof}
This follows from~\cite[p.~154, (1.2.5)]{DeligneWeilII}.
\end{proof}

\begin{lemma} \label[lemma]{lemma:WeightNotEqualZeroTrivial}
If $V$ and $W$ are $\Q_\ell[\Gamma]$-modules pure of weights $m \neq n$, every $\Gamma$-morphism $V \to W$ is zero.

Consequently, if $V$ is a $\Q_\ell[\Gamma]$-module pure of weight $\neq 0$, $V_\Gamma = V^\Gamma = 0$.  
\end{lemma}
\begin{proof}
For the first statement, see~\cite[p.~4, Fact~2]{JannsenWeights}.

The second statement follows from the first one:  For $V$ pure of weight $m$, $V_\Gamma$ and $V^\Gamma$ are pure of weight $0$ since $\Gamma$ acts as the identity.  The inclusion $V^\Gamma \hookrightarrow V$ is a $\Gamma$-morphism and if the weight $m$ of $V$ is $\neq 0$, this morphism is zero and injective, so $V^\Gamma = 0$.  Analogously, consider the $\Gamma$-morphism $V \twoheadrightarrow V_\Gamma$.
\end{proof}

\subsection{Isogenies of commutative group schemes}

\begin{definition} \label[definition]{def:isogeny}
An \defn{isogeny} of commutative group schemes $G,H$ of finite type over an arbitrary base scheme $S$ is a group scheme homomorphism $f: G \to H$ such that for all $s \in S$, the induced homomorphism $f_s: G_s \to H_s$ on the fibres over $s$ is finite and surjective on identity components.
\end{definition}

\begin{remark}
	See~\cite[p.~180, Definition~4]{BLR}.  We will usually consider isogenies between Abelian schemes, for example the finite flat $n$-multiplication, which is étale iff $n$ is invertible on the base scheme.
\end{remark}

\begin{lemma} \label[lemma]{groupschemes-epi}
Let $G, G'$ be commutative group schemes over a scheme $S$ which are smooth and of finite type over $S$ with connected fibres and $\dim G = \dim G'$ and let $f: G' \to G$ be a morphism of commutative group schemes over $S$.

If $f$ is flat (respectively, étale) then $\ker(f)$ is a flat (respectively, étale) group scheme over $S$, $f$ is quasi-finite, surjective and defines an epimorphism in the category of flat (respectively, étale) sheaves over $S$.
\begin{proof}
(This is the (corrected) exercise 2.19 in~\cite[p.~67, II\,§\,2]{MilneEtaleCohomology}.)
Since $\ker(f) \to S$ is the base change of $f$
along the unit-section of $G$, it is flat (respectively, étale).
That $f$ is surjective and quasi-finite can be checked fibrewise for $s \in S$.
By the flatness and~\cite[p.~178, §\,7.3 Lemma~1]{BLR}, we have that $f_s$ is finite and flat. So the image of $f_s$ is open and closed in $G_s$. Since $G_s$ is connected by assumption, $f_s$ must be surjective.

Now let $T$ be an $S$-scheme, $g \in \Hom_S(T,G)$ and $T'$ the fibre product of $G'$ and $T$ along $f$ and $g$:
\[\begin{tikzcd}
	T' \arrow[d,"f'",twoheadrightarrow] \arrow[r,"g'"] & G' \arrow[d,"f",twoheadrightarrow] \\
	T  \arrow[r,"g"] & G
\end{tikzcd}\]
Then the base change $f'$ of $f$ is again flat (respectively, étale) and surjective, and so is a covering in the stated topology. Hence, then the base change
$g' \in \Hom_S(T',G')$ of $g$ is a local lift of $g$ in that topology. So the claim follows.
\end{proof}
\end{lemma}

\begin{lemma}
\label[lemma]{groupschemes-mult}
Let $S$ be a scheme and $f: G \to S$ be a smooth commutative group scheme over $S$ and $n$ an integer invertible on $S$.
Then the multiplication map $[n]: G \to G$ is étale and the $n$-torsion subgroup scheme $G[n]:=\ker([n]) \to S$ is an étale group scheme over $S$.

If, furthermore, $f$ is of finite type with connected fibres, then $[n]$ is surjective and induces an epimorphism in the category of étale sheaves over $S$.
\begin{proof}
For the first statement use~\cite[p.~179, §\,7.3 Lemma~2\,(b)]{BLR}. Note that the assumption ``of finite type'' is not needed here (see also~\cite[II\,3.9.4]{SGA3}). The morphism $\ker([n]) \to S$ is just the base change of $[n]$ along the unit-section.
For the second part apply~\cref{groupschemes-epi}.
\end{proof}
\end{lemma}

\begin{corollary}[Kummer sequence] \label[corollary]{cor:Kummersequence}
Let $\Acal/S$ be an Abelian scheme and let $\ell$ be invertible on $S$.  Then one has for every $n \geq 1$ a short exact sequence
\[
    0 \to \Acal[\ell^n] \to \Acal \stackrel{[\ell^n]}{\to} \Acal \to 0
\]
of étale sheaves on $S$.
\end{corollary}
\begin{proof}
This follows from~\cref{groupschemes-mult} since $[\ell^n]$ is étale by~\cite[p.~147, Proposition~20.7]{MilneAbelianVarieties} and since Abelian schemes have connected fibres.
\end{proof}

\subsection{Tate modules of Abelian schemes}

\begin{definition}
	Let $k$ be a field, $\ell \neq \Char{k}$ be a prime and $A/k$ be an Abelian variety.  The \defn{$\ell$-adic Tate module} $T_\ell A$ is the $\Z_\ell[G_k]$-module $\varprojlim_n A[\ell^n](k^\sep)$.
\end{definition}

Note that $A[\ell^n]/k$ is finite étale since $\ell$ is invertible in $k$, and hence $A[\ell^n](k^\sep) = A[\ell^n](\overline{k})$.

\begin{proposition} \label[proposition]{thm:Tate module and etale cohomology}
Let $K$ be an arbitrary field, $\ell \neq \Char{K}$ be prime and $A/K$ an Abelian variety.  Let $\overline{A} = A \times_K \overline{K}$.  Then we have an isomorphism of ($\ell$-adic discrete) $G_K$-modules, equivalently, by~\cite[p.~53, Theorem~II.1.9]{MilneEtaleCohomology}, of ($\ell$-adic) étale sheaves on $\Spec{K}$,
\[
    T_\ell(A) = \H^1(\overline{A}, \Z_\ell)^\vee.
\]
In particular, $T_\ell(A)$ is pure of weight $-1$.
\end{proposition}
\begin{proof}
Consider the Kummer sequence
\[
    1 \to \mu_{\ell^n} \to \G_m \stackrel{\ell^n}{\to} \G_m \to 1
\]
on $\overline{A}$.  Taking étale cohomology, one gets an exact sequence of $G_K$-modules
\[
    0 \to \G_m(\overline{A})/\ell^n \to \H^1(\overline{A},\mu_{\ell^n}) \to \H^1(\overline{A}, \G_m)[\ell^n] \to 0.
\]
Since $\Gamma(\overline{A},\Ocal_{\overline{A}}) = \overline{K}$ is separably closed and $\ell \neq \Char{K}$, $\G_m(\overline{A})$ is $\ell$-divisible (one can extract $\ell$-th roots), and hence
\[
    \H^1(\overline{A},\mu_{\ell^n}) \isoto \H^1(\overline{A}, \G_m)[\ell^n] = \Pic(\overline{A})[\ell^n] = \Pic^0(\overline{A})[\ell^n],
\]
the latter equality since $\NS(\overline{A})$ is torsion-free by~\cite[p.~178, Corollary~2]{MumfordAbelianVarieties}.  Taking Tate modules $\varprojlim_n$ yields
\begin{equation} \label{eq:H1Zl1isotoTellPic0}
    \H^1(\overline{A},\Z_\ell(1)) \isoto T_\ell\Pic^0(\overline{A}),
\end{equation}
so (the first equality coming from the perfect Weil pairing~\eqref{eq:HomTellAZell})
\[
    \Hom(T_\ell A, \Z_\ell(1)) = T_\ell(A^t) = \H^1(\overline{A},\Z_\ell(1)),
\]
so
\[
    (T_\ell A)^\vee = \Hom(T_\ell A, \Z_\ell) = \H^1(\overline{A},\Z_\ell),
\]
so
\[
    T_\ell A = \H^1(\overline{A},\Z_\ell)^\vee.
\]

Alternatively, $\pi_1(A, 0) = \prod_\ell T_\ell(A)$ by~\cite[p.~171]{MumfordAbelianVarieties}, and $\H^1(\overline{A}, \Z_\ell) = \Hom(\pi_1(A, 0), \Z_\ell)$ by~\cite[p.~231, Proposition~4.14]{KellerSha}.
\end{proof}

\begin{remark}
Note that both $T_\ell(-)$ and $\H^1(-,\Z_\ell)^\vee$ are covariant functors.
\end{remark}

\begin{proposition} \label[proposition]{prop:RiundTateModul}
Let $S$ be a locally Noetherian scheme, $\pi: \Acal \to S$ be a projective Abelian scheme over $S$. Let $\ell$ be a prime number invertible on $S$.  Then we have a canonical isomorphism $\R^1\pi_*\Z_\ell(1) = T_\ell\Acal^t$ as $\ell$-adic étale sheaves on $S$.  In particular, $T_\ell\Acal$ has weight $-1$.
\end{proposition}
\begin{proof}
Applying the functor $\pi_*$ on the exact Kummer sequence
\[
    1 \to \mu_{\ell^n} \to \G_{m,\Acal} \stackrel{\ell^n}{\to} \G_{m,\Acal} \to 1
\]
of étale sheaves on $\Acal$, we get an exact sequence
%\begin{equation} %\label{eq:sesGmonA}
   \[ 1 \to \pi_*\G_{m,\Acal}/\ell^n \to \R^1\pi_*\mu_{\ell^n} \to \R^1\pi_*\G_{m,\Acal}[\ell^n] \to 0. \]
%\end{equation}
of étale sheaves on $S$.
The first term will vanish by following arguments.
Since $\pi: \Acal \to S$ is proper and its geometric fibres are integral by definition, we get the isomorphism
 $\Ocal_S = \pi_*\Ocal_\Acal$ by the Stein factorization (cf.~\cite[p.~348, Theorem~12.68]{Goertz-Wedhorn}). Hence we have $\G_{m,S} = \pi_*\G_{m,\Acal}$.
But since $\ell$ is invertible on $S$, the map $\ell^n: \G_{m,S} \to \G_{m,S}$ is an epimorphism and we get
\[ \pi_*\G_{m,\Acal}/\ell^n = \G_{m,S}/\ell^n = 1.\]
For the last term in the above sequence by~\cite[p.~203, §\,8.1]{BLR} we get the canonical isomorphism $\R^1\pi_*\G_{m,\Acal} = \PPic_{\Acal/S}$
since $\pi$ is smooth and proper.
Note that since $\Acal \to S$ is projective and flat with integral fibres, the Picard scheme exists by~\cite[p.~263, Theorem~9.4.8]{FGAExplained}. % and is separated and locally of finite type over $S$. %und glatt, da Pic^0 abelsches schema
Let $\NS_{\Acal/S}$ be defined by the short exact sequence of étale sheaves:
\[
    0 \to \PPic^0_{\Acal/S} \to \PPic_{\Acal/S} \to \NS_{\Acal/S} \to 0.
\]
Here $\PPic^0_{\Acal/S}$ is the identity component of $\PPic_{\Acal/S}$ and coincides with the dual Abelian scheme $\Acal^t$ by~\cite[p.~234, §\,8.4 Theorem~5]{BLR}. This implies that $\PPic_{\Acal/S}$ is a smooth commutative group scheme over $S$. Taking $\ell^n$-torsion, which is left exact, we get a short exact sequence:
\[
    0 \to \PPic^0_{\Acal/S}[\ell^n] \to \PPic_{\Acal/S}[\ell^n]  \to \NS_{\Acal/S}[\ell^n].
\]
We now prove that $\PPic^0_{\Acal/S}[\ell^n] \to \PPic_{\Acal/S}[\ell^n]$ is an isomorphism by looking at the stalks.
Since the first two groups are étale over $S$, by~\cref{groupschemes-mult}
it suffices to look at the sequence over the geometric points $\bar s$ of $S$ by~\cite[p.~34, Proposition~I.4.4]{MilneEtaleCohomology}. But by~\cite[p.~165, IV\,§\,19 Theorem~3, Corollary~2]{MumfordAbelianVarieties}, the group
$\NS_{\Acal_{\overline{s}}}(\bar s)$ is a finitely generated free Abelian group (since we are over a field) and its torsion part vanishes.
So, all together, we have the isomorphisms:
\[\R^1\pi_*\mu_{\ell^n} = \R^1\pi_*\G_{m,\Acal}[\ell^n] = \PPic_{\Acal/S}[\ell^n] = \PPic^0_{\Acal/S}[\ell^n] = \Acal^t[\ell^n]. \]
By taking the projective limit over all $n$ we then get the claim:
$\R^1\pi_*\Z_\ell(1) = T_\ell\Acal^t$.

The statement on the weight follows from~\cref{lemma:WeightOfTateTwist} and~\cref{thm:YogaOfWeights}:  $\Z_\ell(1)$ has weight $-2$ and $1-2 = -1$.
\end{proof}

\begin{lemma} \label[lemma]{lemma:isogeny induces iso on Tate modules}
Let $f: A \to B$ be an isogeny (not necessarily étale) of Abelian varieties over a field $k$ and $\ell \neq \Char{k}$.  Then $f$ induces an Galois equivariant isomorphism $V_\ell A \isoto V_\ell B$ of rational Tate modules.
\end{lemma}
\begin{proof}
There is an exact sequence of $\ell$-divisible groups
\[
    0 \to \ker(f)[\ell^\infty] \to A[\ell^\infty] \to B[\ell^\infty] \to 0
\]
with $A[\ell^\infty]$ and $B[\ell^\infty]$ étale since $\ell$ is invertible in $k$ and $\ker(f)[\ell^\infty]$ a finite étale group scheme.  Since for an Abelian group $M$, one has $T_\ell M = \Hom(\Q_\ell/\Z_\ell, M)$ by~\cref{lemma:PropertiesOfTateModule}\,\ref{lemma:HomUndTateModul}, applying $\Hom(\Q_\ell/\Z_\ell, -)$ to the above exact sequence yields an exact sequence
\[
    0 \to T_\ell \ker(f) \to T_\ell A \to T_\ell B \to \Ext^1(\Q_\ell/\Z_\ell, \ker(f)[\ell^\infty]).
\]
Since $\ker(f)$ is a finite group scheme, we have $T_\ell \ker(f) = 0$ by~\cref{lemma:PropertiesOfTateModule}\,\ref{lemma:Tate-module-of-finite-group-is-trivial}.  Since $T_\ell A$ and $T_\ell B$ have the same rank as $f$ is an isogeny (or since $\Ext^1(\Q_\ell/\Z_\ell, \ker(f)[\ell^\infty])$ is finite), tensoring with $\Q_\ell$ yields the desired isomorphism.
\end{proof}

\begin{corollary} \label[corollary]{prop:TateModulesIsogeny}
Let $f: \Acal \to \Bcal$ be an isogeny (not necessarily étale) of Abelian schemes over $S$ and $\ell$ invertible on $S$.  Then $f$ induces an isomorphism $V_\ell\Acal \isoto V_\ell\Bcal$ of $\ell$-adic sheaves.
\end{corollary}
\begin{proof}
We check the isomorphism $V_\ell\Acal \to V_\ell\Bcal$ on stalks.  Let $\pi: \Acal^t \to S$ and $\pi': \Bcal^t \to S$ be the structure morphisms of the dual Abelian schemes and $\pi_x,\pi_x'$ the base changes of $\pi,\pi'$ by $\{x\} \to S$.  By~\cref{prop:RiundTateModul}, we have $V_\ell\Acal = \R^1\pi_*\Q_\ell(1)$ and $V_\ell\Bcal = \R^1\pi'_*\Q_\ell(1)$.  Since $\pi$ and $\pi'$ are proper, by proper base change~\cite[p.~224, Corollary~VI.2.5]{MilneEtaleCohomology}, ($\Z_\ell(1)$ is an inverse limit of the torsion sheaves $\mu_{\ell^n}$), $(V_\ell\Acal)_x = \R^1\pi_*\Q_\ell(1)_x = \R^1\pi_{x,*}\Q_\ell(1) = V_\ell(\Acal_x)$ and analogously for $\Bcal$.  So one can assume $S$ is the spectrum of a field. Then the statement is just~\cref{lemma:isogeny induces iso on Tate modules}.
\end{proof}

\begin{lemma} \label[lemma]{lemma:TateTwistherausziehen}
Let $\pi: X \to Y$ be a morphism of schemes and $\Fcal$ an $\ell$-adic sheaf on $X$.  Then $\R^i\pi_*(\Fcal(n)) = (\R^i\pi_*\Fcal)(n)$.
\end{lemma}
\begin{proof}
We have
\begin{align*}
    \R^i\pi_*(\Fcal(n)) &= \R^i\pi_*(\Fcal \otimes \Z_\ell(n))\\
                        &= \R^i\pi_*(\Fcal \otimes \pi^*\Z_\ell(n)) \quad\text{since $\pi^*\mu_{\ell^n} = \mu_{\ell^n}$}\\
                        &= \R^i\pi_*(\Fcal) \otimes \Z_\ell(n) \quad\text{by the projection formula~\cite{MilneEtaleCohomology}, p.~260, Lemma~VI.8.8 since $\Z_\ell(n)$ is flat}\\
                        &= (\R^i\pi_*\Fcal)(n). \qedhere
\end{align*}
\end{proof}

\begin{definition}
	Let $S$ be an arbitrary base scheme and $\Acal/S$ be an Abelian scheme.  A \defn{polarisation} of $\Acal/S$ is an $S$-group scheme homomorphism $\lambda: \Acal \to \Acal^t$ such that for all $s \in S$, the induced homomorphism $\lambda_{\overline{s}}: \Acal_{\overline{s}} \to \Acal^t_{\overline{s}}$ on geometric fibres is a polarisation in the classical sense, i.\,e.\ it is of the form $a \mapsto t_a^*\Lcal \otimes \Lcal^{-1}$ for $\Lcal \in \Pic(A_{\overline{s}})$ ample.
	
	A polarisation is called \defn{principal} if it is an isomorphism.
\end{definition}

\begin{remark}
See~\cite[p.~126, §\,13]{MilneAbelianVarieties} for the definition of a polarisation for Abelian varieties and~\cite[p.~120, Definition~6.3]{MumfordGIT} for the definition of a polarisation over a general base scheme.
	
Since a polarisation is fibrewise an isogeny, it is globally an isogeny in the sense of~\cref{def:isogeny}.
\end{remark}

\begin{proposition} \label[proposition]{prop:AbelianSchemeanddualisogenous}
Let $X$ be a normal Noetherian integral scheme and $\Acal/X$ an Abelian scheme.  Then there is a polarisation $\Acal \to \Acal^t$.
\end{proposition}
\begin{proof}
Since being an isogeny is defined fibrewise, we have to show that there exists a relatively ample line bundle for $\Acal/X$ since ample line bundles induce polarisations (see~\cite[p.~126, §\,13]{MilneAbelianVarieties}).  This follows from~\cite[p.~170, Théorème~XI.1.13]{RaynaudSLN} and by property~(A) in~\cite[p.~159, Definition~XI.1.2]{RaynaudSLN} and by the existence of an ample line bundle on the generic fibre~\cite[p.~114, Corollary~7.2]{MilneAbelianVarieties}.
\end{proof}

\begin{remark}
Note that
\begin{align*}
    P_i(\Acal/X,q^{-s}) &= \det(1 - q^{-s}\Frob_q^{-1} \mid \H^i(\overline{X}, \R^1\pi_*\Q_\ell)) \\
    &= \det(1 - q^{-s}\Frob_q^{-1} \mid \H^i(\overline{X}, V_\ell(\Acal^t)(-1))) \quad\text{by~\cref{prop:RiundTateModul}}\\
                        &= \det(1 - q^{-s}\Frob_q^{-1} \mid \H^i(\overline{X}, V_\ell(\Acal^t))(-1)) \quad\text{by~\cref{lemma:TateTwistherausziehen}}\\
                        &= \det(1 - q^{-s}q\Frob_q^{-1} \mid \H^i(\overline{X}, V_\ell\Acal^t))\\
                        &= \det(1 - q^{-s}q\Frob_q^{-1} \mid \H^i(\overline{X}, V_\ell\Acal)) \quad\text{by~\cref{prop:TateModulesIsogeny} and~\cref{prop:AbelianSchemeanddualisogenous}}\\
                        &= L_i(\Acal/X, q^{-s+1}),
\end{align*}
so the vanishing order of $P_i(\Acal/X,q^{-s})$ at $s=1$ is equal to the vanishing order of $L_i(\Acal/X,t)$ at $t = q^{-1+1} = q^0 = 1$, and the respective leading coefficients agree.
\end{remark}

The following is a generalisation of~\cite[p.~134--138]{SchneiderZeta} and~\cite[p.~496--498]{Schneider}.

\begin{lemma} \label[lemma]{lemma:BarsottiTateIsEllAdicSheaf}
Let $(G_n)_{n \in \N}$ be a Barsotti-Tate group consisting of finite \emph{étale} group schemes.  Then it is an $\ell$-adic sheaf.
\end{lemma}
\begin{proof}
By~\cite[p.~161, (2)]{Tate-p-divisible},
\[
    0 \to \ker[\ell] \to G_{n+1} \stackrel{[\ell]}{\to}G_n \to 0
\]
is exact.  But $\ker[\ell] = \ell^nG_{n+1}$.  Furthermore, $G_n = 0$ for $n < 0$ and $\ell^{n+1}G_n = 0$ by~\cite[p.~161, (ii)]{Tate-p-divisible}.  Finally, the $G_n$ are constructible since they are finite \emph{étale} group schemes.
\end{proof}

\begin{corollary} \label[corollary]{cor:TellAladicsheaf}
For $\ell$ invertible on $X$, $T_\ell\Acal = (\Acal[\ell^n])_{n \in \N}$ is an $\ell$-adic sheaf.
\end{corollary}
\begin{proof}
This follows from~\cref{lemma:BarsottiTateIsEllAdicSheaf} since $\ell$ is invertible on $X$, so $\Acal[\ell^n]/X$ is finite étale by~\cite[p.~147, Proposition~20.7]{MilneAbelianVarieties}.
\end{proof}

\begin{theorem} \label[theorem]{thm:WeilPairing}
	Let $f: \Acal \to \Acal'$ be an $X$-isogeny of Abelian schemes with dual isogeny $f^t: \Acal'^t \to \Acal^t$.  The \defn{Weil pairing}
	\[
	\qu{\cdot,\cdot}_f: \ker(f) \times_X \ker(f^t) \to \G_m
	\]
	is a non-degenerate and biadditive pairing of finite flat $X$-group schemes, i.\,e.\ it defines a canonical $X$-isomorphism
	\[
	\ker(f^t) \isoto (\ker(f))^t.
	\]
	Moreover, it is functorial in $f$.
	
	If $X = \Spec{k}$, it induces a perfect pairing of torsion-free finitely generated $\Z_\ell[\Gamma]$-modules
	\begin{align}
	T_\ell\Acal \times T_\ell(\Acal^t)   &\to \Z_\ell(1) \label{eq:WeilPairing}
	\end{align}
	and this a canonical isomorphism of $\Z_\ell[\Gamma]$-modules
	\begin{align}
		\Hom_{\Z_\ell}(T_\ell\Acal, \Z_\ell)              &= T_\ell(\Acal^t)(-1). \label{eq:HomTellAZell}
	\end{align}	
\end{theorem}
\begin{proof}
	See~\cite[p.~186]{MumfordAbelianVarieties} (for Abelian varieties) and~\cite[p.~66\,f., Theorem~1.1]{OdaDeRham} (for Abelian schemes).  Note that it is \emph{not} assumed that $f$ is étale.
\end{proof}

\subsection{Étale cohomology of varieties over finite fields}

\begin{lemma} \label[lemma]{lemma:isogenous Abelian varieties}
	Let $k$ be a finite field.  Then $k$-isogenous Abelian varieties have the same number of $k$-rational points.
\end{lemma}
\begin{proof}
	Let $f: A\to B$ be a $k$-isogeny.  Note that the finite field $k$ is perfect.  Take Galois invariants of
	\[
	0 \to (\ker f)(\overline{k}) \to A(\overline{k}) \to B(\overline{k}) \to 0
	\]
	and using Lang-Steinberg~\cite[p.~205, Theorem~3]{MumfordAbelianVarieties} in the form $\H^1(k,A) = 0 = \H^1(k,B)$ and the Herbrand quotient $h((\ker f)(\overline{k})) = 1$ (since $(\ker f)(\overline{k})$ is finite) yields $\card{A(k)} = \card{B(k)}$.
	
	(Alternatively, use that $A(\F_{q^n}) = \ker(1-\Frob_q^n)$ and $f(1-\Frob_q^n) = (1-\Frob_q^n)f$ and $\deg{f} \neq 0$ is finite, take degrees and cancel $\deg{f}$.)
\end{proof}

\begin{remark}
	For the much harder converse:  By~\cite[p.~139, Theorem~1\,(c1)$\iff$(c4)]{TateEndomorphisms}, two Abelian varieties over a finite field $k$ are $k$-isogenous \emph{iff} they have the same number of $k'$-rational points for every finite extension $k'$ of $k$.  For the question how many $k'$ suffice, see~\cite{282315}.
\end{remark}

\begin{theorem} \label[theorem]{thm:ProperConstructibleFiniteCohomology}
Let $X$ be a proper scheme over a separably closed or finite field $K$ and $\Fcal$ be a constructible étale sheaf on $X$.  Then $\H^q(X,\Fcal)$ is finite for all $q \geq 0$.
\end{theorem}
Note that~\cite[p.~224, Corollary~VI.2.8]{MilneEtaleCohomology} does not hold in general (consider $X = \Spec\Q$ with $\H^1(\Spec\Q,\mu_n) = \Q^\times/n$)!
\begin{proof}
By the proper base change theorem~\cite[p.~223, Theorem~VI.2.1]{MilneEtaleCohomology}, the claim follows for separably closed fields. For a finite field $K$ with absolute Galois group $\Gamma$, the claim follows by passing to a separable closure $\overline K$ of $K$ and the usage of Hochschild-Serre spectral sequence $\H^p(\Gamma,\H^q(\overline{X},\Fcal)) \Rightarrow \H^{p+q}(X,\Fcal)$ with $\overline{X} := X \times_K \overline{K}$, which degenerates by~\cite[p.~124, Exercise~5.2.1]{Weibel} because of $\mathrm{cd}(\Gamma) = 1$ by~\cite[p.~69, (1.6.13)\,(ii)]{CohomologyOfNumberFields} as $\Gamma = \hat{\Z}$ into short exact sequences
\[
    0 \to \H^{i-1}(\overline{X},\Fcal)_\Gamma \to \H^i(X,\Fcal) \to \H^i(\overline{X},\Fcal)^\Gamma \to 0
\]
with the outer groups being finite by the case of a separably closed ground field.
\end{proof}

\begin{lemma} \label[lemma]{lemma:HochschildSerreSS}
Let $X$ be a variety over a finite field $k$ with absolute Galois group $\Gamma$.  Let $(\Fcal_n)_{n\in\N}$, $\Fcal = \varprojlim_n\Fcal_n$ be an $\ell$-adic sheaf.  For every $i$, there is a short exact sequence
\begin{align}
    0 \to \H^{i-1}(\overline{X}, \Fcal)_\Gamma \to \H^i(X, \Fcal) \to \H^i(\overline{X}, \Fcal)^\Gamma \to 0 \label{eq:sesHS}
\end{align}
with $\H^i(\overline{X}, \Fcal)$ and $\H^i(X, \Fcal)$ finitely generated $\Z_\ell$-modules.
\end{lemma}
The following argument is a generalisation of~\cite[p.~78, Lemma~3.4]{Milne-1988b}.
\begin{proof}
Since $\Gamma = \hat{\Z}$ has cohomological dimension $1$ by~\cite[p.~69, (1.6.13)\,(ii)]{CohomologyOfNumberFields}, we get from the Hochschild-Serre spectral sequence for $\overline{X}/X$ (see~\cite[p.~106, Remark~III.2.21\,(b)]{MilneEtaleCohomology}) by~\cite[p.~124, Exercise~5.2.1]{Weibel} short exact sequences for every $n$ and $i$
\[
    0 \to \H^{i-1}(\overline{X}, \Fcal_n)_\Gamma \to \H^i(X, \Fcal_n) \to \H^i(\overline{X}, \Fcal_n)^\Gamma \to 0.
\]
Since all involved groups are finite (because the two outer groups are finite by~\cref{thm:ProperConstructibleFiniteCohomology} since $\overline{X}/\overline{k}$ is proper over $\overline{k}$ separably closed and $\Fcal_n$ is constructible by definition of an $\ell$-adic sheaf), the system satisfies the Mittag-Leffler condition, so taking the projective limit yields an exact sequence
\[
    0 \to \varprojlim_n(\H^{i-1}(\overline{X}, \Fcal_n)_\Gamma) \to \H^i(X,\Fcal) \to \varprojlim_n(\H^i(\overline{X}, \Fcal_n)^\Gamma) \to 0.
\]
Write $M_{(n)}$ for $\H^i(\overline{X}, \Fcal_n)$.  Breaking the exact sequence
\[
    0 \to M_{(n)}^\Gamma \to M_{(n)} \stackrel{\Frob-1}{\longrightarrow} M_{(n)} \to (M_{(n)})_\Gamma \to 0
\]
into two short exact sequences and applying $\varprojlim_n$, one obtains, setting $Q_{(n)} = (\Frob-1)M_{(n)}$, exact sequences
\begin{align}
    0 \to \varprojlim_nM_{(n)}^\Gamma \to \varprojlim_nM_{(n)} \stackrel{\Frob-1}{\longrightarrow} \varprojlim_nQ_{(n)} \to \varprojlim_n{}^1M_{(n)}^\Gamma\label{eq:Mn} \\
    0 \to \varprojlim_nQ_{(n)}        \to \varprojlim_nM_{(n)} \to \varprojlim_n(M_{(n)})_\Gamma \to \varprojlim_n{}^1Q_{(n)}.  \label{eq:Qn}
\end{align}

Since the $M_{(n)}$, and hence the $M_{(n)}^\Gamma$ are finite (argument as above), they form a Mittag-Leffler system, and hence one gets from~\eqref{eq:Mn} exact sequences
\[
    0 \to \varprojlim_nM_{(n)}^\Gamma \to \varprojlim_nM_{(n)} \stackrel{\Frob-1}{\longrightarrow} \varprojlim_nQ_{(n)} \to 0.
\]
Similarly, the $Q_{(n)} \subseteq M_{(n)}$ are finite, and hence
\[
    0 \to \varprojlim_nQ_{(n)}        \to \varprojlim_nM_{(n)} \to \varprojlim_n(M_{(n)})_\Gamma \to 0
\]
is exact from~\eqref{eq:Qn}.  Combining the above two short exact sequences, one gets the exactness of
\[
    0 \to \varprojlim_nM_{(n)}^\Gamma \to \varprojlim_nM_{(n)} \stackrel{\Frob-1}{\longrightarrow} \varprojlim_nM_{(n)} \to \varprojlim_n(M_{(n)})_\Gamma \to 0,
\]
which shows that for all $i$
\begin{align*}
    \varprojlim_n(\H^{i}(\overline{X}, \Fcal_n)^\Gamma) = \varprojlim_nM_{(n)}^\Gamma = \ker\Big(\varprojlim_nM_{(n)} \stackrel{\Frob-1}{\longrightarrow} \varprojlim_nM_{(n)}\Big) = \H^i(\overline{X}, \Fcal)^\Gamma \\
    \varprojlim_n(\H^{i}(\overline{X}, \Fcal_n)_\Gamma) = \varprojlim_n(M_{(n)})_\Gamma = \coker\Big(\varprojlim_nM_{(n)} \stackrel{\Frob-1}{\longrightarrow} \varprojlim_nM_{(n)}\Big) = \H^{i}(\overline{X}, \Fcal)_\Gamma,
\end{align*}
which is what we wanted.
\end{proof}

\cref{lemma:HochschildSerreSS} implies
\begin{align} \label{eq:H2dH2dplus1}
    \H^{2d}(\overline{X}, T_\ell\Acal)_\Gamma &\isoto \H^{2d+1}(X, T_\ell\Acal)
\end{align}
since $\H^{2d+1}(\overline{X}, T_\ell\Acal) = 0$ by~\cite[p.~221, Theorem~VI.1.1]{MilneEtaleCohomology} as $\dim{\overline{X}} = d$.  Because of $\H^i(\overline{X}, T_\ell\Acal) = 0$ for $i > 2d$ for the same reason, it follows from~\cref{lemma:HochschildSerreSS} that $\H^i(X, T_\ell\Acal) = 0$ for $i > 2d + 1$.  Furthermore, one has
\begin{align} \label{eq:ZellH2dbarX}
    \Z_\ell = (\Z_\ell)_\Gamma = \H^{2d}(\overline{X}, \Z_\ell(d))_\Gamma \isoto \H^{2d+1}(X, \Z_\ell(d)),
\end{align}
the second equality by Poincaré duality~\cite[p.~276, Theorem~VI.11.1\,(a)]{MilneEtaleCohomology} and the isomorphism by~\cref{lemma:HochschildSerreSS} since $\H^{2d+1}(\overline{X}, \Z_\ell(d))^\Gamma = 0$ by~\cite[p.~221, Theorem~VI.1.1]{MilneEtaleCohomology} as $\dim{\overline{X}} = d$.

\begin{definition}
Let $f: A \to B$ be a homomorphism of Abelian groups.  If $\ker(f)$ and $\coker(f)$ are finite, $f$ is called an \defn{isomorphism up to finite groups}, in which case we define
\[
    q(f) = \frac{\card{\coker(f)}}{\card{\ker(f)}}.
\]
\end{definition}

\begin{remark}
	An isomorphism up to finite groups is called \emph{quasi-isomorphism} in~\cite[p.~433]{Artin-Tate}, but we avoid this term because one may confuse it with a quasi-isomorphism of complexes.
\end{remark}

The following lemma is crucial for relating special values of $L$-functions and orders of cohomology groups.

\begin{lemma} \label[lemma]{lemma:BayerNeukirch}
Let $\Frob$ be a topological generator of $\Gamma$ and $M$ be a finitely generated $\Z_\ell$-module with continuous $\Gamma$-action.  Then the following are equivalent:

1. $\det(1-\Frob \mid M \otimes_{\Z_\ell} \Q_\ell) \neq 0$.

2. $\H^0(\Gamma, M) = M^\Gamma$ is finite.

3. $\H^1(\Gamma, M)$ is finite.

If one of these holds, we have $\H^1(\Gamma, M) = M_\Gamma$ and
\[
    |\det(1-\Frob \mid M \otimes_{\Z_\ell} \Q_\ell)|_\ell = \frac{\card{\H^0(\Gamma, M)}}{\card{\H^1(\Gamma, M)}} = \frac{|M^\Gamma|}{|M_\Gamma|} = \frac{\card{\ker{(1-\Frob)}}}{\card{\coker{(1-\Frob)}}} = q(1-\Frob)^{-1}.
\]
\end{lemma}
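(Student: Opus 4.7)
The plan is to exploit that $\Gamma \cong \hat\Z$ is procyclic with topological generator $\Frob$, so that continuous cohomology of any finitely generated $\Z_\ell$-module $M$ with continuous $\Gamma$-action is computed by the two-term complex $M \xrightarrow{1-\Frob} M$ placed in degrees $0$ and $1$. This immediately gives $\H^0(\Gamma, M) = \ker(1-\Frob) = M^\Gamma$ and $\H^1(\Gamma, M) = \coker(1-\Frob) = M_\Gamma$, with higher cohomology vanishing; in particular the identification of $\H^1$ with the coinvariants comes for free.

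For the equivalence of (1), (2), (3), I would tensor everything with $\Q_\ell$. Since $\Q_\ell$ is flat over $\Z_\ell$, the formation of kernel and cokernel commutes with $\otimes_{\Z_\ell}\Q_\ell$, and a finitely generated $\Z_\ell$-module is finite if and only if it vanishes after this tensoring. On the finite-dimensional $\Q_\ell$-vector space $M \otimes \Q_\ell$ the endomorphism $1-\Frob$ has nonzero determinant if and only if it is injective if and only if it is surjective; these two conditions translate, respectively, into finiteness of $\ker(1-\Frob)$ and of $\coker(1-\Frob)$, giving the three-way equivalence.

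For the quantitative formula, the main input is the standard fact: for any endomorphism $\phi$ of a finitely generated $\Z_\ell$-module $N$ with $\det(\phi \otimes \Q_\ell) \neq 0$, one has $|\det(\phi \otimes \Q_\ell)|_\ell = \card{\ker\phi}/\card{\coker\phi}$. I would establish this by applying the snake lemma to the $\phi$-stable short exact sequence $0 \to \Tor N \to N \to N/\Tor N \to 0$: since $\Tor N$ is finite, $\phi|_{\Tor N}$ has kernel and cokernel of equal cardinality, so those contributions cancel, reducing the statement to the free case, where it follows directly from the elementary divisor theorem ($\ker\phi = 0$ and $\card{\coker\phi} = |\det\phi|_\ell^{-1}$). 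Specialising to $\phi = 1-\Frob$ on $M$ then gives $|\det(1-\Frob \mid M\otimes\Q_\ell)|_\ell = q(1-\Frob)^{-1}$ and closes the lemma. The main delicate point is the justification of the two-term complex computing continuous $\hat\Z$-cohomology with $\Z_\ell$-coefficients; this is standard once one recognises $M$ as the inverse limit of the finite $\Z_\ell$-modules $M/\ell^n M$, so no obstacle arises in practice.
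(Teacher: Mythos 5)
Your argument is correct. Note that the paper itself gives no self-contained proof here: it simply cites Bayer--Neukirch (3.2) for the equivalence, Neukirch--Schmidt--Wingberg (1.6.13) for $\H^1(\Gamma,M)=M_\Gamma$, and the Artin--Tate ``Lemma z'' computations for the determinant formula. Your proposal reconstitutes exactly these three ingredients directly: the identification of $\H^0$ and $\H^1$ with $\ker(1-\Frob)$ and $\coker(1-\Frob)$ (the content of the NSW citation, extended from torsion to finitely generated $\Z_\ell$-modules), the three-way equivalence by tensoring with $\Q_\ell$ (the Bayer--Neukirch input), and the Herbrand-quotient-style identity $|\det(\phi\otimes\Q_\ell)|_\ell=\card{\ker\phi}/\card{\coker\phi}$ via the snake lemma and Smith normal form (the Artin--Tate input), which indeed yields $q(1-\Frob)^{-1}$ with the paper's normalisation of $q(f)$. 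The only step needing care is the one you flag: that continuous $\hat{\Z}$-cohomology of $M$ is computed by the two-term complex $M\xrightarrow{1-\Frob}M$ in degrees $0$ and $1$. Your limit argument does close this: writing $M=\varprojlim M/\ell^n$, using $\H^1(\hat{\Z},N)=N_\Gamma$ for finite $N$, the Mittag--Leffler vanishing of $\varprojlim^1$ for systems of finite groups, and the fact that $M_\Gamma$, being a finitely generated $\Z_\ell$-module, is $\ell$-adically complete so that $\varprojlim (M/\ell^n)_\Gamma = M_\Gamma$. (Your identification $\H^1(\Gamma,M)=M_\Gamma$ holds unconditionally, which is slightly stronger than the statement requires.) So the proof is sound and, in substance, the same mathematics the paper outsources to the literature, made explicit.
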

\begin{proof}
See~\cite[p.~42, Lemma~(3.2)]{BayerNeukirch}.  If $M$ is torsion, $\H^1(\Gamma, M) = M_\Gamma$ by~\cite[p.~69, (1.6.13)~Proposition\,(i)]{CohomologyOfNumberFields}. % For the last equality, see~\cite{Artin-Tate}, p.~306-19--306-20, Lemma~z.1--z.4.
\end{proof}

\begin{corollary} \label[corollary]{cor:weightnotequalzerofinite}
Let $\Frob$ be a topological generator of $\Gamma$ and $M$ be a finitely generated $\Z_\ell$-module with continuous $\Gamma$-action.  If $M \otimes_{\Z_\ell} \Q_\ell$ has weight $\neq 0$, $M^\Gamma$ and $M_\Gamma$ are finite.
\end{corollary}
\begin{proof}
Since $M \otimes_{\Z_\ell} \Q_\ell$ has weight $\neq 0$, $\Frob$ has all eigenvalues $\neq 1 = q^{0/2}$, hence $\det(1-\Frob \mid M \otimes_{\Z_\ell} \Q_\ell)$ is $\neq 0$, so the corollary follows from~\cref{lemma:BayerNeukirch}.
\end{proof}

\subsection{$L$-functions of Abelian schemes}

\begin{definition} \label[definition]{def:L-series of an Abelian variety}
	Let $\pi: \Acal \to X$ be an Abelian scheme.  Then for $\ell$ invertible on $X$ let
	\begin{align*}
	\mathcal{L}(\Acal/X, s)   &= \prod_{x \in |X|}\det\big(1 - q^{-s\deg(x)}\Frob_{x}^{-1} \mid (\R^1\pi_*\Q_\ell)_{\overline{x}}\big)^{-1}
	\end{align*}
	as a power series in $q^{-s}$ with coefficients in $\Q_\ell$.  Here, $\Frob_x^{-1}$ is the geometric Frobenius of the finite field $k(x)$.
\end{definition}

\begin{theorem} \label[theorem]{prop:Grothendieck trace formula}
	Let $\pi: \Acal \to X$ be an Abelian scheme of (relative) dimension $d$.  One has
	\begin{align*}
	\mathcal{L}(\Acal/X, s)   = \prod_{i=0}^{2d}\det\big(1 - q^{-s}\Frob_q^{-1} \mid \H^i(\overline{X}, \R^1\pi_*\Q_\ell)\big)^{(-1)^{i+1}},
	\end{align*}
	where the Frobenius acts via functoriality on the second factor of $\overline{X} = X \times_k \overline{k}$.
\end{theorem}
\begin{proof}
	This follows from the Grothendieck-Lefschetz trace formula~\cite[p.~7, Theorem~1.1]{KiehlWeissauer}.
\end{proof}

\begin{corollary} \label[corollary]{cor:indepencenceofell}
	The power series $\mathcal{L}(\Acal/X, s)$ is a rational function in $q^{-s}$ with coefficients in $\Q$ independent of $\ell \neq p$.
	
	The factors in~\cref{prop:Grothendieck trace formula} for different $i$ are polynomials with coefficients in $\Q$ independent of $\ell \neq p$.
\end{corollary}
\begin{proof}
	The right hand side in~\cref{prop:Grothendieck trace formula} is a polynomial in $q^{-s}$ with coefficients in $\Q_\ell$.  These are contained in $\Q$ and independent of $\ell$:   Using~\cref{def:L-series of an Abelian variety}, by~\cref{prop:RiundTateModul}, \cref{thm:Tate module and etale cohomology} and $(\R^1\pi_*\Q_\ell)_{\overline{x}} = \H^1(\Acal_{\overline{x}}, \Q_\ell)$ by proper base change~\cite[p.~224, Corollary~VI.2.5]{MilneEtaleCohomology}
	\begin{align*}
	\det(1 - t\Frob_x^{-1} \mid \H^1(\Acal_{\overline{x}},\Q_\ell)) &= \det(1 - t\Frob_x^{-1} \mid (V_\ell\Acal_{\overline{x}})^\vee)
	\end{align*}
	is in $\Q[t]$ independent of $\ell \neq p$ since this is true for the $\ell$-adic Tate module by~\cite[p.~167, Theorem~4]{MumfordAbelianVarieties}.
	
	By the yoga of weights, the characteristic polynomials in~\cref{prop:Grothendieck trace formula} for different $i$ do not cancel since their roots have different absolute values for all complex embeddings.  Since their alternating product is in $\Q(t)$ independent of $\ell \neq p$, this holds for all factors individually.
\end{proof}

\begin{definition}  \label[definition]{def:LFunctionSchneider}
	For an Abelian scheme $\pi: \Acal \to X$ let
	\[
	P_i(\Acal/X,t) = \det\big(1 - t\Frob_q^{-1} \mid \H^i(\overline{X}, \R^1\pi_*\Q_\ell)\big).
	\]
	for $\ell$ invertible on $X$ and define the \defn{relative $L$-function} of an Abelian scheme $\Acal/X$ by
	\[
	L(\Acal/X,s) = \frac{P_1(\Acal/X,q^{-s})}{P_0(\Acal/X,q^{-s})}.
	\]
\end{definition}

For our purposes, it is better to consider the following $L$-function:

\begin{definition} \label[definition]{def:Li}
	Let
	\[
	L_i(\Acal/X,t) = \det(1 - t\Frob_q^{-1} \mid \H^i(\overline{X}, V_\ell\Acal))
	\]
	for $\ell$ invertible on $X$.
\end{definition}

\begin{remark} \label[remark]{rem:DefinitionOfLFunction}
	This definition is motivated in~\cref{rem:MotivationForLfunction} below.  It is explained there why we omit the cohomology in degrees $> 1$ in contrast to~\cref{prop:Grothendieck trace formula} coming from the usual~\cref{def:L-series of an Abelian variety}.
	
	The $P_i(\Acal/X,t)$ are polynomials with rational coefficients independent of $\ell$ by~\cref{cor:indepencenceofell}.  Using~\cref{prop:RiundTateModul}, the proof of~\cref{cor:indepencenceofell} also shows this for the $L_i(\Acal/X,t)$.
\end{remark}

\subsection{The cohomological formula for the special $L$-value $L^*(\Acal/X,1)$}

\begin{corollary} \label[corollary]{cor:Liell}
If $i \neq 1$, $\H^i(\overline{X}, T_\ell\Acal)^\Gamma$ and $\H^i(\overline{X}, T_\ell\Acal)_\Gamma$ are finite, and one has
\[
    |L_i(\Acal/X,1)|_\ell = \frac{\card{\H^i(\overline{X}, T_\ell\Acal)^\Gamma}}{\card{\H^i(\overline{X}, T_\ell\Acal)_\Gamma}}.
\]
\end{corollary}
\begin{proof}
This follows from~\cref{lemma:BayerNeukirch}\,2 and~\cref{lemma:WeightNotEqualZeroTrivial} since $\H^i(\overline{X}, V_\ell\Acal)$ has weight $i - 1$ by~\cref{thm:YogaOfWeights} and~\cref{thm:Tate module and etale cohomology}, which is $\neq 0$ if $i \neq 1$.
\end{proof}

After~\cref{cor:Liell}, one can concentrate on $i = 1$.

\begin{lemma} \label[lemma]{lemma:alphafandbeta}
Infinite groups in the short exact sequences in~\eqref{eq:sesHS} can only occur in the following two sequences:
\begin{equation}
\begin{gathered}\begin{tikzcd}
	0 \arrow[r] &\H^1(\overline{X}, T_\ell\Acal)_\Gamma \arrow[r,"\beta"] &\H^2(X, T_\ell\Acal) \arrow[r] &\H^2(\overline{X}, T_\ell\Acal)^\Gamma \arrow[r] &0\\
	0 \arrow[r] &\H^0(\overline{X}, T_\ell\Acal)_\Gamma \arrow[r] &\H^1(X, T_\ell\Acal) \arrow[r,"\alpha"] &\H^1(\overline{X}, T_\ell\Acal)^\Gamma \arrow[llu,"f",dashrightarrow] \arrow[r] &0
\end{tikzcd}\label{eq:sesHS1}\end{gathered}\end{equation}
Here, $f$ is induced by the identity on $\H^1(\overline{X}, T_\ell\Acal)$.  The morphisms $\alpha$ and $\beta$ are isomorphisms up to finite groups, and $\alpha$ is surjective and $\beta$ is injective.
\end{lemma}
\begin{proof}
Since $T_\ell\Acal$ has weight $-1$ by~\cref{prop:RiundTateModul}, $\H^i(\overline{X}, T_\ell\Acal)$ has weight $i-1$ by~\cref{thm:YogaOfWeights}.  So the conditions of~\cref{lemma:BayerNeukirch} are fulfilled for the $\Gamma$-module $M = \H^i(\overline{X},T_\ell\Acal)$ and $i \neq 1$.  Therefore infinite groups in the short exact sequences in~\eqref{eq:sesHS} can only occur in the two sequences of diagram~\eqref{eq:sesHS1}.  Since $\H^2(\overline{X}, T_\ell\Acal)^\Gamma$ and $\H^0(\overline{X}, T_\ell\Acal)_\Gamma$ are finite (having weight $2-1 \neq 0$ and $0-1 \neq 0$, so~\cref{cor:weightnotequalzerofinite} applies), $\alpha$ and $\beta$ are isomorphisms up to finite groups, and $\alpha$ is surjective and $\beta$ is injective.
\end{proof}

Recall from~\cref{def:Li} that
\[
    L_1(\Acal/X,t) = \det(1-t\Frob_q^{-1} \mid \H^1(\overline{X}, V_\ell\Acal)).
\]
Define $\tilde{L}_1(\Acal/X,t)$ and the \defn{analytic rank} $\rho$ by
\begin{align}
    \rho &= \ord_{t=1}L_1(\Acal/X,t) \in \N, \label{eq:rho} \\
    L_1(\Acal/X,t) &= (t-1)^\rho \cdot \tilde{L}_1(\Acal/X,t). \label{eq:L1tilde}
\end{align}
Note that $\tilde{L}_1(\Acal/X,1) \neq 0$ and $\tilde{L}_1(\Acal/X,t) \in \Q_\ell[t]$.

The idea is that for infinite cohomology groups $\H^1(\overline{X}, T_\ell\Acal)$, one should insert a regulator term $q(f)$ or $q((\beta f \alpha)_\Tors)$ with $(\beta f \alpha)_\Tors$ induced by $\beta f \alpha$ by modding out torsion.
\begin{lemma} \label[lemma]{lemma:L1tilda}
Let $\rho$ be as in~\eqref{eq:rho}.  One always has $\rho \geq \rk_{\Z_\ell}\H^1(X, T_\ell\Acal)$ with equality iff $f$ in~\eqref{eq:sesHS1} is an isomorphism up to finite groups.  In this case,
\begin{align*}
    |\tilde{L}_1(\Acal/X,1)|_\ell^{-1} &= q(f) = \frac{|\coker{f}|}{|\ker{f}|} \quad\text{and} \\
    |\tilde{L}_1(\Acal/X,1)|_\ell^{-1} &= q((\beta f \alpha)_\Tors) \cdot \frac{\card{\H^0(\overline{X},T_\ell\Acal)_\Gamma}}{\card{\H^2(\overline{X},T_\ell\Acal)^\Gamma}} \cdot \frac{\card{\H^2(X,T_\ell\Acal)_\tors}}{\card{\H^1(X,T_\ell\Acal)_\tors}}
\end{align*}
with $\tilde{L}_1(\Acal/X,t)$ from~\eqref{eq:L1tilde}.
\end{lemma}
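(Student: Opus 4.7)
The plan is to establish the equivalence first, and then prove the two formulae in turn, leveraging the structural results already collected in this section.

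For the equivalence, the lower short exact sequence in~\eqref{eq:sesHS1} together with the finiteness of $\H^0(\bar{X}, T_\ell\Acal)_\Gamma$ (which has weight $-1$) gives $\rk_{\Z_\ell}\H^1(X, T_\ell\Acal) = \rk_{\Z_\ell}\H^1(\bar{X}, T_\ell\Acal)^\Gamma = \dim_{\Q_\ell}\H^1(\bar{X}, V_\ell\Acal)^\Gamma$.  The paragraph preceding~\cref{cor:Liell} already shows that $\rho$ equals this common value if and only if Frobenius acts semisimply at~$1$ on $\H^1(\bar{X}, V_\ell\Acal)$, which in turn is equivalent to the natural map $V^\Gamma \to V_\Gamma$ being an isomorphism (with $V := \H^1(\bar{X}, V_\ell\Acal)$), i.\,e.\ to $f_{\Q_\ell}$ being an isomorphism, i.\,e.\ to $f$ being a quasi-isomorphism.

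Assume now that $f$ is a quasi-isomorphism.  Semisimplicity of $\Frob_q^{-1}$ at~$1$ on $V$ gives a $\Gamma$-stable decomposition $V = V^\Gamma \oplus V_2$ over $\Q_\ell$, where $V_2$ collects the generalised $\Frob_q^{-1}$-eigenspaces for eigenvalues $\neq 1$.  This factorises $L_1(\Acal/X, t) = (1-t)^\rho \det(1 - t\Frob_q^{-1} \mid V_2)$, so
\[
    |\tilde{L}_1(\Acal/X, q^{-1})|_\ell = |\det(1 - q^{-1}\Frob_q^{-1} \mid V_2)|_\ell.
\]
Picking a $\Gamma$-stable $\Z_\ell$-lattice $M_2 \subset V_2$ and applying~\cref{lemma:BayerNeukirch} to the Tate twist $M_2(1)$, on which the Galois action converts the operator $1 - \Frob_q^{-1}$ into $1 - q^{-1}\Frob_q^{-1}$ on $M_2$, yields $|\det(1 - q^{-1}\Frob_q^{-1} \mid V_2)|_\ell = q(1 - q^{-1}\Frob_q^{-1} \mid M_2)^{-1}$; the hypothesis of~\cref{lemma:BayerNeukirch} is met because $V$ is of weight~$0$ and hence has no $\Frob_q$-eigenvalue equal to $q^{-1}$.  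To identify this with $q(f)$, apply the same argument to the full lattice $M := \H^1(\bar{X}, T_\ell\Acal)$: on the finite-index sublattice $M^\Gamma \oplus M_2 \subset M$ the operator $1 - q^{-1}\Frob_q^{-1}$ restricts to the scalar $(q-1)/q$ on $M^\Gamma$, contributing a factor of $|q-1|_\ell^{-\rho}$, while on $M_\Gamma = M^\Gamma \oplus M_{2,\Gamma}$ the map $f$ is the inclusion into the first summand.  Cancelling this scalar contribution against the factor $|q^{-1} - 1|_\ell^\rho = |q-1|_\ell^\rho$ separating $\tilde{L}_1$ from $L_1$ leaves $q(f)$.

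For the second formula, the two short exact sequences in~\eqref{eq:sesHS1} show that $\alpha$ is surjective with kernel $\H^0(\bar{X}, T_\ell\Acal)_\Gamma$, so $q(\alpha) = |\H^0(\bar{X}, T_\ell\Acal)_\Gamma|^{-1}$, and $\beta$ is injective with cokernel $\H^2(\bar{X}, T_\ell\Acal)^\Gamma$, so $q(\beta) = |\H^2(\bar{X}, T_\ell\Acal)^\Gamma|$.  Multiplicativity of $q$ along compositions of quasi-isomorphisms (via the snake-lemma six-term exact sequence) gives $q(\beta f \alpha) = q(\beta) q(f) q(\alpha)$, and the snake lemma applied to the torsion/torsion-free short exact sequences for the source $\H^1(X, T_\ell\Acal)$ and the target $\H^2(X, T_\ell\Acal)$ of $\beta f \alpha$ yields
\[
    q(\beta f \alpha) = q((\beta f \alpha)_\Tors) \cdot \frac{|\Tor \H^2(X, T_\ell\Acal)|}{|\Tor \H^1(X, T_\ell\Acal)|};
\]
combining with the first formula gives the second.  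I expect the principal technical obstacle to lie in the first formula, specifically in the careful tracking of finite-index defects between $M$ and $M^\Gamma \oplus M_2$ together with the scalar $(q-1)/q$-cancellation.
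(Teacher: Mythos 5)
Your handling of the equivalence (reduction to the semisimplicity discussion preceding \cref{cor:Liell}) and your derivation of the second displayed formula from the first (multiplicativity of $q(\cdot)$ along $\alpha$, $f$, $\beta$, plus the torsion/torsion-free d\'evissage via the snake lemma) are correct and are essentially the paper's own argument, which invokes Artin--Tate, Lemma~z.1--z.4, for exactly these bookkeeping steps.

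The gap is in the first formula, and it is twofold. First, the assertion that $M_\Gamma = M^\Gamma \oplus M_{2,\Gamma}$ with $f$ ``the inclusion into the first summand'' presupposes an \emph{integral} splitting $M = M^\Gamma \oplus M_2$ of $M = \H^1(\bar{X},T_\ell\Acal)$; such a splitting exists only after tensoring with $\Q_\ell$, and its failure to hold integrally is precisely what $\ker f$ and $\coker f$ measure (your assumption forces $\ker f = 0$), so at this step you assume what is to be proved. Second, and more seriously, the Tate-twist step aims at the wrong identity: the quantity that equals $q(f) = \card{\coker f}/\card{\ker f}$ is $|\det(1-\Frob_q^{-1}\mid V_2)|_\ell^{-1}$, i.e.\ the leading coefficient of $L_1(\Acal/X,t)$ at $t=1$ (the point $s=1$; this is what the notation $\tilde{L}_1(\Acal/X,q^{-1})$ encodes via the normalisation of $P_1$, and it is what feeds into \cref{cor:cell}), and not $|\det(1-q^{-1}\Frob_q^{-1}\mid V_2)|_\ell^{-1}$. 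Since the Frobenius eigenvalues $\gamma$ on $V_2$ are $\ell$-adic units, one has $|1-\gamma q^{-1}|_\ell \neq |1-\gamma|_\ell$ in general (e.g.\ when $\gamma \equiv q \not\equiv 1 \pmod{\ell}$), so no cancellation of $(q-1)/q$-factors can bridge the twisted and untwisted determinants; the cancellation you describe merely reproduces the factorisation $\tilde{L}_1(\Acal/X,q^{-1}) = \pm\det(1-q^{-1}\Frob_q^{-1}\mid V_2)$ that you already had. The way to close the argument is the paper's: work with the untwisted operator $\Frob_q-1$ on the lattice $M$ itself and run the index computation of \cref{lemma:BayerNeukirch} resp.\ Artin--Tate (comparing $(\Frob_q-1)M^\Gamma$ with the index of $(\Frob_q-1)^2M$ in $(\Frob_q-1)M$), which gives $|\tilde{L}_1|_\ell = \card{\ker f}/\card{\coker f}$ without any splitting hypothesis and without introducing a twist.
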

\begin{proof}
By writing $\Frob_q^{-1}$ in Jordan normal form, one sees that $\rho$ is equal to
\[
    \dim_{\Q_\ell}\bigcup_{n^\geq1}\ker(1-\Frob_q^{-1})^n \geq \dim_{\Q_\ell}\ker(1-\Frob_q^{-1}) = \dim_{\Q_\ell}\H^1(\overline{X}, V_\ell\Acal)^\Gamma,
\]
i.\,e.\
\[
    \rho \geq \dim_{\Q_\ell}\H^1(\overline{X}, V_\ell\Acal)^\Gamma,
\]
 and that equality holds iff the operation of the Frobenius on $\H^1(\overline{X}, V_\ell\Acal)$ is semi-simple at $1$, i.\,e.\
\[
    \dim_{\Q_\ell}\bigcup_{n^\geq1}\ker(1-\Frob_q^{-1})^n = \dim_{\Q_\ell}\ker(1-\Frob_q^{-1}),
\]
i.\,e.\ the generalised eigenspace at $1$ equals the eigenspace, which is equivalent to $f_{\Q_\ell}$ in~\eqref{eq:sesHS1} being an isomorphism, i.\,e.\ $f$ being an isomorphism up to finite groups.

From~\eqref{eq:sesHS1}, since $\H^0(\overline{X}, T_\ell\Acal)_\Gamma$ is finite, one sees that
\[
    \dim_{\Q_\ell}\H^1(\overline{X}, V_\ell\Acal)^\Gamma = \rk_{\Z_\ell}\H^1(\overline{X}, T_\ell\Acal)^\Gamma = \rk_{\Z_\ell}\H^1(X, T_\ell\Acal).
\]
Hence, the inequality $\rho \geq \rk_{\Z_\ell}\H^1(X, T_\ell\Acal)$ and the first statement follows.

Assuming $f$ being an isomorphism up to finite groups, one has by~\cref{lemma:BayerNeukirch} and arguing as in~\cite[p.~136, proof of Lemma~3]{SchneiderZeta}
\begin{align*}
    |\tilde{L}_1(\Acal/X,1)|        &= \frac{|[(\Frob_q-1)\H^1(\overline{X}, T_\ell\Acal)]^\Gamma|}{|[(\Frob_q-1)\H^1(\overline{X}, T_\ell\Acal)]_\Gamma|} \\
                                    &= \frac{|[(\Frob_q-1)\H^1(\overline{X}, T_\ell\Acal)]^\Gamma|}{|(\Frob_q-1)\H^1(\overline{X}, T_\ell\Acal):(\Frob_q-1)^2\H^1(\overline{X}, T_\ell\Acal)|} \\
                                    &= \frac{|\ker{f}|}{|\coker{f}|} = q(f)^{-1}.
\end{align*}
For the second equation,
\begin{align*}
    q(f) &= \frac{q((\beta f)_\tors)}{q(\beta)} \cdot q((\beta f)_\Tors) \quad\text{by~\cite[p.~306-19--306-20, Lemma~z.1--z.4]{Artin-Tate}} \\
         &= \frac{1}{\card{\H^2(\overline{X},T_\ell\Acal)^\Gamma}} \cdot \frac{\card{(\H^2(X,T_\ell\Acal)_\Gamma)_\tors}}{\card{(\H^1(\overline{X},T_\ell\Acal)^\Gamma)_\tors}} \cdot q((\beta f)_\Tors) \\
         &= q((\beta f \alpha)_\Tors) \cdot \frac{\card{\H^0(\overline{X},T_\ell\Acal)_\Gamma}}{\card{\H^2(\overline{X},T_\ell\Acal)^\Gamma}} \cdot \frac{\card{\H^2(X,T_\ell\Acal)_\tors}}{\card{\H^1(X,T_\ell\Acal)_\tors}} \quad\text{since $\coker(\alpha) = 0$.} \qedhere
\end{align*}
\end{proof}

\begin{lemma} \label[lemma]{lemma:AcalExactSequence}
Let $\ell \neq p$ be invertible on $X$.  Then there is an exact sequence
\[
    0 \to \Acal(X) \otimes \Q_\ell/\Z_\ell \to \H^1(X,\Acal[\ell^\infty]) \to \H^1(X,\Acal)[\ell^\infty] \to 0.
\]
\end{lemma}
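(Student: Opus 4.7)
The plan is to obtain this as the direct limit of the usual Kummer-type short exact sequences at finite level, then identify the limit terms.

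First I would note that since $\ell$ is invertible on $X$, the multiplication-by-$\ell^n$ isogeny on the Abelian scheme $\Acal/X$ is finite étale and surjective as a morphism of étale sheaves, so there is a short exact sequence of étale sheaves
\[
    0 \to \Acal[\ell^n] \to \Acal \xrightarrow{\ell^n} \Acal \to 0
\]
on $X_{\et}$. Taking the associated long exact sequence of étale cohomology and extracting the degree-one piece yields the finite-level Kummer sequence
\[
    0 \to \Acal(X)/\ell^n \to \H^1(X, \Acal[\ell^n]) \to \H^1(X, \Acal)[\ell^n] \to 0.
\]

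Next I would pass to the colimit over $n$, ordered by divisibility. Filtered colimits are exact in the category of abelian groups, so the resulting sequence
\[
    0 \to \colim_n \Acal(X)/\ell^n \to \colim_n \H^1(X, \Acal[\ell^n]) \to \colim_n \H^1(X, \Acal)[\ell^n] \to 0
\]
is still exact. It remains to identify the three terms with those in the statement. For the left term, since $\Q_\ell/\Z_\ell = \colim_n \Z/\ell^n$ and tensor product commutes with colimits, we have $\colim_n \Acal(X)/\ell^n = \Acal(X) \otimes_\Z \Q_\ell/\Z_\ell$. For the right term, obviously $\colim_n \H^1(X, \Acal)[\ell^n] = \H^1(X, \Acal)[\ell^\infty]$ by definition of $\ell$-primary torsion. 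For the middle term, étale cohomology on the quasi-compact quasi-separated scheme $X$ commutes with filtered colimits of sheaves, so $\colim_n \H^1(X, \Acal[\ell^n]) = \H^1(X, \colim_n \Acal[\ell^n]) = \H^1(X, \Acal[\ell^\infty])$.

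There is no real obstacle here; the only subtlety worth flagging is the use of the fact that $\ell^n$ is an epimorphism of étale sheaves, which fails for $\ell = p$ (where one would have to work in the fppf topology and the kernel would no longer be étale), justifying the hypothesis $\ell \neq p$. Everything else is a routine exactness-of-colimit and commutation-of-cohomology-with-colimits argument.
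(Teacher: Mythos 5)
Your proof is correct and is exactly the intended argument: the paper states this lemma without proof, but it uses the same finite-level Kummer sequence $0 \to \Acal(X)/\ell^n \to \H^1(X,\Acal[\ell^n]) \to \H^1(X,\Acal)[\ell^n] \to 0$ in \cref{lemma:SesAcal} and the same limit manipulations in \cref{lemma:lesGarbensequenz}, so passing to the filtered colimit (with the transition maps induced by multiplication by $\ell$ on $\Acal(X)/\ell^n$ and the inclusions $\Acal[\ell^n] \hookrightarrow \Acal[\ell^{n+1}]$) and invoking exactness of filtered colimits plus commutation of \'etale cohomology with colimits of sheaves on the qcqs scheme $X$ is precisely the expected proof. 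Your remark on why $\ell \neq p$ is needed (surjectivity of $\ell^n$ as a map of \'etale sheaves) is also the right justification.
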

\begin{proof}
Since $\ell$ is invertible on $X$, one has the short exact Kummer sequence of étale sheaves $0 \to \Acal[\ell^n] \to \Acal \to \Acal \to 0$, which induces
\begin{equation} \label{eq:AcalSes}
    0 \to \Acal(X)/\ell^n \to \H^1(X,\Acal[\ell^n]) \to \H^1(X,\Acal)[\ell^n] \to 0.
\end{equation}
Passing to the colimit $\varinjlim_n$ yields the result.
\end{proof}

\begin{remark}
	This reminds us of the exact sequence
	\[
		0 \to A(K)/n \to \mathrm{Sel}^{(n)}(A/K) \to \Sha(A/K)[n] \to 0
	\]
	for an Abelian variety $A$ over a global field $K$ with $n$ invertible in $K$.
\end{remark}

Recall our definition of the Tate-Shafarevich group, $\Sha(\Acal/X) = \Het^1(X,\Acal)$, from~\cite[p.~225, Definition~4.2]{KellerSha}.
\begin{lemma} \label[lemma]{lemma:ShaEndlichenEllKorang}
Let $\ell$ be invertible on $X$.  Then the $\Z_\ell$-corank of $\Sha(\Acal/X)[\ell^\infty]$ is finite.
\end{lemma}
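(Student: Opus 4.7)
The plan is to bound the $\Z_\ell$-corank of $\Sha(\Acal/X)[\ell^\infty]$ by a two-step chain of quotients, ultimately reducing to the standard finiteness of étale cohomology of a finite locally constant sheaf on a smooth projective variety over $\F_q$.

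First I would apply \cref{lemma:AcalExactSequence}, which exhibits $\Sha(\Acal/X)[\ell^\infty] = \H^1(X,\Acal)[\ell^\infty]$ as a quotient of $\H^1(X,\Acal[\ell^\infty])$, so that it suffices to bound the $\Z_\ell$-corank of the latter. I would then invoke the elementary observation that, for any $\ell$-primary torsion abelian group $T$, the $\Z_\ell$-corank of $T$ is at most $\dim_{\F_\ell} T[\ell]$: the maximal divisible subgroup $D \subseteq T$ is $\ell$-divisible, so $\dim_{\F_\ell} D[\ell]$ equals the $\Z_\ell$-corank of $D$, and hence of $T$, while $D[\ell] \hookrightarrow T[\ell]$. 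It is therefore enough to show that $\H^1(X,\Acal[\ell^\infty])[\ell]$ has finite $\F_\ell$-dimension.

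For this I would feed in the Kummer-type short exact sequence
\[
    0 \to \Acal[\ell] \to \Acal[\ell^\infty] \xrightarrow{\ell} \Acal[\ell^\infty] \to 0
\]
of étale sheaves on $X$, whose exactness uses precisely that $\ell$ is invertible on $X$ (so that $\Acal[\ell]$ is finite étale over $X$). Extracting the relevant portion of the resulting long exact cohomology sequence gives
\[
    0 \to \H^0(X,\Acal[\ell^\infty])/\ell \to \H^1(X,\Acal[\ell]) \to \H^1(X,\Acal[\ell^\infty])[\ell] \to 0,
\]
exhibiting $\H^1(X,\Acal[\ell^\infty])[\ell]$ as a quotient of $\H^1(X,\Acal[\ell])$.

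Since $\Acal[\ell]$ is a finite locally constant (hence constructible) étale sheaf on the smooth projective variety $X/\F_q$, the group $\H^1(X,\Acal[\ell])$ is finite by the standard finiteness theorem for étale cohomology of constructible sheaves over varieties over finite fields (see e.g.\ \cite{Milne�taleCohomology}). Chaining the bounds yields the claim. I do not expect any real obstacle here; the only points deserving care are the exactness of the Kummer sequence at the level of étale sheaves (guaranteed by $\ell$ being invertible on $X$) and the elementary upper bound on the corank, both of which are routine.
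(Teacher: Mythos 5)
Your argument is correct, and since the paper states \cref{lemma:ShaEndlichenEllKorang} without printing a proof, there is nothing to clash with: your route is exactly in the spirit of the surrounding material (the finite-level Kummer-type sequence and the finiteness of $\H^i(X,\Fcal)$ for constructible $\Fcal$ on the proper $X/k$ are the same inputs used in \cref{lemma:lesGarbensequenz} and \cref{lemma:SesAcal}). Concretely: $\Acal[\ell]$ is finite \'etale since $\ell$ is invertible on $X$, multiplication by $\ell$ is surjective on the sheaf $\Acal[\ell^\infty]=\varinjlim_n\Acal[\ell^n]$, your exact sequence $0 \to \H^0(X,\Acal[\ell^\infty])/\ell \to \H^1(X,\Acal[\ell]) \to \H^1(X,\Acal[\ell^\infty])[\ell] \to 0$ is the right piece of the long exact sequence, and $\H^1(X,\Acal[\ell])$ is finite. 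One small point worth phrasing more carefully: for a general $\ell$-primary torsion group, corank measured via the maximal divisible subgroup is \emph{not} automatically monotone under quotients (e.g.\ $\bigoplus_n \Z/\ell^n$ has trivial divisible part but surjects onto $\Q_\ell/\Z_\ell$), so the cleanest way to finish is to note that finiteness of $\H^1(X,\Acal[\ell^\infty])[\ell]$ makes $\H^1(X,\Acal[\ell^\infty])$ a cofinitely generated $\Z_\ell$-module, i.e.\ of the form $(\Q_\ell/\Z_\ell)^r\oplus(\text{finite})$, whence its quotient $\Sha(\Acal/X)[\ell^\infty]$ (via \cref{lemma:AcalExactSequence}) is also cofinitely generated of corank at most $r$. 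This stronger cofinite-generation statement is in fact the form in which the lemma is used later, in the proof of \cref{lemma:ShaFinite}, so your argument delivers precisely what the paper needs.
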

\begin{proof}
From~\eqref{eq:AcalSes}, one sees that $\H^1(X,\Acal)[\ell]$ is finite as it is a quotient of $\H^1(X,\Acal[\ell])$ and $\Acal[\ell]/X$ is constructible, and the cohomology of a constructible sheaf on a proper variety over a finite field is finite by~\cref{thm:ProperConstructibleFiniteCohomology}.  Hence $\Sha(\Acal/X)[\ell^\infty]$ is cofinitely generated by~\cref{lemma:PropertiesOfTateModule}\,\ref{lemma:Aellendlich}.
\end{proof}

For an $\ell$-adic sheaf $\Fcal$, denote by $\Fcal(n)$ the $n$-th Tate twist of $\Fcal$, $\Fcal(n) := \Fcal \otimes_{\Z_\ell} \Z_\ell(n)$, see~\cref{def:Tatetwist}.
\begin{lemma} \label[lemma]{lemma:lesGarbensequenz}
Let $X/k$ be proper over $k$ separably closed or finite, and let $\ell$ be invertible on $X$.  There is a long exact sequence %[TODO: allgemeiner]
\[
    \ldots \to \H^i(X,T_\ell\Acal(n)) \to \H^i(X,T_\ell\Acal(n)) \otimes_{\Z_\ell} \Q_\ell \to \H^i(X,\Acal[\ell^\infty](n)) \to \ldots
\]
which induces isomorphisms
\[
    \H^{i-1}(X,\Acal[\ell^\infty](n))_\Div \isoto \H^i(X,T_\ell\Acal(n))_\tors
\]
and short exact sequences
\[
    0 \to \H^i(X,T_\ell\Acal(n))_\Tors \to \H^i(X,T_\ell\Acal(n)) \otimes_{\Z_\ell} \Q_\ell \to \H^i(X,\Acal[\ell^\infty](n))_\divv \to 0.
\]
\end{lemma}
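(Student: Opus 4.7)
The plan is to derive everything from the short exact sequence of $\ell$-adic sheaves
\[
    0 \to T_\ell\Acal(n) \to V_\ell\Acal(n) \to \Acal[\ell^\infty](n) \to 0,
\]
where $V_\ell\Acal(n) = T_\ell\Acal(n) \otimes_{\Z_\ell} \Q_\ell$ and $\Acal[\ell^\infty] = \colim_m \Acal[\ell^m]$, by applying étale cohomology.

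First I would take cohomology to obtain a long exact sequence. The key identification is $\H^i(X, V_\ell\Acal(n)) = \H^i(X, T_\ell\Acal(n)) \otimes_{\Z_\ell} \Q_\ell$: this is the definition of $\ell$-adic cohomology with $\Q_\ell$-coefficients, and is legitimate here because $X/k$ is proper, so the groups $M_i := \H^i(X, T_\ell\Acal(n))$ are finitely generated $\Z_\ell$-modules and tensoring with the flat module $\Q_\ell$ commutes with the inverse limit defining integral $\ell$-adic cohomology. I expect this to be the main obstacle, since it is not purely formal but depends crucially on the properness hypothesis.

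Next I would extract both consequences from the local picture
\[
    \H^{i-1}(X,\Acal[\ell^\infty](n)) \xrightarrow{\delta} M_i \xrightarrow{\iota} M_i \otimes_{\Z_\ell} \Q_\ell \xrightarrow{q} \H^i(X,\Acal[\ell^\infty](n)) \xrightarrow{\delta'} M_{i+1}.
\]
The kernel of $\iota$ is $\Tor M_i$, so exactness at $M_i$ forces $\im \delta = \Tor M_i$. The image $\im q$ is divisible, being a quotient of a $\Q_\ell$-vector space, and its cokernel equals $\im \delta' = \Tor M_{i+1}$, which is finite by finite generation of $M_{i+1}$ over $\Z_\ell$. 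Since a divisible subgroup with finite quotient must coincide with the maximal divisible subgroup, $\im q = \Divv \H^i(X,\Acal[\ell^\infty](n))$.

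Reading off the two pieces: exactness around $M_i \otimes \Q_\ell$ yields the short exact sequence $0 \to M_i/\Tor M_i \to M_i \otimes_{\Z_\ell} \Q_\ell \to \Divv \H^i(X,\Acal[\ell^\infty](n)) \to 0$, which is the claimed short exact sequence (the statement tacitly identifies $M_i$ with its image $M_i/\Tor M_i$ under $\iota$). Applying the same analysis one step earlier, $\ker \delta$ equals the image of $q$ at level $i-1$, namely $\Divv \H^{i-1}(X,\Acal[\ell^\infty](n))$, so $\delta$ induces the claimed isomorphism $\H^{i-1}(X,\Acal[\ell^\infty](n))_\Div \isoto \Tor M_i$.
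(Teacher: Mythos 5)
Your deduction of the two consequences from the long exact sequence is correct and essentially what the paper leaves as ``the other statements follow easily'': the kernel of $M_i \to M_i \otimes_{\Z_\ell} \Q_\ell$ is $\Tor M_i$, the image of $q$ is divisible with finite cokernel inside $\H^i(X,\Acal[\ell^\infty](n))$ (finiteness of $\Tor M_{i+1}$ coming from finite generation, hence from properness), and a divisible subgroup with finite quotient is the maximal divisible subgroup. That part is fine.

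The gap is in the first step, which is the entire content of the paper's proof: you cannot simply ``take cohomology'' of the displayed sequence $0 \to T_\ell\Acal(n) \to V_\ell\Acal(n) \to \Acal[\ell^\infty](n) \to 0$, because it is not a short exact sequence in a category of sheaves for which the groups occurring in the lemma are the derived functors. Here $T_\ell\Acal(n)$ is a pro-object (its cohomology is $\varprojlim_\mu \H^i(X,\Acal[\ell^\mu](n))$), $\Acal[\ell^\infty](n)$ is an ind-object, and $V_\ell\Acal(n)$ is neither; the existence of a long exact sequence relating $\varprojlim$-cohomology, its rationalisation, and $\varinjlim$-cohomology is exactly what has to be proved. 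The paper does this at finite level: it takes the sequences $0 \to \Acal[\ell^\mu](n) \to \Acal[\ell^{\mu+\nu}](n) \to \Acal[\ell^\nu](n) \to 0$, forms the long exact cohomology sequences, then applies $\varprojlim_\mu$ followed by $\varinjlim_\nu$; the inverse limit preserves exactness because all the finite-level groups are \emph{finite} (constructible sheaves on a proper variety, Milne VI.2.8), so Mittag-Leffler applies, and the middle terms become $\varinjlim_\nu$ of $\H^i(X,T_\ell\Acal(n))$ along multiplication by $\ell$, i.e.\ $\H^i(X,T_\ell\Acal(n)) \otimes_{\Z_\ell} \Q_\ell$. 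Note also that your stated justification --- that tensoring with the flat module $\Q_\ell$ commutes with the inverse limit --- is not the right mechanism: at each finite level $\H^i(X,\Acal[\ell^\mu](n)) \otimes \Q_\ell = 0$, so no naive tensor--limit exchange can produce the middle term; properness enters through the finiteness needed for exactness of $\varprojlim_\mu$, not through a flatness argument. So you have correctly located \emph{where} properness matters in spirit, but the construction of the long exact sequence itself is assumed rather than proved, and supplying it amounts to reproducing the paper's limit argument.
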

\begin{proof}
Consider for $m,m' \in \N$ invertible on $X$ the short exact sequence of étale sheaves
\[
    0 \to \Acal[m](n) \hookrightarrow \Acal[mm'](n) \stackrel{\cdot m}{\to} \Acal[m'](n) \to 0.
\]
Setting $m = \ell^\mu, m' = \ell^\nu$, the associated long exact sequence is
\[
    \ldots \to \H^i(X,\Acal[\ell^\mu](n)) \to \H^i(X,\Acal[\ell^{\mu+\nu}](n)) \to \H^i(X,\Acal[\ell^\nu](n)) \to \ldots.
\]
Passing to the projective limit $\varprojlim_{\mu}$ and then to the inductive limit $\varinjlim_\nu$ yields the desired long exact sequence since all involved cohomology groups are finite by~\cref{thm:ProperConstructibleFiniteCohomology} since $X/k$ is proper over a separably closed or finite field and our sheaves are constructible.  Here, we use that $\varprojlim$ is exact on finite groups, see~\cite[p.~83, Proposition~3.5.7 and Exercise~3.5.2]{Weibel}.

For the second statement, consider the exact sequence
\[
    \H^{i-1}(X,T_\ell\Acal(n)) \otimes_{\Z_\ell} \Q_\ell \stackrel{f}{\to} \H^{i-1}(X,\Acal[\ell^\infty](n)) \stackrel{d}{\to} \H^i(X,T_\ell\Acal(n)) \stackrel{g}{\to} \H^i(X,T_\ell\Acal(n)) \otimes_{\Z_\ell} \Q_\ell.
\]
Since $\H^i(X,T_\ell\Acal(n))$ is a finitely generated $\Z_\ell$-module (since $(\Acal[\ell^n])_{n\in\N}$ is an $\ell$-adic sheaf) and $g$ is induced by the identity, we have $\ker{g} = \H^i(X,T_\ell\Acal(n))_\tors$; note that $\H^i(X,T_\ell\Acal(n)) \iso \Z_\ell^{\rk} \oplus \H^i(X,T_\ell\Acal(n))_\tors$ and that the codomain of $g$ is isomorphic to $\Q_\ell^{\rk}$.  Since $\H^{i-1}(X,T_\ell\Acal(n))$ is a finitely generated $\Z_\ell$-module and $\H^{i-1}(X,\Acal[\ell^\infty](n))$ is a cofinitely generated $\ell$-torsion module isomorphic to $(\Q_\ell/\Z_\ell)^{\rk} \oplus \H^i(X,T_\ell\Acal(n))_\tors$, so the divisible part is $(\Q_\ell/\Z_\ell)^{\rk}$, we have $\im{f} = \H^{i-1}(X,\Acal[\ell^\infty](n))_\divv$.  The claim follows from the exactness of the sequence.

For the third statement, consider the exact sequence
\[
    \H^i(X,T_\ell\Acal(n)) \stackrel{g}{\to} \H^i(X,T_\ell\Acal(n)) \otimes_{\Z_\ell} \Q_\ell \stackrel{f}{\to} \H^i(X,\Acal[\ell^\infty](n)).
\]
Since $\H^i(X,T_\ell\Acal(n))$ is a finitely generated $\Z_\ell$-module (since $(\Acal[\ell^n](n))_{n\in\N}$ is an $\ell$-adic sheaf) and $g$ is induced by the identity, we have $\ker{g} = \H^i(X,T_\ell\Acal(n))_\tors$; note that $\H^i(X,T_\ell\Acal(n)) \iso \Z_\ell^{\rk} \oplus \H^i(X,T_\ell\Acal(n))_\tors$ and that the codomain of $g$ is isomorphic to $\Q_\ell^{\rk}$.  Since $\H^i(X,T_\ell\Acal(n))$ is a finitely generated $\Z_\ell$-module and $\H^i(X,\Acal[\ell^\infty](n))$ is a cofinitely generated $\ell$-torsion module isomorphic to $(\Q_\ell/\Z_\ell)^{\rk} \oplus \H^i(X,T_\ell\Acal(n))_\tors$, so the divisible part is $(\Q_\ell/\Z_\ell)^{\rk}$, we have $\im{f} = \H^i(X,\Acal[\ell^\infty](n))_\divv$.  The claim follows from the exactness of the sequence.
\end{proof}

\begin{theorem}[Mordell-Weil(-Lang-Néron)] \label[theorem]{thm:MordellWeil}
Let $K$ be a field finitely generated over its prime field and $A/K$ an Abelian variety.  Then the Mordell-Weil group $A(K)$ is a finitely generated Abelian group.
\end{theorem}
\begin{proof}
See~\cite[p.~42, Theorem~2.1]{Conrad-trace}.
\end{proof}

Note that $\Acal(X) = A(K)$ by the Néron mapping property:
\begin{theorem}[Néron mapping property] \label[theorem]{thm:NeronModel}
Let $S$ be a regular, Noetherian, integral, separated scheme with $g: \{\eta\} \hookrightarrow S$ the inclusion of the generic point.  Let $\Acal/S$ be an Abelian scheme.  Then
\[
    \Acal \isoto g_*g^*\Acal
\]
as sheaves on the smooth site $S_{\mathrm{sm}}$ of $S$.
\end{theorem}
\begin{proof}
See~\cite[p.~222, Theorem~3.3]{KellerSha}.
\end{proof}

\begin{lemma} \label[lemma]{lemma:cohTerms}
Assume $\ell$ is invertible on $X$.  Then one has the following identities for the étale cohomology groups of $X$:
\begin{align}
    \H^i(X, T_\ell\Acal) &= 0 \quad\text{for $i \not\in \{1, 2, \ldots, 2d+1$\}} \label{eq:HiVanishing}\\
    \H^1(X, T_\ell\Acal)_\tors &= \H^0(X, \Acal[\ell^\infty])_\Div = {\H^0(X, \Acal)[\ell^\infty]} \label{eq:H1}\\
    \H^2(X, T_\ell\Acal)_\tors &= \H^1(X, \Acal[\ell^\infty])_\Div \label{eq:H2} \\
    \H^1(X, \Acal[\ell^\infty])_\Div &= \Sha(\Acal/X)[\ell^\infty] \quad\text{if $\Sha(\Acal/X)[\ell^\infty]$ is finite} \label{eq:H1andSha}
\end{align}
\end{lemma}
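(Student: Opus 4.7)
The four statements naturally split into a vanishing part, two identifications via the ``limit sequence'' of \cref{lemma:lesGarbensequenz}, and a comparison with $\Sha$ via the Kummer sequence of \cref{lemma:AcalExactSequence}.

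For \eqref{eq:HiVanishing}, the range $i > 2d+1$ has already been handled in the paragraph preceding \cref{lemma:BayerNeukirch}: $\H^i(\bar X,T_\ell\Acal)=0$ for $i>2d$ by cohomological dimension, and the Hochschild--Serre sequence \eqref{eq:sesHS} upgrades this to $\H^i(X,T_\ell\Acal)=0$ for $i>2d+1$. For $i=0$ I would commute $\H^0(X,-)$ with the inverse limit defining $T_\ell\Acal$ to get $\H^0(X,T_\ell\Acal)=T_\ell\Acal(X)=T_\ell A(K)$; since $A(K)$ is finitely generated (Lang--N\'eron), $A(K)[\ell^\infty]$ is finite, so its Tate module vanishes.

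For \eqref{eq:H1} and \eqref{eq:H2} I would specialise \cref{lemma:lesGarbensequenz} to $n=0$, $i=1$ and $i=2$ respectively, giving the isomorphisms $\H^0(X,\Acal[\ell^\infty])_\Div\isoto\Tor\H^1(X,T_\ell\Acal)$ and $\H^1(X,\Acal[\ell^\infty])_\Div\isoto\Tor\H^2(X,T_\ell\Acal)$. The remaining equality in \eqref{eq:H1} is then routine: $\H^0(X,\Acal[\ell^\infty])=A(K)[\ell^\infty]$ is finite by the same Lang--N\'eron argument as above, hence has no nonzero divisible subgroup, so $(-)_\Div$ acts as the identity; likewise $\H^0(X,\Acal)[\ell^\infty]=A(K)[\ell^\infty]$ is its own torsion, yielding the triple equality.

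For \eqref{eq:H1andSha} I would use \cref{lemma:AcalExactSequence} together with $\Sha(\Acal/X)[\ell^\infty]=\H^1(X,\Acal)[\ell^\infty]$. Write the short exact sequence as $0\to U\to V\to W\to 0$ with $U=\Acal(X)\otimes\Q_\ell/\Z_\ell$, $V=\H^1(X,\Acal[\ell^\infty])$, $W=\Sha(\Acal/X)[\ell^\infty]$. Then $U$ is divisible (being a tensor product with the divisible group $\Q_\ell/\Z_\ell$), so $U\subseteq\Divv V$. Conversely, if $W$ is finite it contains no nonzero divisible subgroup, so the image of $\Divv V$ in $W$ vanishes and $\Divv V\subseteq U$. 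Hence $\Divv V=U$ and $V_\Div=V/U=W$, which is the claim.

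The only genuinely delicate input is the finiteness of $A(K)[\ell^\infty]$ used in \eqref{eq:HiVanishing} (case $i=0$) and in the second equality of \eqref{eq:H1}; everything else is a mechanical assembly of the preceding two lemmata and the definition of $(-)_\Div$.
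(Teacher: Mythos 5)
Your proposal is correct and follows essentially the same route as the paper: the vanishing for $i>2d+1$ via the Hochschild--Serre sequence and cohomological dimension, $i=0$ via the triviality of the Tate module of the finitely generated group $\Acal(X)=A(K)$, \eqref{eq:H1} and \eqref{eq:H2} by specialising \cref{lemma:lesGarbensequenz} to $i=1,2$, and \eqref{eq:H1andSha} by the divisibility argument applied to the exact sequence of \cref{lemma:AcalExactSequence} together with the finiteness of $\Sha(\Acal/X)[\ell^\infty]$. Your spelled-out argument that $\Divv\H^1(X,\Acal[\ell^\infty])=\Acal(X)\otimes\Q_\ell/\Z_\ell$ when the quotient is finite is exactly the justification the paper uses (and repeats later in the section on the pairing $(\cdot,\cdot)_\ell$).
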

\begin{proof}
\eqref{eq:HiVanishing}:  For $i > 2d+1$ this follows from~\eqref{eq:sesHS} (using the fact that $\H^i(\overline{X},T_\ell\Acal) = 0$ for $i > 2d$, as noted earlier below~\eqref{eq:H2dH2dplus1}), and it holds for $i = 0$ since $\H^0(X, \Acal[\ell^n]) \subseteq \Acal(X)_\tors$ is finite (since $\Acal(X)$ is a finitely generated Abelian group by the Mordell-Weil theorem~\cref{thm:MordellWeil} and the Néron mapping property~\cref{thm:NeronModel}) hence its Tate-module is trivial.

\eqref{eq:H1} and~\eqref{eq:H2}:
From~\cref{lemma:lesGarbensequenz}, we get %[kann verallgemeinert werden!]
\begin{align*}
%\card{\H^{(2d+1)-i}(X, T_\ell\Acal^t(1-d))_\tors} &= \card{\H^{2d-i}(X, \Acal^t[\ell^\infty](1-d))_\Div}\\
%\card{\H^i(X, \Acal[\ell^\infty])_\Div} &= \card{\H^{i+1}(X, T_\ell\Acal)_\tors}\\
    \H^i(X, T_\ell\Acal)_\tors &= \H^{i-1}(X, \Acal[\ell^\infty])_\Div %\label{eq:Tate3}\\
%\card{\H^{(2d+1)-i}(X, \Acal^t[\ell^\infty](1-d))_\Div} &= \card{\Tor\H^{(2d+2)-i}(X, T_\ell\Acal^t(1-d))} \label{eq:Tate4}
\end{align*}
The desired equalities follow by plugging in $i = 1,2$.

Further, one has $\H^0(X, \Acal[\ell^\infty])_\Div = {\H^0(X, \Acal)[\ell^\infty]}_\tors$ in~\eqref{eq:H1} because $\H^0(X, \Acal[\ell^\infty])$ is cofinitely generated by the Mordell-Weil theorem and the Néron mapping property~\cref{thm:NeronModel}.

Finally, \eqref{eq:H1andSha} holds since by~\cref{lemma:AcalExactSequence}, $\H^1(X, \Acal[\ell^\infty])_\Div = \H^1(X,\Acal)[\ell^\infty]$ if the latter is finite, and this equals $\Sha(\Acal/X)[\ell^\infty]$.
\end{proof}

Now we have two pairings given by cup product in cohomology
\begin{align}
    \qu{\cdot,\cdot}_\ell&: \H^1(X, T_\ell\Acal)_\Tors \times \H^{2d-1}(X, T_\ell(\Acal^t)(d-1))_\Tors \to \H^{2d}(X,\Z_\ell(d)) \stackrel{\pr_1^*}{\to} \H^{2d}(\overline{X},\Z_\ell(d)) = \Z_\ell, \label{eq:Regulator spitze Klammer}\\
    (\cdot,\cdot)_\ell&: \H^2(X, T_\ell\Acal)_\Tors \times \H^{2d-1}(X, T_\ell(\Acal^t)(d-1))_\Tors \to \H^{2d+1}(X,\Z_\ell(d)) = \Z_\ell. \label{eq:Regulator runde Klammer}
\end{align}

\begin{lemma} \label[lemma]{lemma:AAstrichBpairings}
Let $A, A'$ and $B$ finitely generated free $\Z_\ell$-modules.  Consider the commutative diagram
\[\begin{tikzcd}
	A  \arrow[r,"\times",phantom] \arrow[d,"f"] & B \arrow[d,equal] \arrow[r,"\qu{\cdot{,}\cdot}"] & \Z_\ell \arrow[d,equal]\\
	A' \arrow[r,"\times",phantom] & B \arrow[r,"(\cdot{,}\cdot)"] & \Z_\ell,
\end{tikzcd}\]
where $\qu{\cdot,\cdot}$ is a non-degenerate pairing.

Then $f$ is an isomorphism up to finite groups iff $(\cdot,\cdot)$ is non-degenerate, and in this case one has
\[
    q(f) = \left|\frac{\det\qu{\cdot,\cdot}}{\det(\cdot,\cdot)}\right|_\ell^{-1}.
\]
\end{lemma}
\begin{proof}
Since the $\Z_\ell$-modules are finitely generated free, the pairings are non-degenerate iff they are perfect after tensoring with $\Q_\ell$.  So $f$ is an isomorphism up to finite groups iff $f \otimes_{\Z_\ell} \Q_\ell$ is an isomorphism iff $f_{\Q_\ell}^*: \Hom_{\Q_\ell}(A'_{\Q_\ell},\Q_\ell) \isoto \Hom_{\Q_\ell}(A_{\Q_\ell},\Q_\ell) = B_{\Q_\ell}$ is an isomorphism iff $(\cdot,\cdot)_{\Q_\ell}$ is perfect (the last equality coming from the following facts: (a) $\qu{\cdot,\cdot}_{\Q_\ell}$ is perfect, (b) a non-degenerate pairing of finite dimensional vector spaces is perfect and (c) the non-degeneracy of a pairing is preserved by localisation).

The statement on $q(f)$ follows by considering the dual diagram
\[\begin{tikzcd}
    A \arrow[r,hookrightarrow] \arrow[d,"f"] & B^\vee \arrow[d,equal]\\
    A' \arrow[r,hookrightarrow] & B^\vee.
\end{tikzcd}\]
from~\cite[p.~433\,f., Lemma~z.1 and Lemma~z.2]{Artin-Tate}.
\end{proof}

%\begin{lemma} \label[lemma]{lemma:perfektePaarungundInvarianten}
%Let $R$ be a ring, $G$ a group and $M \times N \to R$ be a perfect pairing of finitely generated $R$-modules $M,N$ carrying a $G$-action.  Then there is a perfect pairing $(M^G)_\Tors \times (N_G)_\Tors \to R$.
%\end{lemma}
%\begin{proof}
%We have
%\begin{align*}
%    (M^G)_\Tors &= \Hom_R(N_\Tors,R)^G \\
%                &= \Hom_{R[G]}(N_\Tors,R) \\
%                &= \Hom_R((N_G)_\Tors,R)  \quad\text{since $R$ is a trivial $G$-module.} \qedhere
%\end{align*}
%\end{proof}

\begin{lemma} \label[lemma]{lemma:TwoPairings}
Recall the maps $\alpha,\beta,f$ from diagram~\eqref{eq:sesHS1}.  The pairing $(\cdot,\cdot)_\ell$~\eqref{eq:Regulator runde Klammer} is non-degenerate.  The regulator term $q((\beta f \alpha)_\Tors)$ is defined iff $f$ is an isomorphism up to finite groups, and then it equals
\[
    \left|\frac{\det\qu{\cdot,\cdot}_\ell}{\det(\cdot,\cdot)_\ell}\right|_\ell^{-1}
\]
where both pairings are non-degenerate.  Conversely, if the pairing $\qu{\cdot,\cdot}_\ell$~\eqref{eq:Regulator spitze Klammer} is non-degenerate, $f$ is an isomorphism up to finite groups.
\end{lemma}
\begin{proof}
%Recall $\ell$-adic Poincaré duality over finite fields, \cite{MilneEtaleCohomology}, p.~276, Theorem~VI.11.1 and p.~183, Corollary~V.2.3.
Using $\H^{2d+1}(X,\Z_\ell(d)) = \Z_\ell$ and $\H^{2d}(\overline{X},\Z_\ell(d)) = \Z_\ell$ by~\eqref{eq:ZellH2dbarX}, there is a commutative diagram of pairings
\[\begin{tikzcd}
    \H^2(X,T_\ell\Acal)_\Tors \arrow[right,"\times" near end,phantom] & \H^{2d-1}(X,T_\ell(\Acal^t)(d-1))_\Tors \arrow[d,"\iso"] \arrow[r,"(\cdot{,}\cdot)_\ell"] & \Z_\ell \arrow[d,equal]\\
    (\H^1(\overline{X},T_\ell\Acal)_\Gamma)_\Tors \arrow[u,"\beta_\Tors",hookrightarrow] \arrow[right,"\times",phantom] & (\H^{2d-1}(\overline{X},T_\ell(\Acal^t)(d-1))^\Gamma)_\Tors \arrow[d,equal] \ar[r,"\cup"] & \Z_\ell \arrow[d,equal]\\
    (\H^1(\overline{X},T_\ell\Acal)^\Gamma)_\Tors \arrow[u,"f_\Tors"] \arrow[right,"\times",phantom] & (\H^{2d-1}(\overline{X},T_\ell(\Acal^t)(d-1))^\Gamma)_\Tors \arrow[r,"\cup"] & \Z_\ell \arrow[d,equal]\\
    \H^1(X,T_\ell\Acal)_\Tors \arrow[u,"\alpha_\Tors" left,"\iso" right] \arrow[right,"\times",phantom] & \H^{2d-1}(X,T_\ell(\Acal^t)(d-1))_\Tors \arrow[u,"\iso" right] \ar[r,"\qu{\cdot{,}\cdot}_\ell"] & \Z_\ell,
\end{tikzcd}\]
where the maps $\alpha_\Tors$, $\beta_\Tors$ and $f_\Tors$ are induced by the maps $\alpha$ resp.\ $\beta$ resp.\ $f$ in diagram~\eqref{eq:sesHS1}.  Note that by~\cref{lemma:alphafandbeta}, $\alpha_\Tors$ is an isomorphism and $\beta_\Tors$ is injective with finite cokernel.

As in~\cite[p.~137, (5)]{SchneiderZeta}, this diagram is commutative with the pairing in the second line non-degenerate:  By Poincaré duality~\cite[p.~276, Theorem~VI.11.1]{MilneEtaleCohomology} (using that $T_\ell\Acal$ is a smooth sheaf since the $\Acal[\ell^n]$ are étale), the pairing in the second line is non-degenerate, hence the pairing in the first line is also non-degenerate since $\beta$ is an isomorphism up to finite groups.  The upper right and the lower right arrows (note that these are the same morphism) are isomorphisms since their kernel is $(\H^{2d-2}(\overline{X},T_\ell(\Acal^t)(d-1))_\Tors$ by~\cref{lemma:HochschildSerreSS}, which is $0$ for weight reasons by~\cref{cor:weightnotequalzerofinite}: $(2d-2) + (-1) - 2(d-1) = -1 \neq 0$; the lower left arrow $\alpha_\Tors$ is an isomorphism since $\alpha$ is an isomorphism up to finite groups since it is surjective by~\eqref{eq:sesHS1} with finite kernel again by~\eqref{eq:sesHS1} (the kernel $\H^0(\overline{X},T_\ell\Acal)^\Gamma$ is finite since $\H^0(\overline{X},T_\ell\Acal)$ has weight $-1+0 \neq 0$ by~\cref{prop:RiundTateModul}, so the $\Gamma$-invariants are finite by~\cref{cor:weightnotequalzerofinite}).

Hence if $f$ is an isomorphism up to finite groups, the pairing $\qu{\cdot,\cdot}_\ell$ is non-degenerate by~\cref{lemma:AAstrichBpairings}, and then the claimed equality for the regulator $q((\beta f \alpha)_\Tors) = \card{\coker(\beta f \alpha)_\Tors}$ follows since $\ker(\beta f \alpha)_\Tors = 0$ by~\cref{lemma:AAstrichBpairings}.

Conversely, if $\qu{\cdot,\cdot}_\ell$ is non-degenerate, $f$ is an isomorphism up to finite groups by~\cref{lemma:AAstrichBpairings}.
\end{proof}

\begin{lemma} \label[lemma]{lemma:SesAcal}
Let $\ell$ be invertible on $X$.  Then one has a short exact sequence
\[
    0 \to \Acal(X) \otimes_\Z \Z_\ell \stackrel{\delta}{\to} \H^1(X,T_\ell\Acal) \to \varprojlim_n(\H^1(X,\Acal)[\ell^n]) \to 0.
\]
If $\Sha(\Acal/X)[\ell^\infty] = \H^1(X,\Acal)[\ell^\infty]$ is finite, $\delta$ induces an isomorphism
\[
    \Acal(X) \otimes_\Z \Z_\ell \isoto \H^1(X,T_\ell\Acal).
\]
\end{lemma}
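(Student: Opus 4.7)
The plan is to derive the sequence from the Kummer short exact sequence of étale sheaves
\[
    0 \to \Acal[\ell^n] \to \Acal \stackrel{\ell^n}{\to} \Acal \to 0
\]
by taking the associated long exact cohomology sequence on $X$. This yields, for each $n$, the short exact sequence
\[
    0 \to \Acal(X)/\ell^n \to \H^1(X,\Acal[\ell^n]) \to \H^1(X,\Acal)[\ell^n] \to 0.
\]
I then pass to the projective limit over $n$, where the transition maps are induced by the inclusions $\Acal[\ell^n] \hookrightarrow \Acal[\ell^{n+1}]$ precomposed with multiplication by $\ell$, so that on the left term they become the canonical surjections $\Acal(X)/\ell^{n+1} \twoheadrightarrow \Acal(X)/\ell^n$ and on the middle they give $\varprojlim_n\H^1(X,\Acal[\ell^n]) = \H^1(X,T_\ell\Acal)$.

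The key point is then to justify exactness after taking $\varprojlim_n$. Since $\Acal(X)$ is a finitely generated abelian group by the Mordell-Weil theorem (applied to $A(K)$ with $K = k(X)$), each quotient $\Acal(X)/\ell^n$ is finite, and the transition maps in $(\Acal(X)/\ell^n)_n$ are surjective, so the system is Mittag-Leffler and $\varprojlim{}^1 = 0$. Moreover, for a finitely generated abelian group the $\ell$-adic completion agrees with $(-) \otimes_\Z \Z_\ell$, giving the identification $\varprojlim_n \Acal(X)/\ell^n = \Acal(X) \otimes_\Z \Z_\ell$ on the left. Exactness at the middle is automatic from the lim-exact sequence, and surjectivity on the right follows from the vanishing of $\varprojlim{}^1$ of the left system. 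This establishes the short exact sequence.

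For the second assertion, assume $\Sha(\Acal/X)[\ell^\infty] = \H^1(X,\Acal)[\ell^\infty]$ is finite (the identification of these two groups for Abelian schemes was recalled in \cref{lemma:cohTerms}). Choose $N$ with $\ell^N \cdot \H^1(X,\Acal)[\ell^\infty] = 0$; then for all $n \geq N$ one has $\H^1(X,\Acal)[\ell^n] = \H^1(X,\Acal)[\ell^\infty]$ as abstract groups, but the transition map $\H^1(X,\Acal)[\ell^{n+1}] \to \H^1(X,\Acal)[\ell^n]$ induced by the above construction is multiplication by $\ell$, hence iteratively becomes the zero map after $N$ steps. Therefore $\varprojlim_n \H^1(X,\Acal)[\ell^n] = 0$, and the short exact sequence collapses to the desired isomorphism $\Acal(X) \otimes_\Z \Z_\ell \isoto \H^1(X,T_\ell\Acal)$.

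The only mildly delicate step is the Mittag-Leffler verification together with the identification of the inverse limit with the $\ell$-adic tensor product, which rests essentially on the finite generation of $\Acal(X)$; everything else is bookkeeping with the long exact cohomology sequence.
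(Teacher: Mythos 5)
Your proof is correct and follows essentially the same route as the paper: take the long exact sequence of the Kummer sequence $0 \to \Acal[\ell^n] \to \Acal \to \Acal \to 0$, pass to $\varprojlim_n$ using that the $\Acal(X)/\ell^n$ are finite (Mordell--Weil) so the Mittag--Leffler condition holds, and for the second claim observe that the Tate module $\varprojlim_n \H^1(X,\Acal)[\ell^n]$ of a finite group vanishes since the transition maps are multiplication by $\ell$. Your explicit verification that $\varprojlim_n \Acal(X)/\ell^n = \Acal(X)\otimes_\Z \Z_\ell$ is exactly the implicit bookkeeping the paper leaves to the reader.
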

\begin{proof}
Since $\ell$ is invertible on $X$, the short exact Kummer sequence of étale sheaves
\[
    0 \to \Acal[\ell^n] \to \Acal \stackrel{\ell^n}{\to} \Acal \to 0
\]
induces a short exact sequence
\[
    0 \to \Acal(X)/\ell^n \stackrel{\delta}{\to} \H^1(X,\Acal[\ell^n]) \to \H^1(X,\Acal)[\ell^n] \to 0
\]
in cohomology, and passing to the limit $\varprojlim_n$ gives us the desired short exact sequence since the $\Acal(X)/\ell^n$ are finite by the Mordell-Weil theorem~\cref{thm:MordellWeil} and the Néron mapping property~\cref{thm:NeronModel}, so they satisfy the Mittag-Leffler condition and $\varprojlim_n^1\Acal(X)/\ell^n = 0$.

The second claim follows from the short exact sequence and since the Tate module of a finite group is trivial by~\cref{lemma:PropertiesOfTateModule}\,\ref{lemma:Tate-module-of-finite-group-is-trivial}.
\end{proof}

\begin{lemma} \label[lemma]{lemma:ShaFinite}
Consider the following statements:

(1) $\qu{\cdot,\cdot}_\ell$ is non-degenerate.

(2) The morphism $f$, where $f$ is as in~\eqref{eq:sesHS1}, is an isomorphism up to finite groups.

(3) In the inequality $\rho \geq \rk_{\Z_\ell}\H^1(X,T_\ell\Acal)$ from~\cref{lemma:L1tilda}, equality holds: $\rho = \rk_{\Z_\ell}\H^1(X,T_\ell\Acal)$.

(4) The canonical injection $\Acal(X) \otimes_\Z \Z_\ell \stackrel{\delta}{\to} \H^1(X,T_\ell\Acal)$ is surjective.

(5) The $\ell$-primary part of the Tate-Shafarevich group $\Sha(\Acal/X)[\ell^\infty]$ is finite.

Then $(1) \iff (2) \iff (3)$ and $(4) \iff (5)$; further $(3) \iff (4)$ assuming $\rho = \rk_\Z\Acal(X)$.

Furthermore, the following are equivalent:

(a) $\rho = \rk_\Z\Acal(X)$ (weak Birch-Swinnerton-Dyer conjecture)

(b) $\qu{\cdot,\cdot}_\ell$ is non-degenerate and the $\ell$-primary part of the Tate-Shafarevich group $\Sha(\Acal/X)[\ell^\infty]$ is finite.
\end{lemma}
\begin{proof}
$(1) \iff (2)$: See~\cref{lemma:TwoPairings}.

$(2) \iff (3)$: This is~\cref{lemma:L1tilda}.

$(3) \iff (4)$ assuming $\rho = \rk_\Z\Acal(X)$: One has $\rho = \rk_{\Z}\Acal(X) = \rk_{\Z_\ell}(\Acal(X) \otimes_\Z \Z_\ell)$ and by~\cref{lemma:SesAcal} $\rk_{\Z_\ell}(\Acal(X) \otimes_\Z \Z_\ell) \leq \rk_{\Z_\ell}\H^1(X,T_\ell\Acal)$, so this is an equality iff $\delta$ in 4.\ is onto.

$(4) \iff (5)$:  By~\cref{lemma:PropertiesOfTateModule}\,\ref{lemma:Tate-module-of-is-trivial-so-finite}, $\varprojlim_n(\H^1(X,\Acal)[\ell^n]) = T_\ell(\H^1(X,\Acal))$ is trivial iff $\H^1(X,\Acal)[\ell^\infty] = \Sha(\Acal/X)[\ell^\infty]$ is finite since $\Sha(\Acal/X)[\ell^\infty]$ is a cofinitely generated $\Z_\ell$-module by~\cref{lemma:ShaEndlichenEllKorang}.  %Note that $\varprojlim_n(\H^1(X,\Acal)[\ell^n]) = T_\ell(\H^1(X,\Acal))$ is zero iff it has $\Z_\ell$-rank zero since Tate modules are $\Z_\ell$-free.

(a)$\implies$(b):  Since $\delta$ in (4) is injective, one has $\rk_\Z\Acal(X) \leq \rk_{\Z_\ell}\H^1(X,T_\ell\Acal) \leq \rho$.  Therefore, $\rho = \rk_\Z\Acal(X)$ implies equality, and (3) and (4) follow, so (1)--(5) hold.

(b)$\implies$(a):  from (b) follows $(5) \implies (4)$ and $(1) \implies (2) \implies (3)$, so from (4) one gets $\Acal(X) \otimes_\Z \Z_\ell \isoto \H^1(X,T_\ell\Acal)$, but by (3), $\rho = \rk_{\Z_\ell}\H^1(X,T_\ell\Acal) = \rk_\Z\Acal(X)$.
\end{proof}

\begin{remark} \label[remark]{rem:exp}
We have
\begin{align*}
    1 - q^{1-s} = 1 - \exp(-(s-1)\log{q}) = (\log{q})(s-1) + O\big((s-1)^2\big) \quad\text{for $s \to 1$}
\end{align*}
using the Taylor expansion of $\exp$.
\end{remark}

\begin{definition}
	Define $c$ by
	\begin{align}
		L(\Acal/X,s) &\sim c\cdot(1-q^{1-s})^\rho \nonumber \\
	&\sim c\cdot(\log{q})^\rho(s-1)^\rho \quad\text{for $s \to 1$}, \label{eq:definitionofc}
	\end{align}
	see~\cref{rem:exp}.
\end{definition}

\begin{remark}
 	Note that $c \in \Q$ since $L(\Acal/X,s)$ is a rational function with $\Q$-coefficients in $q^{-s}$, and $c \neq 0$ since $\rho$ is the vanishing order of the $L$-function at $s=1$ by definition of $\rho$ and the Riemann hypothesis.
\end{remark}

\begin{corollary} \label[corollary]{cor:cell}
If $\rho = \rk_{\Z_\ell}\H^1(X, T_\ell\Acal)$, then
\[
    |c|_\ell^{-1} = q((\beta f \alpha)_\Tors) \cdot \frac{\card{\H^2(X,T_\ell\Acal)_\tors}}{\card{\H^1(X,T_\ell\Acal)_\tors} \cdot \card{\H^2(\overline{X},T_\ell\Acal)^\Gamma}}.
\]
\end{corollary}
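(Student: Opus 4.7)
My plan is to combine the factorisation of $L(\Acal/X,s)$ coming from its definition with the two formulas already established in \cref{lemma:L1tilda} and \cref{cor:Liell}, and then dispose of the residual $\H^0$-term via Mordell-Weil.

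First, I would unwind the definition $L(\Acal/X,s) = P_1(\Acal/X, q^{-s})/P_0(\Acal/X, q^{-s})$ together with the identity $P_i(\Acal/X,s) = L_i(\Acal/X, s-1)$, and use the polynomial factorisation $L_1(\Acal/X, t)=(t-1)^\rho\tilde L_1(\Acal/X, t)$. Together with the asymptotic $q^{1-s}-1\sim -(s-1)\log q$ as $s\to 1$, this gives
\[
L(\Acal/X,s) \sim (-1)^\rho (\log q)^\rho (s-1)^\rho\cdot\frac{\tilde L_1(\Acal/X,q^{-1})}{L_0(\Acal/X,q^{-1})} \qquad \text{as } s \to 1.
\]
Comparing with the defining relation $L(\Acal/X,s)\sim c(\log q)^\rho(s-1)^\rho$, and noting that signs and powers of $q$ are $\ell$-adic units for $\ell\neq p$, this identifies
\[
|c|_\ell \;=\; \frac{|\tilde L_1(\Acal/X,q^{-1})|_\ell}{|L_0(\Acal/X,q^{-1})|_\ell}.
\]

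Second, I would plug in the expression for $|\tilde L_1(\Acal/X,q^{-1})|_\ell^{-1}$ from \cref{lemma:L1tilda} and that for $|L_0(\Acal/X,q^{-1})|_\ell$ from \cref{cor:Liell} applied with $i=0$. The two factors $|\H^0(\bar X,T_\ell\Acal)_\Gamma|$ cancel, yielding
\[
|c|_\ell^{-1} \;=\; q((\beta f\alpha)_\Tors)\cdot \frac{|\H^0(\bar X,T_\ell\Acal)^\Gamma|\cdot|\Tor\H^2(X,T_\ell\Acal)|}{|\H^2(\bar X,T_\ell\Acal)^\Gamma|\cdot|\Tor\H^1(X,T_\ell\Acal)|}.
\]

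Third, I would show that $|\H^0(\bar X,T_\ell\Acal)^\Gamma|=1$, which disposes of the remaining extraneous factor. Using the Hochschild-Serre sequence from \cref{lemma:HochschildSerreSS} (with $\H^{-1}=0$), this group equals $\H^0(X,T_\ell\Acal) = \varprojlim_n \H^0(X,\Acal[\ell^n]) = \varprojlim_n \Acal(X)[\ell^n] = T_\ell\Acal(X)$, and the Tate module of a finitely generated Abelian group vanishes; here $\Acal(X)$ is finitely generated by the Mordell-Weil theorem. Substituting this in gives exactly the claimed identity.

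The main obstacle is the bookkeeping around the shift between $P_i$ and $L_i$: one has to track the evaluation point $t = q^{-1}$ correctly through the identification $L=P_1/P_0$ so that it matches the one used in \cref{lemma:L1tilda} and \cref{cor:Liell}, and verify that every extraneous factor in the expansion (signs, $\log q$, powers of $q$) is an $\ell$-adic unit for $\ell\neq p$. Once the conventions are aligned the remainder is a straightforward multiplication together with the Mordell-Weil vanishing.
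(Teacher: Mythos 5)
Your proposal is correct and follows essentially the same route as the paper: it combines \cref{lemma:L1tilda} for $\tilde{L}_1$ with \cref{cor:Liell} for $L_0$, cancels the common factor $\card{\H^0(\bar{X},T_\ell\Acal)_\Gamma}$, and eliminates $\card{\H^0(\bar{X},T_\ell\Acal)^\Gamma}$ via the Hochschild--Serre sequence and the vanishing $\H^0(X,T_\ell\Acal)=0$, which is exactly the paper's argument (your Mordell--Weil step is just the paper's proof of \eqref{eq:HiVanishing} for $i=0$ made explicit). The extra care you take with the $P_i$ versus $L_i$ shift and the $\ell$-adic unit factors is sound bookkeeping but not a different method.
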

\begin{proof}
Using~\cref{lemma:L1tilda} for $\tilde{L}_1(\Acal/X,t)$ and~\cref{cor:Liell} for $L_0(\Acal/X,t)$, one gets
\begin{align*}
    |c|_\ell^{-1} &= q((\beta f \alpha)_\Tors) \cdot \frac{\card{\H^0(\overline{X},T_\ell\Acal)_\Gamma}}{\card{\H^2(\overline{X},T_\ell\Acal)^\Gamma}} \cdot \frac{\card{\H^2(X,T_\ell\Acal)_\tors}}{\card{\H^1(X,T_\ell\Acal)_\tors}} \cdot \frac{\card{\H^0(\overline{X},T_\ell\Acal)^\Gamma}}{\card{\H^0(\overline{X},T_\ell\Acal)_\Gamma}} \\
            &= q((\beta f \alpha)_\Tors) \cdot \frac{1}{\card{\H^2(\overline{X},T_\ell\Acal)^\Gamma}} \cdot \frac{\card{\H^2(X,T_\ell\Acal)_\tors}}{\card{\H^1(X,T_\ell\Acal)_\tors}} \cdot \frac{\card{\H^0(\overline{X},T_\ell\Acal)^\Gamma}}{1}.
\end{align*}
For $0 = \H^0(X,T_\ell\Acal) \isoto \H^0(\overline{X},T_\ell\Acal)^\Gamma$ use~\eqref{eq:sesHS} with $i=0$ and~\eqref{eq:HiVanishing}.
\end{proof}

\begin{theorem}[analogue of the conjecture of Birch and Swinnerton-Dyer for Abelian schemes over higher dimensional bases, cohomological version] \label[theorem]{thm:BSDI}
Recall that $k$ is a finite field of characteristic $p$, $X/k$ is a smooth projective geometrically connected variety and $\Acal/X$ is an Abelian scheme with analytic rank $\rho = \ord\nolimits_{t=1}L(\Acal/X,t)$.  One has $\rho \geq \rk_{\Z_\ell}\H^1(X, T_\ell\Acal) \geq \rk_\Z \Acal(X)$ and the following are equivalent:

(a) $\rho = \rk_{\Z}\Acal(X) = \rk_{\Z}A(K)$

(b) For some $\ell \neq p = \Char{k}$, $\qu{\cdot,\cdot}_\ell$ is non-degenerate and $\Sha(\Acal/X)[\ell^\infty]$ is finite.

If these hold, we have for all $\ell \neq p$
\[
    |c|_\ell^{-1} = \left|\frac{\det\qu{\cdot,\cdot}_\ell}{\det(\cdot,\cdot)_\ell}\right|_\ell^{-1} \cdot \frac{\card{\Sha(\Acal/X)[\ell^\infty]}}{\card{\Acal(X)[\ell^\infty]_\tors} \cdot \card{\H^2(\overline{X},T_\ell\Acal)^\Gamma}},
\]
where the special $L$-value $c$ is defined by~\eqref{eq:definitionofc}, and the prime-to-$p$ torsion $\Sha(\Acal/X)[\text{non-$p$}]$ is finite.
\end{theorem}
\begin{proof}
Note that $\rho = \ord\nolimits_{t=1}L_1(\Acal/X,t) = \ord\nolimits_{t=1}L(\Acal/X,t)$ by~\cref{rem:DefinitionOfLFunction}, and that $\rk_{\Z_\ell}\H^1(X, T_\ell\Acal) \geq \rk_\Z \Acal(X)$ by the injection from~\cref{lemma:ShaFinite}\,(4).  The first statement is~\cref{lemma:ShaFinite} (a)$\iff$(b).  Now identify the terms in~\cref{cor:cell} using~\cref{lemma:cohTerms} (cohomology groups) and~\cref{lemma:TwoPairings} (regulator).

By~\cref{thm:BSDI}\,(b) for $\ell$ $\implies$ (a) independent of $\ell$ $\implies$ (b) for $\ell'$, $\Sha(\Acal/X)[\ell'^\infty]$ is finite for every $\ell' \neq p$.  But since $c \neq 0$, and by the relation of $|c|_{\ell'}^{-1}$ and $|\Sha(\Acal/X)[\ell'^\infty]|$, the prime-to-$p$ torsion is finite.
\end{proof}

\begin{remark} \label[remark]{rem:Lfn}
(a) For example, \cref{thm:BSDI} holds unconditionally if $L(\Acal/X,1) \neq 0$ since one then has $0 = \rho \geq \rk_{\Z}\Acal(X) \geq 0$.  For examples when $\Sha(\Acal/X)[\ell^\infty]$ is finite, see section~\ref{sec:IsoconstantAbelianScheme}.

(b) The (determinants of the) pairings $\qu{\cdot,\cdot}_\ell$ and $(\cdot,\cdot)_\ell$ are identified below:  One has $\det(\cdot,\cdot)_\ell = 1$ and $\det\qu{\cdot,\cdot}_\ell$ is the regulator, see especially~\cref{rem:HeightPairingAndCohPairing}.

(c) For the vanishing of $\H^2(\overline{X},T_\ell\Acal)^\Gamma$ see~\cref{rem:H2vanishing} below.

(d) Assume $\Sha(\Acal/X)[\ell^\infty]$ finite.  Then, the pairing $(\cdot,\cdot)_\ell$ has determinant $1$, see the discussion in subsection~\ref{subsection:pairingroundbrackets} below, and is thus non-degenerate.  The pairing $\qu{\cdot,\cdot}_\ell$ equals the height pairing, see subsection~\ref{subsection:pairingacutebrackets} below, and is therefore non-degenerate by~\cite[p.~98, Theorem~9.15]{Conrad-trace}.
\end{remark}

\begin{remark}
	This remark is about the independence of the analytic rank $\ord\nolimits_{s=1}L(\Acal/X,s)$ on the model $\Acal/X$ of $A/K$.
	
	Note that the vanishing order of $L(\Acal/X,s)$ at $s = 1$, the analytic rank, only depends on $L_1(\Acal/X,s)$ since $L_0(\Acal/X,1) = \det(1-\Frob_q^{-1} \mid \H^0(\overline{X},V_\ell\Acal)) \neq 0$ by~\cref{lemma:BayerNeukirch} ``$2 \implies 1$'' since $\H^0(\overline{X}, V_\ell\Acal)$ is pure of weight $0-1 \neq 0$ by~\cref{prop:RiundTateModul} below, so its invariants $\H^0(\overline{X}, V_\ell\Acal)^\Gamma$ are finite by~\cref{cor:weightnotequalzerofinite}.  Furthermore, the vanishing order of $L_1$ at $s = 1$ only depends on the generic fibre $A/K$ (and not on the model $X$) assuming the conjecture of Birch and Swinnerton-Dyer for $\Acal/X$ by an a posteriori argument:  If the conjecture holds, by~\cref{thm:BSDI}, the vanishing order at $s = 1$ of $L(\Acal/X,s)$ equals the (algebraic) rank of $A(K)$.
	
	For $\dim{X} = 1$, there is a canonical model $X$ of $K$.  In contrast, for higher dimensional $X$, there is no canonical model (one can e.\,g.\ blow up smooth centres), and the special $L$-value depends on the model.  If every birational morphism of smooth projective $k$-varieties of dimension $d$ is given by a sequence of monoidal transformations (e.\,g.\ for surfaces, see~\cite[p.~412, Theorem~V.5.5]{HartshorneAG} over algebraically closed fields), the vanishing order of $L_1(\Acal/X,1) = \det(1-\Frob_q^{-1} \mid \H^1(\overline{X},V_\ell\Acal))$ is independent of the model of $\overline{X}$ by calculation of the étale cohomology of blow-ups of torsion sheaves~\cite[section~0EW3]{stacks-project}:  If $\overline{X}'$ is the blow-up of $\overline{X}$ along a closed point $\overline{Z}$ with exceptional divisor $\overline{E} \iso \mathbf{P}^{c-1}_{\overline{k}}$, then there is an exact sequence of proper varieties over $\overline{k}$
	\[
	\H^0(\overline{E},V_\ell\Acal) \to \H^1(\overline{X},V_\ell\Acal) \to \H^1(\overline{X}',V_\ell\Acal) \oplus \H^1(\overline{Z},V_\ell\Acal) \to \H^1(\overline{E},V_\ell\Acal).
	\]
	Here, $\H^1(\overline{Z},V_\ell\Acal) = 0$ since $\cd_\ell\overline{Z} = 0$, $\H^0(\overline{E},V_\ell\Acal)$ is pure of weight $0-1 \neq 0$ and $\Acal[\ell^n]|_{\overline{E}} \iso \mu_{\ell^n}^{2g}$ since $\piet(\mathbf{P}^{c-1}_{\overline{k}}) = 0$ and $\Acal[\ell^n]/X$ is finite étale, so $\H^1(\overline{E},V_\ell\Acal) = \H^1(\mathbf{P}^{c-1}_{\overline{k}},\Q_\ell(1)) = 0$.
\end{remark}

\section{Comparison of the cohomological pairing $\qu{\cdot,\cdot}_\ell$ with geometric height pairings} \label{subsection:pairingacutebrackets}
The objective of this section is to show that the cohomological pairing $\qu{\cdot,\cdot}_\ell$ discussed above coincides, up to multiplication by a certain \emph{integral hard Lefschetz defect} (see~\cref{def:integralhardLefschetzdefect}), with certain other geometric pairings defined in subsection~\ref{subsection:DefinitionOfGeometricPairings} below, namely the \emph{generalised Bloch} and the \emph{generalised Néron-Tate canonical height} pairings (see~\cref{def:BlochPairing} and~\cref{def:generalisedNeronTatecanonicalheightpairing}, respectively).  The equality of these three pairings (up to the indicated integral hard Lefschetz defect) is the content of the main~\cref{thm:comparisonofpairings} of this section, which is essentially equivalent to the commutativity of diagram~\eqref{eq:BigDigram} in~\cref{thm:bigdiagramPairings}.  To establish the commutativity of diagram~\eqref{eq:BigDigram}, we first compare (in subsection~\ref{subsect:Yoneda}) $\qu{\cdot,\cdot}_\ell$ with a certain Yoneda pairing.  The main results in this subsection are~\cref{lemma:commutativityof2firstpart} and~\cref{lemma:cupproductandintersectionproduct}, which establish the commutativity of subdiagrams (1) and (2) in diagram~\eqref{eq:BigDigram}.  In subsection~\ref{subsec:Bloch}, the Yoneda pairing is compared with the generalised Bloch pairing and, in subsection~\ref{subsec:NTheight}, the generalised Bloch pairing is shown to coincide with the generalised Néron-Tate canonical height pairing (see~\cref{cor:BlochpairingandNeronTatepairing}).  The developments in subsection~\ref{subsec:Bloch} and~\ref{subsec:NTheight} yield the commutativity of subdiagram~(3) in~\eqref{eq:BigDigram}, thereby establishing the commutativity of the full diagram and thereby proving~\cref{thm:comparisonofpairings}.

\subsection{Definition of the geometric pairings}\label{subsection:DefinitionOfGeometricPairings}

We wish to define a \defn{generalised Bloch pairing}
\[
	\qu{\cdot,\cdot}: A(K) \times A^t(K) \to \RR,
\]
see~\cref{def:BlochPairing} below.  To this end, we need some preparations.

Let $X$ be a $k$-variety and $K = k(X)$ be the function field of $X$.  Define for $S \subset X^{(1)}$ finite the $S$-adele ring of $X$ as the restricted product
\begin{align*}
\A_{K,S} &= \prod_{x \in S}K_x \times \prod_{v \in X^{(1)} \setminus S}\Ocal_{X,x}
\end{align*}
where $K_x$ is the quotient field of the discrete valuation ring $\Ocal_{X,x}$ with discrete valuation $v_x: K_x^\times \to \Z$ and absolute value $|\cdot|_{\iota,x} = q^{-\deg_\iota\overline{\{x\}}\cdot v_x(\cdot)}$, and the \defn{adele ring} of $X$ as
\begin{align*}
	\A_K & = \varinjlim_{S \subset X\text{ finite}} \A_{K,S}. %\prod'_{x \in X^{(1)}} K_x
\end{align*}

\begin{proposition}[adele valued points] \label[proposition]{prop:AdeleValuedPoints}
	Let $X$ be a $k$-variety and $S \subset X^{(1)}$ be a finite set of places.  Then
	\[
	\varinjlim_{S'} X_{S'}(\A_{K,S'}) = X_S(\A_K) = X(\A_K)
	\]
	and
	\[
	X_S(\A_{K,S}) = \prod_{v \in S}X_v(K_v) \times \prod_{v \not\in S}X_{S,v}(\Ocal_{X,v})
	\]
	as sets with the notation from~\cite[p.~70]{Conrad-Adelic}.  This bijection is used to define a topology on $X_S(\A_{K,S})$.
\end{proposition}
\begin{proof}
	See~\cite[p.~70\,f., (3.1) and Theorem~3.6]{Conrad-Adelic}.
\end{proof}

\begin{lemma} \label[lemma]{lemma:PicardGroupOfInfiniteProduct}
	Let $(R_i)_{i \in I}$ be a family of rings with $\Pic(\Spec{R_i}) = 0$ for every $i \in I$.  Then $\Pic(\prod_{i \in I}\Spec R_i) = 0$.  (Note that the infinite fibre product exists and is affine by~\cite[\href{https://stacks.math.columbia.edu/tag/0CNH}{Tag 0CNH}]{stacks-project}.)
\end{lemma}
\begin{proof}
	Line bundles correspond to $\G_m$-torsors, and a torsor is trivial iff it has a section.  So let $\Lcal$ be a line bundle on $\prod_{i \in I}\Spec R_i$.  Since line bundles on affine schemes are affine, $\Lcal$ is represented by an affine scheme $X$.  One has
	\[
		X(\prod_{i \in I}\Spec R_i) = \prod_{i\in I}X(R_i),
	\]
	by~\cite[p.~72\,ff., proof after~(3.3)]{Conrad-Adelic}.  Each of the factors has a non-trivial element by $\Pic(R_i) = 0$.  Hence, the product is non-empty by the axiom of choice.
\end{proof}

\begin{corollary} \label[corollary]{cor:PicAdeleRingTrivial}
	The Picard group of the adele ring $\A_K$ is trivial.
\end{corollary}
\begin{proof}
	By the previous~\cref{lemma:PicardGroupOfInfiniteProduct}, $\Pic(\A_{K,S}) = 0$ since line bundles on the local rings $K_x$ and $\Ocal_{X,x}$ are trivial, and $\Pic(\A_K) = \varinjlim_S \Pic(\A_{K,S})$ by the compatibility of étale cohomology ($\Pic(X) = \H^1(X,\G_m)$) with limits, see~\cite[p.~88\,f., Lemma~III.1.16]{MilneEtaleCohomology}.
	%TODO: Produkte!
\end{proof}

Let $a \in A(K)$ and $a^t = (1 \to \G_m \to \mathscr{X} \to \Acal \to 0) \in \EExt^1_X(\Acal,\G_m) = \Acal^t(X) = A^t(K)$.  By descent theory, $\mathscr{X}$ is a smooth commutative $X$-group scheme, and by Hilbert's theorem~90, the sequence
\begin{equation} \label{eq:GmKXKAcalK}
1 \to \G_m(K) \to \mathscr{X}(K) \to \Acal(K) \to \H^1(K,\G_m) = 0
\end{equation}
and, by~\cref{cor:PicAdeleRingTrivial},
\begin{equation} \label{eq:GmKXKAcalAK}
1 \to \G_m(\A_K) \to \mathscr{X}(\A_K) \to \Acal(\A_K) \to \H^1(\A_K,\G_m) = \Pic(\A_K) = 0
\end{equation}
are still exact.

Fix a closed immersion $\iota: X \hookrightarrow \PP^N_k$ with the very ample sheaf $\Ocal_X(1) := \iota^*\Ocal_{\PP^N_k}(1)$.  There is a natural homomorphism, the \defn{logarithmic modulus map},
\begin{equation} \label{eq:homomorphisml}
l: \G_m(\A_K) \to \log{q}\cdot\Z \subseteq \RR, (a_x) \mapsto \sum_{x \in X^{(1)}}\log|a_x|_{\iota,x} = - \log{q} \cdot \sum_{x \in X^{(1)}}\deg_\iota\overline{\{x\}}\cdot v_x(a_x).
\end{equation}
By the product formula (see~\cite[p.~146, Exercise~II.6.2\,(d)]{HartshorneAG}), $l(\G_m(K)) = l(K^\times) = 0$.  Scale the image of $l$ such that $l$ is surjective.

\begin{lemma} \label[lemma]{lemma:laextends}
	The homomorphism $l: \G_m(\A_K) \to \log{q}\cdot\Z \subseteq \RR$~\eqref{eq:homomorphisml} has a unique extension $l_{a^t}: \mathscr{X}(\A_K) \to \RR$, which induces by restriction to $\mathscr{X}(K)$ a homomorphism
	\[
	l_{a^t}: A(K) \to \RR.
	\]
	%since $l(\G_m(K)) = 0$. %, and one has now Bloch's definition of $h: A(K) \times A^t(K) \to \RR$, equivalently, by adjunction, $A^t(K) \to \Hom(A(K),\RR), a^t \mapsto (l_{a^t}(-): A(K) \to \RR)$,
	%\[
	%    h(a,a^t) = l_{a^t}(a).
	%\]
\end{lemma}
\begin{proof}
	Define $\G_m^1$ as the kernel of $l$, and $\mathscr{X}^1$ as
	\[
	\mathscr{X}^1 = \{a \in \mathscr{X}(\A_K) : \exists n \in \Z_{\geq 1}, na \in \widetilde{\mathscr{X}}^1\}
	\]
	the rational saturation of $\widetilde{\mathscr{X}}^1$ with
	\begin{align*}
	\widetilde{\mathscr{X}}^1 &= \G_m^1\cdot\prod_{v \in X^{(1)}}\mathscr{X}(\Ocal_{X,v}) \subseteq \mathscr{X}(\A_K).
	\end{align*}
	
	Consider the following commutative diagram (at first without the dashed arrows) with exact rows by~\eqref{eq:GmKXKAcalK} and~\eqref{eq:GmKXKAcalAK} and injective upper vertical morphisms and exact left column:
	\begin{equation} \label{eq:commdiaglat}
	\begin{gathered}
	\begin{tikzcd}
	& 1 \arrow[d] & 0 \arrow[d,dashed] & & \\
	1 \arrow[r] & \G_m^1 \arrow[r] \arrow[d] & \mathscr{X}^1 \arrow[r] \arrow[d] & \Acal(\A_K) \arrow[r] \arrow[d,equal] & 0\\
	1 \arrow[r] & \G_m(\A_K) \arrow[r] \arrow[d,"l"] & \mathscr{X}(\A_K) \arrow[r] \arrow[d,"l_{a^t}",dashed] & \Acal(\A_K) \arrow[r] & 0\\
	& \log{q}\cdot\Z \arrow[d] \arrow[r,dashed,equal] & \log{q}\cdot\Z \arrow[d] & & \\
	& 0 & 0 & &
	\end{tikzcd}
	\end{gathered}
	\end{equation}
	For the commutativity of the diagram, it suffices to show that (1) $\G_m^1 = \mathscr{X}^1 \cap \G_m(\A_K) \subseteq \mathscr{X}(\A_K)$ and (2) $\mathscr{X}^1 \twoheadrightarrow \Acal(\A_K)$.
	
	Assertion (1) is true because of the following:  One has $\G_m^1 \subseteq \G_m(\A_K) \cap \mathscr{X}^1$ by definition of $\mathscr{X}^1$ and $\G_m^1$.  For the other inclusion $\mathscr{X}^1 \cap \G_m(\A_K) \subseteq \G_m^1$, note that
	\[
	l\Big(\prod_{v \in X^{(1)}}\G_m(\Ocal_{X,v})\Big) = 0
	\]
	since $\log|a_v|_{\iota,v} = 0$ for $a_v \in \Ocal_{X,v}^\times$, hence
	\[
	\G_m^1\cdot\prod_{v \in X^{(1)}}\G_m(\Ocal_{X,v}) \subseteq \G_m^1,
	\]
	so
	\[
	\widetilde{\mathscr{X}}^1 \cap \G_m(\A_K) = \G_m^1\cdot\prod_{v \in X^{(1)}}\G_m(\Ocal_{X,v}) \subseteq \G_m^1,
	\]
	but $\G_m(\A_K)/\G_m^1 \hookrightarrow \RR$ is torsion-free, hence the inclusion $\mathscr{X}^1 \cap \G_m(\A_K) \subseteq \G_m^1$.
	
	Assertion (2) is true because of the following:  By the long exact sequence associated to the short exact sequence $1 \to \G_m \to \mathscr{X} \to \Acal \to 0$ and~\cref{lemma:PicardGroupOfInfiniteProduct}, there is a surjection
	\[
	\mathscr{X}\Big(\prod_{x \in X^{(1)}}\Ocal_{X,x}\Big) \twoheadrightarrow \Acal\Big(\prod_{x \in X^{(1)}}\Ocal_{X,x}\Big) = \Acal(\A_K),
	\]
	the latter equality by~\cref{prop:AdeleValuedPoints} and the valuative criterion for properness.  But obviously
	\begin{equation} \label{eq:XprodsubseteqX1}
	\mathscr{X}\Big(\prod_{x \in X^{(1)}}\Ocal_{X,x}\Big) \subseteq \mathscr{X}^1.
	\end{equation}
	
	By the snake lemma, the diagram completed with the dashed arrows is also exact and there exists the sought-for extension $l_{a^t}: \mathscr{X}(\A_K) \to \log{q}\cdot\Z$ of $l: \G_m(\A_K) \to \log{q}\cdot\Z$.  The homomorphism $l_{a^t}$ induces by restriction to $\mathscr{X}(K)$ a homomorphism
	\[
	l_{a^t}: A(K) \to \RR,
	\]
	since $l(\G_m(K)) = 0$ by the product formula.  %The pairing $h: A(K) \times A^t(K) \to \RR$, equivalently $A^t(K) \to \Hom(A(K),\RR)$, is now given by
	%\[
	%    h(a,a^t) = l_{a^t}(a). \qedhere
	%\]
\end{proof}

\begin{definition} \label[definition]{def:BlochPairing}
	Define the \defn{generalised Bloch pairing} $\qu{\cdot,\cdot}: A(K) \times A^t(K) \to \log{q}\cdot\Z$ as follows:  Let $a \in A(K)$ and $a^t \in A^t(K)$.  Let $\qu{a,a^t}$ be the image of $a$ under the composition of the maps
	\[
	A(K) = \mathscr{X}(K)/\G_m(K) \to \mathscr{X}(\A_K)/\G_m(K) \stackrel{l_{a^t}}{\to} \log{q}\cdot\Z
	\]
	with $l_{a^t}$ coming from~\cref{lemma:laextends}.
\end{definition}
(The first identity comes from~\eqref{eq:GmKXKAcalK}.  Note that $\G_m(K) \subseteq \G_m^1$ by the product formula.)\\

Now we wish to define the \defn{generalised Néron-Tate canonical height pairing} $\hat{h}_{K,\iota}$.

For the definition of a generalised global field see~\cite[p.~83, Definition~8.1]{Conrad-trace}.  Let us recall \textbf{Conrad's height pairing for generalised global fields} from~\cite[p.~82\,ff., section~8]{Conrad-trace}.  Let $X$ be a smooth projective geometrically connected variety over a finite ground field $k = \F_q$ and $K = k(X)$ the function field of $X$.  Choose a closed $k$-immersion $\iota: X \hookrightarrow \PP^N_k$.  For $x \in X^{(1)}$ let
\[
c_{x,\iota} = q^{-\deg_{k,\iota} \overline{\{x\}}} \in \Q \cap (0,1).
\]
For the definition of the degree of a closed subscheme of projective space see~\cite[p.~52]{HartshorneAG}.  Then the absolute values
\[
\|\cdot\|_{x,\iota} = c_{x,\iota}^{\mathrm{ord}_x(\cdot)}
\]
on $K$, where $x \in X^{(1)}$, satisfy the product formula by~\cite[p.~146, Exercise~II.6.2\,(d)]{HartshorneAG}.  Conrad calls the system of these valuations the structure of a \defn{generalised global field} on $K$.  This induces a height function
\[
h_{K,n,\iota}: \PP^n_K(\overline{K}) \to \RR_{\geq 0},\quad h_{K,n,\iota}([t_0:\ldots:t_n]) = \frac{1}{[K':K]}\sum_{v'}\max_{i=0}^n \log\|t_i\|_{v',\iota}
\]
on projective space over $K$, see~\cite[p.~86\,ff.]{Conrad-trace}, with the finitely many lifts $v'$ of $v = x \in X^{(1)}$ to $K'/K$ finite, where $K \subseteq K' \subseteq \overline{K}$ is a finite subextension over $K$ that contains the $t_i$ and we canonically endow $K'$ with a structure of generalised global field via the algebraic method as in~\cite[p.~86]{Conrad-trace}; this is independent of the choice of $K'$, see~\cite[p.~87\,ff.]{Conrad-trace}.

Now, we construct a canonical height pairing
\[
\hat{h}_{\iota} := \hat{h}_{K,\iota}: \Acal(X) \times \Acal^t(X) \to \RR
\]
as follows.  If $\Lcal$ is a very ample line bundle on a smooth projective geometrically connected $K$-variety $X$, the induced closed immersion
\[
\iota: X \hookrightarrow \PP(\H^0(X,\Lcal))
\]
defines a height function
\[
h_{K,\Lcal,\iota} := h_{K,\H^0(X,\Lcal),\iota} \circ \iota_\Lcal: X(\overline{K}) \to \RR,
\]
where $h_{K,\H^0(X,\Lcal),\iota} = h_{K,n,\iota}$ if $\dim_K\H^0(X,\Lcal) = n$.  By linearity, since one can write any line bundle on $X$ as a difference of two very ample line bundles (see~\cite[p.~186, l.~8]{HindrySilverman}), this extends as in~\cite[p.~184, Theorem~B.3.2]{HindrySilverman} \emph{(Weil's height machine)} to a homomorphism
\[
\Pic(X) \to \RR^{X(\overline{K})}/O(1),
\]
where $O(1) := \{f: X(\overline{K}) \to \RR : \text{$f$ is bounded}\} \subset \RR^{X(\overline{K})}$ is the vector subspace of bounded functions.

Our $K$-variety $X$ will now be an Abelian variety $A/K$ arising as the generic fibre of an Abelian scheme.
\begin{definition}[generalised Néron-Tate canonical height pairing] \label[definition]{def:generalisedNeronTatecanonicalheightpairing}
Now let $\Acal/X$ be an Abelian scheme.  In this case, one can, by the Tate limit argument, define a \emph{canonical} height pairing, taking values in $\RR^{A(\overline{K})}$ (not modulo bounded functions)
\[
\hat{h}_{K,\Lcal,\iota}: \Acal(X) = A(K) \to \RR
\]
or
\[
\hat{h}_{K,\iota}: \Acal(X) \times \Acal^t(X) = A(K) \times A^t(K) \to \RR,
\]
respectively as in~\cite[p.~284\,ff.]{BombieriGubler}.  One has $\hat{h}_{K,\iota}(x,\Lcal) = \hat{h}_{K,\Lcal,\iota}(x)$.
\end{definition}

\begin{proposition} \label[proposition]{prop:NeronTateHeightAndPoincareBundle}
	Let $K$ be a generalised global field, $A/K$ an Abelian variety and $\Pcal \in \Pic(A \times_K A^t)$ the Poincaré bundle.  Then
	\[
	\hat{h}_{\Lcal}(x) = \hat{h}_{\Pcal}(x,\Lcal)
	\]
	for $x \in A(K)$ and $\Lcal \in A^t(K)$.
\end{proposition}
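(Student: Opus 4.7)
The plan is to reduce the identity to the universal property of the Poincaré bundle combined with functoriality of the canonical height under morphisms of Abelian varieties. Recall that for a line bundle $\Mcal$ on an Abelian variety satisfying either $[n]^*\Mcal \cong \Mcal^{n^2}$ (symmetric) or $[n]^*\Mcal \cong \Mcal^n$ (antisymmetric), Tate's limit procedure produces a canonical height $\hat{h}_\Mcal$ which is quadratic resp.\ linear and agrees with $h_{K,\Mcal,\iota}$ modulo $O(1)$; see~\cite{BombieriGubler}, p.~284\,ff. A formal consequence of the uniqueness in this limit construction is functoriality: for every morphism $f: B' \to B$ of Abelian varieties over $K$ and every line bundle $\Mcal$ on $B$ to which the construction applies, one has $\hat{h}_{f^*\Mcal} = \hat{h}_\Mcal \circ f$.

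First I would address the definition of $\hat{h}_\Pcal$ itself. The Poincaré bundle is neither symmetric nor antisymmetric on $A \times A^\vee$, but a bilinear variant of Tate's telescoping argument applies because $\Pcal$ satisfies the mixed transformation law $([m] \times [n])^*\Pcal \cong \Pcal^{mn}$, a consequence of the fact that its restrictions to fibres over either factor are of degree zero. This yields a well-defined $\Z$-bilinear pairing $\hat{h}_\Pcal: A(K) \times A^\vee(K) \to \RR$ agreeing with $h_{K,\Pcal,\iota}$ up to bounded error. Next, for fixed $\Lcal \in A^\vee(K) = \Pic^0(A)(K)$, consider the closed immersion
\[
    i_\Lcal: A \to A \times A^\vee, \quad x \mapsto (x, \Lcal).
\]
The defining universal property of the Poincaré bundle gives $i_\Lcal^*\Pcal \cong \Lcal$ as line bundles on $A$, and applying the functoriality of $\hat{h}$ to $i_\Lcal$ yields
\[
    \hat{h}_\Pcal(x, \Lcal) = \hat{h}_{i_\Lcal^*\Pcal}(x) = \hat{h}_\Lcal(x),
\]
which is the asserted identity.

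The main obstacle is not the algebraic identity, which is essentially formal once the setup is in place, but rather checking that the height machine of~\cite{BombieriGubler}—originally written for number fields—transfers verbatim to Conrad's generalised global fields. This reduces to verifying that only the product formula and the local finiteness properties of the valuations $\parallel \cdot \parallel_{x,\iota}$, both established at the start of this section via~\cite{HartshorneAG}, Exercise~II.6.2\,(d), are needed as inputs; the limit, functoriality, and bilinearity arguments are then formal and go through unchanged.
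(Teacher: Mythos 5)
The paper disposes of this proposition by citing it directly (Bombieri--Gubler, Corollary~9.3.7), so any actual argument you give is necessarily a different route: you are essentially reproving the cited corollary, which is legitimate, and your closing remark correctly identifies the only genuinely new point in this setting, namely that the height machine only uses the product formula for the valuations $\parallel\cdot\parallel_{x,\iota}$ and hence transfers to Conrad's generalised global fields. Your construction of $\hat{h}_\Pcal$ via $([m]\times[n])^*\Pcal\cong\Pcal^{mn}$ and the identification $i_\Lcal^*\Pcal\cong\Lcal$ are both correct.

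The one step that does not hold as stated is the ``functoriality'' lemma you invoke: $\hat{h}_{f^*\Mcal}=\hat{h}_\Mcal\circ f$ is a formal consequence of Tate's limit only when $f$ is a \emph{homomorphism} of Abelian varieties. For a general morphism, i.e.\ a homomorphism composed with a translation, it fails by a constant: e.g.\ for $\Mcal\in\Pic^0(B)$ and $f=\tau_a$ one has $\tau_a^*\Mcal\cong\Mcal$, while $\hat{h}_\Mcal(x+a)=\hat{h}_\Mcal(x)+\hat{h}_\Mcal(a)$. Your map $i_\Lcal\colon x\mapsto(x,\Lcal)$ is precisely of this non-homomorphic type (it is $x\mapsto(x,0)$ followed by translation by $(0,\Lcal)$), so the lemma as stated cannot be applied to it. The conclusion is nevertheless correct and the repair stays inside your own toolkit: for fixed $\Lcal$, the function $x\mapsto\hat{h}_\Pcal(x,\Lcal)$ is additive (bi-additivity of $\hat{h}_\Pcal$), the function $\hat{h}_\Lcal$ is additive ($\Lcal\in\Pic^0$), and their difference is bounded because $\hat{h}_\Pcal=h_{K,\Pcal,\iota}+O(1)$, $h_{K,\Pcal,\iota}\circ i_\Lcal=h_{K,i_\Lcal^*\Pcal,\iota}+O(1)=h_{K,\Lcal,\iota}+O(1)$, and $\hat{h}_\Lcal=h_{K,\Lcal,\iota}+O(1)$; a bounded additive function on $A(\bar{K})$ vanishes, giving $\hat{h}_\Pcal(x,\Lcal)=\hat{h}_\Lcal(x)$. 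With that substitution (which is in fact the argument behind the corollary the paper cites), your proof is complete.
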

\begin{proof}
	See~\cite[p.~292, Corollary~9.3.7]{BombieriGubler}.
\end{proof}

\begin{lemma} \label[lemma]{lemma:xLcal}
	Let $\Acal/X$ be a projective Abelian scheme over a locally Noetherian scheme $X$.  Let $x \in \Acal(X)$ and $\Lcal \in \PPic^0_{\Acal/X}(X) = \Acal^t(X)$.  By the universal property of the Poincaré bundle~\cite[p.~262\,f., Exercise~9.4.3]{FGAExplained}, there is a unique $X$-morphism $h: X \to \Acal^t$ such that $\Lcal = (\id_\Acal \times h)^*\Pcal_\Acal$.  Then $x^*\Lcal = (x,h)^*\Pcal_\Acal$.
\end{lemma}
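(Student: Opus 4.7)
The plan is to reduce the statement to a tautological application of functoriality of pullback together with an identification of a composition of morphisms via the universal property of the fibre product.

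By the hypothesis extracted from the universal property of the Poincaré bundle, the morphism $h: X \to \Acal^\vee$ (uniquely) satisfies $\Lcal \iso (\id_\Acal \times h)^*\Pcal_\Acal$, where $\id_\Acal \times h$ is understood as the $X$-morphism $\Acal \iso \Acal \times_X X \to \Acal \times_X \Acal^\vee$ obtained from $\id_\Acal$ and $h$ via the fibre product. Applying the section $x: X \to \Acal$ as a pullback to both sides and using functoriality of the pullback of line bundles gives
\[
    x^*\Lcal \iso x^*(\id_\Acal \times h)^*\Pcal_\Acal \iso \bigl((\id_\Acal \times h) \circ x\bigr)^*\Pcal_\Acal.
\]

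The only remaining point is to identify the composition $(\id_\Acal \times h) \circ x: X \to \Acal \times_X \Acal^\vee$ with the morphism $(x,h): X \to \Acal \times_X \Acal^\vee$ induced by the pair of $X$-morphisms $x$ and $h$. This is immediate from the universal property of the fibre product: letting $\pi: \Acal \to X$ denote the structure map of $\Acal/X$, the composition $(\id_\Acal \times h) \circ x$ factors the two projections $\Acal \times_X \Acal^\vee \to \Acal$ and $\Acal \times_X \Acal^\vee \to \Acal^\vee$ as $x$ and $h \circ \pi \circ x = h \circ \id_X = h$ respectively, since $x$ is a section of $\pi$. These are precisely the projections factoring $(x,h)$, so the two morphisms agree.

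There is really no substantial obstacle in this lemma; the entire content lies in being careful about the identification $\Acal \times_X X = \Acal$ and the fact that $\pi \circ x = \id_X$, so that the naive pointwise formula $t \mapsto (x(t), h(t))$ is recovered intrinsically. Combining the two displayed steps yields $x^*\Lcal \iso (x,h)^*\Pcal_\Acal$, as required.
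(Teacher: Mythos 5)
Your proof is correct: pulling back the defining isomorphism $\Lcal \iso (\id_\Acal \times h)^*\Pcal_\Acal$ along the section $x$ and identifying $(\id_\Acal \times h)\circ x$ with $(x,h)$ via the universal property of the fibre product (using $\pi\circ x=\id_X$) is exactly the intended argument. The paper in fact states this lemma without proof, so your write-up simply supplies the routine verification that was omitted; there is no gap.
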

\begin{proof}
	Note that the map $(x,h): X \to \Acal \times_X \Acal^t$ factors as
	\[\begin{tikzcd}
	X \arrow[r,"x"] & \Acal \times_X X \arrow[r,"\id_\Acal \times h"] & \Acal \times_X \Acal^t.
	\end{tikzcd}\]
	Consequently, $x^*\Lcal = x^*(\id_\Acal \times h)^*\Pcal_\Acal = (x,h)^*\Pcal_\Acal$.
\end{proof}

Now we need to define the \defn{integral hard Lefschetz defect}.

\begin{theorem}[hard Lefschetz for finite ground fields] \label[theorem]{thm:hardLefschetzFiniteGroundField}
Let $k$ be a finite field, $\ell \neq \Char{k}$ be prime and $X/k$ be a smooth projective variety of pure dimension $d$.  Let $\eta \in \H^2(X,\Z_\ell(1))$ be the first Chern class of $\Ocal_X(1) \in \Pic(X)$ (the image of $\Ocal_X(1)$ under the homomorphism $\Pic(X) \to \H^2(X,\Z_\ell(1))$ from~\cite[p.~271, Proposition~VI.10.1]{MilneEtaleCohomology}) and $\Acal/X$ be an Abelian scheme.  Then the iterated cup products
\[
    (\cup\eta)^{i}: \H^{d-i}(X, V_\ell\Acal) \to \H^{d+i}(X, V_\ell\Acal(i))
\]
are isomorphisms.
\end{theorem}
\begin{proof}
This follows from the hard Lefschetz theorem~\cite[p.~144, Théorème~5.4.10]{BBD} for the projective morphism $f: X \to \Spec{k}$ since $\Fcal := V_\ell\Acal[d]$ is a pure perverse sheaf:  The sheaf $\Fcal$ is pure of weight $-1$ by~\cref{prop:RiundTateModul}.  It is perverse:  The sheaf $V_\ell\Acal = \R^1\pi_*\Q_\ell(1)$ with the smooth projective morphism $\pi: \Acal^t \to X$ is smooth by proper and smooth base change~\cite[p.~223, Corollary~VI.2.2 and p.~230, Corollary~VI.4.2]{MilneEtaleCohomology}.  If $\Fcal$ is a smooth sheaf on a smooth pure $d$-dimensional variety, then $\Fcal[d]$ is perverse by~\cite[p.~149, Corollary~III.5.5]{KiehlWeissauer}.
\end{proof}

\begin{corollary}[integral hard Lefschetz for finite ground fields] \label[corollary]{thm:integralhardLefschetz}
The \emph{integral} hard Lefschetz morphism
\[
    (\cup \eta)^{d-1}: \H^1(X, T_\ell(\Acal^t))_\Tors \to \H^{2d-1}(X, T_\ell(\Acal^t)(d-1))_\Tors
\]
is injective with finite cokernel.
\end{corollary}
\begin{proof}
By the hard Lefschetz theorem~\cref{thm:hardLefschetzFiniteGroundField}, it follows that the kernel and the cokernel tensored with $\Q_\ell$ are trivial, hence torsion, hence finite.  Now note that all groups are taken modulo their torsion subgroup, so the kernel is trivial.
\end{proof}

\begin{definition} \label[definition]{def:integralhardLefschetzdefect}
We call the order of the cokernel of the integral hard Lefschetz morphism from~\cref{thm:integralhardLefschetz} the \defn{integral hard Lefschetz defect}.
\end{definition}

\subsection{Comparison of the cohomological pairing with a Yoneda pairing} \label{subsect:Yoneda}
In the rest of this section, if we deal with $\Ext$-groups or $\EExt$-sheaves, we always mean them with respect to the fppf topology in order to have the \emph{Barsotti-Weil formula} $\EExt_{X_\fppf}^1(\Acal,\G_m) \isoto \Acal^t$ (\cite[p.~121, l.~--11]{MilneAbelianVarieties} or~\cite[p.~III.18--1, Theorem~III.18.1]{OortCommutativeGroupSchemes}).  Although we are also dealing with étale cohomology, there is no problem since by~\cite[p.~116, Remark~3.11\,(b)]{MilneEtaleCohomology} the étale and fppf cohomology of sheaves represented by smooth group schemes (we are using $\Acal$, $\Acal[\ell^n]$, $\G_m$ and $\mu_{\ell^n}$ with $\ell$ invertible on $X$) agree.

Note that one has a Yoneda Ext-pairing
\[
    \vee: \Ext^r(A,B) \times \Ext^s(B,C) \to \Ext^{r+s}(A,C),
\]
in Abelian categories with enough injectives, see~\cite[p.~167]{MilneEtaleCohomology}; we will use this several times below.  This induces pairings
\[
    \vee: \H^r(X,\Fcal) \times \Ext_X^s(\Fcal,\Gcal) \to \H^{r+s}(X,\Gcal).
\]
See also~\cite[p.~166\,f.]{GelfandManin}.

\begin{lemma} \label[lemma]{prop:YonedaExtAndHeightPairing}
Let $\Acal/X$ be a projective Abelian scheme over a locally Noetherian scheme $X$.  Then the following diagram commutes:
\[\begin{tikzcd}
    \Acal(X) \arrow[d,equal] \arrow[r,"\times",phantom] & \Acal^t(X) \arrow[r] & \Pic(X) \\
    \H^0(X,\Acal) \arrow[r,"\times",phantom] & \Ext^1_X(\Acal,\G_m) \arrow[u,"\iso"] \arrow[r,"\vee"] & \H^1(X,\G_m) \arrow[u,"\iso"]
\end{tikzcd}\]
Here, the upper pairing is given by $(x,\Lcal) \mapsto x^*\Lcal = (x,\Lcal)^*\Pcal_\Acal$ (the equality by~\cref{lemma:xLcal}) for $x \in \Acal(X)$ and $\Lcal \in \Acal^t(X) = \PPic^0_{\Acal/X}(X)$ with $(x,\Lcal): X \to \Acal \times_X \Acal^t$, and the lower pairing is the Yoneda pairing.
\end{lemma}
\begin{proof}
The morphism $\Ext^1_X(\Acal,\G_m) \to \Acal^t(X)$ is an isomorphism by the Barsotti-Weil formula.

Given $x \in \Acal(X)$, i.\,e.\ $x: X \to \Acal$, and $e: (1 \to \G_m \to G \to \Acal \to 0) \in \Ext^1_X(\Acal,\G_m)$, $(x,e)$ maps to $G \times_\Acal X$ under the Yoneda pairing (composition in the lower row).  This is a $\G_m$-torsor on $X$, namely $x^*\Lcal$ if $G = \Lcal \setminus 0$ ($0$ the zero-section), which is the composition in the upper row.
\end{proof}

\begin{lemma} \label[lemma]{lemma:HHomAcalGmGleich0}
Let $n$ be invertible on $X$.  Then one has
\begin{align*} %\label{eq:HHomAcalGmGleich0}
    \HHom_X(\Acal,\G_m) = 0 \text{ and } \HHom_X(\Acal,\mu_n) = 0.
\end{align*}
\end{lemma}
\begin{proof}
This holds since $\G_m$ and $\mu_n$ are affine over $X$ and $\Acal/X$ is proper and has geometrically integral fibres using the Stein factorisation.
\end{proof}

%\begin{corollary}
%The low term exact sequence associated to the Ext spectral sequence $\H^p(X,\EExt_X^q(\Acal,\G_m)) \Rightarrow \Ext_X^{p+q}(\Acal,\G_m)$ (see~\cite[p.~91, Theorem~III.1.22]{MilneEtaleCohomology}) gives an isomorphism
%\begin{align}
%    \H^0(X,\EExt^1_X(\Acal,\G_m)) \isoto \Ext^1_X(\Acal,\G_m).
%\end{align}
%\end{corollary}
%\begin{proof}
%This follows since by~\eqref{eq:HHomAcalGmGleich0}, one has $E_2^{p,0} = 0$ for all $p$ in the Ext spectral sequence.
%\end{proof}

\begin{corollary}
Let $\ell$ be invertible on $X$.  Then the local-to-global Ext spectral sequence $\H^p(X,\EExt_X^q(\Acal,\mu_{\ell^n})) \Rightarrow \Ext_X^{p+q}(\Acal,\mu_{\ell^n})$ gives an injection
\begin{align} \label{eq:H1EExt1Ext2}
    \H^1(X,\EExt_X^1(\Acal,\mu_{\ell^n})) \hookrightarrow \Ext_X^2(\Acal,\mu_{\ell^n}).
\end{align}
\end{corollary}
\begin{proof}
This follows since $\HHom_X(\Acal,\mu_{\ell^n}) = 0$ by~\cref{lemma:HHomAcalGmGleich0}, so $E_2^{p,0} = 0$ for all $p$ in the Ext spectral sequence.
\end{proof}

\begin{lemma} \label[lemma]{lemma:EExt1AcalGmelln}
Let $\ell$ be invertible on $X$.  Then one has
\begin{align*}
    \EExt^1_X(\Acal,\mu_{\ell^n}) = \EExt^1_X(\Acal,\G_m)[\ell^n] = \Acal^t[\ell^n].
\end{align*}
\end{lemma}
\begin{proof}
One has a short exact sequence of sheaves %(of the shape $0 \to A[\ell^n] \to A \stackrel{\ell^n}{\to} A \to A/\ell^n \to 0$)
\begin{align} \label{eq:EExtGm}
    0 \to \EExt^1_X(\Acal,\G_m)[\ell^n] \to \EExt^1_X(\Acal,\G_m) \stackrel{\ell^n}{\to} \EExt^1_X(\Acal,\G_m) \to 0
\end{align}
since one can check $\EExt^1_X(\Acal,\G_m)/\ell^n = \Acal^t/\ell^n = 0$ on stalks by the exactness of the Kummer sequence.

The short exact Kummer sequence yields by~\cref{lemma:HHomAcalGmGleich0} a short exact sequence
\begin{align} \label{eq:EExtGm2}
    \HHom_X(\Acal,\G_m) = 0 \to \EExt^1_X(\Acal,\mu_{\ell^n}) \to \EExt^1_X(\Acal,\G_m) \stackrel{\ell^n}{\to} \EExt^1_X(\Acal,\G_m) \to 0,
\end{align}
the $0$ at the right hand side by~\eqref{eq:EExtGm}.

Combining~\eqref{eq:EExtGm} and~\eqref{eq:EExtGm2}, one gets the first equation in~\cref{lemma:EExt1AcalGmelln}.  The second equation follows from the Barsotti-Weil formula.
\end{proof}

\begin{lemma} \label[lemma]{lemma:deltaisaniso}
Let $\ell$ be invertible on $X$.  Then one has an isomorphism
\[
    \delta: \HHom_X(\Acal[\ell^n],\mu_{\ell^n}) \isoto \EExt_X^1(\Acal,\mu_{\ell^n}) = \Acal^t[\ell^n].
\]
\end{lemma}
\begin{proof}
Applying the functor $\HHom_X(-,\mu_{\ell^n})$ to the short exact Kummer sequence $0 \to \Acal[\ell^n] \to \Acal \to \Acal \to 0$ gives an exact sequence
\[
    0 = \HHom_X(\Acal,\mu_{\ell^n}) \to \HHom_X(\Acal[\ell^n],\mu_{\ell^n}) \stackrel{\delta}{\to} \EExt_X^1(\Acal,\mu_{\ell^n}) \stackrel{\ell^n}{\to} \EExt_X^1(\Acal,\mu_{\ell^n}),
\]
the first equality by~\cref{lemma:HHomAcalGmGleich0}.  But multiplication by $\ell^n$ kills $\EExt_X^1(\Acal,\mu_{\ell^n})$, so the last arrow is zero.  Hence $\delta$ is an isomorphism.

The equality $\EExt_X^1(\Acal,\mu_{\ell^n}) = \Acal^t[\ell^n]$ is~\cref{lemma:EExt1AcalGmelln}.
\end{proof}

The following is commutativity of part~(1) of diagram~\eqref{eq:BigDigram}.

\begin{proposition} \label[proposition]{lemma:commutativityof2firstpart}
Note that under the assumption $\Sha(\Acal/X)[\ell^\infty]$ finite, one has from~\cref{lemma:SesAcal} an isomorphism
\begin{equation} \label{eq:deltaboundarymapabelianscheme}
	\delta: \Acal(X) \otimes_\Z \Z_\ell \isoto \H^1(X,T_\ell\Acal).
\end{equation}
induced by the boundary map of the long exact sequence induced by the short exact Kummer sequence~\cref{cor:Kummersequence}.  Denote the analogous map for $\Acal^t$ by $\delta^t: \Acal^t(X) \otimes_\Z \Z_\ell \isoto \H^1(X,T_\ell\Acal^t)$.
	
Then the diagram
\[\begin{tikzcd}
    \H^1(X,T_\ell\Acal)_\Tors \times \H^1(X,T_\ell\Acal^t)_\Tors \arrow[r,"\cup"] & \H^2(X,\Z_\ell(1))_\Tors \\
    \Acal(X)_\Tors \otimes_\Z \Z_\ell \times \Acal^t(X)_\Tors \otimes_\Z \Z_\ell \arrow[u,"(\delta{,}\delta^t)" left,"\iso" right] \arrow[r] & \Pic(X)_\Tors \otimes_\Z \Z_\ell \arrow[u,"\delta_2",hookrightarrow]
\end{tikzcd}\]
commutes.
\end{proposition}
\begin{proof}
The pairing in the lower row identifies with $\H^0(X,\Acal) \times \Ext_X^1(\Acal,\G_m) \to \H^1(X,\G_m)$ by~\cref{prop:YonedaExtAndHeightPairing}.  %We show that the diagram
%\[\xymatrix{
%    \H^1(X,\Acal[\ell^n]) \times \H^1(X,\Acal^t[\ell^n]) \ar[r]^{\quad\quad\cup} & \H^2(X,\mu_{\ell^n}) \\
%    \Acal(X)/\ell^n \times \Acal^t(X)/\ell^n \ar@{^{(}->}[u]^{(\delta_{\ell^n},\delta^t_{\ell^n})} \ar[r] & \H^1(X,\G_m)/\ell^n \ar@{^{(}->}[u]^{\delta_{2,\ell^n}}
%}\]
%commutes for all $n$.  Then we can pass to the inverse limit $\varprojlim_n$ and take $(-)_\Tors$ to get the commutative diagram in~\cref{lemma:commutativityof2firstpart}.

In the rest of the proof, we show that the following diagram commutes:
\begin{equation} \label{eq:DiagramMitWeilPaarungUndYonedaPaarung}
\begin{gathered}\begin{tikzcd}
    \H^1(X,\Acal[\ell^n]) \arrow[r,"\times",phantom] & \H^1(X,\Acal^t[\ell^n]) \arrow[r,"\cup"] & \H^2(X,\mu_{\ell^n}) \\
    \H^0(X,\Acal) \arrow[u,"\delta"] \arrow[r,"\times",phantom] & \H^0(X,\Acal^t) \arrow[u,"\delta^t"] \arrow[r] & \H^1(X,\G_m) \arrow[u,"\delta'"]
\end{tikzcd}
\end{gathered}
\end{equation}
Here, the pairing in the upper line is induced by the Weil pairing, and the pairing in the lower line is given by~\cref{prop:YonedaExtAndHeightPairing}.  The morphism $\delta'$ is the connecting morphism of the Kummer sequence.  Since $\H^1(X,\Acal[\ell^n])$ is killed by $\ell^n$, $\delta$ factors through $\delta_{\ell^n}$, and analogously for $\delta^t$ and $\delta'$.

By~\cref{prop:YonedaExtAndHeightPairing}, the pairing $\Acal(X) \times \Acal^t(X) \to \Pic(X)$ identifies with
\[
    \H^0(X,\Acal) \times \Ext^1_X(\Acal,\G_m) \to \H^1(X,\G_m).
\]

The diagram
\[\begin{tikzcd}
    \H^0(X,\Acal) \arrow[d,equal]\arrow[r,"\times",phantom] & \Ext^1_X(\Acal,\G_m) \arrow[d,"\delta_{\Ext}"] \arrow[r] & \H^1(X,\G_m) \arrow[d,"\delta'"] \\
    \H^0(X,\Acal) \arrow[r,"\times",phantom] & \Ext^2_X(\Acal,\mu_{\ell^n}) \arrow[r] & \H^2(X,\mu_{\ell^n})
\end{tikzcd}\]
commutes, where the horizontal maps are Yoneda Ext-pairings, by the $\delta$-functoriality~\cite[p.~67, Theorem~1.1]{AltmanKleiman}, so we are left with proving that the lower pairing of this diagram and the upper pairing of the diagram~\eqref{eq:DiagramMitWeilPaarungUndYonedaPaarung} are equal.  In order to show this, we prove the commutativity of %~\cite{JannsenAGIII}, p.~110, Theorem~11.9\,(iii)
\begin{equation} \label{eq:DiagrammExt2etc}
\begin{gathered}
\begin{tikzcd}
    \H^0(X,\Acal) \arrow[d,"\delta"] \arrow[r,"\times",phantom] & \Ext^2_X(\Acal,\mu_{\ell^n}) \arrow[r] & \H^2(X,\mu_{\ell^n}) \arrow[d,equal] \\
    \H^1(X,\Acal[\ell^n]) \arrow[r,"\times",phantom] & \H^1(X,\EExt_X^1(\Acal,\mu_{\ell^n})) \arrow[u,hookrightarrow] \arrow[r] & \H^2(X,\mu_{\ell^n}) \\
    \H^1(X,\Acal[\ell^n]) \arrow[r,"\times",phantom] \arrow[u,equal] & \H^1(X,\Acal^t[\ell^n]) \arrow[u,equal] \arrow[r] & \H^2(X,\mu_{\ell^n}) \arrow[u,equal];
\end{tikzcd}
\end{gathered}
\end{equation}
note that $\EExt^1_X(\Acal,\mu_{\ell^n}) = \Acal^t[\ell^n]$ by~\cref{lemma:EExt1AcalGmelln} and use the injection~\eqref{eq:H1EExt1Ext2}.  %Here, the lower row is induced by the Weil pairing using $\EExt^1(\Acal,\mu_{\ell^n}) = \HHom(\Acal[\ell^n],\mu_{\ell^n}) = \Acal^t[\ell^n]$ by the long exact Ext sequence for the Kummer sequence
%$0 \to \Acal[\ell^n] \to \Acal \to \Acal \to 0$:
%\[
%    0 = \HHom_X(\Acal,\mu_{\ell^n}) \to \HHom_X(\Acal[\ell^n],\mu_{\ell^n}) \isoto \EExt^1_X(\Acal,\mu_{\ell^n}) \stackrel{0}{\to} \EExt^1_X(\Acal,\mu_{\ell^n}),
%\]
%the first equality by~\eqref{eq:HHomAcalGmGleich0} and the latter transition map being $0$ since $\ell^n$ kills $\mu_{\ell^n}$.

By adjunction, rewrite the two upper rows of the diagram~\eqref{eq:DiagrammExt2etc} as
\[\begin{tikzcd}
    \Ext^2_X(\Acal,\mu_{\ell^n}) \arrow[r] & \Hom(\H^0(X,\Acal), \H^2(X,\mu_{\ell^n})) \\
    \H^1(X,\EExt_X^1(\Acal,\mu_{\ell^n})) \arrow[u,hookrightarrow] \arrow[r] & \Hom(\H^1(X,\Acal[\ell^n]), \H^2(X,\mu_{\ell^n})) \arrow[u,"\delta^*"]
\end{tikzcd}\]
with the injectivity by~\eqref{eq:H1EExt1Ext2}.  Now, the low term exact sequence associated to the local-to-global Ext spectral sequence gives an embedding $\H^1(X,\HHom_X(\Acal[\ell^n],\mu_{\ell^n})) \hookrightarrow \Ext^1_X(\Acal[\ell^n],\mu_{\ell^n})$.  But by~\cref{lemma:deltaisaniso}, one has an isomorphism $\delta_1: \H^1(X,\HHom_X(\Acal[\ell^n],\mu_{\ell^n})) \isoto \H^1(X,\EExt_X^1(\Acal,\mu_{\ell^n}))$.
Now, the square in the diagram
\[
\begin{tikzcd}
    & \Ext_X^2(\Acal,\mu_{\ell^n}) \arrow[r] & \Hom(\H^0(X,\Acal), \H^2(X,\mu_{\ell^n})) \\
    & \Ext_X^1(\Acal[\ell^n],\mu_{\ell^n}) \arrow[r] \arrow[u,"\delta"] & \Hom(\H^1(X,\Acal[\ell^n]), \H^2(X,\mu_{\ell^n})) \arrow[u,"\delta^*"] \\
    \H^1(X,\EExt_X^1(\Acal,\mu_{\ell^n})) \arrow[ur,hookrightarrow] \arrow[uur,hookrightarrow] \ar[rru] & &
\end{tikzcd}
\]
commutes by $\delta$-functoriality~\cite[p.~67, Theorem~1.1]{AltmanKleiman} with the injection~\eqref{eq:H1EExt1Ext2}. %by~\cite{JannsenAGIII}, p.~110, Theorem~11.9\,(iv).
The lower triangle commutes by definition and the upper left triangle by functoriality of the Grothendieck spectral sequence and its low term exact sequence applied to the special case of the local-to-global Ext spectral sequences
\begin{align*}
	E_2^{p,q} = \H^p(X,\EExt_X^q(\Acal,\mu_{\ell^n})) \Rightarrow &E^{p+q} = \Ext_X^{p+q}(\Acal,\mu_{\ell^n})\\
	'{E_2^{p,q}} = \H^p(X,\EExt_X^q(\Acal[\ell^n],\mu_{\ell^n})) \Rightarrow &'E^{p+q} = \Ext_X^{p+q}(\Acal[\ell^n],\mu_{\ell^n})
\end{align*}
defined on derived categories with edge maps $\kappa_1, '\kappa_1$ and the exact triangle $\Acal[\ell^n] \to \Acal \to \Acal \stackrel{+1}{\to} \Acal[\ell^n][1]$ inducing $\HHom_X(\Acal,\mu_{\ell^n}) \isoto \HHom_X(\Acal[\ell^n][1],\mu_{\ell^n})$:
\[
\begin{tikzcd}
	E_2^{1,1} \arrow[r,"\kappa_1"] & E^2\\
	'E_2^{1,0} \arrow[u,"\delta_1 \iso"] \arrow[r,"'\kappa_1"] & 'E^1 \arrow[u,"\delta"] 
\end{tikzcd}
\] %commutes by the $\delta$-functoriality~\cite{AltmanKleiman}, p.~67, Theorem~1.1. %by~\cite{JannsenAGIII}, p.~110, Theorem~11.9\,(iv).
\end{proof}

The following is commutativity of part~(2) of diagram~\eqref{eq:BigDigram}.

\begin{proposition} \label[proposition]{lemma:cupproductandintersectionproduct}
The diagram
\[\begin{tikzcd}
    \H^2(X,\Z_\ell(1))_\Tors \arrow[rr,"\cup\eta^{d-1}"] && \H^{2d}(X,\Z_\ell(d))_\Tors \arrow[r,"\pr_1^*"] & \H^{2d}(\overline{X},\Z_\ell(d)) \arrow[r,"\iso"] & \Z_\ell \arrow[d,equal]\\
    \CH^1(X)_\Tors \otimes_\Z \Z_\ell \arrow[u,"\delta_2 = \mathrm{cl}_X^1",hookrightarrow] \arrow[rr,"\cap\Ocal_X(1)^{d-1}"] && \CH^d(X)_\Tors \otimes_\Z \Z_\ell \arrow[u,"\mathrm{cl}_X^d"]
    \arrow[r,"\pr_1^*"] & \CH^d(\overline{X})_\Tors \otimes_\Z \Z_\ell \arrow[u,"\mathrm{cl}_{\overline{X}}^d"] \arrow[r,"\deg"] & \Z \otimes_\Z \Z_\ell
\end{tikzcd}\]
commutes.
\end{proposition}
\begin{proof}
Since the category of $\Z_\ell$-modules modulo the Serre subcategory of torsion $\Z_\ell$-modules is equivalent to the category of $\Q_\ell$-modules, see~\cite[Tag 0B0K]{stacks-project}, we can prove the statement after tensoring with $\Q_\ell$.  There is a ring homomorphism
\[
    \mathrm{cl}_{\overline{X}}: \bigoplus_{i=0}^d\CH^i(\overline{X}) \to \bigoplus_{i=0}^d\H^{2i}(\overline{X},\Q_\ell(i)),
\]
see~\cite[p.~270, Proposition~VI.9.5]{MilneEtaleCohomology} (intersection product on the Chow ring and cup product on the cohomology ring) or~\cite[p.~243, Lemma~(6.14)]{JannsenContinuous}, and $\CH^d(\overline{X}) \to \H^{2d}(\overline{X},\Q_\ell(d)) \isoto \Q_\ell$ maps the class of a point to $1$, see~\cite[p.~276, Theorem~VI.11.1\,(a)]{MilneEtaleCohomology}.
\end{proof}

\subsection{Comparison of a Yoneda pairing with the generalised Bloch pairing} \label{subsec:Bloch}
This is a generalisation of~\cite{Schneider} and~\cite{BlochNote}.

Recall that $d = \dim{X}$.  We want to show that the pairing
\begin{equation} \label{eq:AcalXExt1X}
\begin{tikzcd}
    \qu{\cdot,\cdot}: \Acal(X) \times \Ext^1_{X_{\mathrm{fppf}}}(\Acal,\G_m) \arrow[r,"\vee"] & \H^1(X,\G_m) \arrow[rr,"\cap \Ocal_X(1)^{d-1}"] && \CH^d(X) \arrow[r,"\deg"] & \Z
\end{tikzcd}
\end{equation}
(note that the Yoneda pairing $\vee$ identifies with $\Acal(X) \times \Acal^t(X) \to \Pic(X)$ by~\cref{prop:YonedaExtAndHeightPairing}) coincides up to a factor $-\log{q}$ with the generalised Bloch pairing
\begin{equation} \label{eq:BlochPairing}
    h: A(K) \times A^t(K) \to \log{q}\cdot\Z \subset \RR
\end{equation}
from~\cref{def:BlochPairing}:

\begin{proposition} \label[proposition]{lemma:YonedaandBlochpairing}
The diagram
\[\begin{tikzcd}
    A(K) \arrow[r,"\times",phantom] & A^t(K) \arrow[r,"h"] & \RR \\
    A(K) \arrow[r,"\times",phantom] \arrow[u,equal] & \Ext^1_{X_{\mathrm{fppf}}}(\Acal,\G_m) \arrow[u,"\iso"] \arrow[r,"\qu{\cdot{,}\cdot}"] & \arrow[u,"\cdot(-\log{q})",hookrightarrow]\Z
\end{tikzcd}\]
commutes.
\end{proposition}

Note that $\Acal(X) = \Hom_{X_{\mathrm{fppf}}}(\Z,\Acal)$. The Yoneda pairing $\vee: \Hom_{X_{\mathrm{fppf}}}(\Z,\Acal) \times \Ext^1_{X_{\mathrm{fppf}}}(\Acal,\G_m) \to \H^1(X,\G_m)$ maps $(a,a^t)$ to the extension $a \vee a^t$ defined by
\begin{equation}\begin{gathered}\label{eq:aveeavee}\begin{tikzcd}
    a \vee a^t:& 1 \arrow[r] & \G_m \arrow[d,equal] \arrow[r] & \mathscr{Y} \arrow[r] \arrow[d] & \Z \arrow[r] \arrow[d,"a"] & 0\\
           a^t:& 1 \arrow[r] & \G_m            \arrow[r] & \mathscr{X} \arrow[r] & \Acal \arrow[r] & 0.
\end{tikzcd}\end{gathered}\end{equation}
By composition, one gets an extension
\[
    l_{a \vee a^t}: \mathscr{Y}(\A_K) \to \mathscr{X}(\A_K) \stackrel{l_{a^t}}{\to} \RR
\]
of $l: \G_m(\A_K) \to \RR$ to $\mathscr{Y}(\A_K)$, which induces because of $l(\G_m(K)) = 0$ in the exact sequence $a \vee a^t$ by restriction to $\mathscr{Y}(K)$ a homomorphism
\[
    l_{a \vee a^t}: \Z \stackrel{a}{\to} A(K) \stackrel{l_{a^t}}{\to} \RR,
\]
so one obviously has
\begin{equation} \label{eq:haatlaveeat1}
    h(a,a^t) = l_{a^t}(a) = l_{a \vee a^t}(1).
\end{equation}

By~\eqref{eq:commdiaglat} and~\eqref{eq:XprodsubseteqX1}
\[
    l_{a^t}\Big(\prod_{x\in X^{(1)}}\mathscr{X}(\Ocal_{X,x})\Big) = 0, \quad\text{hence } l_{a \vee a^t}\Big(\prod_{x\in X^{(1)}}\mathscr{Y}(\Ocal_{X,x})\Big) = 0,
\]
by the diagram~\eqref{eq:aveeavee} defining $a \vee a^t$.

\begin{lemma} \label[lemma]{lemma:le1}
Let $(1 \to \G_m \to \mathscr{Y} \to \Z \to 0) = e \in \Ext^1_{X_{\mathrm{fppf}}}(\Z, \G_m) = \H^1(X,\G_m) = \Pic{X}$ be a torsor representing $\Lcal \in \Pic{X}$, and let $l_e: \mathscr{Y}(\A_K) \to \RR$ be an extension of $l$ which vanishes on $\prod_{x\in X^{(1)}}\mathscr{Y}(\Ocal_{X,x})$.  Then one has for the homomorphism $l_e: \Z \to \RR$ (since $l_e(\G_m(K)) = l(\G_m(K)) = 0$) defined by restriction to $\mathscr{Y}(K)$:
\[
    l_e(1) = -\log{q} \cdot \deg(\Lcal \cap \Ocal_X(1)^{d-1}),
\]
where $\Ocal_X(1)^{d-1}$ denotes the $(d-1)$-fold self-intersection of $\Ocal_X(1)$.
\end{lemma}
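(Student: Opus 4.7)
The plan is to evaluate $l_e(1)$ by lifting $1 \in \Z$ both globally (as a rational section of $\Lcal$) and locally (as local trivialisations) to $\mathscr{Y}$, and then splitting the global lift adelically into an integral part killed by $l_e$ and a $\G_m$-part on which $l_e$ agrees with the explicit formula for $l$. Concretely, choose any lift $y \in \mathscr{Y}(K)$ of $1$; since $\mathscr{Y}$ is (the total space of) the $\G_m$-torseur on $X$ associated to $\Lcal$, this amounts to the choice of a nonzero rational section $s$ of $\Lcal$. For each $x \in X^{(1)}$ the local ring $\Ocal_{X,x}$ is a DVR, so $\Pic(\Ocal_{X,x}) = 0$ and the extension $e$ splits on $\Spec \Ocal_{X,x}$; pick a lift $y_x \in \mathscr{Y}(\Ocal_{X,x})$ of $1$, which corresponds to a nowhere-vanishing local section of $\Lcal$ near $x$.

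Using the commutative group structure on $\mathscr{Y}$, the ratio $y_x^{-1} y \in \mathscr{Y}(K_x)$ maps to $0 \in \Z$ and therefore lies in $\G_m(K_x) = K_x^\times$; set $n_x := v_x(y_x^{-1} y)$. The key geometric identification to carry out is that $y_x^{-1} y$ is the transition function writing the rational section $s$ in the local trivialisation $y_x$, so $n_x$ is precisely the order of $s$ at $x$ and the Weil divisor $D := \sum_{x \in X^{(1)}} n_x \overline{\{x\}}$ represents $\Lcal$ in $\Pic(X) = \CH^1(X)$. In particular $n_x = 0$ for all but finitely many $x$, so $(y_x^{-1} y)_x$ is a genuine adele and $(y_x)_x \in \prod_x \mathscr{Y}(\Ocal_{X,x})$ is integral.

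In $\mathscr{Y}(\A_K)$ one then has the identity $y = (y_x)_x \cdot (y_x^{-1} y)_x$. Applying the homomorphism $l_e$ and using both the hypothesis $l_e\big(\prod_x \mathscr{Y}(\Ocal_{X,x})\big) = 0$ and that $l_e$ extends $l$ on $\G_m(\A_K)$, one obtains
\[
    l_e(y) \;=\; l\big((y_x^{-1} y)_x\big) \;=\; -\log q \sum_{x \in X^{(1)}} \deg_\iota(x)\, n_x.
\]
The right-hand sum equals $\deg(D \cdot \Ocal_X(1)^{d-1})$, since for a prime divisor $\overline{\{x\}} \subset X \subset \PP^N_k$ the intersection number $\deg(\overline{\{x\}} \cdot \Ocal_X(1)^{d-1})$ agrees with the projective degree $\deg_{k,\iota}(\overline{\{x\}})$ by the projection formula applied to the closed immersion $\overline{\{x\}} \hookrightarrow X$. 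Hence $l_e(1) = l_e(y) = -\log q \cdot \deg(\Lcal \cap \Ocal_X(1)^{d-1})$, as claimed.

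The only substantive step is the geometric identification in the second paragraph: translating the algebraic datum $n_x = v_x(y_x^{-1} y)$ into the order of vanishing of the rational section $s$ of $\Lcal$ at $x$. Everything else is formal, and independence of the final answer from the choices of $y$ and the $y_x$ is automatic, since two global lifts differ by an element of $\G_m(K)$ on which $l_e = l$ vanishes by the product formula, and two local lifts differ by an element of $\G_m(\Ocal_{X,x})$ on which $l_e$ vanishes by the integral hypothesis.
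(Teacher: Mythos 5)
Your proof is correct and follows essentially the same route as the paper's: lift $1\in\Z$ once rationally (a rational section $s$ of $\Lcal$) and once integrally at each $x\in X^{(1)}$, observe that the discrepancy lies in $\G_m$, apply $l_e$ using the two vanishing hypotheses, and identify $\sum_x \deg_\iota(x)\,v_x(\cdot)$ with $\deg(\Lcal\cap\Ocal_X(1)^{d-1})$ via the divisor of $s$. The only cosmetic difference is that you split the torseur over the local rings $\Ocal_{X,x}$ directly (using $\Pic(\Ocal_{X,x})=0$), whereas the paper chooses sections over open neighbourhoods $U_x$ of closed points (covering $X$ by Jacobson-ness) so that the local data visibly form a Cartier divisor representing $\Lcal$ — the content is the same.
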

(Note that for every $e$ there is a extension $l_e$ as in the Lemma using the diagram~\eqref{eq:aveeavee} and~\cref{lemma:laextends}.)
\begin{proof}
Considering $e$ as a class of a line bundle $\Lcal$ on $X$, write $Y(\Lcal) := V(\Lcal) \setminus \{\text{$0$-section}\}$ for the $\G_m$-torsor on $X$ defined by $\Lcal$.  Then $e$ is isomorphic to the extension
\begin{align*}
    1 \to \G_m \to \coprod_{n\in\Z} Y(\Lcal^{\otimes n}) \to \Z \to 0.
\end{align*}

For every $x \in |X|$, choose an open neighbourhood $U_x \subseteq X$ such that $1 \in \Z$ has a preimage $s_x \in \mathscr{Y}(U_x)$ (these exist by exactness of the short exact sequence $e$ of sheaves; note that $\Pic(X) = \H^1_\Zar(X,\G_m) = \H^1_{\mathrm{fppf}}(X,\G_m)$ by~\cite[p.~124, Proposition~III.4.9]{MilneEtaleCohomology}, so there is indeed such a \emph{Zariski} neighbourhood, not just an fppf one).  Let further $s \in \mathscr{Y}(K)$ be a preimage of $1 \in \Z$ (note that $0 \to \G_m(K) \to \mathscr{Y}(K) \to \Z(K) \to 0$ is exact by Hilbert 90).  Then one has $s_x^{-1}\cdot s \in \G_m(K) = K^\times$.  Since $X$ is Jacobson, the $U_x$ for $x \in |X|$ cover $X$.  For every $x \in X^{(1)}$ choose an $\tilde{x} \in |X|$ such that $x \in U_{\tilde{x}}$ and set $s_x = s_{\tilde{x}}$ and $U_x = U_{\tilde{x}}$.  These define a Cartier divisor as $(s_x^{-1}\cdot s)\cdot(s_y^{-1}\cdot s)^{-1} = s_x^{-1}\cdot s_y \mapsto 1 - 1 = 0 \in \Z$, so one has $(s_x^{-1}\cdot s)\cdot(s_y^{-1}\cdot s)^{-1} \in \G_m(U_x \cap U_y)$ by the exactness of $1 \to \G_m \to \mathscr{Y} \to \Z \to 0$, and $[(U_x,s_x^{-1})_x] = \Lcal$ since
\[
    \Gamma(U_x,\Ocal_X((U_x,s_x^{-1})) = \{f \in K : fs_x^{-1} \in \Gamma(U_x,\Ocal_X)\} \stackrel{!}{=} \Gamma(U_x,\Lcal).
\]
One has to compare the line bundle $\Lcal$ with the $\G_m$-torsor $\mathscr{Y}$.  Now one calculates
\begin{align*}
    l_e(1) &= l_e(s) \quad\text{note that $l_e(\G_m(K)) = l(\G_m(K)) = 0$ and $s \mapsto 1$}\\
           & = l_e((s_x^{-1}\cdot s)) \quad\text{since $l_e({\textstyle\prod}_{x\in X^{(1)}}\mathscr{Y}(\Ocal_{X,x})) = 0$}\\
           & = l((s_x^{-1}\cdot s)) \quad\text{since $s_x^{-1}\cdot s \in \G_m(K)$ and $l_e$ extends $l$, note that $(s_x^{-1}\cdot s)_x \in \G_m(\A_K)$}\\
           &= -\log{q}\cdot\sum_{x \in X^{(1)}}\deg_\iota{x}\cdot v_x(s_x^{-1}\cdot s) \quad\text{by definition of $l$}.
\end{align*}

On the other hand, by the above description of $e$, since the $(U_x,s_x^{-1}\cdot s)$ define a Cartier divisor on $X$ with associate line bundle isomorphic to $\Lcal$, one has for $\deg: \CH^d(X) \to \Z$
\[
    \deg(\Lcal \cap \Ocal_X(1)^{d-1}) = \sum_{x \in X^{(1)}}\deg_\iota{x}\cdot v_x(s_x^{-1}\cdot s)
\]
since $\deg_{\iota}{x} = \deg(\overline{\{\iota(x)\}} \cap H^{d-1})$ for a generic hyperplane $H \hookrightarrow \PP^N_k$ and $\Ocal_X(1) = [H] \in \CH^1(X) = \Pic(X)$.

Combining the formulae gives the claim.
\end{proof}

Applying~\cref{lemma:le1} to the above situation $a \in A(K), a^t \in A^t(K)$ gives us
\begin{align*}
    h(a,a^t) &= l_{a \vee a^t}(1) \quad\text{by~\eqref{eq:haatlaveeat1}}\\
                &= -\log{q}\cdot\deg([a \vee a^t] \cap \Ocal_X(1)^{d-1}) \quad\text{by~\cref{lemma:le1}}\\
                &= -\log{q}\cdot \qu{a,a^t} \quad\text{by~\eqref{eq:AcalXExt1X}}.
\end{align*}
(Note that $\iota$ and $\Ocal_X(1)$ occur in $l$ and thus in $h$.)  This finishes the proof of~\cref{lemma:YonedaandBlochpairing}.

\subsection{Comparison of the generalised Bloch pairing with the generalised Néron-Tate height pairing} \label{subsec:NTheight}

Let $K_v$ be the (completion) of $K = k(X)$ at $v \in X^{(1)}$, which is a local field.

Let $\Delta$ be a divisor on $A$ defined over $K_v$ algebraically equivalent to $0$ (this corresponds to $\Ext_X^1(\Acal,\G_m) = \Acal^t(K_v) = A^t(K_v) = \PPic^0_{A/k}(K_v)$).  The divisor $\Delta$ corresponds to an extension
\begin{align} \label{eq:GmTorseur}
    1 \to \G_m \to \mathscr{X}_\Delta \to \Acal \to 0
\end{align}
in $\Ext_X^1(\Acal,\G_m)$.  Let $\Lcal_\Delta$ be the line bundle associated to $\Delta$.  Then $\mathscr{X}_\Delta = V(\Lcal_\Delta) \setminus \{0\}$ with $\Lcal_\Delta = \Ocal_\Acal(\Delta)$ as a $\G_m$-torsor.  The extension~\eqref{eq:GmTorseur} only depends on the linear equivalence class of $\Delta$.

Restricting to $K_v$, the extension~\eqref{eq:GmTorseur} is split as a torsor over $A \setminus |\Delta|$ (since a line bundle associated to a divisor $\Delta$ is trivial on $X \setminus \Delta$) by $\sigma_{\Delta,v}: A \setminus |\Delta| \to \mathscr{X}_{\Delta,K_v}$ with $\sigma_\Delta$ canonical up to translation by $\G_m(K_v)$ (since the choice of $\sigma_{\Delta,v}$ is the same as the choice of a rational section of $\Lcal_\Delta$).  Let $Z_{\Delta,K_v}$ be the group of zero cycles $\mathfrak{A} = \sum n_i(p_i)$, $p_i \in A(K_v)$, on $A$ defined over $K_v$ such that $\sum n_i \deg{p_i} = 0$ and $\supp\mathfrak{A} \subseteq A \setminus |\Delta|$.  We get a homomorphism $\sigma_{\Delta,v}: Z_{\Delta,K_v} \to \mathscr{X}_\Delta(K_v)$ (since $\mathscr{X}_\Delta$ is a group scheme).

We now prove a local analogue of~\cref{lemma:laextends}.
\begin{lemma} \label[lemma]{lemma:locallaextends}
There is a commutative diagram with exact rows and columns:
\[\begin{tikzcd}
    & 1 \arrow[d] & 0 \arrow[d] & & \\
    1 \arrow[r] & \Ocal_{K_v}^\times \arrow[r] \arrow[d] & \mathscr{X}_\Delta(\Ocal_{K_v}) \arrow[r] \arrow[d] & A(K_v) \arrow[d,equal] \arrow[r] & 0\\
    1 \arrow[r] & K_v^\times \arrow[d,"l_v"] \arrow[r] & \mathscr{X}_\Delta(K_v) \arrow[d,"\psi_{\Delta{,}v}",dashrightarrow] \arrow[r] & A(K_v) \arrow[r] & 0\\
    & \Z \arrow[d] \arrow[r,dashed,equal] & \Z \arrow[d] & & \\
    & 0 & 0 & &
\end{tikzcd}\]
\end{lemma}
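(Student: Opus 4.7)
The plan is to establish the two horizontal short exact sequences separately and then derive the middle vertical column (together with the map $\psi_\Delta$) by a single application of the snake lemma. For the top row I would take the short exact sequence of fppf sheaves $1 \to \G_m \to \mathscr{X}_\Delta \to \Acal \to 0$ and pass to sections over $\Spec \Ocal_{K_v}$. Since a DVR has trivial Picard group, the long exact cohomology sequence stops at $\H^1(\Spec \Ocal_{K_v},\G_m) = 0$, so $\mathscr{X}_\Delta(\Ocal_{K_v}) \to \Acal(\Ocal_{K_v})$ is surjective. One then identifies $\Acal(\Ocal_{K_v}) = A(K_v)$ by the valuative criterion of properness applied to the proper scheme $\Acal/\Spec \Ocal_{K_v}$ (equivalently, by the Néron mapping property recalled earlier in the text). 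The middle row is obtained in the same way over $\Spec K_v$, with Hilbert~90 providing the vanishing $\H^1(K_v,\G_m) = 0$.

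The left column is nothing but the defining exact sequence of the normalised discrete valuation $l_v$, with kernel $\Ocal_{K_v}^\times$ and cokernel $\Z$. With both rows in place and the obvious vertical maps (the two inclusions on the left two columns and the identity on the right), I would then invoke the snake lemma. All three kernel groups vanish because the verticals are injective (separatedness of $\mathscr{X}_\Delta$ over $\Ocal_{K_v}$ for the middle one), and the right cokernel vanishes because the right vertical is the identity. The snake sequence therefore collapses to an isomorphism
\[
    \Z = \coker\bigl(\Ocal_{K_v}^\times \hookrightarrow K_v^\times\bigr) \isoto \coker\bigl(\mathscr{X}_\Delta(\Ocal_{K_v}) \hookrightarrow \mathscr{X}_\Delta(K_v)\bigr).
\]
I would define $\psi_\Delta$ as the projection to this latter cokernel followed by the inverse of this isomorphism; by construction it is surjective with kernel $\mathscr{X}_\Delta(\Ocal_{K_v})$, which gives the exactness of the middle column, and the snake naturality ensures that $\psi_\Delta|_{K_v^\times} = l_v$, i.e.\ commutativity of the lower-left square.

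The \emph{main obstacle} is the identification $\Acal(\Ocal_{K_v}) = A(K_v)$: it is exactly this input that makes the two rows share the same right-hand term with the identity between them, which in turn is what forces the snake sequence to collapse to an isomorphism rather than an arbitrary short exact sequence. Once that step and the two vanishings $\H^1(\Ocal_{K_v},\G_m) = 0 = \H^1(K_v,\G_m)$ are in hand, the rest of the lemma is pure diagram chasing.
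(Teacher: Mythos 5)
Your argument is correct and is essentially the paper's own proof: both rows come from evaluating the torsor extension $1 \to \G_m \to \mathscr{X}_\Delta \to \Acal \to 0$ at $K_v$ and $\Ocal_{K_v}$, with surjectivity from $\H^1(\Spec{K_v},\G_m) = 0 = \H^1(\Spec{\Ocal_{K_v}},\G_m)$, the identification $\Acal(\Ocal_{K_v}) = A(K_v)$ from the valuative criterion of properness and the N\'eron mapping property, and $\psi_\Delta$ produced by the snake lemma exactly as in the global case. No gaps.
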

\begin{proof}
The map labelled $l_v$ is the valuation map.  The short exact sequence in the middle row is~\eqref{eq:GmTorseur} evaluated at $K_v$, and the short exact sequence in the upper row is~\eqref{eq:GmTorseur} evaluated at $\Ocal_{K_v}$:  One is left showing that $\mathscr{X}^1_\Delta \twoheadrightarrow A(K_v)$ is surjective.  But this follows from the long exact sequence associated to the short exact sequence of sheaves on $\Ocal_{K_v}$
\[
    1 \to \G_m \to \mathscr{X} \to \Acal \to 0
\]
and Hilbert's theorem 90: $\H^1(\Spec{\Ocal_{K_v}},\G_m) = 0$ since $\Ocal_{K_v}$ is a local ring.  Further, one has $\Acal(K_v) = \Acal(\Ocal_{K_v}) = A(K_v)$ by the valuative criterion of properness and the Néron mapping property.
\end{proof}

Now let $\psi_{\Delta,v}: \mathscr{X}_\Delta(K_v) \to \Z$ be the map defined in the previous lemma.  For $\mathfrak{A} \in Z_{\Delta,K_v}$ define
\[
    \qu{\Delta,\mathfrak{A}}_v := \psi_{\Delta,v}\sigma_{\Delta,v}(\mathfrak{A}).
\]

\begin{theorem}
Let $K = k(X)$.  Let $a \in A(K)$ and $a^t \in A^t(K)$.  Let $\Delta$ resp.\ $\mathfrak{A}$ be a divisor algebraically equivalent to $0$ defined over $K$ resp.\ a zero cycle of degree $0$ over $K$ on $A$ such that $[\Delta] = a^t$ resp.\ $\mathfrak{A}$ maps to $a$.  Assume $\supp\Delta$ and $\supp\mathfrak{A}$ disjoint.  Then
    \[
        \qu{a,a^t} = \log{q}\cdot\sum_{v \in X^{(1)}}\qu{\Delta,\mathfrak{A}}_v
    \]
with $\qu{a,a^t}$ defined as in~\cref{def:BlochPairing}.
\end{theorem}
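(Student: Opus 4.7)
The plan is to produce a global adelic lift of $a$ through the extension $\mathscr{X}_\Delta$, then to evaluate $l_{a^\vee}$ on this lift by recognizing it place by place as the local pairing.

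First, I would fix the presentation of $a^\vee$ by the $\G_m$-extension $\mathscr{X}_\Delta$ coming from the divisor $\Delta$, so that $\mathscr{X} = \mathscr{X}_\Delta$ in the notation of Definition~\ref{def:BlochPairing}. Because $\supp\Delta$ and $\supp\mathfrak{A}$ are disjoint, the canonical local section $\sigma_\Delta\colon A\setminus|\Delta| \to \mathscr{X}_{\Delta,K_v}$ is defined on every point of $\mathfrak{A}$ at every place $v \in X^{(1)}$, and since $\mathscr{X}_\Delta$ is a group scheme and $\mathfrak{A}$ has degree $0$, the element $\sigma_\Delta(\mathfrak{A}) \in \mathscr{X}_\Delta(K_v)$ lies above $a \in A(K_v)$. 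The family $(\sigma_\Delta(\mathfrak{A}))_{v \in X^{(1)}}$ then defines an adelic point of $\mathscr{X}_\Delta$ lying above the image of $a$ in $\Acal(\A_K)$; this is the input we need in order to evaluate $l_{a^\vee}$ according to Definition~\ref{def:BlochPairing}.

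Next, I would invoke the uniqueness of $l_{a^\vee}$ from Lemma~\ref{lemma:laextends}: once one requires that $l_{a^\vee}$ extends $l$ and vanishes on $\prod_{v} \mathscr{X}_\Delta(\Ocal_{X,v})$, it is determined. Writing $\sigma_\Delta(\mathfrak{A})$ at each $v$ as the product of an element of $\mathscr{X}_\Delta(\Ocal_{X,v})$ and a $\G_m(K_v)$-translate encoding the non-integral part, the local extension $\psi_\Delta$ from Lemma~\ref{lemma:locallaextends} exactly measures that non-integral part as an element of $\Z$. The global modulus map $l$ is, by definition, the weighted sum $-\log q \cdot \sum_v \deg_\iota v \cdot v(\cdot)$; applied to this local decomposition, it evaluates $l_{a^\vee}$ on the adelic lift to the signed sum of the $\psi_\Delta\sigma_\Delta(\mathfrak{A})$ with the appropriate $\log q$ factor, which is the claimed right-hand side $\log q \cdot \sum_v \qu{\Delta,\mathfrak{A}}_v$.

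Finally, I would verify independence of the auxiliary choices. The local section $\sigma_\Delta$ is only determined up to a translation by $\G_m(K_v)$, but such a translation lies in $\G_m(K_v) \subseteq \G_m(\A_K)$ and is annihilated in the quotient by $\G_m(K)$ that appears in Definition~\ref{def:BlochPairing}; more carefully, the change is absorbed by $l$ via the product formula together with the fact that $\mathfrak{A}$ has degree $0$. Independence of the choice of representative $\Delta$ of $[\Delta] = a^\vee$ amounts to checking invariance of both sides under replacing $\Delta$ by $\Delta + \mathrm{div}(f)$, which on the left side follows from $l(\G_m(K)) = 0$ and on the right from the standard reciprocity for Weil functions.

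The main obstacle I expect is the careful bookkeeping in the second step: matching the unnormalised local valuation entering $\psi_\Delta$ with the $\deg_\iota v$-weighted global normalisation in $l$, and verifying that the adelic lift built from $\sigma_\Delta$ does vanish against $l_{a^\vee}$ on the $\mathscr{X}_\Delta(\Ocal_{X,v})$-components so that no spurious contributions remain. Once these normalisations are pinned down, both sides coincide term by term.
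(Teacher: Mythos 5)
Your overall route is the paper's: present $a^\vee$ by the extension $\mathscr{X}_\Delta$, lift $\mathfrak{A}$ via $\sigma_\Delta$, and identify $l_{a^\vee}$ with the ($\deg_\iota v$-weighted) sum of the local maps $\psi_{\Delta,v}$ of \cref{lemma:locallaextends} by appealing to the uniqueness part of \cref{lemma:laextends} together with the vanishing of the $\psi_{\Delta,v}$ on $\mathscr{X}_\Delta(\Ocal_{K_v})$; the paper carries this out with a two-row diagram comparing $\mathscr{X}_\Delta(\A_K)/\prod_v\mathscr{X}_\Delta(\Ocal_{K_v})$ with $\bigoplus_v\Z$, which is exactly the content of your second step.

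The gap is in how you feed your lift into \cref{def:BlochPairing}. The pairing $\qu{a,a^\vee}$ is defined by evaluating $l_{a^\vee}$ on a $K$-rational lift of $a$, i.e.\ on the diagonal image of a point of $\mathscr{X}_\Delta(K)$, and it is well defined only because $l(\G_m(K))=0$. You instead assemble a lift place by place from the local sections over $K_v$ and assert that this is ``the input we need''. But an arbitrary adelic lift of (the adelic image of) $a$ differs from a $K$-rational one by an element of $\G_m(\A_K)$, on which $l$ does \emph{not} vanish, so this step needs an argument; and the justification you sketch is incorrect as stated: a translation of $\sigma_\Delta$ by $\G_m(K_v)$ is not ``annihilated in the quotient by $\G_m(K)$'', and the product formula does not absorb it. What makes the argument work is the point the paper builds in from the start: since $\Delta$ is defined over $K$, the section $\sigma_\Delta$ can be taken over $K$ (a rational section of $\Lcal_\Delta$ defined over $K$), the paper uses $\sigma_\Delta\colon Z_{\Delta,K}\to\mathscr{X}_\Delta(K)$, and because $\mathfrak{A}$ has degree $0$ the value $\sigma_\Delta(\mathfrak{A})$ is independent of the $\G_m$-ambiguity altogether; hence your local values are precisely the components of the single global point $\sigma_\Delta(\mathfrak{A})\in\mathscr{X}_\Delta(K)$, which is a legitimate lift in the sense of \cref{def:BlochPairing}. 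With that correction your proof coincides with the paper's.
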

\begin{proof}
Let
\[
    1 \to \G_m \to \mathscr{X}_\Delta \to \Acal \to 0
\]
be the $\G_m$-torsor represented by $a^\vee$, and $\sigma_{\Delta,v}: Z_{\Delta,K} \to \mathscr{X}_\Delta(K)$ be as in the local case.  One has to show that the map
\begin{align*}
    l_{a^t}: \mathscr{X}_\Delta(\A_K) \to \log{q}\cdot\Z
\end{align*}
(of the above definition in~\cref{lemma:laextends}; note that $\mathscr{X}_\Delta(\A_K) \hookrightarrow \mathscr{X}(\A_K)$) coincides with the sum of the local maps
\[
    \psi_{\Delta,v}: \mathscr{X}_\Delta(K_v) \to \Z
\]
multiplied by $\log{q}\cdot\deg_\iota{v}$ for $v \in X^{(1)}$ defined above.

Consider the commutative diagram
\[\begin{tikzcd}
    & \G_m^1 \arrow[d,"\sum_v \deg_\iota{v}\cdot v(\cdot)"] \arrow[r] & \mathscr{X}_\Delta(\A_K)/\prod_v\mathscr{X}_{\Delta}(\Ocal_{K_v}) \arrow[d,"\sum_v \deg_\iota{v}\cdot\psi_{\Delta{,}v}"] \arrow[r] & \mathscr{X}_\Delta(\A_K)/\mathscr{X}_\Delta^1 \arrow[d,"l_{a^t}"] \arrow[r] &  0 \\
    0 \arrow[r] & \ker(\sum) \arrow[r] & \bigoplus_{v \in X^{(1)}}\Z \arrow[r,"\log{q}\cdot\sum"] & \log{q}\cdot\Z \arrow[r] & 0.
\end{tikzcd}\]

One has $\prod_v\mathscr{X}_{\Delta}(\Ocal_{K_v}) \subseteq \mathscr{X}_\Delta^1$ and $\G_m^1 \subseteq \mathscr{X}_\Delta(\A_K)$ by the commutative diagram~\eqref{lemma:locallaextends} in~\cref{lemma:laextends}, so the exactness of the upper row follows.  The exactness of the lower row is clear.

The left square commutes obviously.  The right square commutes since by~\cref{lemma:laextends} the extension of $l$ to $\mathscr{X}(\A_K)$ is unique and $\sum_v \deg_\iota{v}\cdot\psi_{\Delta,v}$ is well-defined (since $\psi_{\Delta,v}$ vanishes on $\mathscr{X}_\Delta(\Ocal_{K_v})$ and in the adele ring, almost all components lie in $\mathscr{X}_\Delta(\Ocal_{K_v})$) and restricts to $l: \G_m(\A_K) \to \RR$ since $\psi_{\Delta,v}$ restricts to $l_v$ by~\cref{lemma:locallaextends}.

Now let $x \in \mathscr{X}_\Delta(\A_K)/\mathscr{X}_\Delta^1$ with $l_{a^t}(x) = h$.  Lift it to $\tilde{x} \in \mathscr{X}_\Delta(\A_K)/\prod_v\mathscr{X}_{\Delta}(\Ocal_{K_v})$.  Then $\log{q}\cdot\sum_v \deg_\iota{v} \cdot \psi_{\Delta,v}(\tilde{x}) = h$ by commutativity of the right square.  If one chooses another lift, their difference comes from $d \in \G_m^1$, which has height $0$, so $\log{q}\cdot\sum_v \deg_\iota{v}\cdot \psi_{\Delta,v}(\tilde{x})$ only depends on $x$.
\end{proof}

\begin{proposition}
The local pairings $\qu{\Delta,\mathfrak{A}}_v$ coincide with the local Néron height pairings $\qu{\Delta,\mathfrak{A}}_{\mathrm{N\acute{e}ron},v}$.
\end{proposition}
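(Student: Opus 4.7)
The plan is to invoke the uniqueness theorem for N\'eron's local symbol and verify that $\qu{\Delta,\mathfrak{A}}_v = \psi_\Delta\sigma_\Delta(\mathfrak{A})$ satisfies the characterising axioms. Recall that $\qu{\cdot,\cdot}_{\text{N\'eron},v}$ is uniquely determined on pairs $(\Delta,\mathfrak{A})$ of disjoint support (with $\Delta \in \Divv^0(A_{K_v})$ and $\mathfrak{A}$ a zero cycle of degree $0$) by: (i) $\Z$-bilinearity; (ii) the principal-divisor compatibility $\qu{\mathrm{div}(f),\mathfrak{A}}_v = \sum n_i v(f(p_i))$ for $\mathfrak{A} = \sum n_i(p_i)$; and (iii) the local N\'eron-function property near $|\Delta|$, i.e.\ continuity on $A(K_v)\setminus|\Delta|$ together with the correct logarithmic singularity along $|\Delta|$.

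First I would establish bilinearity. The torseur $\mathscr{X}_\Delta$ depends on $\Delta$ only through its class in $A^\vee(K_v) = \Ext^1(A_{K_v},\G_m)$, so additivity in $\Delta$ is Baer sum of extensions; since $\psi_\Delta$ is built from the valuation map $l_v$ of~\cref{lemma:locallaextends} by pushout along the group structure of $\mathscr{X}_\Delta$, this additivity propagates to $\psi_\Delta\sigma_\Delta$. Additivity in $\mathfrak{A}$ is immediate from $\sigma_\Delta\colon Z_{\Delta,K_v}\to\mathscr{X}_\Delta(K_v)$ being a group homomorphism, which uses precisely the degree-zero condition $\sum n_i = 0$ to absorb the $\G_m(K_v)$-ambiguity in the choice of $\sigma_\Delta$.

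Next I would verify the principal-divisor axiom. If $\Delta=\mathrm{div}(f)$, then $\Lcal_\Delta$ is canonically trivialised by $f$; the torseur $\mathscr{X}_\Delta$ becomes the trivial extension $\G_m\times A$, and the rational section $\sigma_\Delta$ on $A\setminus|\Delta|$ is (up to sign convention) the graph of $f$. Evaluating $\psi_\Delta = l_v$ on $\sigma_\Delta(p_i)=(f(p_i),p_i)$ yields $v(f(p_i))$, so $\qu{\mathrm{div}(f),\mathfrak{A}}_v = \sum n_i v(f(p_i))$ as required.

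The main obstacle is axiom (iii), since it must be checked for an arbitrary, non-principal $\Delta$. My plan is to analyse $\sigma_\Delta$ in local charts: on a Zariski open $U\subseteq A_{K_v}$ over which $\Lcal_\Delta|_U$ is trivialised and $\Delta|_U = \mathrm{div}(g)$ for some rational $g$, the section $\sigma_\Delta$ on $U\setminus|\Delta|$ is $g$ times the chosen trivialisation, hence $\psi_\Delta\sigma_\Delta(a) = v(g(a)) + c_U$ with a bounded correction $c_U$ measuring the discrepancy between the chosen trivialisation and the integral model $\mathscr{X}_\Delta(\Ocal_{K_v})$. Continuity on $A(K_v)\setminus|\Delta|$ follows because $\mathscr{X}_\Delta(\Ocal_{K_v})=\ker\psi_\Delta$ is open and each $a$ admits such a trivialising neighbourhood; the required logarithmic singularity along $|\Delta|$ is exactly $v(g(a))$. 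Combined with (i) and (ii), this pins our pairing down to $\qu{\cdot,\cdot}_{\text{N\'eron},v}$ by N\'eron's uniqueness theorem (see~\cite{BombieriGubler}, Chapter~9.5), completing the proof.
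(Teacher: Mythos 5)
Your overall strategy is the same as the paper's: both reduce the statement to N\'eron's axiomatic characterisation of the local symbol (\cite{BombieriGubler}, Theorem~9.5.11) and then verify the axioms for $\qu{\Delta,\mathfrak{A}}_v = \psi_\Delta\sigma_\Delta(\mathfrak{A})$, which the paper delegates to the argument of~\cite{BlochNote}, (2.11)--(2.15). Your verifications of bi-additivity (Baer sum in $\Delta$, the degree-zero condition absorbing the $\G_m(K_v)$-ambiguity of $\sigma_\Delta$) and of the principal-divisor axiom are fine in substance.

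However, there is a genuine gap in the uniqueness step: the three properties you propose to check --- bi-additivity, $\qu{\mathrm{div}(f),\mathfrak{A}}_v=\sum n_i v(f(p_i))$, and ``continuity with the correct logarithmic singularity along $|\Delta|$'' --- do \emph{not} characterise the N\'eron symbol. Continuity plus the prescribed singularities is only the \emph{Weil-function} property, which pins a local height down merely up to a continuous locally bounded function on all of $A(K_v)$; concretely, if $\phi\colon\Pic(A_{K_v})\to\RR$ is any nonzero additive map and $F\colon A(K_v)\to\RR$ any non-constant continuous function, then adding $\phi([\Delta])\cdot\sum n_iF(p_i)$ to the symbol preserves all three of your axioms while changing the pairing. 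What singles out N\'eron's symbol among such perturbations is the further axiom in the characterisation (invariance under simultaneous translation of $\Delta$ and $\mathfrak{A}$, equivalently N\'eron's boundedness under algebraic variation of $\Delta$), and this is precisely the non-trivial point that Bloch's (2.11)--(2.15) establishes. In your framework it is verifiable --- translation by $x\in A(K_v)$ carries the data $(\mathscr{X}_\Delta,\sigma_\Delta,\mathscr{X}_\Delta(\Ocal_{K_v}))$ of~\cref{lemma:locallaextends} canonically to the corresponding data for $\Delta_x$, since the extension, its rational splitting and its integral structure are defined group-theoretically, whence $\qu{\Delta_x,\mathfrak{A}_x}_v=\qu{\Delta,\mathfrak{A}}_v$ --- but as written your argument omits this axiom, so the appeal to the uniqueness theorem does not yet go through.
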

\begin{proof}
This follows from Néron's axiomatic characterisation in~\cite[p.~304\,f., Theorem~9.5.11]{BombieriGubler}, which holds for $\qu{\Delta,\mathfrak{A}}_v$ by the same argument as in~\cite[p.~73\,ff., (2.11)--(2.15)]{BlochNote}.
\end{proof}

\begin{corollary} \label[corollary]{cor:BlochpairingandNeronTatepairing}
The generalised Bloch pairing coincides with the canonical Néron-Tate height pairing.
\end{corollary}
\begin{proof}
This is clear since the local Néron-Tate height pairings sum up to the canonical Néron-Tate height pairing, see~\cite[p.~307, Corollary~9.5.14]{BombieriGubler}.
\end{proof}

\subsection{Conclusion} \label{subsec:Conclusion}

We are now ready to combine the main results of subsections~\ref{subsect:Yoneda}, \ref{subsec:Bloch} and~\ref{subsec:NTheight} into

\begin{proposition} \label[proposition]{thm:bigdiagramPairings}
	Let $\ell$ be invertible on $X$ and assume $\Sha(\Acal/X)[\ell^\infty]$ is finite.  Then there is a commutative diagram 
	{\footnotesize
		\begin{equation}\begin{tikzcd}
		\H^1(X, T_\ell\Acal)_\Tors  \times \H^{2d-1}(X, T_\ell(\Acal^t)(d-1))_\Tors \arrow[drr,"(0)",phantom]\arrow[rr,"\cup"] && \H^{2d}(X,\Z_\ell(d))_\Tors \arrow[r,"\pr_1^*"]\arrow[d,equal]& \H^{2d}(\overline{X},\Z_\ell(d)) \arrow[r,"\iso"]\arrow[d,equal] &\Z_\ell \arrow[d,equal]\\
		\H^1(X, T_\ell\Acal)_\Tors  \times \H^1(X, T_\ell(\Acal^t))_\Tors \arrow[dr,"(1)",phantom]\arrow[u,"\id \times (\cup\eta^{d-1})",hookrightarrow] \arrow[r,"\cup"] &  \H^2(X,\Z_\ell(1))_\Tors \arrow[drrr,"(2)",phantom]\arrow[r,"(\cup\eta)^{d-1}"]& \H^{2d}(X,\Z_\ell(d))_\Tors \arrow[r,"\pr_1^*"]& \H^{2d}(\overline{X},\Z_\ell(d)) \arrow[r,"\iso"] &\Z_\ell \arrow[d,equal]\\
		\Acal(X)_\Tors \otimes_\Z \Z_\ell \times \Acal^t(X)_\Tors \otimes_\Z \Z_\ell \arrow[drrr,"(3)",phantom]\arrow[u,"(\delta{,}\delta^t)" left,"\iso" right] \arrow[r,"\vee"] & \H^1(X,\G_m)_\Tors \otimes_\Z \Z_\ell \arrow[u,hookrightarrow]\arrow[rr,"\cap\Ocal_X(1)^{d-1}"]&& \CH^d(X)_\Tors \otimes_\Z \Z_\ell \arrow[d]\arrow[r,"\deg"]&\Z_\ell \arrow[d,equal]\\
		\Acal(X)_\Tors \otimes_\Z \Z_\ell \times \Acal^t(X)_\Tors \otimes_\Z \Z_\ell \arrow[u,equal] \arrow[rrr,"\hat{h}_\Pcal(\cdot{,}\cdot)"]&&&\Z \otimes_\Z \Z_\ell \arrow[r,"\iso"] &\Z_\ell.
		\end{tikzcd}\label{eq:BigDigram}\end{equation}}Here, $\eta \in \H^2(X, \Z_\ell(1))$ is the cycle class associated to $\Ocal_X(1) \in \Pic(X) = \CH^1(X)$ ($X$ is regular) by $\Pic(X) \to \H^2(X, \Z_\ell(1))$ (this map comes from the Kummer sequence, see~\cite[p.~271, Proposition~VI.10.1]{MilneEtaleCohomology}), where $\Ocal_X(1) = \iota^*\Ocal_{\PP^N_K}(1)$ for the closed immersion $\iota: X \hookrightarrow \PP^N_K$ which defines the structure of a generalised global field on the function field $K = k(X)$ of $X$.  Further,
	\[
	\vee: \Acal(X)_\Tors \otimes_\Z \Z_\ell \times \Acal^t(X)_\Tors \otimes_\Z \Z_\ell = \Acal(X)_\Tors \otimes_\Z \Z_\ell \times \Ext_X^1(\Acal,\G_m)_\Tors \otimes_\Z \Z_\ell \to \H^1(X,\G_m)_\Tors \otimes_\Z \Z_\ell
	\]
	is the Yoneda Ext-pairing (the equality $\Acal^t(X) = \Ext_X^1(\Acal,\G_m)$ comes from the Barsotti-Weil formula).  The pairing in the lower row is the generalised Néron-Tate canonical height pairing divided by $-\log{q}$.  The left vertical isomorphism $(\delta,\delta^t)$ comes from~\eqref{eq:deltaboundarymapabelianscheme}, and the injection $\cup\eta^{d-1}$ from~\cref{thm:integralhardLefschetz}.
\end{proposition}
\begin{proof}
Diagram~(0) commutes by associativity of the $\cup$-product.
Diagram~(1) commutes by~\cref{lemma:commutativityof2firstpart} and~(2) by~\cref{lemma:cupproductandintersectionproduct}.  Diagram~(3) commutes by~\cref{lemma:YonedaandBlochpairing} and by~\cref{cor:BlochpairingandNeronTatepairing}.
\end{proof}

%Now~\cref{thm:comparisonofpairings} follows from~\cref{thm:bigdiagramPairings} and the definition of the integral hard Lefschetz defect~\cref{def:integralhardLefschetzdefect}.
The above proposition and the definition of the integral hard Lefschetz defect (\cref{def:integralhardLefschetzdefect}) yield the following statement, which is the main theorem of this section:
\begin{theorem} \label[theorem]{thm:comparisonofpairings}
	The \emph{cohomological pairing} $\qu{\cdot,\cdot}_\ell$ from~\cref{thm:BSDI} equals the \emph{generalised Bloch pairing} (see~\cref{def:BlochPairing}) and the \emph{canonical Néron-Tate height pairing} (see~\cref{def:generalisedNeronTatecanonicalheightpairing}) up to multiplication by the integral hard Lefschetz defect (see~\cref{def:integralhardLefschetzdefect}).
\end{theorem}

\begin{remark} \label[remark]{rem:HeightPairingAndCohPairing}
	Note that the cohomological pairing $\qu{\cdot,\cdot}_\ell$ does not depend on an embedding $\iota: X \hookrightarrow \PP^N_k$, but all other pairings in~\eqref{eq:BigDigram} depend on a line bundle $\theta$ or cohomology class $\eta \in \H^2(X,\Z_\ell(1))$, which manifests in the integral hard Lefschetz defect in the commutative square~(0).  The two choices, in the integral hard Lefschetz defect in the commutative square~(0) and in the other pairings, cancel.
\end{remark}

Here is an example where the integral hard Lefschetz morphism is an isomorphism:
\begin{theorem} \label[theorem]{thm:integralHardLefschetzDefectForAbelianVarieties}
	Let $A/k$ be an Abelian variety of dimension $d$ over an algebraically closed field of characteristic $\neq \ell$ with principal polarisation associated to $\Lcal \in \Pic(A)$.  Denote by $\theta \in \H^2(A,\Z_\ell(1))$ the image of $\Lcal$ under the homomorphism $\Pic(A) \to \H^2(A,\Z_\ell(1))$.  Then the integral hard Lefschetz morphism $(\cup\theta)^{d-1}: \H^{1}(A,\Z_\ell) \to \H^{2d-1}(A,\Z_\ell(d-1))$ is an isomorphism.
\end{theorem}
\begin{proof}
	Using that $\theta$ is a principal polarisation, write $\theta = \sum_{i=1}^d e_i \wedge e_i'$ in a symplectic basis (with respect to the Weil pairing $\wedge: T_\ell{A} \times T_\ell(A^t) \to \Z_\ell(1)$; using the principal polarisation $A \isoto A^t$, the Weil pairing becomes a symplectic pairing $T_\ell A \times T_\ell A \to \Z_\ell(1)$ by~\cite[p.~132, Lemma~16.2\,(e)]{MilneAbelianVarieties}) and use that the cohomology ring $\H^*(A,\Z_\ell) = \bigwedge^*\H^1(A,\Z_\ell)$ is an exterior algebra.
	
	By~\cite[p.~130]{MilneAbelianVarieties}, one has $\H^*(A,\Z_\ell) = (\bigwedge^*T_\ell{A})^\vee$ (here we use that the ground field is algebraically closed).
	
	Note that, via the identifications of the cohomology ring with the exterior algebra, proving that $(\cup\theta)^{d-1}$ is an isomorphism is equivalent to showing that this morphism sends a basis of $\bigwedge^1T_\ell{A}$ to a basis of $\bigwedge^{2d-1}T_\ell{A}$.  A basis of $\bigwedge^1T_\ell{A}$ is $e_1, e_1', \ldots, e_d, e_d'$, and a basis of $\bigwedge^{2d-1}T_\ell{A}$ is (a hat denotes the omission of a term)
	\[
	e_1 \wedge e_1' \wedge \ldots \widehat{e_i} \wedge e_i' \wedge \ldots \wedge e_d \wedge e_d'
	\]
	and the same for $e_i'$ instead of $e_i$.  Now,
	\[
	\theta^{d-1} = \sum_{i=1}^d(e_1 \wedge e_1' \wedge \ldots \widehat{e_i \wedge e_i'} \wedge \ldots \wedge e_d \wedge e_d').
	\]
	Thus,
	\[
	e_i' \wedge \theta^{d-1} = e_1 \wedge e_1' \wedge \ldots \widehat{e_i} \wedge e_i' \wedge \ldots \wedge e_d \wedge e_d'
	\]
	and the same for $e_i$, gives a basis of $\bigwedge^{2d-1}T_\ell{A}$.
\end{proof}

\begin{corollary} \label[corollary]{cor:integralHardLefschetzDefectConstantAV}
	Let $\Acal = B \times_k X$ be a constant Abelian scheme with $X = A$ a principally polarised Abelian variety of dimension $d$ over an algebraically closed field $k$.  Then the integral hard Lefschetz morphism $(\cup\theta)^{d-1}: \H^{1}(\Acal,\Z_\ell) \to \H^{2d-1}(\Acal,\Z_\ell(d-1))$ is an isomorphism.
\end{corollary}
\begin{proof}
	Note that $\H^1(A,T_\ell\Acal) = \H^1(A, \Z_\ell) \times T_\ell B$ by~\cref{lemma:BasicPropertiesOfConstantAbelianSchemes}\,2 and the projection formula.
\end{proof}

\begin{corollary}
	Let $\Acal = B \times_k X$ be a constant Abelian scheme over $X$ with $X = A$ a principally polarised Abelian variety of dimension $d$ over finite field $k$.  Then over the maximal $\ell$-extension $k_{\ell^\infty}$, the integral hard Lefschetz morphism $(\cup\theta)^{d-1}: \H^{1}(\Acal_{k_{\ell^\infty}},\Z_\ell) \to \H^{2d-1}(\Acal_{k_{\ell^\infty}},\Z_\ell(d-1))$ is an isomorphism.
	
	Furthermore, for some finite $\ell$-extension $K/k$ the integral hard Lefschetz morphism $(\cup\theta)^{d-1}: \H^{1}(\Acal_K,\Z_\ell) \to \H^{2d-1}(\Acal_K,\Z_\ell(d-1))$ is an isomorphism.
\end{corollary}
\begin{proof}
	By~\cite[p.~259, Proposition~5.9.2\,(iii)]{LeiFuEtaleCohomologyTheory}, the integral hard Lefschetz homomorphism over $\overline{k}$ is the direct limit over all $K/k$ finite.  The transition morphisms are injective since $\cores_{K/k} \circ \res_{K/k} = [K:k]$ is injective, and isomorphisms for $\ell \nmid [K:k]$ since then multiplication by $[K:k]$ is an isomorphism, in particular surjective.  It follows that the integral hard Lefschetz morphism is an isomorphism over the maximal $\ell$-extension $k_{\ell^\infty}$ of $k$.
	
	Since the integral hard Lefschetz morphism over $k_{\ell^\infty}$ is the filtered direct limit over the base changes of $(\cup\theta)^{d-1}$ of the finite $\ell$-extensions of $k$ and since $\H^{2d-1}(\Acal_{k_{\ell^\infty}},\Z_\ell(d-1))$ is a finitely generated $\Z_\ell$-module, there is a finite $\ell$-extension $K/k$ such that $(\cup\theta)^{d-1}: \H^{1}(\Acal_K,\Z_\ell) \hookrightarrow \H^{2d-1}(\Acal_K,\Z_\ell(d-1))$ is surjective~\cite[p.~46, (5.2.3))]{EGAI}, hence an isomorphism.
\end{proof}

\section{The determinant of the pairing $(\cdot,\cdot)_\ell$} \label{subsection:pairingroundbrackets}
\begin{lemma}
Assume $\Sha(\Acal/X)[\ell^\infty]$ is finite.  Then one has a commutative diagram with exact columns
\begin{equation} \label{eq:bigdiagramforroundpairing}
\begin{gathered}
\begin{tikzcd}
        & 0 \arrow[d] & 0 \arrow[d] & 0 \arrow[d] & \\
& (\Acal^t(X)\otimes\Z_\ell)_\Tors \arrow[d]\arrow[r,"\iso"] & \H^1(X,T_\ell\Acal^t)_\Tors \arrow[d]\arrow[r,"(\cup\eta_{T_\ell})^{d-1}",hookrightarrow] & \H^{2d-1}(X,T_\ell(\Acal^t)(d-1))_\Tors \arrow[d]& \\
        & \Acal^t(X) \otimes \Q_\ell \arrow[d]\arrow[r,"\iso"] & \H^1(X,V_\ell\Acal^t) \arrow[d]\arrow[r,"(\cup\eta)^{d-1}" above,"\iso" below] & \H^{2d-1}(X,V_\ell(\Acal^t)(d-1))\arrow[d] & \\
        & \Acal^t(X) \otimes \Q_\ell/\Z_\ell \arrow[d]\arrow[r,"\iso"] & \H^1(X,\Acal^t[\ell^\infty])_\divv \arrow[d]\arrow[r,"(\cup\overline{\eta})^{d-1}",twoheadrightarrow] & \H^{2d-1}(X,\Acal^t[\ell^\infty](d-1))_\divv \arrow[d] \\
        & 0 & 0 & 0 &
\end{tikzcd}
\end{gathered}
\end{equation}
with the cokernel of $\H^1(X,T_\ell\Acal^t)_\Tors \hookrightarrow \H^{2d-1}(X,T_\ell(\Acal^t)(d-1))_\Tors$ being finite.
\end{lemma}
\begin{proof}
The upper left arrow is an isomorphism by~\cref{lemma:ShaFinite}.  For the lower left arrow being an isomorphism:  By~\cref{lemma:AcalExactSequence}, one has a short exact sequence
\[
    0 \to \Acal(X) \otimes \Q_\ell/\Z_\ell \to \H^1(X,\Acal[\ell^\infty]) \to \H^1(X,\Acal)[\ell^\infty] \to 0.
\]
Since $\Acal(X) \otimes \Q_\ell/\Z_\ell$ is divisible, one gets an inclusion $\Acal(X) \otimes \Q_\ell/\Z_\ell \hookrightarrow \H^1(X,\Acal[\ell^\infty])_\divv$.  Since $\Sha(\Acal/X)[\ell^\infty] = \H^1(X,\Acal)[\ell^\infty]$ is finite, if an element from $\H^1(X,\Acal[\ell^\infty])_\divv$ is mapped to $\H^1(X,\Acal)[\ell^\infty]$, it has finite order and is divisible, so it is $0$, hence it comes from $\Acal(X) \otimes \Q_\ell/\Z_\ell$.

The upper and middle right arrows are induced by the integral hard Lefschetz theorem~\cref{thm:integralhardLefschetz} (injective) and the hard Lefschetz theorem~\cref{thm:hardLefschetzFiniteGroundField} (isomorphism), respectively, and the lower one by functoriality of the $\coker$-functor.  So the lower one surjective by the snake lemma.

For the exactness of the columns:  Left column:  This column arises from tensoring
\[
    0 \to \Z_\ell \to \Q_\ell \to \Q_\ell/\Z_\ell \to 0
\]
with $\Acal^t(X)_\Tors \iso \Z^{\rk{\Acal^t(X)}}$ over $\Z$.  (By the theorem of Mordell-Weil~\cref{thm:MordellWeil} and the Néron mapping property~\cref{thm:NeronModel}, $\Acal(X)$ is a finitely generated Abelian group).  Middle and right column:  This follows from~\cref{lemma:lesGarbensequenz}.
\end{proof}

\begin{lemma} \label[lemma]{lemma:H2dminusinjectivewithfinitecokernel}
The homomorphisms induced by the commutative diagram~\eqref{eq:bigdiagramforroundpairing}
\begin{align*}
    \Hom(\H^{2d-1}(X,T_\ell(\Acal^t)(d-1))_\Tors, \Z_\ell) &\to \Hom((\Acal^t(X)\otimes\Z_\ell)_\Tors, \Z_\ell)\quad\text{and}\\
    \Hom(\H^{2d-1}(X,\Acal^t[\ell^\infty](d-1)), \Q_\ell/\Z_\ell)_\divv &\to \Hom(\Acal^t(X) \otimes \Q_\ell/\Z_\ell, \Q_\ell/\Z_\ell)
\end{align*}
are injective with finite cokernels of the same order (even isomorphic).
\end{lemma}
\begin{proof}
Write
\begin{equation} \label{eq:diagAstrichB}
\begin{gathered}
\begin{tikzcd}
    0 \arrow[r] & A' \arrow[r]\arrow[d,"f",hookrightarrow] & A \arrow[r]\arrow[d,"\iso"] & A'' \arrow[r]\arrow[d,"g",twoheadrightarrow] & 0 \\
    0 \arrow[r] & B' \arrow[r] & B \arrow[r] & B'' \arrow[r] & 0
\end{tikzcd}
\end{gathered}
\end{equation}
in short for two right columns of the big diagram~\eqref{eq:bigdiagramforroundpairing}: $A' = \H^1(X,T_\ell\Acal^t)_\Tors$, $A = \H^1(X,V_\ell\Acal^t)$, $A'' = \H^1(X,\Acal^t[\ell^\infty])_\divv$ for the middle column and $B', B, B''$ for the corresponding groups in the right column.

The snake lemma gives us $\ker(g) \isoto \coker(f)$ since the middle vertical arrow in~\eqref{eq:diagAstrichB} is an isomorphism.

Applying $\Hom(-,\Z_\ell)$ to the short exact sequence $0 \to A' \to B' \to \coker(f) \to 0$ gives
\[
    0 \to \Hom(\coker(f),\Z_\ell) \to \Hom(B',\Z_\ell) \to \Hom(A',\Z_\ell) \to \Ext^1(\coker(f),\Z_\ell) \to \Ext^1(B',\Z_\ell).
\]
Since $\coker(f)$ is finite, the first term vanishes and $\Ext^1(\coker(f),\Z_\ell) \iso \coker(f)$, and since $B'$ is torsion-free and finitely generated, hence projective, the last term vanishes.  So $\Hom(f,\Z_\ell)$ is injective with finite cokernel isomorphic to $\coker(f)$.

Applying the exact functor $\Hom(-,\Q_\ell/\Z_\ell)$ ($\Q_\ell/\Z_\ell$ is divisible, hence injective) to the short exact sequence $0 \to \ker(g) \to A'' \to B'' \to 0$ gives
\[
    0 \to \Hom(B'',\Q_\ell/\Z_\ell) \to \Hom(A'',\Q_\ell/\Z_\ell) \to \Hom(\ker(g),\Q_\ell/\Z_\ell) \to 0
\]
and $\Hom(\ker(g),\Q_\ell/\Z_\ell) \iso \ker(g)$ since $\ker(g) \isoto \coker(f)$ is a finite $\ell$-primary group.  So $\Hom(g,\Q_\ell/\Z_\ell)$ is injective with finite cokernel isomorphic to $\ker(g)$.
\end{proof}

\begin{lemma} \label[lemma]{lemma:H2undH2d-1iso}
One has an isomorphism
\begin{equation} \label{eq:H2H2d-1iso}
    \H^2(X,T_\ell\Acal)_\Tors \isoto \Hom(\H^{2d-1}(X,\Acal^t[\ell^\infty](d-1))_\divv, \Q_\ell/\Z_\ell)
\end{equation}
induced by the cup product.
\end{lemma}
\begin{proof}
Poincaré duality for the absolute situation~\cite[p.~183, Corollary~V.2.3]{MilneEtaleCohomology} (easily generalised to higher dimensions) gives non-degenerate pairings of finite groups for all $n \in \N$
\[
    \H^2(X,\Acal[\ell^n]) \times \H^{2d-1}(X,\Acal^t[\ell^n](d-1)) \to \Q_\ell/\Z_\ell.
\]
This is the same as isomorphisms
\[
    \H^2(X,\Acal[\ell^n]) \isoto \Hom(\H^{2d-1}(X,\Acal^t[\ell^n](d-1)), \Q_\ell/\Z_\ell),
\]
and passing to the projective limit gives us an isomorphism
\[
    \H^2(X,T_\ell\Acal) \isoto \Hom(\H^{2d-1}(X,\Acal^t[\ell^\infty](d-1)), \Q_\ell/\Z_\ell).
\]
Write $M = \H^2(X,T_\ell\Acal)$ and $N = \H^{2d-1}(X,\Acal^t[\ell^\infty](d-1))$, so one has $M \isoto N^D$.  These are finitely and cofinitely generated, respectively.  One has
\[
    M_\Tors = N^D/\varinjlim_n N^D[\ell^n] = N^D/(\varprojlim_n N/\ell^n)^D = N^D/\hat{N}^D
\]
since $0 \to N_\divv \to N \stackrel{h}{\to} \hat{N}$ is exact with $\hat{N}$ the $\ell$-adic completion of $N$.  As $N \iso (\Q_\ell/\Z_\ell)^r \oplus T$ with $T$ finite, one has $\hat{N} \iso T$ (since the $\ell$-adic completion of the divisible group $\Q_\ell/\Z_\ell$ is trivial) and $h$ surjective.  Dualising gives $0 \to \hat{N}^D \to N^D \to N_\divv^D \to 0$, so $N^D/\hat{N}^D = N_\divv^D$.  Summing up, we get $M_\Tors = N_\divv^D$.
\end{proof}

\begin{theorem} \label[theorem]{thm:rundeKlammerPaarungDeterminante1}
Assume $\Sha(\Acal/X)[\ell^\infty]$ is finite.  Then one has $\det(\cdot,\cdot)_\ell = 1$ for the pairing
\[
	(\cdot,\cdot)_\ell: \H^2(X, T_\ell\Acal)_\Tors \times \H^{2d-1}(X, T_\ell(\Acal^t)(d-1))_\Tors \to \H^{2d+1}(X,\Z_\ell(d)) = \Z_\ell
\]
from~\eqref{eq:Regulator runde Klammer}.
\end{theorem}
\begin{proof}
Consider the commutative diagram
\begin{equation} \label{eq:Hom2d-1commdiag}
\begin{gathered}
\begin{tikzcd}
    \Hom(\H^{2d-1}(X,T_\ell(\Acal^t)(d-1))_\Tors, \Z_\ell) \arrow[r,hookrightarrow] &\Hom((\Acal^t(X)\otimes\Z_\ell)_\Tors, \Z_\ell) \arrow[dd,"\iso"]\\
    \H^2(X,T_\ell\Acal)_\Tors \arrow[u] \arrow[d,"\iso"] & \\
    \Hom(\H^{2d-1}(X,\Acal^t[\ell^\infty](d-1))_\divv, \Q_\ell/\Z_\ell) \arrow[r,hookrightarrow] & \Hom(\Acal^t(X) \otimes \Q_\ell/\Z_\ell, \Q_\ell/\Z_\ell)
\end{tikzcd}
\end{gathered}
\end{equation}
with the lower left isomorphism by~\eqref{eq:H2H2d-1iso}.  The horizontal maps are injective with cokernels finite of the same order by~\cref{lemma:H2dminusinjectivewithfinitecokernel}.

The right vertical map is an isomorphism:  A homomorphism $h: \Z_\ell \to \Z_\ell$ induces a morphism $\Q_\ell \to \Q_\ell$ by tensoring with $\Q$ and hence a morphism between the cokernels $\Q_\ell/\Z_\ell \to \Q_\ell/\Z_\ell$.  This is an isomorphism:  By the Mordell-Weil theorem and the Néron mapping property~\cref{thm:NeronModel}, $(\Acal^t(X)\otimes\Z_\ell)_\Tors \iso \Z_\ell^{\rk{\Acal^t(X)}}$ and $\Acal^t(X) \otimes \Q_\ell/\Z_\ell \iso (\Q_\ell/\Z_\ell)^{\rk \Acal^t(X)}$, and $\Hom(\Q_\ell/\Z_\ell,\Q_\ell/\Z_\ell) = (\Q_\ell/\Z_\ell)^D = (\varinjlim_n \frac{1}{\ell^n}\Z/\Z)^D = \varprojlim_n \Z/\ell^n\Z = \Z_\ell$.

It follows from Poincaré duality for the absolute situation~\cite[p.~183, Corollary~V.2.3]{MilneEtaleCohomology} that one has a non-degenerate pairing
\[
    (\cdot,\cdot)_\ell: \H^2(X,T_\ell\Acal)_\Tors \times \H^{2d-1}(X,T_\ell(\Acal^t)(d-1))_\Tors \stackrel{\cup}{\to} \H^{2d+1}(X,\Z_\ell(d)) = \Z_\ell,
\]
so the upper left vertical arrow in~\eqref{eq:Hom2d-1commdiag}
\[
    \H^2(X,T_\ell\Acal)_\Tors \to \Hom(\H^{2d-1}(X,T_\ell(\Acal^t)(d-1))_\Tors, \Z_\ell)
\]
is injective with cokernel of order $\det(\cdot,\cdot)_\ell$.  By comparison of the terms in the commutative diagram~\eqref{eq:Hom2d-1commdiag} and using that the horizontal morphisms are injective with cokernels of the same order, see~\cref{lemma:H2dminusinjectivewithfinitecokernel}, it follows that $\det(\cdot,\cdot)_\ell = 1$.
\end{proof}

\section{Proof of the conjecture for constant Abelian schemes} \label{sec:IsoconstantAbelianScheme}

\subsection{The case of a basis of arbitrary dimension}

\begin{lemma} \label[lemma]{lemma:BasicPropertiesOfConstantAbelianSchemes}
Let $A$ be an Abelian variety over a finite field $k$, $X/k$ be a variety and $\Acal = A \times_k X$ be a constant Abelian scheme over $X$.

1.  There is an isomorphism $\Acal[m] \isoto A[m] \times_k X$ of finite flat group schemes resp.\ of constructible sheaves (for $\Char{k} \nmid m$) on $X$.

2.  There is an isomorphism $T_\ell\Acal = (T_\ell A) \times_k X$ of $\ell$-adic sheaves on $X$ for $\ell \neq p$.

3.  There is an isomorphism of Abelian groups
    \[
        \Acal(X) = \Mor_X(X,\Acal) \isoto \Mor_k(X, A), (f: X \to \Acal) \mapsto \pr_1 \circ f,
    \]
    and under this isomorphism $\Acal(X)_\tors$ corresponds to the subset of constant morphisms
    \[
        \Acal(X)_\tors \isoto \{f: X \to A \mid f(X) = \{a\}\} = \Hom_k(k,A) = A(k).
    \]
\end{lemma}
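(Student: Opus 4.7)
For (1), since $\Acal = A \times_k X$ is a base change, the formation of $m$-torsion commutes with base change: $\Acal[m] = (A \times_k X)[m] = A[m] \times_k X$, as finite flat $X$-group schemes. When $\Char(k) \nmid m$, the $k$-group scheme $A[m]$ is étale, so $A[m] \times_k X$ is étale over $X$ and corresponds to a locally constant constructible étale sheaf on $X$. Statement (2) follows from (1) applied to $m = \ell^n$ by passing to the inverse limit over $n$, using compatibility of $T_\ell$ with base change. For the first isomorphism in (3), the universal property of the fibre product identifies an $X$-morphism $f : X \to A \times_k X$ with its first projection $\pr_1 \circ f : X \to A$, since the second projection is forced to be $\id_X$.

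For the torsion claim, first note that $A(k) \subseteq \Tor \Acal(X)$ since $A(k)$ is finite ($k = \F_q$), so any constant $k$-valued map is killed by $\card{A(k)}$. Conversely, suppose $f : X \to A$ satisfies $mf = 0$ in $\Mor_k(X,A)$; then $f$ factors through the finite $k$-group scheme $A[m] \hookrightarrow A$. Writing $m = p^a m'$ with $(m', p) = 1$ and using the standard decomposition $A[m] \iso A[p^a] \times_k A[m']$ (kernels of coprime multiplication maps), write $f = (f_1, f_2)$ accordingly.

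For $f_2 : X \to A[m']$: since $(m', p) = 1$, the group scheme $A[m']$ is étale over $k$. Write $A[m'] = \Spec L$ with $L$ an étale $k$-algebra; then $f_2$ corresponds to a $k$-algebra homomorphism $L \to \H^0(X, \Ocal_X) = k$ (using that $X$ is proper and geometrically connected, so $\H^0(X, \Ocal_X) = k$), i.e.\ a $k$-rational point of $A[m']$, hence $f_2$ is constant. For $f_1 : X \to A[p^a]$: the scheme $A[p^a]$ is not étale in general, but since $X$ is smooth, hence reduced, $f_1$ factors through $(A[p^a])_{\mathrm{red}}$, which is étale over the perfect field $k = \F_q$. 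The same argument then shows $f_1$ is constant with value in $(A[p^a])_{\mathrm{red}}(k) \subseteq A(k)$. Combining, $f$ is constant with value in $A(k)$.

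The main obstacle is the $p$-primary part of the torsion: one must invoke the smoothness of $X$ to factor through $(A[p^a])_{\mathrm{red}}$, and the perfectness of the finite base field $k$ to make this reduced group scheme étale. Everything else is a direct application of universal properties and of $\H^0(X, \Ocal_X) = k$.
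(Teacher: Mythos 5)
Your argument is correct. The paper states this lemma without proof, so there is no proof of record to compare against; your route is the standard one: torsion commutes with base change for (1), passage to the limit over $m=\ell^n$ for (2), the universal property of the fibre product for the first half of (3), and for the torsion statement the factorization of an $m$-torsion morphism through $A[m]$ combined with $\H^0(X,\Ocal_X)=k$ (properness and geometric connectedness) and the reducedness of $X$. One small streamlining: the splitting $A[m]\iso A[p^a]\times_k A[m']$ is unnecessary, since $X$ reduced already forces $f$ to factor through $(A[m])_{\mathrm{red}}$, which is finite étale over the perfect field $k=\F_q$ regardless of $m$; the argument you give for the prime-to-$p$ factor then applies verbatim and yields a constant morphism with value in $A(k)$. (If you want the literal set-theoretic formulation $\{f: f(X)=\{a\}\}=A(k)$ of the lemma, note that the same two facts — $X$ reduced and $\H^0(X,\Ocal_X)=k$ — show that any morphism with set-theoretic image a single point factors through a $k$-rational point, so your identification of the constant morphisms with $A(k)$ is complete.)
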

\begin{proof}
1.  Consider the fibre product diagram
\[\begin{tikzcd}
    A[m] \arrow[r] \arrow[d,hookrightarrow] & \Spec{k} \arrow[d,"0", hookrightarrow] \\
    A    \arrow[r,"{[m]}"]        & A
\end{tikzcd}\]
and apply $- \times_k X$.

2.  This follows from 1 by passing to the inverse limit over $m = \ell^n, n \in \N$.

3.  The inverse is given by $(f: X \to A) \mapsto ((f, \id_X): X \to A \times_k X = \Acal)$.

    For the second statement:  If $f: X \to A$ takes on the constant value $a$, $(f, \id_X)$ has finite order $\ord{a}$ in $A(k)$ since $k$ and thus $A(k)$ is finite.  Conversely, if $f: X \to \Acal$ has finite order $n$, the image of $\pr_1 \circ f$ lies in the discrete set of $n$-torsion points (since $\pr_1: A \times_k X \to A$ is a morphism of group schemes), so is constant because $X$ is connected.
\end{proof}

\begin{corollary} \label[corollary]{cor:ExactSequenceOfAcal}
Assume $X$ has a $k$-rational point $x_0$.  Then there is a commutative diagram with exact rows
\[\begin{tikzcd}
    0 \arrow[r] & A(k) \arrow[r] \arrow[d,"\iso"] & \Acal(X) \arrow[r] \arrow[d,equal] & \Hom_k(\Alb_{X/k}, A) \arrow[r] \arrow[d,"\iso"]   & 0\\
    0 \arrow[r] & \Acal(X)_\tors \arrow[r]     & \Acal(X) \arrow[r]            & \Acal(X)_\Tors \arrow[r]                      & 0,
\end{tikzcd}\]
and
\[
    \rk_\Z\Acal(X) = r(f_A,f_{\Alb_{X/k}}) = \dim_{\Q_\ell}\Hom_{\Q_\ell[G_k]}(V_\ell A,V_\ell\Alb_{X/k}),
\]
with $f_A,f_B$ the characteristic polynomials of the Frobenius on $A,B/k$ and
\[
	r(f_A,f_B) = \sum_{P \in \Q[T]\text{ irreducible}}v_P(f_A)v_P(f_B)\deg{P}
\]
(see~\cite[p.~138]{TateEndomorphisms}).
\end{corollary}
\begin{proof}
The lower row is trivially exact.  By the universal property of the Albanese variety (use that $X$ has a $k$-rational point $x_0$), one has $\{f \in \Mor_k(X, A) \mid f(x_0) = 0\} = \Hom_k(\Alb_{X/k}, A)$.  Thus the upper row is exact.  The left hand vertical arrow is an isomorphism because of~\cref{lemma:BasicPropertiesOfConstantAbelianSchemes}\,3.  Now the five lemma implies that the right hand vertical arrow is an isomorphism since it is a well-defined homomorphism: Precompose $f: \Alb_{X/k} \to A$ with the Abel-Jacobi map $\phi: X \to \Alb_{X/k}$ associated to $x_0$.

The equality for the rank follows from~\cite[p.~139, equation~(5) and Theorem~1\,(a)]{TateEndomorphisms}.
\end{proof}

\begin{example} \label[example]{ex:RankOfConstantAbelianVarietyOverP1}
The rank of the Mordell-Weil group of a constant Abelian variety over a projective space is $0$, since there are no non-constant $k$-morphisms $\mathbf{P}^n_k \to A$, see~\cite[p.~107, Corollary~3.9]{MilneAbelianVarieties}.
\end{example}

\begin{lemma} \label[lemma]{lemma:InvariantsAndTensorProduct}
Let $M$ and $N$ be torsion-free finitely generated $\Z_\ell$-modules, resp.\ continuous $\Z_\ell[\Gamma]$-modules.  Then one has
\begin{align*}
  M \otimes_{\Z_\ell} N &= \Hom_{\Z_\ell\text{-}\Mod}(M^\vee,N) \\
  (M \otimes_{\Z_\ell} N)^\Gamma &= \Hom_{\Z_\ell[\Gamma]\text{-}\Mod}(M^\vee,N) \\
\end{align*}
\end{lemma}
\begin{proof}
For the first equality, see~\cite[p.~628, Corollary~XVI.5.5]{LangAlgebra}.   Note that finitely generated torsion-free modules over a principal ideal domain are free.
	
The second equality follows from $\Hom_R(M,N)^\Gamma = \Hom_{R[\Gamma]}(M,N)$ for any commutative ring $R$ with $1$, group $\Gamma$ and $R$-modules $M,N$ and the first equality and using $M^{\Gamma} = M^\Z$ for $M$ a discrete $\Z_\ell[\Gamma]$-module since $\Z \subset \Gamma$ is dense.
\end{proof}

\begin{lemma} \label[lemma]{lemma:projection formula}
Let $\Acal = A\times_kX$ be a constant Abelian scheme.  Then one has $\H^i(\overline{X}, T_\ell\Acal) = \H^i(\overline{X}, \Z_\ell) \otimes_{\Z_\ell} T_\ell A$ as $\ell$-adic sheaves on the étale site of $k$.
\end{lemma}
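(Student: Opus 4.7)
The plan is to reduce to the finite level, apply the projection formula (or a direct decomposition) for constant coefficients, and then pass to the inverse limit, keeping track of the $\Gamma$-action throughout.

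First, by~\cref{lemma:BasicPropertiesOfConstantAbelianSchemes}\,2 we have $T_\ell\Acal = (T_\ell A)\times_k X$, and, writing $\bar\pi: \bar X\to\Spec\bar k$ for the structure map, this identifies $T_\ell\Acal\!\mid_{\bar X}$ with the pullback $\bar\pi^*T_\ell A$, i.\,e.\ the constant $\ell$-adic sheaf on $\bar X$ with value the finitely generated free $\Z_\ell$-module $T_\ell A$. Concretely, for every $n$ the sheaf $\Acal[\ell^n]\!\mid_{\bar X}$ is the constant sheaf associated to the finite free $\Z/\ell^n$-module $A[\ell^n](\bar k) = T_\ell A/\ell^n$; all these identifications are $\Gamma$-equivariant because $\Acal = A\times_k X$ is defined over $k$ and the whole picture is the base change of the analogous statement over $X$.

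Next, at finite level one has a canonical isomorphism
\[
    \H^i(\bar X,A[\ell^n]) \iso \H^i(\bar X,\Z/\ell^n) \otimes_{\Z/\ell^n} (T_\ell A/\ell^n).
\]
This is the projection formula~\cite{Milne�taleCohomology} for the map $\bar\pi$ applied to the flat (in fact free) $\Z/\ell^n$-module $T_\ell A/\ell^n$; equivalently, choose a $\Z/\ell^n$-basis of $T_\ell A/\ell^n$ and use that étale cohomology commutes with finite direct sums of sheaves. Either way, the isomorphism is visibly functorial in the coefficients, hence $\Gamma$-equivariant.

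Finally, passing to the limit $\varprojlim_n$ yields
\[
    \H^i(\bar X, T_\ell\Acal) \iso \varprojlim_n\bigl(\H^i(\bar X,\Z/\ell^n) \otimes_{\Z/\ell^n} T_\ell A/\ell^n\bigr) \iso \H^i(\bar X,\Z_\ell) \otimes_{\Z_\ell} T_\ell A,
\]
where the last step exchanges $\varprojlim$ with $\otimes$: this is harmless because $T_\ell A$ is a finitely generated free $\Z_\ell$-module, so after picking a basis the tensor product is just a finite direct sum, with which $\varprojlim$ commutes. The $\Gamma$-action on both sides is the diagonal action coming from the $\Gamma$-action on $\bar X$ and on $T_\ell A$; equivariance of the isomorphism is inherited from the finite-level isomorphisms above. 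The only slightly delicate point is the compatibility of $\varprojlim$ with tensoring by $T_\ell A$ and with Galois action simultaneously, but this is automatic from the freeness of $T_\ell A$ over $\Z_\ell$.
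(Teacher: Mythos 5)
Your proof is correct, and since the paper states this lemma without giving a proof, your argument (identify $T_\ell\Acal\mid_{\bar X}$ with the constant sheaf $\bar\pi^*T_\ell A$, decompose at each finite level using freeness of $A[\ell^n](\bar k)\cong T_\ell A/\ell^n$ over $\Z/\ell^n$, pass to $\varprojlim_n$ using that $T_\ell A$ is finite free over $\Z_\ell$, and get $\Gamma$-equivariance from naturality of the comparison map rather than from any choice of basis) is exactly the standard argument the paper leaves implicit. The one point worth keeping explicit, which you do handle, is that the basis is used only to verify the natural, basis-free map is bijective, so the Galois action on $T_\ell A$ causes no trouble.
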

\begin{proof}
This follows from~\cref{lemma:BasicPropertiesOfConstantAbelianSchemes}\,2 and the projection formula.
\end{proof}

\begin{theorem} \label[theorem]{thm:AlbDualToPic}
Let $X/k$ be a smooth projective geometrically connected variety with a $k$-rational point.  Then the reduced Picard variety $(\PPic_{X/k}^0)_{\mathrm{red}}$ is dual to $\Alb_{X/k}$ and $\PPic_{X/k}^0$ is reduced if and only if $\dim \PPic_{X/k}^0 = \dim_k \H^1_{\Zar}(X,\Ocal_X)$.
\end{theorem}
\begin{proof}
By~\cite[Proposition~A.6\,(i)]{MochizukiTopics} or~\cite[p.~289\,f., Remark~9.5.25]{FGAExplained}, $(\PPic_{X/k}^0)_{\mathrm{red}}$ is dual to $\Alb_{X/k}$.  By~\cite[p.~283, Corollary~9.5.13]{FGAExplained}, the Picard variety is reduced (and then smooth and an Abelian scheme) iff equality holds in $\dim \PPic_{X/k}^0 \leq \dim_k \H^1_{\Zar}(X,\Ocal_X)$.
\end{proof}

\begin{remark} \label[remark]{rem:DefectOfSmoothness}
The integer $\alpha(X) := \dim_k \H^1_{\Zar}(X,\Ocal_X) - \dim \PPic_{X/k}^0 \geq 0$ is called the \defn{defect of smoothness}.
\end{remark}

\begin{example} \label[example]{ex:DefectOfSmoothness}
One has $\alpha(X) = 0$ iff the Picard scheme of $X/k$ is smooth (since a group variety is smooth iff it is reduced) iff the dimension of $\H^1_{\Zar}(\overline{X},\Ocal_{\overline{X}})$ as a vector space over $\overline{k}$ equals the dimension of the Albanese variety of $\overline{X}/\overline{k}$~\cite[p.~94, Remarks to Theorem~1]{Milne-Tate-Shafarevic}:

%This holds true for $X$ an Abelian variety (since $\PPic_{X/k}^0/k$ is the dual Abelian variety of $X/k$, which is smooth), a K3 surface (since $\H^1_{\Zar}(\overline{X},\Ocal_{\overline{X}}) = 0$ by~\cite[p.~1, Definition~1.1]{HuybrechtsK3Surfaces}) or a curve (since for smooth proper connected curves curves over algebraically closed fields, the Albanese variety, which is an Abelian variety, is canonically isomorphic to the Picard variety).  In characteristic $0$, this is always the case~\cite[p.~94, Remarks to Theorem~1]{Milne-Tate-Shafarevic}.  For examples of non-reduced Picard schemes of smooth projective surfaces in positive characteristic see~\cite{LiedtkeNonReducedPicardSchemes}.
This holds true for K3 surfaces since $\H^1_{\Zar}(\overline{X},\Ocal_{\overline{X}}) = 0$ by~\cite[p.~1, Definition~1.1]{HuybrechtsK3Surfaces}.  In characteristic $0$, this is always the case~\cite[p.~94, Remarks to Theorem~1]{Milne-Tate-Shafarevic}.  For examples of non-reduced Picard schemes of smooth projective surfaces in positive characteristic see~\cite{LiedtkeNonReducedPicardSchemes}.
\end{example}

\begin{lemma} \label[lemma]{lemma:WeilPairingAndHomomorphisms}
Let $f: A \to B$ a homomorphism of an Abelian varieties and $e_A: T_\ell A \times T_\ell A^t \to \Z_\ell(1)$ and $e_B: T_\ell B \times T_\ell B^t \to \Z_\ell(1)$ be the perfect Weil pairings from~\cref{thm:WeilPairing}.  Then
\[
    e_B(f(a),b) = e_A(a,f^t(b))
\]
for all $a \in T_\ell A$ and $b \in T_\ell B^t$, i.\,e.\ the diagram
\[\begin{tikzcd}
    T_\ell A \arrow[r,"\times",phantom] \arrow[d,"f"] & T_\ell A^t \arrow[r,"{e_A}"] & \Z_\ell(1) \arrow[d,equal]\\
    T_\ell B \arrow[r,"\times",phantom] & T_\ell B^t \arrow[u,"{f^t}"] \arrow[r,"{e_B}"] & \Z_\ell(1)
\end{tikzcd}\]
commutes.
\end{lemma}
\begin{proof}
See~\cite[p.~186, (I)]{MumfordAbelianVarieties}.
\end{proof}

\begin{corollary} \label[corollary]{cor:TraceOfDualMorphism}
Let $f: A \to A$ be an endomorphism of an Abelian variety $A$.  Then
\[
    \Tr(f) = \Tr_{T_\ell(A)}(f) = \Tr_{T_\ell(A^t)}(f^t) = \Tr(f^t).
\]
\end{corollary}
\begin{proof}
Choosing an isomorphism $\Z_\ell(1) \iso \Z_\ell$,\footnote{a heresy!~\cite[p.~194, l.~$-6$]{zbMATH03319220}} dualising the diagram in~\cref{lemma:WeilPairingAndHomomorphisms} and using that the Weil pairing is perfect by~\cref{thm:WeilPairing}, gives us a commutative diagram
\[\begin{tikzcd}
    T_\ell A \arrow[r,"f"] \arrow[d,"\iso"] & T_\ell A \arrow[d,"\iso"] \\
    (T_\ell A^t)^\vee \arrow[r,"(f^t)^\vee"] & (T_\ell A^t)^\vee.
\end{tikzcd}\]
Now use that dualising does not change the trace.

The trace of an endomorphism can be calculated on $\ell$-adic Tate modules by~\cite[p.~125, Proposition~12.9]{MilneAbelianVarieties}.
\end{proof}

\begin{lemma} \label[lemma]{lemma:bigCommutativeDiagram}
	Let $X/k$ be a smooth projective geometrically connected variety of dimension $d$ with Albanese variety $A$ associated to a base point $x_0 \in X(k)$ such that $\PPic_{X/k}$ is reduced.  Consider the following diagram of finitely generated free $\Z_\ell$-modules:
	%\newpage
	%\begin{landscape}
	%\begin{figure}
	{\scriptsize
		%\resizebox{0.1\textwidth}{!}{
		\begin{equation}\begin{tikzcd}
		\H^1(X, T_\ell\Acal)_\Tors  \times \H^{2d-1}(X, T_\ell(\Acal^t)(d-1))_\Tors \arrow[d,"\pr_1^*"] \arrow[dr,"\text{(1)}",phantom]\arrow[r,"\cup"] & \H^{2d}(X,\Z_\ell(d)) \to \H^{2d}(\overline{X},\Z_\ell(d)) \arrow[r,"\iso"] &\Z_\ell \arrow[d,equal]\\
		\H^1(\overline{X}, T_\ell\Acal)^\Gamma  \times \H^{2d-1}(\overline{X}, T_\ell(\Acal^t)(d-1))^\Gamma \arrow[dr,"\text{(2)}",phantom]\arrow[r,"\cup"] & \H^{2d}(\overline{X},\Z_\ell(d)) \arrow[r,"\iso"] &\Z_\ell \arrow[d,equal]\\
		\big(\H^1(\overline{X}, \Z_\ell(1)) \otimes_{\Z_\ell} (T_\ell A)(-1)\big)^\Gamma  \times \big(\H^{2d-1}(\overline{X}, \Z_\ell(d-1)) \otimes_{\Z_\ell} T_\ell(A^t)\big)^\Gamma \arrow[u,"\iso"] \arrow[d,"\iso" left] \arrow[dr,"\text{(3)}",phantom]\arrow[r] & \H^{2d}(\overline{X},\Z_\ell(d)) \otimes_{\Z_\ell} \Z_\ell\arrow[u,equal] \arrow[r,"\iso"] &\Z_\ell \arrow[d,equal]\\
		\big(T_\ell\PPic^0_{X/k} \otimes_{\Z_\ell} (T_\ell A)(-1)\big)^\Gamma  \times \big((T_\ell\PPic^0_{X/k})^\vee \otimes_{\Z_\ell} T_\ell(A^t)\big)^\Gamma \arrow[d,"\iso" left] \arrow[dr,"\text{(4)}",phantom] \arrow[r]& \End(T_\ell\PPic^0_{X/k}) \otimes_{\Z_\ell} \Z_\ell \arrow[r,"\Tr"] &\Z_\ell \arrow[d,equal]\\
		\Hom_{\Z_\ell[\Gamma]\text{-}\Mod}\big(\left((T_\ell A)(-1)\right)^\vee, T_\ell\PPic^0_{X/k}\big)  \times \Hom_{\Z_\ell[\Gamma]\text{-}\Mod}\big(T_\ell\PPic^0_{X/k}, T_\ell(A^t)\big) \arrow[d,"\iso" left] \arrow[dr,"\text{(5)}",phantom]\arrow[r] & \Hom_{\Z_\ell[\Gamma]\text{-}\Mod}\big(\left((T_\ell A)(-1)\right)^\vee, T_\ell(A^t)\big) \arrow[r] &\Z_\ell \arrow[d,equal]\\
		%\Hom_{\Z_\ell[\Gamma]\text{-}\Mod}\left(\Hom(T_\ell(\Acal^t), \Z_\ell)^\vee, T_\ell\Pic(X)\right)  \times \Hom_{\Z_\ell[\Gamma]-\Mod}\left(T_\ell\Pic(X), T_\ell(\Acal^t)\right) \ar[d]^\iso \ar@{}[dr]|{\text{(6)}} &\to \Hom_{\Z_\ell[\Gamma]-\Mod}\left(\Hom(T_\ell(\Acal^t), \Z_\ell)^\vee, T_\ell(\Acal^t)\right) \to &\Z_\ell \ar@{=}[d]\\
		\Hom_{\Z_\ell[\Gamma]\text{-}\Mod}\big(T_\ell A^t, T_\ell\PPic^0_{X/k}\big)  \times \Hom_{\Z_\ell[\Gamma]\text{-}\Mod}\big(T_\ell\PPic^0_{X/k}, T_\ell A^t\big)  \arrow[dr,"\text{(6)}",phantom] \arrow[r,"\circ"]& \End_{\Z_\ell[\Gamma]\text{-}\Mod}(T_\ell A^t) \arrow[r,"\Tr"] &\Z_\ell \arrow[d,equal]\\
		\Hom_k(A^t, \PPic^0_{X/k}) \otimes_\Z \Z_\ell \times \Hom_k(\PPic^0_{X/k}, A^t) \otimes_\Z \Z_\ell \arrow[u,"\iso"] \arrow[dr,"\text{(7)}",phantom] \arrow[r,"\circ"]& \End_k(A^t) \otimes_\Z \Z_\ell \arrow[u,"\iso"]\arrow[r,"\Tr"] &\Z_\ell \arrow[d,equal]\\
		%\Hom_k(\PPic^0_{X/k}, A^t) \otimes_\Z \Z_\ell \times \Hom_k(A^t, \PPic^0_{X/k}) \otimes_\Z \Z_\ell \arrow[u,"\iso" left,"\sigma" right] \arrow[dr,"\text{(8)}",phantom]\arrow[r,"\circ"] & \End_k(\PPic^0_{X/k}) \otimes_\Z \Z_\ell \arrow[r,"\Tr"] &\Z_\ell\arrow[d,equal]\\
		\Hom_k(\Alb_{X/k}, A) \otimes_\Z \Z_\ell \times \Hom_k(A, \Alb_{X/k}) \otimes_\Z \Z_\ell \arrow[u,"\iso" left,"(-)^t" right]\arrow[r,"\circ"] & \End_k(\Alb_{X/k}) \otimes_\Z \Z_\ell \arrow[r,"\Tr"] &\Z_\ell
		\end{tikzcd}\label{eq:bigDiagram}\end{equation}
	}%The isomorphism $\sigma$ switches the two factors.
	%\end{figure}
	%\end{landscape}
	We claim that the above diagram~\eqref{eq:bigDiagram} commutes and that the left-hand vertical arrows are indeed isomorphisms.
\end{lemma}
\begin{proof}
	First we will justify that the left-hand vertical arrows in diagram~\eqref{eq:bigDiagram} that are claimed to be isomorphisms are indeed so.
%TODO reference
%That $\sigma$ is an isomorphism is obvious.

	For the vertical isomorphisms in the first factor in the left column of~\eqref{eq:bigDiagram}:  One has
	\begin{align*}
	\H^1(X, T_\ell\Acal)_\Tors  &= \H^1(\overline{X}, T_\ell\Acal)^\Gamma \quad\text{by~\eqref{eq:sesHS1}} \\
	&= \left(\H^1(\overline{X}, \Z_\ell(1)) \otimes_{\Z_\ell} (T_\ell A)(-1)\right)^\Gamma \quad\text{by~\cref{lemma:projection formula}} \\
	&= \big(T_\ell\PPic^0_{X/k} \otimes_{\Z_\ell} (T_\ell A)(-1)\big)^\Gamma \quad\text{by the Kummer sequence}\\
	&= \Hom_{\Z_\ell[\Gamma]\text{-}\Mod}\big(\left((T_\ell A)(-1)\right)^\vee, T_\ell\PPic^0_{X/k}\big) \quad\text{by~\cref{lemma:InvariantsAndTensorProduct}}  \\
	&= \Hom_{\Z_\ell[\Gamma]\text{-}\Mod}\big(\Hom_{\Z_\ell\text{-}\Mod}(T_\ell(A^t), \Z_\ell)^\vee, T_\ell\PPic^0_{X/k}\big) \quad\text{by~\eqref{eq:HomTellAZell}} \\
	&= \Hom_{\Z_\ell[\Gamma]\text{-}\Mod}\big(T_\ell(A^t), T_\ell\PPic^0_{X/k}\big) \\
	&= \Hom_k(A^t, \PPic^0_{X/k}) \otimes_\Z \Z_\ell \quad\text{by the Tate conjecture~\cite{TateEndomorphisms}}\\
	&= \Hom_k(\Alb_{X/k},A) \otimes_\Z \Z_\ell \quad\text{since the functor $(-)^t$ is an autoduality.}
	\end{align*}
	Note that $\H^1(\overline{X}, T_\ell\Acal)^\Gamma$ is torsion-free since $\H^1(\overline{X}, T_\ell\Acal)$ is so, and this holds because of the Künneth formula and since $\H^1(\overline{X},\Z_\ell(1)) = T_\ell\PPic^0_{X/k}$ is torsion-free by~\cref{lemma:PropertiesOfTateModule}\,\ref{lemma:Tate-module-is-torsion-free}.  Therefore, in~\eqref{eq:sesHS1}, $\ker\alpha = \H^0(\overline{X},T_\ell\Acal)_\Gamma$ is the whole torsion subgroup of $\H^1(X,T_\ell\Acal)$.

	$T_\ell\Acal$ has weight $-1$ by~\cref{prop:RiundTateModul} and $T_\ell(\Acal^t)(d-1)$ has weight $-1 - 2(d-1) = -2d + 1$ and from~\eqref{eq:sesHS}, we have a commutative diagram with exact rows
	\[%\label{eq:sesHS3}
	%\begin{gathered}
	\begin{tikzcd}
	0 \arrow[r] &\H^{2d-1}(\overline{X}, T_\ell(\Acal^t)(d-1))_\Gamma \arrow[r,"\beta"] &\H^{2d}(X, T_\ell(\Acal^t)(d-1)) \arrow[r] &\H^{2d}(\overline{X}, T_\ell(\Acal^t)(d-1))^\Gamma \arrow[r] &0\\
	0 \arrow[r] &\H^{2d-2}(\overline{X}, T_\ell(\Acal^t)(d-1))_\Gamma \arrow[r] &\H^{2d-1}(X, T_\ell(\Acal^t)(d-1)) \arrow[r,"\alpha"] &\H^{2d-1}(\overline{X}, T_\ell(\Acal^t)(d-1))^\Gamma \arrow[llu,"f",dashrightarrow] \arrow[r] &0,
	\end{tikzcd}
	%\end{gathered}
	\]
	where only the four groups connected by $f$, $\alpha$ and $\beta$ can be infinite by~\cref{lemma:alphafandbeta} and as in~\eqref{eq:sesHS1}.
	
	The perfect Poincaré duality pairing
	\begin{align} \label{eq:PoincareDualityAndPic}
	\H^1(\overline{X}, \Z_\ell(1)) \times \H^{2d-1}(\overline{X}, \Z_\ell(d-1)) \to \H^{2d}(\overline{X},\Z_\ell(d)) \isoto \Z_\ell
	\end{align}
	identifies $\H^{2d-1}(\overline{X}, \Z_\ell(d-1))$ with $(T_\ell\PPic^0_{X/k})^\vee$.
	
	For the vertical isomorphisms in the second factor in the left column of~\eqref{eq:bigDiagram}:  One has
	\begin{align*}
	\H^{2d-1}(X, T_\ell(\Acal^t)(d-1))_\Tors &= \H^{2d-1}(\overline{X}, T_\ell(\Acal^t)(d-1))^\Gamma \quad\text{by~\eqref{eq:sesHS1}} \\
	&= \left(\H^{2d-1}(\overline{X}, \Z_\ell(d-1)) \otimes_{\Z_\ell} T_\ell(A^t)\right)^\Gamma \quad\text{by~\cref{lemma:projection formula}} \\
	&= \big((T_\ell\PPic^0_{X/k})^\vee \otimes_{\Z_\ell} T_\ell(A^t)\big)^\Gamma \quad\text{by~\eqref{eq:PoincareDualityAndPic}}\\
	&= \Hom_{\Z_\ell[\Gamma]\text{-}\Mod}\big(T_\ell\PPic^0_{X/k}, T_\ell(A^t)\big) \quad\text{by~\cref{lemma:InvariantsAndTensorProduct}}\\
	&= \Hom_k(\PPic^0_{X/k}, A^t) \otimes_\Z \Z_\ell \quad\text{by the Tate conjecture~\cite{TateEndomorphisms}}\\
	&= \Hom_k(A,\Alb_{X/k}) \otimes_\Z \Z_\ell \quad\text{the functor $(-)^t$ is an autoduality.}
	\end{align*}
	
	Now we will prove that the diagram commutes:
	
	$(1)$ commutes since $\cup$-product commutes with restrictions.
	
	$(2)$ commutes because of the associativity of the $\cup$-product.
	
	$(3)$ commutes since, in general, one has a commutative diagram of finitely generated free modules over a ring $R$
	\[\begin{tikzcd}
	A \times B \arrow[d,"\iso"] \arrow[r,"\qu{\cdot{,}\cdot}"] & R \arrow[d,equal] \\
	C \times C^\vee          \arrow[r]                    & R
	\end{tikzcd}\]
	identifying $B$ with the dual of $C \iso A$ with a perfect pairing $\qu{\cdot,\cdot}$ and the canonical pairing $C \times C^\vee \to R, (c,\phi) \mapsto \phi(c)$:  Choose a basis $(a_i)$ of $A$ and the dual basis $(b_i)$ of $B$; these are mapped to the bases $(c_i)$ and $(c_i')$ of $C$ and $C^\vee$.  Then, under the top horizontal map, $\qu{a_i, b_j} = \delta_{ij}$ with the Kronecker symbol $\delta_{ij}$, and under the bottom horizontal map $(c_i, c_j') = \delta_{ij}$.
	
	$(4)$ commutes since, in general, one has using~\cref{lemma:InvariantsAndTensorProduct} a commutative diagram of finitely generated free modules over a ring $R$
	\[\begin{tikzcd}
	(M \otimes_R N^\vee) \times (M^\vee \otimes_R N) \arrow[d,"\iso"]  \arrow[r]          & \End_{R\text{-}\Mod}(M) \otimes_R \End_{R\text{-}\Mod}(N) \arrow[rr,"\Tr_M \otimes_R \Tr_N"] && R \arrow[d,equal] \\
	\Hom_{R\text{-}\Mod}(N, M) \times \Hom_{R\text{-}\Mod}(M, N)                  \arrow[r,"\circ"]  & \End_{R\text{-}\Mod}(N) \arrow[rr,"\Tr_N"] && R.
	\end{tikzcd}\]
	For proving this, choose bases $(a_i)$ of $M$ and $(b_i)$ of $N$ and their dual bases $(a_i')$ of $M^\vee$ and $(b_i')$ of $N^\vee$.  The element $(a_i \otimes b_j', a_k' \otimes b_l)$ of $(M \otimes_K N^\vee) \times (M^\vee \otimes_K N)$ is sent by the upper horizontal arrows to $\delta_{ik}\delta{jl}$, and by the left vertical arrow to $(b_m \mapsto b_j'(b_m)a_i, a_n \mapsto a_k'(a_n)b_l)$.  The latter element is mapped by the lower left horizontal arrow to $b_m \mapsto a_k'(b_j'(b_m)a_i)b_l$ and the trace of this endomorphism is $\delta_{jl}\delta_{ki}$.  Therefore, the diagram commutes.
	
	$(5)$ commutes because of precomposing with the isomorphism $(T_\ell\Acal(-1))^\vee \isoto T_\ell(\Acal^t)$ coming from the perfect Weil pairing~\cref{thm:WeilPairing}.
	
	$(6)$ commutes because of~\cite[p.~186\,f., Theorem~3]{LangAV}.
	
	$(7)$ commutes since $\PPic^0_{X/k}$ is dual to $\Alb_{X/k}$ by~\cref{thm:AlbDualToPic} since $\PPic_{X/k}$ is reduced and because of
	\begin{align*}
		\Tr(\beta \circ \alpha) &= \Tr((\beta \circ \alpha)^t) \quad\text{by~\cref{cor:TraceOfDualMorphism}}\\
								&= \Tr(\alpha^t \circ \beta^t)\\
								&= \Tr(\beta^t \circ \alpha^t) \quad\text{by~\cite[p.~187, Corollary~1]{LangAV}.}
	\end{align*}
\end{proof}

\begin{theorem}[The cohomological and the trace pairing] \label[theorem]{thm:HeightAndTracePairing}
Let $X/k$ be a smooth projective geometrically connected variety of dimension $d$ with Albanese variety $A$ associated to a base point $x_0 \in X(k)$ such that $\PPic_{X/k}$ is reduced.  Denote the constant Abelian scheme $B\times_kX/X$ by $\Acal/X$.  Then the trace pairing
\[
    \Hom_k(A,B) \times \Hom_k(B,A) \stackrel{\circ}{\to} \End_k(A) \stackrel{\Tr}{\to} \Z
\]
tensored with $\Z_\ell$ equals the cohomological pairing from~\eqref{eq:Regulator spitze Klammer}
\[
    \qu{\cdot,\cdot}_\ell: \H^1(X, T_\ell\Acal)_\Tors \times \H^{2d-1}(X, T_\ell(\Acal^t)(d-1))_\Tors \to \H^{2d}(X,\Z_\ell(d)) \stackrel{\pr_1^*}{\to} \H^{2d}(\overline{X},\Z_\ell(d)) = \Z_\ell,
\]
and this equals by~\cref{thm:comparisonofpairings} the Néron-Tate canonical height pairing up to the integral hard Lefschetz defect (see~\cref{def:integralhardLefschetzdefect}).
\end{theorem}
\begin{proof}
First note that the Kummer sequence for $\G_m$ on $X$ gives us a short exact sequence
\[
  1 = \G_m(\overline{X})/\ell^n \to \H^1(\overline{X},\mu_{\ell^n}) \to \Pic(\overline{X})[\ell^n] \to 0,
\]
the first equality since $\G_m(\overline{X})/\ell^n = \overline{k}^\times/\ell^n = 1$ since $X/k$ is proper and geometrically integral, and passing to the inverse limit over $n$, an isomorphism $\H^1(\overline{X},\Z_\ell(1)) = T_\ell\Pic(\overline{X}) = T_\ell\PPic^0_{X/k}$, the latter equality since $T_\ell\NS(\overline{X}) = 0$ since the Néron-Severi group is finitely generated by the theorem of the base~\cite[p.~215, Theorem~V.3.25]{MilneEtaleCohomology}.

Now use~\cref{lemma:bigCommutativeDiagram}.
\end{proof}

\begin{example}
	In particular, if the characteristic polynomials of the Frobenius on $\PPic^0_{X/k}$ and $A^t$ are coprime, then $\Hom_k(A^t, \PPic^0_{X/k}) = 0 = \Hom_k(\PPic^0_{X/k}, A^t)$ and the discriminants of the parings $\qu{\cdot,\cdot}_\ell$ and $(\cdot,\cdot)_\ell$ from~\eqref{eq:Regulator spitze Klammer} and~\eqref{eq:Regulator runde Klammer} are equal to $1$.
\end{example}

\begin{theorem} \label[theorem]{thm:MilnesMainTheorem}
Let $k = \F_q$, $q = p^n$ be a finite field and $X/k$ a smooth projective and geometrically connected variety and assume $\overline{X} = X \times_k \overline{k}$ satisfies

(a) the Néron-Severi group of $\overline{X}$ is torsion-free and

(b) the dimension of $\H^1_{\Zar}(\overline{X},\Ocal_{\overline{X}})$ as a vector space over $\overline{k}$ equals the dimension of the Albanese variety of $\overline{X}/\overline{k}$.

If $B/k$ is an Abelian variety, then $\H^1(X,B)$ is finite and its order satisfies the relation
\[
    q^{gd}\prod_{a_i \neq b_j}\Big(1-\frac{a_i}{b_j}\Big) = \card{\H^1(X,B)}\card{\det\qu{\alpha_i,\beta_j}},
\]
where $A/k$ is the Albanese variety of $X/k$, $g$ and $d$ are the dimensions of $A$ and $B$, respectively, $(a_i)_{i=1}^{2g}$ and $(b_j)_{j=1}^{2d}$ are the roots of the characteristic polynomials of the Frobenius of $A/k$ and $B/k$, $(\alpha_i)_{i=1}^r$ and $(\beta_i)_{i=1}^r$ are bases for $\Hom_k(A,B)$ and $\Hom_k(B,A)$, and $\qu{\alpha_i,\beta_j}$ is the trace of the endomorphism $\beta_j\alpha_i$ of $A$.
\end{theorem}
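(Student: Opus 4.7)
The strategy is to apply the cohomological Birch--Swinnerton-Dyer formula of \cref{thm:BSDI} to the constant Abelian scheme $\Acal = B \times_k X$ and to evaluate each factor explicitly in that setting, then match with a direct computation of $c = L^*(\Acal/X, 1)$.

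First I would verify the hypothesis $\rho = \rk_\Z \Acal(X)$ of \cref{thm:BSDI}. By \cref{lemma:projection formula}, $\H^1(\bar X, V_\ell\Acal) = \H^1(\bar X, \Q_\ell) \otimes_{\Q_\ell} V_\ell B$; assumptions (a) and (b), together with the Kummer sequence and \cref{thm:AlbDualToPic}, give a Galois-equivariant identification $\H^1(\bar X, \Q_\ell) \cong \H^1(\bar A, \Q_\ell)$, so Frobenius acts on $\H^1(\bar X, V_\ell\Acal)$ with eigenvalues of the form $a_i/b_j$ (up to Tate twist). The vanishing order $\rho$ of $L(\Acal/X, s)$ at $s=1$ is therefore $\#\{(i,j) : a_i = b_j\}$, which by Tate's theorem on homomorphisms~\cite{TateEndomorphisms} equals $\rk_\Z \Hom_k(A,B)$, which in turn equals $\rk_\Z \Acal(X)$ by \cref{cor:ExactSequenceOfAcal}. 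Hence \cref{thm:BSDI} applies and yields finiteness of $\Sha(\Acal/X)[\ell^\infty] = \H^1(X, B)[\ell^\infty]$ for every $\ell \neq p$ together with the $\ell$-adic formula for $c$.

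Next I would evaluate each factor in the constant setting: the height pairing theorem for constant Abelian schemes (Section~3) identifies $\det\qu{\cdot,\cdot}_\ell$ with $\det(\Tr(\beta_j\alpha_i))_{i,j} \otimes_\Z \Z_\ell$; $\det(\cdot,\cdot)_\ell = 1$ by the discussion after \cref{rem:HeightPairingAndCohPairing}; $\Tor\Acal(X) = B(k)$ by \cref{lemma:BasicPropertiesOfConstantAbelianSchemes}; and the projection-formula decomposition $\H^2(\bar X, T_\ell\Acal) = \H^2(\bar X, \Z_\ell) \otimes T_\ell B$ together with a weight argument shows that $\H^2(\bar X, T_\ell\Acal)^\Gamma$ is finite, with $\ell$-adic order computed via \cref{lemma:BayerNeukirch}. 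In parallel I would read off $c$ directly from \cref{def:LFunctionSchneider}: $P_1(\Acal/X,s) = \prod_{i,j}(1 - q^{1-s}b_j/a_i)$ has leading term $\prod_{a_i \neq b_j}(1 - b_j/a_i)$ at $s=1$, and $P_0(\Acal/X, 1) = \prod_j(1 - 1/(qb_j))$, giving
\[
    c = \frac{\prod_{a_i \neq b_j}(1 - b_j/a_i)}{\prod_j(1 - 1/(qb_j))}.
\]

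The elementary identity $(1 - a_i/b_j) = -(a_i/b_j)(1 - b_j/a_i)$, combined with $\prod_{a_i \neq b_j} a_i = \prod_{a_i \neq b_j} b_j$ (which follows from $\prod_i a_i = q^g$, $\prod_j b_j = q^d$ and \cref{Lemma: permutation of eigenvalues of the Frobenius}), converts the numerator of $c$ into $\pm\prod_{a_i \neq b_j}(1 - a_i/b_j)$. Inserting this into the $\ell$-adic BSD identity and using \cref{lemma:BayerNeukirch} to match $|P_0(\Acal/X, 1)|_\ell^{-1}$ with $|B(k)[\ell^\infty]| \cdot |\H^2(\bar X, T_\ell\Acal)^\Gamma| \cdot |q^{gd}|_\ell^{-1}$ produces the claimed formula $\ell$-adically for every $\ell \neq p$; the global rational equality then follows from the rationality of $c$, positivity of both sides, the product formula on $\Q$, and a separate flat-cohomological treatment at $\ell = p$. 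The main obstacle will be this last bookkeeping step: tracking the weights and Tate twists precisely enough to see the correct power $q^{gd}$ emerge, and handling the $p$-adic place, where the argument of \cref{thm:BSDI} must be replaced by a flat-cohomological analogue.
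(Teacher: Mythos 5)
Your proposal takes a genuinely different route from the paper: the paper does not prove this statement internally at all, but quotes it verbatim from Milne's article on the Tate--Shafarevich group of a constant Abelian variety (the proof given is a citation), and then uses it as the \emph{input} that makes the constant case of \cref{thm:BSDI} unconditional. You invert this logic and try to rederive Milne's formula from \cref{thm:BSDI} itself. For the prime-to-$p$ part this inversion is coherent and not circular: $\rho$ equals the algebraic multiplicity of the relevant Frobenius eigenvalue on $\H^1(\bar{X},V_\ell\Acal)=\H^1(\bar{X},\Q_\ell)\otimes V_\ell B$, hence equals $\#\{(i,j):a_i=b_j\}=r(f_A,f_B)$, which by Tate's theorem and \cref{cor:ExactSequenceOfAcal} is $\rk_\Z\Acal(X)$; so hypothesis (a) of \cref{thm:BSDI} holds unconditionally here, and the section-3/section-4 identifications (trace pairing, $\det(\cdot,\cdot)_\ell=1$, $\Tor\Acal(X)=B(k)$, triviality of $\H^2(\bar{X},T_\ell\Acal)^\Gamma$, cf.\ \cref{rem:H2vanishing}) are then legitimately available because finiteness of $\Sha(\Acal/X)[\ell^\infty]$ has already been obtained for $\ell\neq p$.

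The genuine gap is at $p$, and it is not bookkeeping. The statement you are proving asserts finiteness of the \emph{whole} group $\H^1(X,B)$, including its $p$-primary component, and an exact equality of rational numbers, i.e.\ a matching of $p$-adic valuations as well; $q^{gd}$ and the $p$-part of $\prod_{a_i\neq b_j}(1-a_i/b_j)$ are invisible $\ell$-adically for $\ell\neq p$, so your product-formula argument can only pin the identity down up to a power of $p$, and it says nothing about $\H^1(X,B)[p^\infty]$. The ``separate flat-cohomological treatment at $\ell=p$'' you defer is precisely the hard core of Milne's proof (flat duality and the $p$-part of the special value); none of the machinery of \cref{thm:BSDI} applies there, since every step from \cref{lemma:AcalExactSequence} onward requires $\ell$ invertible on $X$. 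As it stands, your argument proves only the prime-to-$p$ statement. There are also small normalization slips to repair in the matching step, e.g.\ with $\Frob_q$ the arithmetic Frobenius one has $P_0(\Acal/X,1)=\prod_j(1-b_jq^{-1})=\card{B(k)}/q^d$ (cf.\ \cref{lemma:denominator}), not $\prod_j\bigl(1-1/(qb_j)\bigr)$; these are fixable, but the $p$-part is not recoverable from the paper's toolkit.
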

\begin{proof}
See~\cite[p.~98, Theorem~2]{Milne-Tate-Shafarevic}.
\end{proof}

\begin{remark} \label[remark]{rem:ShaConstant}
Note that $\Hom_k(A,B)$ and $\Hom_k(B,A)$ are free $\Z$-modules of the same rank $r = r(f_A, f_B) \leq 4gd$ by~\cite[p.~139, Theorem~1\,(a)]{TateEndomorphisms}, with $f_A$ and $f_B$ the characteristic polynomials of the Frobenius of $A/k$ and $B/k$.  (Another argument for them having the same rank is that the category of Abelian varieties up to isogeny is semi-simple, decomposing $A$ and $B$ into simple factors.)  Furthermore, $\H^1(X,B) = \H^1(X,B\times_kX) = \Sha(B\times_kX/X)$ since for $U \to X$, one has $B(U) = (B\times_kX)(U)$ by the universal property of the fibre product.
\end{remark}

\begin{example} \label[example]{ex:aandb}
	(a) and (b) in~\cref{thm:MilnesMainTheorem} are satisfied for $X = A$ an Abelian variety, a K3 surface or a curve: (a) because of~\cite[p.~178, Corollary~2]{MumfordAbelianVarieties} and~\cite[p.~385\,ff., Chapter~17]{HuybrechtsK3Surfaces}, and (b) for curves and Abelian varieties since $A^t = \PPic^0_{A/k}$ is an Abelian variety, in particular smooth and reduced, and by~\cref{ex:DefectOfSmoothness} for K3 surfaces.  See also~\cref{thm:AlbDualToPic}, \cref{rem:DefectOfSmoothness} and~\cref{ex:DefectOfSmoothness}.
\end{example}

\begin{lemma} \label[lemma]{Lemma: permutation of eigenvalues of the Frobenius}
Let $k = \F_q$ be a finite field, $\ell$ invertible in $k$ and $A/k$ be an Abelian variety of dimension $g$.  Denote the eigenvalues of the Frobenius $\Frob_q$ on $V_\ell A$ by $(\alpha_i)_{i=1}^{2g}$.  Then $\alpha_i \mapsto q/\alpha_i$ is a bijection.
\end{lemma}
\begin{proof}
The Weil pairing (\cref{thm:WeilPairing}) induces a perfect Galois equivariant pairing
\[
    V_\ell A \times V_\ell A^t \to \Q_\ell(1),
\]
and, choosing a polarisation $f: A \to A^t$, by~\cref{lemma:isogeny induces iso on Tate modules}, we also have by precomposing a perfect Galois equivariant pairing
\[
    \qu{\cdot,\cdot}: V_\ell A \times V_\ell A \to \Q_\ell(1).
\]
Now let $v_i$ be an eigenvector of $\Frob_q$ on $V_\ell A$ with eigenvalue $\alpha_i$.  Then there is exactly one eigenvector $v_j$ of $\Frob_q$ on $V_\ell A$ such that $\qu{v_i,v_j} = 1 \neq 0$ (otherwise, since the pairing $\qu{\cdot,\cdot}$ is perfect, we would have $\qu{v_i,v_j} = 0$ for all eigenvectors $v_j$, but there is a basis of eigenvectors on the Tate module since the Frobenius acts semi-simply).  Now, since the pairing is Galois equivariant, $q = \Frob_q(1) = \Frob_q\qu{v_i,v_j} = \qu{\Frob_qv_i,\Frob_qv_j} = \qu{\alpha_iv_i, \alpha_jv_j} = \alpha_i\alpha_j\qu{v_i,v_j} = \alpha_i\alpha_j1$, and the statement follows.
\end{proof}

\begin{definition}
Define the \defn{regulator} $R(\Acal/X)$ of $\Acal/X$ as $|\det(\qu{\cdot,\cdot})|$.
\end{definition}

By~\cref{rem:ShaConstant}, we get
\begin{corollary} \label[corollary]{cor:MilnesMainTheorem}
In the situation of~\cref{thm:MilnesMainTheorem}, one has
\[
    q^{gd}\prod_{a_i \neq b_j}\Big(1-\frac{a_i}{b_j}\Big) = \card{\Sha(B\times_kX/X)} R(\Acal/X).
\]
\end{corollary}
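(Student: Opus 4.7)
The plan is to observe that this corollary is a direct assembly of three earlier results and requires essentially no new argument. I would simply apply \cref{thm:MilnesMainTheorem} as the starting point, which gives
\[
    q^{gd}\prod_{a_i \neq b_j}\Big(1-\frac{a_i}{b_j}\Big) = \card{\H^1(X,B)}\cdot\card{\det\qu{\alpha_i,\beta_j}},
\]
and then rewrite each of the two factors on the right using the identifications established just above.

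For the first factor, I would invoke \cref{rem:ShaConstant}, which gives $\H^1(X,B) = \H^1(X, B \times_k X) = \Sha(B\times_kX/X)$; the hypotheses (a) and (b) of \cref{thm:MilnesMainTheorem} ensure the group is finite, so the cardinality makes sense. For the second factor, I would invoke the definition of the regulator $R(B) = |\det\qu{\cdot,\cdot}|$ of the trace pairing $\Hom_k(A,B)\times\Hom_k(B,A)\to\Z$. By \cref{rem:ShaConstant} the two Hom-groups are free $\Z$-modules of the same rank $r$, so choosing the bases $(\alpha_i)$ of $\Hom_k(A,B)$ and $(\beta_j)$ of $\Hom_k(B,A)$ from \cref{thm:MilnesMainTheorem}, the matrix $(\qu{\alpha_i,\beta_j})_{i,j}$ is exactly a Gram matrix for the trace pairing, so $\card{\det\qu{\alpha_i,\beta_j}} = |\det\qu{\cdot,\cdot}| = R(B)$.

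The only conceivable subtlety is that the regulator is defined as the absolute value of a determinant computed in some basis, while \cref{thm:MilnesMainTheorem} produces the determinant in the specific bases it chose; but since any two bases of a free $\Z$-module of the same rank differ by a matrix in $\mathrm{GL}_r(\Z)$ of determinant $\pm 1$, the absolute value of the determinant is independent of the choice. Substituting both identifications into the displayed identity above yields the claimed equality, completing the proof.
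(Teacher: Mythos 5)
Your proposal is correct and matches the paper's (implicit) argument exactly: the paper also obtains the corollary by combining \cref{thm:MilnesMainTheorem} with the identification $\H^1(X,B) = \Sha(B\times_kX/X)$ from \cref{rem:ShaConstant} and the definition of the regulator $R(B)$, the basis-independence remark being the only (routine) point to note.
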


\begin{definition}  \label[definition]{def:LFunctionMilne}
Define the \defn{$L$-function} of $B\times_kX/X$ as the $L$-function of the Chow motive
\[
    h^1(B) \otimes (h^0(X) \oplus h^1(X)) = h^1(B) \oplus (h^1(B) \otimes h^1(X)),
\]
namely
\[
    L(B\times_kX/X,s) = \frac{L(h^1(B) \otimes h^1(X),s)}{L(h^1(B),s)}.
\]
\end{definition}
Here, the Künneth projectors are algebraic by~\cite[p.~217, Corollary~3.2]{DeningerMurre}. %~\cite{KatzMessing}, p.~76, Theorem~2.

\begin{theorem} \label[theorem]{thm:Lfnsagree}
The two $L$-functions~\cref{def:Li} and~\cref{def:LFunctionMilne} agree for constant Abelian schemes.
\end{theorem}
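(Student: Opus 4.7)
The plan is to unpack both definitions for $\Acal = B \times_k X$ and observe that, because the family is constant, the sheaf $\R^1\pi_*\Q_\ell$ is itself constant, after which $P_0$ and $P_1$ in~\cref{def:LFunctionSchneider} match the denominator and numerator of~\cref{def:LFunctionMilne} termwise.

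First I would invoke proper smooth base change: since $\pi: B \times_k X \to X$ is the pullback of $B/k$ along the structure morphism $X \to \Spec{k}$, the sheaf $\R^1\pi_*\Q_\ell$ on $X$ is the pullback of the $\Gamma$-module $\H^1(\bar B, \Q_\ell)$ from $\Spec{k}$. In particular, base changed to $\bar X$ it is the constant $\Q_\ell$-sheaf with fibre $\H^1(\bar B, \Q_\ell)$, the Galois structure coming from $B$. This is the sheaf-level counterpart of~\cref{lemma:projection formula}.

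For a constant $\Q_\ell$-sheaf on $\bar X$ with fibre the $\Gamma$-module $V$, the projection formula supplies a $\Gamma$-equivariant isomorphism $\H^i(\bar X, \underline V) \iso \H^i(\bar X, \Q_\ell) \otimes_{\Q_\ell} V$ with $\Frob_q^{-1}$ acting diagonally on the right. Applying this with $V = \H^1(\bar B, \Q_\ell)$ to the two cohomology groups defining $P_0$ and $P_1$, and using that $X/k$ is geometrically connected so that $\H^0(\bar X, \Q_\ell) = \Q_\ell$ with trivial Frobenius, I obtain
\[
    P_0(B \times_k X/X, s) = \det\big(1 - q^{-s}\Frob_q^{-1} \mid \H^1(\bar B, \Q_\ell)\big) = L(h^1(B), s)
\]
and
\[
    P_1(B \times_k X/X, s) = \det\big(1 - q^{-s}\Frob_q^{-1} \mid \H^1(\bar X, \Q_\ell) \otimes \H^1(\bar B, \Q_\ell)\big) = L(h^1(B) \otimes h^1(X), s),
\]
both final equalities by the standard $\ell$-adic description of the $L$-function of a pure motive. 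Dividing yields Schneider's $L(B \times_k X/X, s) = L(h^1(B) \otimes h^1(X), s)/L(h^1(B), s)$, which is precisely~\cref{def:LFunctionMilne}.

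I do not foresee a genuine obstacle here. The only point deserving explicit attention is the Galois-equivariance of the Künneth / projection-formula isomorphism, since both $L$-functions are defined via the Frobenius action; but this is standard, and can also be read off directly from~\cref{lemma:projection formula} after passing from $T_\ell$ to $V_\ell$ and dualising to the sheaf $\R^1\pi_*\Q_\ell$.
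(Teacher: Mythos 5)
Your argument is correct and is essentially the approach the paper intends: the paper omits an explicit proof of this theorem, but its own ingredients --- \cref{lemma:projection formula} (the constant-sheaf/K\"unneth identification $\H^i(\bar{X},\text{const. fibre})\cong\H^i(\bar{X},\Q_\ell)\otimes\H^1(\bar{B},\Q_\ell)$ with diagonal Galois action) and the subsequent eigenvalue expansion $\prod_{i,j}(1-a_ib_jq^{-s})/\prod_j(1-b_jq^{-s})$ --- are exactly the identifications you make via proper base change and the projection formula. Your remark that Galois-equivariance of the K\"unneth isomorphism is the only point needing care is well placed, and your identification of $P_0$, $P_1$ with $L(h^1(B),s)$, $L(h^1(B)\otimes h^1(X),s)$ matches the paper's later computation verbatim.
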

\begin{proof}
One has $V_\ell B = \H^1(\overline{B},\Q_\ell)^\vee$ by~\cref{thm:Tate module and etale cohomology}, $(V_\ell B)^\vee = (V_\ell B^t)(-1)$, $V_\ell(B) \iso V_\ell(B^t)$ by~\cref{lemma:isogeny induces iso on Tate modules} and the existence of a polarisation~\cite{MilneAbelianVarieties}, p.~113, Theorem~7.1, $\H^i(\overline{X}, V_\ell\Acal) = \H^i(\overline{X}, \Q_\ell) \otimes V_\ell B$ by~\cref{lemma:projection formula} and $V_\ell\Acal = (V_\ell B) \times_k X$ by~\cref{lemma:BasicPropertiesOfConstantAbelianSchemes} since $\Acal/X$ is constant.  Using this, one gets
\begin{align*}
    L(h^i(X) \otimes h^1(B), t) &= \det(1 - \Frob_q^{-1}t \mid \H^i(\overline{X}, \Q_\ell) \otimes \H^1(\overline{B}, \Q_\ell)) \\
                                &= \det(1 - \Frob_q^{-1}t \mid \H^i(\overline{X}, \Q_\ell) \otimes V_\ell(B^t)(-1)) \\
                                &= \det(1 - \Frob_q^{-1}t \mid \H^i(\overline{X}, \Q_\ell) \otimes V_\ell(B)(-1)) \\
                                &= \det(1 - \Frob_q^{-1}t \mid \H^i(\overline{X}, (V_\ell B) \times_k X)(-1)) \\
                                &= \det(1 - \Frob_q^{-1}q^{-1}t \mid \H^i(\overline{X}, V_\ell\Acal)) \\
                                & = L_i(\Acal/X,q^{-1}t).
\end{align*}
Now conclude using $h^1(B) = h^0(X) \otimes h^1(B)$ since $X$ is connected.
\end{proof}

\begin{remark}
Note that
\begin{align*}
    \ord_{t=1}L(\Acal/X,t) = \ord_{s=1}L(\Acal/X,q^{-1}q^s).
\end{align*}
\end{remark}

\begin{remark} \label[remark]{rem:MotivationForLfunction}
Now let us explain how we came up with this definition of the $L$-function.  We omit the characteristic polynomials $L_i(\Acal/X,t)$ in higher dimensions $i > 1$ since otherwise cardinalities of cohomology groups would turn up in the special $L$-value which we have no interpretation for (as in the case $i = 0$ and the cardinality of the $\ell$-torsion of the Mordell-Weil group, or in the case $i = 1$ and the cardinality of the $\ell$-torsion of the Tate-Shafarevich group).  In the case of a curve $C$ as a basis, our definition is the same as the classical definition of the $L$-function up to an $L_2(t)$-factor.  This factor contributes basically only a factor $\card{\Acal^t(X)[\ell^\infty]_\tors}$ in the denominator.  In the classical curve case $\dim{X} = 1$, the $L$-function can also be represented as a product over all closed points $x \in |X|$ of Euler factors.
%
%I came up with my definition of the $L$-function for a higher dimensional basis via the constant Abelian scheme case:  The main contribution of the motive of an Abelian variety is $h^1$ since the motive of an Abelian scheme is an exterior algebra over $h^1$.  Thus it suggests itself to take $h^1(B)$ as a piece for the motive of the $L$-function.  In the classical case is the motive of $B \times_k C/C$ for a curve $C/k$ just $h^1(B) \otimes h^*(C)$.  Here, $h^*(C)$ decomposes as $h^0(C) \oplus h^1(C) \oplus h^2(C)$.  The summand $h^1(B) \otimes h^2(C)$ contributes only the cardinality of $\Acal^t(X)_\tors$, which is well-understood.  For a higher dimensional basis $X$ one has to omit the higher terms $h^i(X)$ for $i > 1$ since otherwise factors in the special $L$-value would turn up which I do not have an interpretation for.
\end{remark}

We expand
\begin{align*}
    L(B\times_kX/X, s) &= \frac{L(h^1(B) \otimes h^1(X),s)}{L(h^1(B),s)} \\
                        &= \frac{\prod_{j=1}^{2d}\prod_{i=1}^{2g}(1-a_ib_jq^{-s})}{\prod_{j=1}^{2d}(1-b_jq^{-s})}.
\end{align*}

By~\cref{Lemma: permutation of eigenvalues of the Frobenius}, one has for the numerator
\begin{equation} \label{eq:Numerator}
    \prod_{j=1}^{2d}\prod_{i=1}^{2g}(1-a_ib_jq^{-s}) = \prod_{j=1}^{2d}\prod_{i=1}^{2g}\Big(1-\frac{a_i}{b_j}q^{1-s}\Big),
\end{equation}
and the denominator has no zeros at $s=1$ by the Riemann hypothesis (the eigenvalues of the Frobenius $(b_j)$ on $h^1(B)$ have absolute value $q^{1/2}$).  Therefore
\[
    \ord_{s=1}L(B\times_kX/X, s) = r(f_A, f_B)
\]
is equal to the number $r(f_A, f_B)$ of pairs $(i,j)$ such that $a_i = b_j$, which equals by~\cite[p.~139, Theorem~1\,(a)]{TateEndomorphisms} the rank $r$ of $(B\times_kX)(X)$:
\begin{align*}
    r(f_A, f_B) &= \rk_\Z\Hom_k(A,B) \\
                &= \rk_\Z\Hom_k(X,B) \quad\text{by the universal property of the Albanese variety}\\
                &= \rk_\Z\Hom_X(X, B \times_k X),
\end{align*}
see~\cref{cor:ExactSequenceOfAcal}.

\begin{lemma} \label[lemma]{lemma:denominator}
The denominator evaluated at $s=1$ equals
\[
    \prod_{j=1}^{2d}(1-b_jq^{-1}) = \frac{\card{(B\times_kX)(X)_\tors}}{q^d}.
\]
\end{lemma}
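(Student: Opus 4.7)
The plan is to compute $\prod_{j=1}^{2d}(1-b_jq^{-1})$ directly using the symmetry of Frobenius eigenvalues on an Abelian variety, then identify the resulting expression with a cardinality via the Lefschetz--Weil trace formula, and finally interpret that cardinality using the structure of constant Abelian schemes.

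First I would apply \cref{Lemma: permutation of eigenvalues of the Frobenius}: the map $b_j \mapsto q/b_j$ is a permutation of the multiset of Frobenius eigenvalues on $V_\ell B$, so the substitution $b_j \rightsquigarrow q/b_j$ in the product yields
\[
    \prod_{j=1}^{2d}\!\Big(1-\tfrac{b_j}{q}\Big) = \prod_{j=1}^{2d}\!\Big(1-\tfrac{1}{b_j}\Big) = \frac{\prod_{j=1}^{2d}(b_j-1)}{\prod_{j=1}^{2d}b_j}.
\]
Since $2d$ is even, $\prod_j(b_j-1)=\prod_j(1-b_j)$. For the denominator, recall that the eigenvalues of Frobenius on $V_\ell B$ come in pairs $(b, q/b)$ by the same lemma, and each pair multiplies to $q$, giving $d$ pairs and hence $\prod_j b_j = q^d$.

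Next I would identify the numerator. The characteristic polynomial of the geometric Frobenius $\Frob_q$ on $V_\ell B$ is $P_B(t)=\prod_j(t-b_j)$, and evaluating at $t=1$ yields $P_B(1)=\prod_j(1-b_j)=\#B(\F_q)$ by the standard Lefschetz trace formula for Abelian varieties (as used, e.g., in the proof of \cref{lemma:isogenous Abelian varieties}). Combining the two computations gives
\[
    \prod_{j=1}^{2d}\!\Big(1-\tfrac{b_j}{q}\Big) = \frac{\#B(\F_q)}{q^d}.
\]

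Finally I would identify $\#B(\F_q)$ with $\#\Tor((B\times_kX)(X))$. By \cref{lemma:BasicPropertiesOfConstantAbelianSchemes}\,3, the torsion subgroup of $(B\times_kX)(X)=\Mor_k(X,B)$ consists precisely of the constant $k$-morphisms $X\to B$, and this set is canonically identified with $B(k)=B(\F_q)$. Substituting this into the formula above yields the asserted equality. The argument is essentially a bookkeeping of Frobenius eigenvalues; the only non-routine input is the eigenvalue symmetry $b\mapsto q/b$, which has already been established in \cref{Lemma: permutation of eigenvalues of the Frobenius}, so I do not expect any real obstacle here.
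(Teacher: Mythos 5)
Your argument is correct and follows essentially the same route as the paper: rewrite the product via the eigenvalue symmetry of \cref{Lemma: permutation of eigenvalues of the Frobenius}, identify $\prod_j(1-b_j)$ with $\card{B(\F_q)}$ (the paper does this through $\deg(\id-\Frob_q)$ and separability, you through the Lefschetz trace formula for Abelian varieties --- the same standard fact), and finish with \cref{lemma:BasicPropertiesOfConstantAbelianSchemes}\,3. One small point: deducing $\prod_j b_j=q^d$ from ``the eigenvalues come in pairs $(b,q/b)$'' is loose, since the involution $b\mapsto q/b$ can have fixed points $b=\pm\sqrt{q}$ and by itself only gives $\prod_j b_j=\pm q^d$; it is cleaner to use $\prod_j b_j=\det\big(\Frob_q\mid V_\ell B\big)=\deg\Frob_q=q^d$, which is what the paper's appeal to Lang's theorem provides.
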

\begin{proof}
\begin{align*}
    \prod_{j=1}^{2d}(1-b_jq^{-1}) &= \prod_{j=1}^{2d}\Big(1-\frac{1}{b_j}\Big) \quad\text{by~\cref{Lemma: permutation of eigenvalues of the Frobenius}} \\
                                    &= \prod_{j=1}^{2d}\frac{b_j-1}{b_j} \\
                                    &= \prod_{j=1}^{2d}\frac{1-b_j}{b_j} \quad\text{since $2d$ is even}\\
                                    &= \frac{\deg(\id_B-\Frob_q)}{q^d} \quad\text{by~\cref{Lemma: permutation of eigenvalues of the Frobenius} and~\cite[p.~186\,f., Theorem~3]{LangAV}}\\
                                    &= \frac{\card{B(\F_q)}}{q^d} \quad\text{since $\id_B - \Frob_q$ is separable} \\
                                    &= \frac{\card{(B\times_kX)(X)_\tors}}{q^d} \quad\text{by~\cref{lemma:BasicPropertiesOfConstantAbelianSchemes}\,3}. \qedhere
\end{align*}
\end{proof}

\begin{remark}
Note that, if $X/k$ is a smooth \emph{curve}, $(B\times_kX)(X) = B(K)$ with $K = k(X)$ the function field of $X$ by the valuative criterion for properness since $X/k$ is a smooth curve and $B/k$ is proper.  For general $X$, setting $\Acal = B \times_k X$ and $K = k(X)$ the function field, $(B\times_kX)(X) = \Acal(X) = A(K)$ also holds true because of the Néron mapping property.
\end{remark}

\begin{remark}
One has $\card{(B\times_kX)(X)_\tors} = \card{B(k)} = \card{B^t(k)} = \card{(B\times_kX)^t(X)_\tors}$ by~\cref{lemma:BasicPropertiesOfConstantAbelianSchemes}\,3 and~\cref{lemma:isogenous Abelian varieties}.
\end{remark}

%Recall~\cref{rem:exp}.
%\begin{remark} \label[remark]{rem:exp}
%We have
%\begin{align*}
    %1 - q^{1-s} &= 1 - \exp(-(s-1)\log{q})\\
                %&= (\log{q})(s-1) + O\big((s-1)^2\big) \quad\text{for $s \to 1$}
%\end{align*}
%using the Taylor expansion of $\exp$.
%\end{remark}

Putting everything together, one has
\begin{align*}
    \lim_{s \to 1}\frac{L(\Acal/X, s)}{(s-1)^r} &= \frac{q^d (\log q)^r}{\card{\Acal(X)_\tors}} \prod_{a_i \neq b_j}\Big(1-\frac{a_i}{b_j}\Big) \quad\text{by~\cref{lemma:denominator} and~\eqref{eq:Numerator}} \\
                                                    &= q^{(g-1)d} (\log q)^r\frac{\card{\Sha(\Acal/X)} \cdot R(\Acal/X)}{\card{\Acal(X)_\tors}} \quad\text{by~\cref{cor:MilnesMainTheorem}.}
\end{align*}

%We conclude with our main theorem.
\begin{theorem} \label[theorem]{thm:BSDII}
In the situation of~\cref{thm:MilnesMainTheorem}, one has:

\noindent 1.  The Tate-Shafarevich group $\Sha(\Acal/X)$ is finite.

\noindent 2.  The vanishing order equals the Mordell-Weil rank $r$: $\ord\nolimits_{s=1}L(\Acal/X,s) = \rk_\Z\Acal(X) = \rk_\Z A(K)$.

\noindent 3.  There is the equality for the leading Taylor coefficient
    \[
        L^*(\Acal/X,1) = q^{(g-1)d} (\log q)^r\frac{\card{\Sha(\Acal/X)} \cdot R(\Acal/X)}{\card{\Acal(X)_\tors}}.
    \]
\end{theorem}

Combining~\cref{thm:BSDI} and~\cref{thm:BSDII} and using~\cref{thm:Lfnsagree}, one can identify the remaining two expressions in~\cref{thm:BSDI}:
\begin{corollary}
In the situation of~\cref{thm:MilnesMainTheorem}, in~\cref{thm:BSDI} resp.~\cref{lemma:ShaFinite}, all equalities hold and one has
%\[
%    \left|\det(\cdot,\cdot)_\ell\right|_\ell^{-1} \cdot \card{\H^2(\overline{X},T_\ell\Acal)^\Gamma} = 1.
%\]
%Since both factors are positive integers, it follows that
\begin{align*}
    \left|\det(\cdot,\cdot)_\ell\right|_\ell^{-1}   &= 1,\\
    \card{\H^2(\overline{X},T_\ell\Acal)^\Gamma}         &= 1.
\end{align*}
\end{corollary}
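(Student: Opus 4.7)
The plan is to equate the two independent expressions for the leading Taylor coefficient $c = L^*(\Acal/X,1)/(\log q)^r$ supplied by~\cref{thm:BSDI} and~\cref{thm:BSDII}, and to read off the remaining identifications.

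First I would verify that both theorems apply. Assumptions~(a) and~(b) of~\cref{thm:MilnesMainTheorem} are exactly the hypotheses of~\cref{thm:BSDII}, which then gives finiteness of $\Sha(\Acal/X)$ and rank equality $\rho = \rk_{\Z}\Acal(X)$. In particular, condition~(b) of~\cref{thm:BSDI} is fulfilled for every $\ell \neq p$, so that theorem is available as well, and by~\cref{thm:Lfnsagree} the two $L$-functions, hence their leading coefficients, agree. The equivalences of~\cref{lemma:ShaFinite} then force all its conditions 1--5 to hold, which is what is meant by \textquote{all equalities hold}.

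Next I would take $\ell$-adic absolute values of the formula of~\cref{thm:BSDII}, using that for $\ell \neq p$ the factor $q^{(g-1)d}$ is an $\ell$-adic unit while $\card{\Sha(\Acal/X)}$ and $\card{\Tor\Acal(X)}$ contribute only through their $\ell$-primary parts. Cancelling these against the matching factors in the formula of~\cref{thm:BSDI} leaves the single identity
\[
    |R(B)|_\ell^{-1} = \left|\frac{\det\qu{\cdot,\cdot}_\ell}{\det(\cdot,\cdot)_\ell}\right|_\ell^{-1} \cdot \card{\H^2(\bar{X},T_\ell\Acal)^\Gamma}^{-1}.
\]
To evaluate the right-hand side I would invoke the pairing results of Sections~3 and~4: the content of Section~3 is that $\qu{\cdot,\cdot}_\ell$ is the N\'eron--Tate height pairing tensored with $\Z_\ell$ and that $\det(\cdot,\cdot)_\ell = 1$ (this is already the first line of the corollary); the height-versus-trace-pairing theorem for constant schemes in Section~4 further identifies the height pairing with the trace pairing, whose determinant is $R(B)$ by definition. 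Substituting $|\det\qu{\cdot,\cdot}_\ell|_\ell = |R(B)|_\ell$ and $|\det(\cdot,\cdot)_\ell|_\ell = 1$ into the displayed identity forces $\card{\H^2(\bar{X},T_\ell\Acal)^\Gamma} = 1$. Since this group is a finite $\ell$-group (its weight $2-1=1$ is nonzero, so the finiteness criterion of~\cref{lemma:BayerNeukirch} applies), triviality of the cardinality means the group itself is trivial.

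The main obstacle is bookkeeping: the cohomological pairing $\qu{\cdot,\cdot}_\ell$ is built from a cohomology class $\eta$ (equivalently, from the embedding $\iota$) which also enters the height pairing and therefore $R(B)$, and square~(1) in the comparison diagram carries an integral hard Lefschetz defect. One must check that these auxiliary choices cancel between the two sides of the comparison, as flagged in~\cref{rem:HeightPairingAndCohPairing}; once that compatibility is handled, the corollary is a formal consequence of equating the two leading-coefficient formulas.
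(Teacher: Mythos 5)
Your argument is correct and is essentially the paper's own route: the corollary is stated there as a direct consequence of combining \cref{thm:BSDI} and \cref{thm:BSDII} via \cref{thm:Lfnsagree}, cancelling the $\Sha$- and torsion-terms $\ell$-adically and matching $\bigl|\det\qu{\cdot,\cdot}_\ell\bigr|_\ell$ with $|R(B)|_\ell$ through the height/trace-pairing theorem of Section~4, with $\det(\cdot,\cdot)_\ell = 1$ already supplied by Section~3.2. Your handling of the auxiliary choices ($\eta$, $\iota$, the integral hard Lefschetz defect) via \cref{rem:HeightPairingAndCohPairing}, and the final step that a finite group of cardinality $1$ is trivial, matches the intended bookkeeping.
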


\begin{remark} \label[remark]{rem:H2vanishing}
For \emph{constant} Abelian schemes $\Acal = A \times_k X$ (under the assumption (a) above that $\NS(\overline{X})$ is torsion-free), one has $\card{\H^2(\overline{X},T_\ell\Acal)^\Gamma} = 1$:\footnote{this factor turns up in the Birch-Swinnerton-Dyer formula for the special $L$-value~\cref{thm:BSDI}}  The long exact sequence associated to the Kummer sequence yields the exactness of
\[
    0 \to \H^1(\overline{X},\G_m)/\ell^n \to \H^2(\overline{X},\mu_{\ell^n}) \to \H^2(\overline{X},\G_m)[\ell^n] \to 0.
\]
Combining with the exactness of
\[
    0 \to \Pic^0(\overline{X}) \to \Pic(\overline{X}) \to \NS(\overline{X}) \to 0
\]
and the divisibility of $\Pic^0(\overline{X})$ (since multiplication by $\ell^n$ on an Abelian variety is an isogeny, hence surjective, by~\cite[p.~115, Theorem~8.2]{MilneAbelianVarieties}), hence $\H^1(\overline{X},\G_m)/\ell^n = \Pic(\overline{X})/\ell^n = \NS(\overline{X})/\ell^n$, and passage to the inverse limit $\varprojlim_n$ gives us
\[
    0 \to \NS(\overline{X}) \otimes_\Z \Z_\ell \to \H^2(\overline{X},\Z_\ell(1)) \to T_\ell\H^2(\overline{X},\G_m) \to 0
\]
since the $\NS(\overline{X})/\ell^n$ are finite by~\cite[p.~215, Theorem~V.3.25]{MilneEtaleCohomology}, so they satisfy the Mittag-Leffler condition.  As $\NS(\overline{X})$ is torsion-free (by assumption (a) above) and $T_\ell\H^2(\overline{X},\G_m)$ too (as a Tate module), it follows that $\H^2(\overline{X},\Z_\ell(1))$ is torsion-free, so also
\[
    \H^2(\overline{X},T_\ell\Acal)^\Gamma = \H^2(\overline{X},\pi^*(T_\ell A))^\Gamma = \H^2(\overline{X},\pi^*(T_\ell A(-1)) \otimes \Z_\ell(1))^\Gamma = (\H^2(\overline{X},\Z_\ell(1)) \otimes_{\Z_\ell} T_\ell A(-1))^\Gamma
\]
by~\cref{lemma:BasicPropertiesOfConstantAbelianSchemes}\,2 (here we are using that $\Acal/X$ is constant) and the projection formula for $\pi: \overline{X} \to k$ (similar to~\cref{lemma:TateTwistherausziehen}), so
\[
    \card{\H^2(\overline{X},T_\ell\Acal)^\Gamma} = 1
\]
since this group is finite by~\cref{cor:weightnotequalzerofinite} (having weight $2-1=1 \neq 0$ by~\cref{thm:YogaOfWeights} and~\cref{prop:RiundTateModul}) and torsion-free (as a subgroup of a tensor product of torsion-free finite rank groups).
\end{remark}

\subsection{The case of a curve as a basis}
Let $X/k$ be a smooth projective geometrically connected \emph{curve} with function field $K = k(X)$, base point $x_0 \in X(k)$, Albanese variety $A$, Abel-Jacobi map $\phi: X \to A$ with canonical principal polarisation $c: A \isoto A^t$, and $B/k$ be an Abelian variety.

Let
\[
\qu{\cdot,\cdot}: \Hom_k(A,B) \times \Hom_k(B,A) \to \Z, (\alpha,\beta) \mapsto \qu{\alpha,\beta} := \Tr(\beta \circ \alpha: A \to A) \in \Z
\]
be the trace pairing, the trace being taken as an endomorphism of $A$ as in~\cite{LangAV}.  By~\cite[p.~186\,f., Theorem~3]{LangAV}, this equals the trace taken as an endomorphism of the Tate module $T_\ell A$ or $\H^1(\overline{A},\Z_\ell)$ (they are dual to each other by~\cref{thm:Tate module and etale cohomology}, so for the trace, it does not matter which one we are taking).

We now show that our trace pairing is equivalent to the usual Néron-Tate height pairing on curves and is thus a sensible generalisation to the case of a higher dimensional base.

\begin{lemma} \label[lemma]{prop:adjunctionPoincareBundle}
	Let $X, Y$ be Abelian varieties over a field $k$ and $f \in \Hom_k(X,Y)$.  Then
	\[
	(f \times \id_{Y^t})^*\Pcal_Y \iso (\id_X \times f^t)^*\Pcal_X
	\]
	in $\Pic(X \times_k Y^t)$.
\end{lemma}
\begin{proof}
	By the universal property of the Poincaré bundle $\Pcal_X$ applied to $(f \times \id_{\Pcal_y})^*\Pcal_Y$, there exists a unique map $\hat{f}: X^t \to Y^t$ such that
	\begin{align} \label{eq:universal property of Poincare bundle}
	(f \times \id_{Y^t})^*\Pcal_Y \iso (\id_X \times \hat{f})^*\Pcal_X.
	\end{align}
	It remains to show that $\hat{f} = f^t$.
	
	Let $T/k$ be a variety and $\Lcal \in \Pic^0(Y \times_k T)$ arbitrary.  By the universal property of the Poincaré bundle $\Pcal_Y$, there exists $g: T \to Y^t$ such that $\Lcal = (\id_Y \times g)^*\Pcal_Y$.  We want to show $\hat{f}_*: Y^t(T) \to X^t(T), g \mapsto \hat{f}g$ equals $f^t: \Pic^0(Y \times_k T) \to \Pic^0(X \times_k T), \Lcal \mapsto f^*\Lcal$.  But we have
	\begin{align*}
	f^t(\Lcal)           &= (f \times \id_T)^*\Lcal \\
	&= (f \times \id_T)^*(\id_Y \times g)^*\Pcal_Y \\
	&= (f \times g)^*\Pcal_Y \\
	&= (\id_X \times g)^*(f \times \id_{Y^t})^*\Pcal_Y \\
	&= (\id_X \times g)^*(\id_X \times \hat{f})^*\Pcal_X    \quad\text{by~\eqref{eq:universal property of Poincare bundle}} \\
	&= (\id_X \times \hat{f}g)^*\Pcal_X \\
	&= \hat{f}_*(\Lcal)
	\end{align*}
	for any $\Lcal \in \Pic^0(Y \times_k T)$.
\end{proof}

\begin{lemma} \label[lemma]{prop:ThetaDivisor}
	Let $\theta^-$ be the class of $[-1]^*\Theta$ with the Theta divisor as in~\cite[p.~272, Remark~8.10.8]{BombieriGubler} and $\delta_1 \in \Pic(X \times_k A)$ as in~\cite[p.~278, l.~$-4$]{BombieriGubler} the Poincaré class.  Let $\phi$ be the Abel-Jacobi map and $\phi_{\theta^-}$ as in~\cite[p.~252, Theorem~8.5.1]{BombieriGubler}.  Let $c_A = m^*\theta^- - \pr_1^*\theta^- - \pr_2^*\theta^- \in \Pic(A \times_k A)$ with $m: A \times_k A \to A$ the addition morphism and $\pr_i: A \times_k A \to A$ the projections.  Then
	\begin{equation} \label{eq:ThetaUndPoincareDivisor}
	(\phi \times \id_A)^*c_A = -\delta_1
	\end{equation}
	and
	\begin{equation} \label{eq:PoincareUndThetaDivisor}
	(\id_A \times \phi_{\theta^-})^*\Pcal_A = c_A.
	\end{equation}
\end{lemma}
\begin{proof}
	See~\cite[p.~279, Propositions~8.10.19 and~8.10.20]{BombieriGubler}.
\end{proof}

\begin{theorem}[The trace and the height pairing for curves] \label[theorem]{thm:TraceAndHeightPairingForCurves}
	Let $X/k$ be a smooth projective geometrically connected \emph{curve} with Albanese variety $A$.  Then the trace pairing
	\[
	\Hom_k(A,B) \times \Hom_k(B,A) \stackrel{\circ}{\to} \End_k(A) \stackrel{\Tr}{\to} \Z, (\alpha,\beta) \mapsto \qu{\alpha,\beta}
	\]
	equals the following height pairing
	\begin{align*}
	(\alpha,\beta)_{\mathrm{ht}}  &:= \deg_X(-(\gamma(\alpha),\gamma'(\beta))^*\Pcal_B) = \deg_X(-(\alpha\phi, \beta^t c\phi)^*\Pcal_B),
	\end{align*}
	where $\phi: X \to A$ is the Abel-Jacobi map associated to a rational point of $X$, $c: A \isoto A^t$ is the canonical principal polarisation associated to the theta divisor, and $\gamma(\alpha), \gamma'(\beta)$ are the following compositions
	\begin{align*}
	\gamma(\alpha)&: X \stackrel{\phi}{\to} A \stackrel{\alpha}{\to} B, \\
	\gamma'(\beta)&: X \stackrel{\phi}{\to} A \stackrel{c}{\to} A^t \stackrel{\beta^t}{\to} B^t,
	\end{align*}
	and $(\alpha,\beta)_{\mathrm{ht}}$ is equal to the usual Néron-Tate canonical height pairing up to a sign.
\end{theorem}
\begin{proof}
	By~\cite[p.~100]{Milne-Tate-Shafarevic}, we have
	\[
	\qu{\alpha,\beta} = \deg_X((\id_X,\beta\alpha\phi)^*\delta_1),
	\]
	where $\delta_1 \in \Pic(X \times_k A)$ is a divisorial correspondence such that
	\[
	(\id_X \times \phi)^*\delta_1 = \Delta_X - \{x_0\}\times X - X \times \{x_0\}
	\]
	with the diagonal $\Delta_X \hookrightarrow X \times_k X$, see~\cite[p.~279, Proposition~8.10.18]{BombieriGubler}.
	%which we define to be $\Delta^*$ ($\Delta_X^*$ is the idempotent cutting out $h(X) - h^0(X) - h^2(X) = h^1(X)$ for $X$ a smooth projective geometrically connected curve).
	
	Note the property~\cref{prop:ThetaDivisor} of the Theta divisor $\Theta$ of the Jacobian $A$ of $C$ on $A$ (which is defined in~\cite[p.~272, Remark~8.10.8]{BombieriGubler}) and let $\Theta^- = [-1]^*\Theta$ with $\theta$ and $\theta^-$ denoting the respecting divisor class.  The Theta divisor induces the canonical principal polarisation $\phi_\theta = c: A \isoto A^t$.  Therefore
	\begin{align*}
	(\alpha\phi \times \beta^t c\phi)^*\Pcal_B &= (\alpha\phi \times c\phi)^*(\id_X \times \beta^t)^*\Pcal_B \\
	&= (\alpha\phi \times c\phi)^*(\beta \times \id_{A^t})^*\Pcal_A \quad\text{by~\cref{prop:adjunctionPoincareBundle}} \\
	&= (\beta\alpha\phi \times c\phi)^*\Pcal_A \\
	&= (\beta\alpha\phi \times \phi_\theta\phi)^*\Pcal_A \\
	&= (\beta\alpha\phi \times \phi)^*(\id_A \times \phi_\theta)^*\Pcal_A  \\
	&= (\beta\alpha\phi \times \phi)^*c_A \quad\text{by~\eqref{eq:PoincareUndThetaDivisor}} \\
	&= (\phi \times \beta\alpha\phi)^*c_A \quad\text{by symmetry of $c_A$} \\
	&= -(\id_X \times \beta\alpha\phi)^*\delta_1 \quad\text{by~\eqref{eq:ThetaUndPoincareDivisor}}
	\end{align*}
	Summing up, one has
	\begin{align*}
	(\alpha,\beta)_{\mathrm{ht}} &= \deg_X(-(\id_X,\beta\alpha\phi)^*\delta_1) \\
	&= -\qu{\alpha,\beta}.
	\end{align*}
	
	By~\cite[p.~72, Théorème~5.4]{MoretBaillyMetriquesPermises}, the latter pairing equals the Néron-Tate canonical height pairing.
\end{proof}

\section{Proof of the conjecture for special Abelian schemes} \label{sec:Verification}

We assume in this section that all varieties have a base point.  This assumption is needed for the existence of the Albanese variety in~\cref{prop:PicardSchemeOfProduct}.

\subsection{Picard and Neron-Severi groups of products}

\begin{proposition}[Picard scheme of a product] \label[proposition]{prop:PicardSchemeOfProduct}
Let $X, Y$ be smooth proper varieties over a field $k$ with a $k$-rational point.  Then there is an exact sequence of $k$-group schemes
\[
    0 \to \PPic_{X/k} \times_k \PPic_{Y/k} \to \PPic_{X \times_k Y/k} \to \underline{\mathrm{Hom}}_k(\Alb_{X/k}, \PPic^0_{Y/k}),
\]
which is short exact on geometric points.
\end{proposition}
\begin{proof}
See~\cite{187445}.
\end{proof}

\begin{corollary} \label[corollary]{cor:Pic0OfAProduct}
Let $X, Y$ be smooth proper varieties over an algebraically field $k$ with a $k$-rational point.  If $\PPic^0_{X/k}$ and $\PPic^0_{Y/k}$ are reduced, so is $\PPic^0_{X \times_k Y/k} = \PPic^0_{X/k} \times_k \PPic^0_{Y/k}$.
\end{corollary}
\begin{proof}
One has $\PPic^0_{X \times_k Y/k} = \PPic^0_{X/k} \times_k \PPic^0_{Y/k}$ from the exact sequence in~\cref{prop:PicardSchemeOfProduct} by taking the connected component of $0$ and since the $\underline{\mathrm{Hom}}$-scheme is discrete.  Now use that the fibre product of reduced varieties over an algebraically closed field is reduced~\cite[p.~135, Proposition~5.49]{Goertz-Wedhorn}.
\end{proof}

\begin{corollary} \label[corollary]{cor:NSOfAProductIsFree}
Let $X, Y$ be smooth proper varieties over an algebraically closed field $k$ with a $k$-rational point.  If $\NS(X)$ and $\NS(Y)$ are free, so is $\NS(X \times_k Y)$.
\end{corollary}
\begin{proof}
By~\cref{prop:PicardSchemeOfProduct} and~\cref{cor:Pic0OfAProduct}, there is a commutative diagram with exact rows
\[\begin{tikzcd}
    0 \arrow[r] & \Pic^0(X) \times \Pic^0(Y) \arrow[d,hookrightarrow]\arrow[r,"\iso"] & \Pic^0(X \times_k Y) \arrow[r]\arrow[d,hookrightarrow] & 0 \ar[d] \\
    0 \arrow[r] & \Pic(X) \times \Pic(Y) \arrow[r] & \Pic(X \times_k Y) \arrow[r] & \Hom_k(\PPic^0_{X/k}, \PPic^0_{Y/k}) \arrow[r] & 0.
\end{tikzcd}\]
The snake lemma gives us a short exact sequence
\[
    0 \to \NS(X) \times \NS(Y) \to \NS(X \times_k Y) \to \Hom_k(\PPic^0_{X/k}, \PPic^0_{Y/k}) \to 0.
\]
Now use that $\Hom_k(A,B)$ for Abelian varieties $A,B$ over a field $k$ is a finitely generated free Abelian group, see~\cite[p.~122, Lemma~12.2]{MilneAbelianVarieties}.
\end{proof}

\subsection{Preliminaries on étale fundamental groups}

\begin{lemma} \label[lemma]{lemma:EtaleCoveringOfAProduct}
Let $X_i$, $i = 1,\ldots,n$ be connected proper varieties over an algebraically closed field $k$.  If $\tilde{X}$ is an étale covering of $X_1 \times_k \ldots \times_k X_n$, there are étale coverings $\tilde{X}_i$ of $X_i$ and an étale covering $\tilde{X}_1 \times_k \ldots \times_k \tilde{X}_n \to \tilde{X}$.
\end{lemma}
\begin{proof}
By~\cite[p.~203\,f., Corollaire~X.1.7]{SGA1}, the étale fundamental group of a product of connected proper varieties over an algebraically closed field is the product of the étale fundamental groups of its factors.  Now use that for an open subgroup $H \leq G$ of a profinite group $G = G_1 \times \ldots \times G_n$ contains an open subgroup $H_1 \times \ldots \times H_n$ of $G$ with $H_i \leq G_i$ open.  (One can take $H_i = G_i \cap H$.)
\end{proof}

\begin{proposition} \label[proposition]{prop:ConnectedFiniteEtaleCovering}
Let $G/S$ be finite étale over $S$ connected.  Then there is a connected finite étale covering $S'/S$ of degree dividing $\deg(G/S)!$ such that $G \times_S S'/S'$ is constant.  %If $G/S$ is a group scheme, one can choose $\deg(S'/S) \leq \Aut_{\mathbf{Gps}}(G/S)$.
\end{proposition}
\begin{proof}
Choose a geometric point $s$ of $S$.  Let $X$ be the $\pi_1^\et(S,s)$-set corresponding to $G/S$, and let $H \subseteq \pi_1^\et(S,s)$ be the subgroup corresponding to the elements that act as the identity on $X$, the kernel of $\pi_1^\et(S,s) \to \Aut(X)$.  Let $S'$ be the finite étale covering corresponding to the $\pi_1^\et(S,s)$-set $\pi_1^\et(S,s)/H$, which is connected as $\pi_1^\et(S,s)$ acts transitively on $\pi_1^\et(S,s)/H$.  The scheme $G \times_S S'/S'$ is constant by~\cite[p.~113, Corollaire~V.6.5]{SGA1} applied to the functor $- \times_S S': \FEt/S \to \FEt/S'$ of Galois categories. %since $H = \pi_1^\et(S',s')$ which acts trivially on $G$ by definition.

Note that $|\Aut(X)| = |X|! = \deg(G/S)!$, so $\deg(S'/S) = [\pi_1^\et(S,s):H] \mid \deg(G/S)!$.
\end{proof}

\subsection{Isoconstant Abelian schemes}

\begin{theorem} \label[theorem]{thm:EllipticCurvesIsoconstant}
Let $k$ be a field of characteristic $p$ and $S/k$ be proper, reduced and connected.  Let $\Acal/S$ be a relative elliptic curve or a principally polarised Abelian scheme with constant isomorphism type of $\Acal[p]$.  Then there is a connected finite étale covering $S'/S$ such that $\Acal \times_S S'/S'$ is constant.
\end{theorem}
\begin{proof}
If $\Acal/S$ is a relative elliptic curve:  Choose $N \geq 3$ such that $N$ is invertible on $S$.  Since $\Ecal[N]/S$ is finite étale, by~\cref{prop:ConnectedFiniteEtaleCovering} there is a connected finite étale covering $S'/S$ such that there is an $S'$-isomorphism $\Ecal[N] \times_S S' \iso (\Z/N)^2$.  Since the fine ($N \geq 3$) moduli space $Y(N)$ of elliptic curves with full level-$N$ structure is affine by~\cite[p.~117, Corollary~4.7.2]{KatzMazur} and $S'$ is reduced and connected, by the coherence theorem, the morphism $S' \to Y(N)$ classifying $(\Ecal \times_S S', \Ecal[N] \times_S S')$ factors over a finite extension field $k'$ of $k$.  Hence $\Ecal \times_S S' \iso \Ecal^{\mathrm{univ}} \times_{Y(N)} \Spec(k')$ is constant.

If $\Acal/S$ is a principally polarised Abelian scheme with constant isomorphism type of $\Acal[p]$:  Use the same argument and use that there is a level-$n$ structure for some $n \geq 3$ not divisible by $p$ after finite étale base extension and that the Ekedahl-Oort stratification of the moduli space $\Acal_{g,1,n} \otimes \F_p$ for $p \nmid n$ is quasi-affine~\cite[p.~348, Theorem~1.2]{OortTexelProceedings}.
\end{proof}

\begin{lemma} \label[lemma]{lemma:SpreadingOutHomomorphismsOfAbelianSchemes}
Let $X$ be a normal Noetherian integral scheme with function field $K = k(X)$, $\Acal$ and $\Bcal$ Abelian schemes over $X$ and $L/K$ be a separable field extension.  Given a homomorphism $f_L \in \Hom_X(\Acal_L,\Bcal_L)$, there exists a finite étale covering $X'/X$ with function field $L'$ with $L \supseteq L' \supseteq K$ and an extension of $f_{L'}$ to $f_{X'} \in \Hom_{X'}(\Acal_{X'},\Bcal_{X'})$.
\end{lemma}
\begin{proof}
Since $X$ is normal Noetherian integral, the Abelian schemes $\Acal,\Bcal$ are projective over $X$ by~\cite[p.~161, Théorème~XI.1.4]{RaynaudSLN}.  Since $X$ is Noetherian and $\Acal,\Bcal$ are also flat over $X$, by~\cite[p.~133, Theorem~5.23]{FGAExplained}, there exists the $\underline{\Hom}$-scheme $\underline{\Hom}_X(\Acal,\Bcal)$ over $X$, which is an open subscheme of the Hilbert scheme $\mathrm{Hilb}_{\Acal \times_X \Bcal/X}$, which is separated and locally of finite presentation over $X$.  Since for a discrete valuation ring $R$ with quotient field $\Quot(R)$, arguing as in~\cite[p.~15, proof of Proposition~1.2/8]{BLR}, there is for $f_{\Quot(R)}: \Acal_{\Quot(R)} \to \Bcal_{\Quot(R)}$ a unique (by separatedness) extension to $f_R: \Acal_R \to \Bcal_R$, the connected components of $\Hom_X(\Acal,\Bcal)$ are proper over $X$.  

By the infinitesimal lifting criterion for unramified morphisms, $\Hom_X(\Acal,\Bcal) \to X$ is also unramified:  Let $(R,\mathfrak{m})$ be a local Artinian ring with residue field $k$.  Then $\Hom_R(\Acal_R,\Bcal_R) \hookrightarrow \Hom_k(\Acal_k,\Bcal_k)$ is injective since $\Spec(R)$ consists of a single point:  Namely, if $f: \Acal_R \to \Acal_R$ maps to $f_k = 0$, $f = 0$ by the rigidity lemma~\cite[p.~115, Theorem~6.1\,1)]{MumfordGIT}.  Hence any component of $\Hom_X(\Acal,\Bcal)$ that is dominant over $X$ is finite (by Zariski's main theorem, since it is proper and quasi-finite) and étale over $X$ (since $X$ is integral and normal, hence geometrically unibranch, so dominant, finite and unramified implies étale by~\cite[p.~157, Théorème~18.10.1]{EGAIV4}).
\end{proof}

For the definition of a supersingular Abelian variety see~\cite[p.~113, Definition~4.1]{OortSubvarietiesOfModuliSpaces}.  A supersingular Abelian scheme is an Abelian schemes with all fibres supersingular Abelian varieties, equivalently (for an integral base) if the generic fibre is supersingular (this follows from~\cref{thm:SupersingularAbelianSchemeIsogenousToProductOfSupersingularEllipticCurves}).

\begin{theorem}[supersingular Abelian schemes] \label[theorem]{thm:SupersingularAbelianSchemeIsogenousToProductOfSupersingularEllipticCurves}
Let $X$ be a normal Noetherian integral scheme of characteristic $p > 0$ and $\Acal/X$ be an Abelian scheme with supersingular generic fibre.  Then there exists a finite étale covering $X'/X$, a supersingular elliptic curve $E/\F_p$ and an isogeny $(E \times_{\F_p} X')^g \to \Acal \times_X X'$.
\end{theorem}
\begin{proof}
Let $K = k(X)$ be the function field of $X$.  By~\cite[p.~113, Theorem~4.2]{OortSubvarietiesOfModuliSpaces}, $\Acal_{\overline{K}}$ is isogenous to $E_{\overline{K}}^g$ with $E_{\overline{K}}/\overline{K}$ any (!) supersingular elliptic curve (any two supersingular elliptic curves over an algebraically closed field are isogenous, see~\cite[p.~113]{OortSubvarietiesOfModuliSpaces}).  Note that for any prime $p$, there exists a supersingular elliptic curve over $\F_p$, see~\cite[p.~148\,f., Theorem~V.4.1\,(c)]{SilvermanAEC} for $p > 2$ and the text before this theorem for $p = 2$.  By~\cite[p.~146, Corollary~20.4\,(b)]{MilneAbelianVarieties} applied to the primary field extension $\overline{K}/K^{\mathrm{sep}}$, there is a separable field extension $L/K$ and an isogeny $E_L^g \to \Acal_L$.  Since $E/\F_p$ extends to $E \times_{\F_p} X$ over $X$, the claim follows from~\cref{lemma:SpreadingOutHomomorphismsOfAbelianSchemes}.
\end{proof}

\begin{definition}
We call an Abelian scheme $\Acal/X$ \defn{$\ell'$-isoconstant} if there is a proper, surjective, generically étale $\ell'$-morphism of regular schemes $f: X' \to X$ (an $\ell'$-alteration) such that $\Acal \times_X X'$ is constant.
\end{definition}

The following theorem about descent of finiteness of the Tate-Shafarevich group together with~\cref{thm:MilnesMainTheorem} implies~\cref{mainthm:BSDforIsoconstant} from the introduction.

\begin{theorem}[invariance of finiteness of $\Sha$ under alterations] \label[theorem]{thm:isotrivial}
Let $\ell$ be a prime invertible on $X$.  Let $f: X' \to X$ be a proper, surjective, generically étale $\ell'$-morphism of regular schemes.  If $\Acal$ is an Abelian scheme on $X$ such that the $\ell^\infty$-torsion of the Tate-Shafarevich group $\Sha(\Acal'/X')$ of $\Acal' := f^*\Acal = \Acal \times_X X'$ is finite, then the $\ell^\infty$-torsion of the Tate-Shafarevich group $\Sha(\Acal/X)$ is finite.
\end{theorem}
\begin{proof}
See~\cite[p.~238, Theorem~4.29]{KellerSha}.
\end{proof}

\begin{corollary} \label[corollary]{cor:BSDoverCurves}
Let $X$ be a product of smooth proper curves, Abelian varieties and K3 surfaces over a finite field of characteristic $p$.  Now let $\Acal$ be an Abelian $X$-scheme belonging to one of the following three classes:
\begin{enumerate}
	\item a relative elliptic curve
	\item an Abelian scheme such that the isomorphism type of $\Acal[p]$ is constant
	\item an Abelian scheme with supersingular generic fibre
\end{enumerate}
Then the prime-to-$p$ part of our analogue of the conjecture of Birch and Swinnerton-Dyer holds for $\Acal/X$ and, if $\Acal/X$ is a relative elliptic curve, $\Br(\Acal)[\text{non-$p$}]$ is finite.  If $X$ is a curve, the full conjecture of Birch and Swinnerton-Dyer holds for $\Acal/X$.  Furthermore, the Tate conjecture holds in dimension $1$ for $\Acal$.
\end{corollary}
\begin{proof}
The conditions~(a) and~(b) from~\cref{thm:MilnesMainTheorem} are satisfied for $S'$ in~\cref{thm:EllipticCurvesIsoconstant} by~\cref{ex:aandb} if the base scheme is a curve or an Abelian variety as a finite étale constant connected covering of a curve or an Abelian variety is again a curve or an Abelian variety, respectively:  For curves, this is clear, and for Abelian varieties see~\cite[p.~155, Theorem of Serre-Lang]{MumfordAbelianVarieties}.  So one has~(a) and~(b) for a product from~\cref{cor:Pic0OfAProduct} and~\cref{cor:NSOfAProductIsFree}.  A K3 surface $X/k$ has $\pi_1^\et(X) = \pi_1^\et(k)$ by~\cite[p.~131, proof of Theorem~1.1]{HuybrechtsK3Surfaces} and the homotopy exact sequence $1 \to \pi_1^\et(X \times_k k^{\sep}) \to \pi_1^\et(X) \to \pi_1^\et(k) \to 1$.  Therefore, a connected étale covering of $X$ is of the form $X \times_k K$ with $K/k$ a finite separable field extension.  Since $\H^1_\Zar(X,\Ocal_X) = 0$, also $\H^1(X \times_k K,\Ocal_{X \times_k K}) = 0$ by~\cite[p.~189, Corollary~5.2.27]{Liu2006}.  Furthermore, $\Omega^2_{X \times_k K} = \Ocal_{X \times_k K}$ by~\cite[p.~271, Proposition~6.1.24\,(a)]{Liu2006}.  Now apply~\cref{thm:isotrivial} to the étale covering from~\cref{lemma:EtaleCoveringOfAProduct} to get~(a) and~(b) for the covering.

For an Abelian scheme with supersingular generic fibre use the same argument together with~\cref{thm:SupersingularAbelianSchemeIsogenousToProductOfSupersingularEllipticCurves} and isogeny invariance of the finiteness of the Tate-Shafarevich group~\cite[p.~240, Theorem~4.31]{KellerSha}.

%This shows the statement for relative elliptic curves.  For principally polarised Abelian schemes such that the isomorphism type of $\Acal[p]$ is constant, use that the Ekedahl-Oort stratification of the moduli space $\Acal_{g,1,n} \otimes \F_p$ for $p \nmid n$ is quasi-affine~\cite{OortTexelProceedings}, p.~348, Theorem~1.2, so by the same argument as in the proof of~\cref{thm:EllipticCurvesIsoconstant}, such a family is isoconstant.

Note that $\Acal/X$ is $\ell'$-isoconstant for some $\ell \neq \Char(k)$, and then we can use (a) $\implies$ (b) from~\cref{thm:BSDI} to get independence from $\ell$.  Using~\cite[p.~286, Theorem~4.8]{Bauer}, this proves the conjecture of Birch and Swinnerton-Dyer for elliptic curves with good reduction everywhere over $1$-dimensional global function fields.

The finiteness of the prime-to-$p$ part of the Brauer group of the absolute variety $\Ecal$ over an Abelian variety $X$ follows from the finiteness of $\Br(X)[\text{non-$p$}]$~\cite{ZarhinBrauerGroupAbelianVarietyOverFiniteField} and~\cite[p.~237, Theorem~4.27]{KellerSha}.  For $X$ a curve, see the proof of~\cref{cor:BrauerGroupFinite}.  For $X$ a K3 surface, see~\cite[p.~11405, Theorem~1.3]{SkorobogatovZarhinFinitenessOfBrauerGroupOfK3Surfaces} and~\cite[p.~1, Theorem~1.1]{FinitenessOfBrauerGroupOfK3SurfacesInChar2} and note that the Brauer group of a finite field is trivial.

The Tate conjecture holds in dimension $1$ since the Kummer sequence gives an exact sequence
\[
    0 \to \Pic(\Ecal) \otimes_\Z \Z_\ell \to \H^1(\Ecal, \Z_\ell(1)) \to T_\ell\Br(\Ecal) \to 0
\]
and $\Br(\Ecal)[\ell^\infty]$ is finite, so $T_\ell\Br(\Ecal) = 0$ by~\cref{lemma:PropertiesOfTateModule}\,\ref{lemma:Tate-module-of-finite-group-is-trivial}.
\end{proof}

The $p$-part will be covered in a forthcoming article~\cite{KellerpPartTateShafarevichAndBrauer}.  There, we prove that the Brauer group of an Abelian variety over a finite field is finite (including the $p$-part), descent of finiteness of the $p^\infty$-torsion of the Tate-Shafarevich group under alterations, and isogeny invariance of finiteness of the $p^\infty$-torsion of the Tate-Shafarevich group.

\begin{corollary} \label[corollary]{cor:BrauerGroupFinite}
Let $C/\F_q$ be a smooth proper geometrically connected curve and $\Ecal/C$ be a relative elliptic curve.  Then $\Br(\Ecal) = \Sha(\Ecal/C)$ is finite and of square order, and the Tate conjecture holds for $\Ecal$.
\end{corollary}
\begin{proof}
This follows from~\cite[p.~237, Theorem~4.27]{KellerSha} and~\cref{cor:BSDoverCurves}, and since $\Br(C) = 0$ by class field theory, see~\cite[p.~137, Remark~I.A.15 and p.~131, Theorem~I.A.7]{MilneADT} and the Albert-Brauer-Hasse-Noether theorem~\cite[p.~437, Theorem~8.1.17]{CohomologyOfNumberFields}.

The statement about the square order follows from~\cite{LiuBrauer}.  The Tate conjecture in dimensions other than $1$ is trivial for a surface.
\end{proof}

%\begin{theorem}
%If $X$ is a product of smooth proper curves and Abelian varieties over a finite field of characteristic $p$ and $\Acal/X$ is a principally polarised supersingular Abelian scheme, the $p'$-part of the conjecture of Birch and Swinnerton-Dyer holds for $\Acal/X$ and, if $\Acal/X$ is a supersingular relative elliptic curve, $\Br(\Acal)[\text{non-$p$}]$ is finite.
%\end{theorem}
%\begin{proof}
%One can assume that $\Acal/X$ has a level-$n$ structure for an $n$ not divisible by $p$.  See~\cite{OortLiModuliOfSupersingularAbelianVarieties} for the definition of $\mathcal{S}_{g,1}$.  If $\Acal/X$ is supersingular, there is an isogeny $E^g \to \Acal$ with a supersingular elliptic curve $E$ [TODO: Referenz].  Since $E^g$ is constant and by isogeny invariance of the finiteness of the Tate-Shafarevich group~\cite{KellerSha}, p.~240, Theorem~4.31, $\Sha(\Acal/X)[\ell^\infty]$ is finite.
%\end{proof}

\section{Reduction to the case of a surface or a curve as a basis} \label{sec:Reduction}

\begin{theorem} \label[theorem]{thm:ReductionToSurface}
	If the analogue of the conjecture of Birch-Swinnerton-Dyer holds for a prime $\ell$ invertible on the base and for all Abelian schemes over all smooth projective geometrically integral surfaces, then it holds over arbitrary dimensional bases.
	
	More precisely, if there is a sequence $S \hookrightarrow \ldots \hookrightarrow X$ of ample smooth projective geometrically integral hypersurface sections with a surface $S$ and the conjecture holds for $\Acal/S$, then it holds for $\Acal/X$.
\end{theorem}

The basic idea is using ample hypersurface sections, Poincaré duality, the affine Lefschetz theorem and that the conjecture of Birch and Swinnerton-Dyer depends only on $\Sha(\Acal/X) = \Het^1(X,\Acal)$ in cohomological degree $1$.

\begin{proof}
Let $Y \hookrightarrow X$ be an ample smooth geometrically connected hypersurface section (this exists by Poonen's Bertini theorem for varieties over finite fields~\cite[Proposition~2.7]{PoonenBertini}) with (necessarily) affine complement $U \hookrightarrow X$.  Base changing to $\overline{k}$ and writing $\overline{X} = X \times_k \overline{k}$ etc., one has by~\cite[p.~94, Remark~III.1.30]{MilneEtaleCohomology} a long exact sequence
\begin{align} \label{eq:longexactsequence}
	\ldots \to \H^i_c(\overline{U}, \Acal[\ell^n]) \to \H^i(\overline{X}, \Acal[\ell^n]) \to \H^i(\overline{Y}, \Acal[\ell^n]) \to \H^{i+1}_c(\overline{U}, \Acal[\ell^n]) \to \ldots
\end{align}
(Note that $\H^i_c(\overline{X}, \Fcal) = \H^i(\overline{X}, \Fcal)$ since $\overline{X}/\overline{k}$ is proper, and likewise for $\overline{Y}$.)

Since $\Acal[\ell^n]/X$ is étale, Poincaré duality~\cite[p.~276, Corollary~VI.11.2]{MilneEtaleCohomology} gives us
\[
	\H^i_c(\overline{U},\Acal[\ell^n]) = \H^{2d-i}(\overline{U},(\Acal[\ell^n])^\vee(d)).
\]
(Note that the varieties live over a separably closed field.)  By the affine Lefschetz theorem~\cite[p.~253, Theorem~VI.7.2]{MilneEtaleCohomology}, one has $\H^{2d-i}(\overline{U},(\Acal[\ell^n])^\vee(d)) = 0$ for $2d-i > d$, i.\,e.\ for $i < d$.  Analogously, $\H^{i+1}_c(\overline{U}, \Acal[\ell^n]) = 0$ for $i+1 < d$.  Plugging this into~\eqref{eq:longexactsequence}, one gets an isomorphism
\begin{equation} \label{eq:HibarXbarYiso}
	\H^i(\overline{X}, \Acal[\ell^n]) \isoto \H^i(\overline{Y}, \Acal[\ell^n])
\end{equation}
for $i+1 < d$. Inductively, it follows that the cohomology groups of $\overline{X}$ in dimension $i = 0,1$ are isomorphic to the cohomology groups of a smooth projective geometrically integral \emph{surface} ($d = 2$) $S/k$.

Since $\cd_\ell(k) = 1$, the Hochschild-Serre spectral sequence degenerates on the $E_2$-page giving exact sequences
\[
	0 \to \H^{i-1}(\overline{X},\Acal[\ell^n])_\Gamma \to \H^i(X,\Acal[\ell^n]) \to \H^i(\overline{X},\Acal[\ell^n])^\Gamma \to 0
\]
and similar for $S$, which implies isomorphisms $\H^i(X,\Acal[\ell^n]) \isoto \H^i(S,\Acal[\ell^n])$ for $i = 0,1$ by the $5$-lemma and~\eqref{eq:HibarXbarYiso}.

It follows that there is a commutative diagram with exact rows
\[\begin{tikzcd}
	 0 \arrow[r] & \Acal(X)/\ell^n \arrow[r]\arrow[d,hookrightarrow] & \H^1(X,\Acal[\ell^n]) \arrow[r]\arrow[d,"\iso"] & \Sha(\Acal/X)[\ell^n] \arrow[r]\arrow[d,twoheadrightarrow] & 0\\
	 0 \arrow[r] & \Acal(S)/\ell^n \arrow[r] & \H^1(S,\Acal[\ell^n]) \arrow[r] & \Sha(\Acal/S)[\ell^n] \arrow[r] & 0.
\end{tikzcd}\]
Passing to the inverse limit $\varprojlim_n$ and using $\varprojlim_n^1\Acal(X)/\ell^n = 0$ (and similar for $S$) because the $\Acal(X)/\ell^n$ are finite by the (weak) Mordell-Weil theorem~\cref{thm:MordellWeil} and the Néron mapping property $\Acal(X) = A(K)$~\cref{thm:NeronModel}, one has a commutative diagram with exact rows
\begin{equation}\begin{tikzcd}\label{eq:ReduktionaufS}
	0 \arrow[r] & \Acal(X)\otimes_\Z\Z_\ell \arrow[r]\arrow[d,hookrightarrow] & \H^1(X,T_\ell\Acal) \arrow[r]\arrow[d,"\iso"] & T_\ell\Sha(\Acal/X) \arrow[r]\arrow[d,twoheadrightarrow] & 0\\
	0 \arrow[r] & \Acal(S)\otimes_\Z\Z_\ell \arrow[r] & \H^1(S,T_\ell\Acal) \arrow[r] & T_\ell\Sha(\Acal/S) \arrow[r] & 0.
\end{tikzcd}\end{equation}
By the snake lemma,
\begin{equation} \label{eq:kercokerAXAY}
	\ker\big(T_\ell\Sha(\Acal/X) \twoheadrightarrow T_\ell\Sha(\Acal/S)\big) = \coker\big(\Acal(X)\otimes_\Z\Z_\ell \hookrightarrow \Acal(S)\otimes_\Z\Z_\ell\big)
\end{equation}
is a finitely generated free $\Z_\ell$-module (since $T_\ell\Sha(\Acal/X)$ is), so $T_\ell\Sha(\Acal/X) \isoto T_\ell\Sha(\Acal/S)$ iff $\rk \Acal(X) = \rk \Acal(S)$.

\begin{proposition} \label[proposition]{prop:AXAS}
	Let $X$ be a smooth projective geometrically integral variety over a finite field  of characteristic $p$.  Let $Y \hookrightarrow X$ be an ample smooth projective geometrically integral hypersurface section with $\dim{Y} \geq 2$ and affine complement $U$.  Let $\Acal/X$ be an Abelian scheme.  Then the restriction morphism $\Acal(X) \to \Acal(Y)$ is an isomorphism (away from $p$).
\end{proposition}
\begin{proof}
	By~\cite[p.~94, Remark~III.1.30]{MilneEtaleCohomology}, there is an exact sequence
	\[
		0 \to \H^0_c(U,\Acal) \to \H^0(X,\Acal) \to \H^0(Y,\Acal) \to \H^1_c(U,\Acal).
	\]
	The injectivity of $\Acal(X) \to \Acal(Y)$ follows from~\eqref{eq:ReduktionaufS} (or $\H^0_c(U,\Acal) = 0$ since $U$ is affine).  For the surjectivity of $\Acal(X) \to \Acal(Y)$ away from $p$, it suffices to show that $\H^1_c(U,\Acal)[\text{non-$p$}] = 0$ (or at least that $\H^1_c(U,\Acal)[\text{non-$p$}]$ is finite/torsion since the cokernel is torsion-free away from $p$ by~\eqref{eq:kercokerAXAY}).  The Kummer exact sequence $0 \to \Acal[\ell^n] \to \Acal \to \Acal \to 0$ with $\ell$ invertible on $U$ induces an exact sequence
	\[
		\H^1_c(U,\Acal[\ell^n]) \to \H^1_c(U,\Acal) \stackrel{\ell^n}{\to} \H^1_c(U,\Acal) \to \H^2_c(U,\Acal[\ell^n]).
	\]
	Since $\H^1_c(U,\Acal[\ell^n]) = 0 = \H^2_c(U,\Acal[\ell^n])$ because of $\dim{U} > 2$ as above by Poincaré duality and the affine Lefschetz theorem, $\H^1_c(U,\Acal)$ is $\ell$-divisible and $\ell$-torsion free.  The exact sequence~\cite[p.~94, Remark~III.1.30]{MilneEtaleCohomology}
	\[
		\Acal(X) \to \Acal(Y) \to \H^1_c(U,\Acal) \to \Sha(\Acal/X) \to \Sha(\Acal/Y)
	\]
	shows, since the Mordell-Weil groups are finitely generated Abelian groups by the theorem of Mordell-Weil~\cref{thm:MordellWeil} and the Néron mapping property $\Acal(X) = A(K)$~\cref{thm:NeronModel} and the $\ell$-primary components of the (torsion) Tate-Shafarevich groups are cofinitely generated Abelian groups by~\cref{lemma:ShaEndlichenEllKorang}, that
	\[
		\H^1_c(U,\Acal)[\text{non-$p$}] \iso \bigoplus_{\ell \neq p}(F_\ell \oplus (\Q_\ell/\Z_\ell)^{n_\ell}) \oplus \Z^n
	\]
	with $F_\ell$ finite Abelian $\ell$-groups and $n, n_\ell \in \N$.  It follows from $\H^1_c(U,\Acal)/\ell = 0$ that $n = 0$ and then from $\H^1_c(U,\Acal)[\ell] = 0$ that $\H^1_c(U,\Acal)[\text{non-$p$}] = 0$.
\end{proof}

It also follows from $\H^i(\overline{X},\Acal[\ell^n]) \isoto \H^i(\overline{S},\Acal[\ell^n])$ for $i = 0,1$ and~\cref{def:Li} that $L(\Acal/X,s) = L(\Acal/S,s)$, so if the conjecture of Birch and Swinnerton-Dyer holds for $\Acal/S$, $\rk \Acal(X) = \rk \Acal(S)$ by~\cref{prop:AXAS} and $\Acal(X)\otimes_\Z\Z_\ell \isoto \Acal(S)\otimes_\Z\Z_\ell$. Hence, the analogue of the conjecture of Birch and Swinnerton-Dyer for $\Acal/X$ is equivalent to the conjecture for $\Acal/S$.
\end{proof}

\begin{theorem}
	If there is a smooth projective ample geometrically integral curve $C \hookrightarrow S$ with $\rk \Acal(S) = \rk \Acal(C)$, the analogue of the conjecture of Birch and Swinnerton-Dyer for $\Acal/S$ is equivalent to the classical conjecture for $\Acal/C$.
\end{theorem}
\begin{proof}
For an ample smooth projective geometrically integral \emph{curve} hypersurface section $C \hookrightarrow S$, one has still $\Acal(S)[\ell^n] \isoto \Acal(C)[\ell^n]$ and at least an injection $\H^1(S,\Acal[\ell^n]) \hookrightarrow \H^1(C,\Acal[\ell^n])$ for all $n \geq 0$ and $\H^1(S,T_\ell\Acal) \hookrightarrow \H^1(C,T_\ell\Acal)$.  Arguing in the same way as above using the commutative diagram with exact rows
\[\begin{tikzcd}
0 \arrow[r] & \Acal(S)\otimes_\Z\Z_\ell \arrow[r]\arrow[d,hookrightarrow] & \H^1(S,T_\ell\Acal) \arrow[r]\arrow[d,hookrightarrow] & T_\ell\Sha(\Acal/S) \arrow[r]\arrow[d] & 0\\
0 \arrow[r] & \Acal(C)\otimes_\Z\Z_\ell \arrow[r] & \H^1(C,T_\ell\Acal) \arrow[r] & T_\ell\Sha(\Acal/C) \arrow[r] & 0
\end{tikzcd}\]
and the snake lemma
\[
	\ker\big(T_\ell\Sha(\Acal/S) \to T_\ell\Sha(\Acal/C)\big) \hookrightarrow \coker\big(\Acal(S)\otimes_\Z\Z_\ell \hookrightarrow \Acal(C)\otimes_\Z\Z_\ell\big)
\]
with $T_\ell\Sha(\Acal/S)$ and hence the kernel being torsion-free, if the conjecture of Birch and Swinnerton-Dyer holds for $\Acal/C$ \emph{and} $\rk \Acal(S) = \rk \Acal(C)$, the analogue of the conjecture of Birch and Swinnerton-Dyer holds for $\Acal/S$.
\end{proof}

\begin{remark}
	So the question arises if there is always such a $C \hookrightarrow S \hookrightarrow \ldots \hookrightarrow X$ with $\rk \Acal(S) = \rk \Acal(C)$, see~\cite[Theorem~1.2\,(ii)]{GraberStarr} and Proposition~1.5\,(iii) (over uncountable fields).
	
	One always has the inequality $\rk \Acal(S) \leq \rk \Acal(C)$, so the analogue of the conjecture of Birch and Swinnerton-Dyer for $\Acal/X$ holds if there is such a $C \hookrightarrow X$ with $\rk \Acal(C) = 0$, e.\,g.\ $C \iso \mathbf{P}^1_k$ and $\Acal/C$ isoconstant, e.\,g.\ if $\Acal/C$ is a relative elliptic curve.
\end{remark}

\paragraph{Acknowledgements.}
I thank the anonymous referee for significantly improving the article, my advisor Uwe Jannsen and Maarten Derickx, Patrick Forré, Ulrich Görtz, Walter Gubler, Peter Jossen, Moritz Kerz, Klaus Künnemann, Frans Oort, Michael Stoll, Tamás Szamuely, Georg Tamme and, from mathoverflow, abx, ACL, Angelo, anon, Martin Bright, Holger Partsch, Kestutis Cesnavicius, Torsten Ekedahl, Laurent Moret-Bailly, nfdc23, Jason Starr, ulrich and xuhan; Yigeng Zhao for proofreading; finally the Studienstiftung des deutschen Volkes for financial and ideational support.

%%%%%%%%%%%%%%%%%%%%%%%%%%%%%%%%%%%%%%%%%%%%%%%%%%%%%%%%%%%%%%%%%%%%%
%%%%%%%%%%%%%%%%%%%%%%%%%%%%%%%%%%%%%%%%%%%%%%%%%%%%%%%%%%%%%%%%%%%%%
%%%%%%%%%%%%%%%%%%%%%%%%%%%%%%%%%%%%%%%%%%%%%%%%%%%%%%%%%%%%%%%%%%%%%
%\clearpage
%\markright{}
\phantomsection % korrekte PDF-Bookmarks
\addcontentsline{toc}{section}{\refname}
%\fancyhead[L]{}
%\nocite{*}
{\small
	\bibliographystyle{tkalpha3}
	\bibliography{BSD}
}

{\small \textsc{Timo Keller, Mathematisches Institut, Universität Bayreuth, 95440 Bayreuth, Germany}

{\small \emph{E-Mail address: firstname.lastname@uni-bayreuth.de}}

\end{document}